\newcommand{\st}{\text{s.t. }}
\newcommand{\E}{\mathbb{E}}
\providecommand{\argmin}{\mathop\mathrm{arg min}}
\newcommand{\proj}{\mathsf{proj}}
\newcommand{\pcd}{\phi^{\text{cd}}}
\newcommand{\R}{\mathbb{R}}
\newcommand{\p}{\partial}
\renewenvironment{proof}{\noindent\textbf{Proof.}\hspace*{.3em}}{\qed\\}
\newenvironment{proof-sketch}{\noindent\textbf{Proof Sketch}
  \hspace*{0.em}}{\qed\bigskip\\}
\newenvironment{proof-idea}{\noindent\textbf{Proof Idea}
  \hspace*{0.em}}{\qed\bigskip\\}
\newenvironment{proof-of-lemma}[1][{}]{\noindent\textbf{Proof of Lemma {#1}.}
  \hspace*{0.em}}{\qed\\}
\newenvironment{proof-of-corollary}[1][{}]{\noindent\textbf{Proof of Corollary {#1}.}
  \hspace*{0.em}}{\qed\\}
\newenvironment{proof-of-theorem}[1][{}]{\noindent\textbf{Proof of Theorem {#1}.}
  \hspace*{0.em}}{\qed\\}
\newenvironment{proof-attempt}{\noindent\textbf{Proof Attempt}
  \hspace*{0.em}}{\qed\bigskip\\}
\newtheorem{theorem}{Theorem}[section]
\newtheorem{lemma}{Lemma}[section]
\newtheorem{corollary}{Corollary}[section]
\newtheorem{proposition}{Proposition}[section]
\newtheorem{assumption}{Assumption}[section]
\newtheorem{remark}{Remark}[section]
\newtheorem{example}{Example}[section]
\newtheorem{definition}{Definition}[section]
\renewcommand*{\backref}[1]{\ifx#1\relax \else Page #1 \fi}
\renewcommand*{\backrefalt}[4]{%
  \ifcase #1 \footnotesize{(Not cited.)}%
  \or        \footnotesize{(Cited on page~#2.)}%
  \else      \footnotesize{(Cited on pages~#2.)}%
  \fi
}
\newcommand*{\colorboxed}{}
\def\colorboxed#1#{%
  \colorboxedAux{#1}%
}
\newcommand*{\colorboxedAux}[3]{%
  \begingroup
    \colorlet{cb@saved}{.}%
    \color#1{#2}%
    \boxed{%
      \color{cb@saved}%
      #3%
    }%
  \endgroup
}
\numberwithin{equation}{section}
\newcommand{\todol}[2][]{{%
 \let\marginpar\marginnote
 \reversemarginpar
 \renewcommand{\baselinestretch}{0.8}%
 \todo[color=yellow]{#2}}}
\newcommand{\ouralgo}{\texttt{SCiNBiO}}
\title{A Correspondence-Driven Approach for Bilevel Decision-making with Nonconvex Lower-Level Problems}
\author{Xiaotian Jiang, Jiaxiang Li, Jiawen Bi, Mingyi Hong, Shuzhong Zhang\footnote{X.~Jiang, J.~Bi, and S.~Zhang are with Department of Industrial and System Engineering, University of Minnesota, Minneapolis, MN, USA. E-mails: \texttt{jian0851@umn.edu}, \texttt{bi000050@umn.edu}, \texttt{zhangs@umn.edu}. J.~Li and M.~Hong are with Department of Electrical and Computer Engineering, University of Minnesota, Minneapolis, MN, USA. E-mails: \texttt{li003755@umn.edu}, \texttt{mhong@umn.edu}.}}
\begin{document}
\maketitle

\begin{abstract}
    We consider bilevel optimization problems with general nonconvex lower-level objectives and show that the classical hyperfunction-based formulation is unsettled, since the global minimizer of the lower-level problem is generally unattainable. To address this issue, we propose a \emph{correspondence-driven hyperfunction} $\pcd$. In this formulation, the follower is modeled not as a \emph{rational agent} always attaining a global minimizer, but as an \emph{algorithm-based bounded rational agent} whose decisions are produced by a fixed algorithm with initialization and step size. Since $\pcd$ is generally discontinuous, we apply Gaussian smoothing to obtain a smooth approximation $\pcd_\xi$, then show that its value and gradient converge to those of $\pcd$. In the nonconvex setting, we identify that bifurcation phenomena, which arise when $g(x,\cdot)$ has a degenerate stationary point, pose a key challenge for hyperfunction-based methods. This is especially the case when $\pcd_\xi$ is solved using gradient methods. To overcome this challenge, we analyze the geometric structure of the bifurcation set under some weak assumptions. Building on these results, we design a biased projected \texttt{SGD}-based algorithm \ouralgo{} to solve $\pcd_\xi$ with a cubic-regularized Newton lower-level solver. We also provide convergence guarantees and oracle complexity bounds for the upper-level. Finally, we connect bifurcation theory from dynamical systems to the bilevel setting and define the notion of fold bifurcation points in this setting. Under the assumption that all degenerate stationary points are fold bifurcation points, we analyze the Hessian behavior of the lower-level problem $g(x,\cdot)$ at its stationary points when the upper-level parameter $x$ lies in a neighborhood of the bifurcation set. We further characterize the manifold structure of the bifurcation set and establish the oracle complexity of \ouralgo{} for the lower-level problem.
\end{abstract}

\section{Introduction}

Bilevel optimization has emerged as a powerful modeling framework in various machine learning and operations research applications, including hyperparameter optimization \citep{maclaurin2015gradient,franceschi2018bilevel}, meta-learning \citep{finn2017model, franceschi2018bilevel}, reinforcement learning \citep{zeng2024demonstrations,hong2023ttsa} and more recently machine unlearning \citep{reisizadeh2025blur} and large language model alignment \citep{li2024getting,li2025joint}. A bilevel problem involves two nested optimization tasks, where the solution of the upper-level problem depends on the solution of the lower-level problem. In this work, we consider the following bilevel problem with an unconstrained and possibly nonconvex lower-level structure:
\begin{align}\label{eq:original_problem}
&\min_{x\in\mathcal{X}\subset\mathbb{R}^n}\ f(x,y^\ast(x))\\
&\st\quad y^\ast(x)\in S(x):=\argmin_{y\in\R^m}\ g(x,y),\nonumber
\end{align}
where $f:\R^n\to\R$ is called the upper-level objective and $g:\R^m\to\R$ is called the lower-level objective; $\mathcal{X}\subseteq \mathbb{R}^n$ is the feasible set for the upper-level problem. In game theory, the bilevel problem can be thought of as a two-player (Stackelberg) game, the lower-level and upper-level problems are also called the follower and the leader, respectively; see e.g.~\cite{kohli2012optimality,liu2018pessimistic}. 

When the lower-level problem satisfies certain regularity conditions, such as convexity or the Polyak-Łojasiewicz (PL) condition, two main approaches are commonly used to solve bilevel optimization problems: hypergradient-based methods \citep{ghadimi2018approximation,yang2021provably,hong2023two} and value-function-based methods~\citep{OnMomentum-BasedGradientMethods,huang2024optimal,Onpenaltybasedbilevelgradientdescentmethod,lu2026solving}. Hypergradient-based methods aim to optimize the hyperfunction defined by
\begin{align}\label{eq:hyperfunction}
\phi(x) := \min_{y^\ast(x) \in \mathcal{S}(x)} f(x, y^\ast(x)).
\end{align}
To ensure that the hypergradient $\nabla_x \phi(x)$ is well-defined and computable, it is typically required that the lower-level objective $g(x,y)$ is strongly convex in $y$ for any $x$. Value-function-based methods, on the other hand, reformulate the bilevel problem as a single-level problem by introducing the value function constraint:
$$g(x,y)\leq v(x),\quad\text{where}\quad v(x):=\min_{y\in\R^m}g(x,y),$$
and then penalizing this constraint to form an unconstrained formulation:
\begin{align*}
\min_{x\in\mathcal{X},y\in\R^m} F_\gamma(x,y):=f(x,y)+\gamma(g(x,y)-v(x)),
\end{align*}
where $\gamma$ is a penalty parameter. Similarly, to guarantee differentiability and computability of the value function $v(x)$, strong convexity or the PL assumption on the lower-level objective $g(x,y)$ is often imposed.

When the lower-level problem is general nonconvex and lacks such assumptions, these methods break down. For hypergradient-based methods, the solution mapping $S(x)$ can be multi-valued or discontinuous, making $\nabla_x \phi(x)$ undefined. For value-function-based methods, evaluating $v(x)$ requires solving a nonconvex optimization problem to global optimality for each $x$, which is computationally prohibitive. We further argue in Section~\ref{sec:illnessofhyperfunction} that this difficulty is not merely algorithmic in nature. In fact, the classical bilevel formulation~\eqref{eq:original_problem} itself implicitly assumes that the follower is a \emph{rational agent} capable of attaining a global minimizer. In the nonconvex setting, this \emph{rational agent} assumption is unreasonable, rendering the classical hyperfunction~\eqref{eq:hyperfunction} unsettled from both computational and modeling perspectives.

To address this issue, in Section~\ref{section:CorrespondenceDrivenHyperfunction}, we introduce the notion of an \emph{algorithm-based bounded rational agent} for the follower, and correspondingly define the \emph{correspondence-driven hyperfunction}. Instead of assuming access to the entire global minimizer set $S(x)$, the follower applies a prescribed optimization method with step size schedule and initialization to generate the set of algorithmically attainable solutions. The \emph{correspondence-driven hyperfunction} then evaluates the leader’s objective based on these reachable solutions, making the bilevel formulation both more realistic and computationally tractable.

We then develop an algorithm \ouralgo{} (Smooth Correspondence-driven Nonconvex lower-level Bilevel Optimization; see Algorithm~\ref{alg:main}) that minimizes a smoothed version of the \emph{correspondence-driven hyperfunction}. Importantly, \ouralgo{} requires only very weak assumptions, which are significantly milder than those typically imposed in existing bilevel optimization frameworks (see Section~\ref{sec:genericassumption}), while still enabling provable convergence guarantees.

\subsection{Related work}

Recent studies on bilevel optimization (BLO) have relaxed the classical assumption that the lower-level objective $g(x, y)$ is strongly convex in $y$, allowing it to be merely convex \citep{liu2022general,chen2023bilevel,shen2024method} or even nonconvex. For nonconvex lower-level problems, some works still guarantee global optimality under conditions such as the Polyak-Łojasiewicz (PL) inequality, while others adopt weaker assumptions to handle more general objectives, such as Morse-type functions, where global optimality may not be attainable. Accordingly, we categorize existing nonconvex methods based on whether the lower-level global minimum is guaranteed by design.\\

\noindent\textbf{Lower-Level Global Optimality Guarantees:} A notable nonconvex assumption enabling lower-level global optimality is the Polyak-Łojasiewicz (PL) condition, under which every stationary point is a global minimizer and gradient-based methods can find a global optimum. The works \cite{OnMomentum-BasedGradientMethods,huang2024optimal} propose hypergradient-based methods and prove non-asymptotic convergence under a PL condition for the lower-level problem; they also assume that the set of optimal solutions to the lower-level problem is a singleton. \cite{Onpenaltybasedbilevelgradientdescentmethod} develops a value-function-based penalty method and establishes non-asymptotic convergence under a PL condition for the lower-level problem in $y$. In \cite{chen2024finding}, the authors design an algorithm that either requires the penalized hyperfunction to satisfy a PL condition in $y$, or assumes a PL condition in $y$ for the lower-level problem together with a singleton solution set, and proves non-asymptotic convergence under these assumptions. The study \cite{chen2025set} shows that, when the lower-level problem satisfies the PL condition in $y$, the optimistic hyper-objective function is weakly concave and the pessimistic hyper-objective function is weakly convex. A penalty-based approach is also taken by \cite{jiang2025beyond}, who establish non-asymptotic convergence under the assumption that the penalized problem is PL in $y$. In \cite{masiha2025superquantile}, the authors assume a local PL condition (the PL inequality holds in a neighborhood of each local minimizer with a uniform constant) together with certain regularity conditions. Under these assumptions, they show that global optimality can still be attained. They propose a smoothing approximation of the pessimistic hyperfunction, design an algorithm, and establish its non-asymptotic convergence. Because minimax problems are special cases of bilevel optimization, it is relevant that \cite{lu2025first} adopts a local KL assumption, weaker than the PL condition, and under this assumption develops a proximal gradient-based method with non-asymptotic convergence.
\\

\noindent\textbf{Without Lower-Level Global Optimality Guarantees:} Many recent studies relax lower-level assumptions from guaranteeing global optimality to weaker ones, under which only a stationary point can be obtained. All of the following works fall into this category. The work \cite{liu2022bome} designs a barrier-based algorithm and, under a local PL assumption (PL condition holds locally around each local minimizer with a uniform constant), proves its non-asymptotic convergence. In \cite{liu2024moreau}, a Moreau envelope-based reformulation is proposed, together with a penalty-based algorithm; under the assumption that the value function is weakly convex, the method is shown to converge to a stationary point of the Moreau envelope-based reformulation and achieves a non-asymptotic solution guarantee. In \cite{lutsp}, the author introduces a stochastic primal-dual method based on a two-sided smoothing of the Moreau envelope reformulation; under the assumption that the lower-level objective is weakly convex, the algorithm is proven to converge to an $\epsilon$-KKT point with a non-asymptotic convergence. The results in \cite{bolte2025bilevel} show that if the lower-level problem is Morse in $y$ for any $x$, then there exist finitely many disjoint lower-level local-minimizer curves parameterized by $x$. A hyperfunction can thus be defined for each curve, and an algorithm is designed that converges asymptotically to a stationary point of one such hyperfunction. \cite{xiao2025hybrid} proposes a hybrid algorithm that, without requiring strong assumptions, achieves asymptotic convergence to a KKT stationary point of the original problem. As a special case of bilevel optimization, the two-stage problem is also examined in \cite{lou2025decomposition}, where a constrained nonconvex lower-level problem is addressed via a log-barrier reformulation and an algorithm with asymptotic convergence guarantees is developed.

\subsection{Contributions}
In this work, we introduce a novel, more practical problem formulation along with its smoothed version, and design an algorithm for efficiently solving its smoothing. In particular, our main contributions can be summarized as follows:
\begin{itemize}
    \item We show that the classical hyperfunction definition $\phi(x)$ is unsettled in the nonconvex lower-level setting. It requires the lower-level follower to be a \emph{rational agent} whose decision $y^\ast(x)$ is always contained in the global minimizer set of $g(x,y)$ with respect to $y$. This \emph{rational agent} assumption makes optimizing the hyperfunction $\phi(x)$ intractable from the computational perspective and unreasonable from the modeling perspective (see Section~\ref{sec:illnessofhyperfunction}). We instead introduce a novel \emph{correspondence-driven hyperfunction} $\pcd(x)$. It replaces the lower-level \emph{rational agent} assumption with an \emph{algorithm-based bounded rational agent}, who generates its decision directly through a prescribed algorithm with a given initialization and step size. This makes the formulation more meaningful and computationally tractable. To overcome the discontinuity of $\pcd(x)$, we employ Gaussian smoothing to construct a smooth approximation $\pcd_\xi(x)$. We prove that, at points where $\pcd(x)$ is continuous, the function value of $\pcd_\xi(x)$ converges to that of $\pcd(x)$ as the smoothing parameter $\xi\to 0$; and at points where $\pcd(x)$ is differentiable, the gradient of $\pcd_\xi(x)$ converges to that of $\pcd(x)$ (see Section~\ref{sec:Correspondence_Driven_Reformulation_and_Its_Smoothing} for more details);
    \item We point out that some existing, relatively general assumptions still impose restrictions on the class of nonconvex problems, and introduce the notion of a prevalent assumption: one that holds with probability one after applying a specific type of arbitrarily small perturbation (e.g., adding a linear term) within a given function class. We prove that the assumption “for almost every $x$, the function $g(x,\cdot)$ is Morse” is prevalent (refer to Theorem~\ref{theorem:nowhere_dense}), and identify that the set of $x$ for which $g(x,\cdot)$ is not Morse, although of measure zero, represents a key difficulty for hyperfunction-based algorithms in the nonconvex lower-level setting (see Section~\ref{section:LimitationsofCommonRegularityAssumptions},\ \ref{sec:genericassumption}). We refer to this set as the bifurcation set (Definition~\ref{def:bifurcationpointset});
    \item We study the geometric structure of the bifurcation set in certain cases (see Section~\ref{sec:measureofbifurcationpoints}). In particular, when the lower-level problem is semi-algebraic, the bifurcation set admits a stratified manifold structure and has Minkowski dimension at most $n-1$, where $n$ is the dimension of the domain of $x$ (Theorem~\ref{theorem:poly_Minkowski});
    \item We design a biased projected \texttt{SGD}-based algorithm \ouralgo{} that estimates the gradient of $\pcd_\xi(x)$ via sampling, with the lower-level responses computed using the cubic-regularized Newton method. We establish its convergence and derive the oracle complexity of the upper-level problem (Theorem~\ref{thm:biased_sgd_conv}). As a key step, we show that when $g(x,\cdot)$ is Morse, the cubic-regularized Newton method exhibits a two-phase update behavior, and the iteration sequence eventually converges to a second-order stationary point of the lower-level problem (Lemma~\ref{lemma:approximationerror});
    \item We relate the notion of bifurcation points in the bilevel setting to those in dynamical systems, and introduce the concept of fold bifurcation points in the bilevel setting (Definition~\ref{def:fold}). Under the assumption that all bifurcation points are fold bifurcation points, we analyze the relationship between the strong convexity parameter of $g(x,\cdot)$ at a local minimizer $y$ and the distance from $x$ to the bifurcation set. This analysis yields the oracle complexity of the lower-level problem solved by \ouralgo{} (for more details see Section~\ref{sec:foldbifurcation}).
    \item Through experiments, we demonstrate the effectiveness of \ouralgo{}. First, we apply it to nonconvex-nonconcave minimax problems, which are a specific case of bilevel optimization with a nonconvex lower-level. The results show that \ouralgo{} converges from all random initializations, whereas \texttt{GDA} struggles with cyclic behaviors and fails to converge in several cases. This highlights the stability and reliability of \ouralgo{} in tackling challenging nonconvex problems. Second, we use \ouralgo{} for bilevel hyperparameter optimization, specifically tuning a nonnegative $\ell_2$-regularization parameter $\lambda$ for a neural network. In this setting, \ouralgo{} consistently outperforms \texttt{V-PBGD} and \texttt{BOME} in terms of both convergence speed and stability, demonstrating its superior performance in solving bilevel optimization problems with nonconvex lower-level objectives.
\end{itemize}

\section{Main difficulties}\label{sec:difficulty_hyperfunction}

In this section, we discuss the main difficulties in bilevel optimization with a nonconvex lower-level problem, affecting both problem formulation and algorithm design.

\subsection{Unsettledness of the Hyperfunction with Nonconvex Lower-level}\label{sec:illnessofhyperfunction}

Define $S(x)=\argmin_{y\in\mathcal{Y}}g(x,y)$ as the set of global optimizers of the lower-level problem. The hyperfunction is defined as
\begin{align}\label{eq:blo_optimistic}
    \phi(x):=\min_{y\in S(x)}f(x,y)=f(x,y^\ast(x)).
\end{align}
When the lower-level objective $g(x,y)$ is nonconvex in $y$, the definition of the hyperfunction is unsettled for two main reasons:
\begin{itemize}
    \item First, from a computational perspective, when the lower-level objective $g(x,y)$ is nonconvex in $y$, the set of global optimizers $S(x)$ is generally unattainable. As a result, the hyperfunction value, though well-defined in theory, cannot be evaluated in practice.
    \item Second, from a modeling perspective, particularly in game-theoretic settings, bilevel problems reflect a hierarchical structure where the leader selects $x$, and the follower reacts by choosing $y$ that minimizes a lower-level objective function $g(x, y)$ in response to the leader’s decision~\citep{labbe2016bilevel,Silvrio2022ABO}. This classical formulation implicitly treats the follower as a \emph{rational agent} who can always attain a global minimizer. Such an assumption is unrealistic, as it endows the follower with unlimited computational power capable of solving generally intractable nonconvex problems to global optimality. In practice, it is unreasonable to assume that followers possess such ``superrational capacity", and this assumption severely disconnects the model from actual decision-making behavior. Consequently, it is not reasonable for the leader to update $x$ under the premise that the follower always returns a global optimum.
\end{itemize}

To bypass this global optimality challenge, many approaches replace the requirement of having global minimum for the lower-level problem with local minima or stationary points. For example, in \cite{liu2024moreau,bolte2025bilevel,lou2025decomposition}, the authors relax the problem \eqref{eq:blo_optimistic} to the following problem
\begin{align}
    &\min_{x\in \mathcal{X},y\in \mathcal{Y}} f(x,y)\\
    &\st\quad y\in\arg\min\text{-loc}_{y\in\mathcal{Y}}\ g(x,\cdot),\nonumber
\end{align}
where $\arg\min\text{-loc}_{y\in\mathcal{Y}} g(x,y)$ means the set of local minimizes of $g(x,y)$. However, this alternative formulation does not address the fundamental difficulties discussed above from both computational and modeling perspectives. In practice, it is still impossible to find all local minima and select the one that yields the smallest value of the upper-level objective $f(x,\cdot)$. If the set of all local minimizers of the lower-level problem is further restricted to a subset of them, another difficulty arises: how should such a subset be selected, and which local minimizers are considered more “representative” than others?

\subsection{Limitations of Common Regularity Assumptions}\label{section:LimitationsofCommonRegularityAssumptions}

Due to the inherent difficulties highlighted in Section~\ref{sec:illnessofhyperfunction}, the literature commonly imposes some regularity conditions on the lower-level problem $g(x,y)$, which are required to hold for \textbf{every} $x$. Examples of such conditions include:
\begin{itemize}
    \item Morse-type parametric qualifications~\citep{bolte2025bilevel};
    \item Local Polyak-Łojasiewicz (PL) conditions~\citep{liu2022bome,masiha2025superquantile}.
\end{itemize}
These assumptions ensure that the set of local solutions to the lower-level problem exhibits favorable geometric structure. Let us discuss these conditions in more detail below.

\textbf{Morse-type parametric qualifications} include the following parameterized Morse property: for \textbf{every} $x\in\mathcal{X}$, any stationary point of $g(x,y)$ with respect to $y$ is non-degenerate, which means $\nabla_{yy}^2 g(x,y)$ is nonsingular. According to \citet[Proposition 3.6]{bolte2025bilevel}, there exists an integer $M \geq 0$ such that for each $x$, the function $g(x,\cdot)$ admits exactly $M$ non-degenerate critical points, denoted by $y_1(x), \dots, y_M(x)$. As $x$ varies, the graph of each mapping $x \mapsto y_i(x)$, i.e., $\{(x,y_i(x)):x\in\mathcal{X}\}$, traces out an $n$-dimensional manifold embedded in $\mathcal{X} \times \mathcal{Y}$. This structure allows the definition of $M$ distinct hyperfunctions $\phi_i(x):=f(x,y_i^\ast(x))$, and \texttt{SMBG} Algorithm proposed in~\cite{bolte2025bilevel} is designed to compute a local minimum of one such $\phi_i(x)$~\citep[Theorem 4.2]{bolte2025bilevel}. 

\textbf{Local PL condition} requires the existence of a constant $\mu>0$, independent of $x$, such that for \textbf{every} $x\in\mathcal{X}$, the function $g(x,\cdot)$ satisfies the following inequality: for any connected component $\mathcal{M}'(x)$ of the set $\mathcal{M}(x)$ of local minima, there exists an open neighborhood $\mathcal{N}(\mathcal{M}'(x))\supset\mathcal{M}'(x)$ such that
$$\forall y\in \mathcal{N}(\mathcal{M}'(x)),\quad g(x,y)-\min_{y'\in\mathcal{N}(\mathcal{M}'(x))}g(x,y')\leq\frac{1}{2\mu}\|\nabla_y g(x,y)\|^2.$$
This condition, combined with additional regularity assumptions~\citep[Definition 2.2]{masiha2025superquantile}, ensures that for each fixed $x$, the set of local minimizers of $g(x,\cdot)$ forms a connected manifold~\citep[Proposition 2.1]{masiha2025superquantile}. 

However, these assumptions, which are required to hold for \textbf{every} $x$, are in fact strong. Both the Morse condition and the local PL condition impose nontrivial structural constraints on the function $g(x,y)$. In what follows, we demonstrate that neither assumption can generally be expected to hold for \textbf{every} $x$.

In Theorem~\ref{theorem:nowhere_dense}, we show that for any smooth function, an arbitrarily small linear perturbation can yield a new function that is Morse in $y$ for \textbf{almost every} $x$; that is, all stationary points of $g(x,y)$ in $y$ are non-degenerate for \textbf{almost every} $x$. However, this \textbf{almost-everywhere} property does not imply the stronger \textbf{pointwise} Morse condition, which requires the function to be Morse for \textbf{every} $x$. The following counterexample illustrates this distinction: there exist functions that are Morse in $y$ for \textbf{almost every} $x$, yet no arbitrarily small perturbation, for which the changes in the function value and in its first and second derivatives are all sufficiently small, can make them Morse for \textbf{every} $x$.

\begin{figure}[htbp]
    \centering
    \begin{minipage}[t]{0.4\textwidth}
        \centering
        \includegraphics[width=\linewidth]{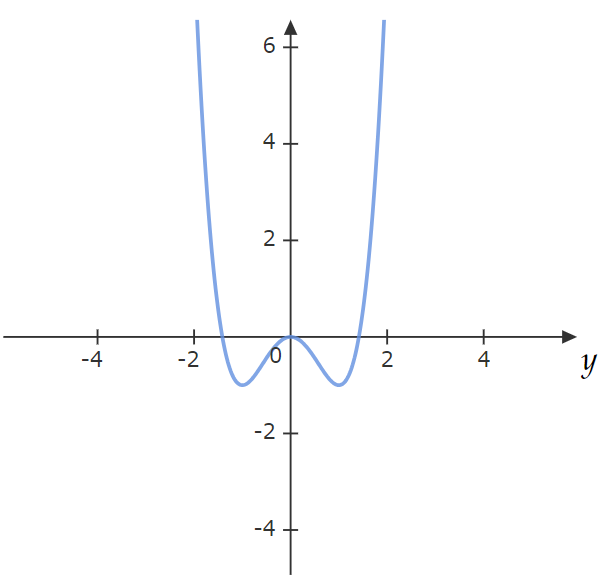}
        \caption{Graph of $g(0,y)$ with three non-degenerate stationary points}
        \label{fig:y^2}
    \end{minipage}
    \hfill
    \begin{minipage}[t]{0.4\textwidth}
        \centering
        \includegraphics[width=\linewidth]{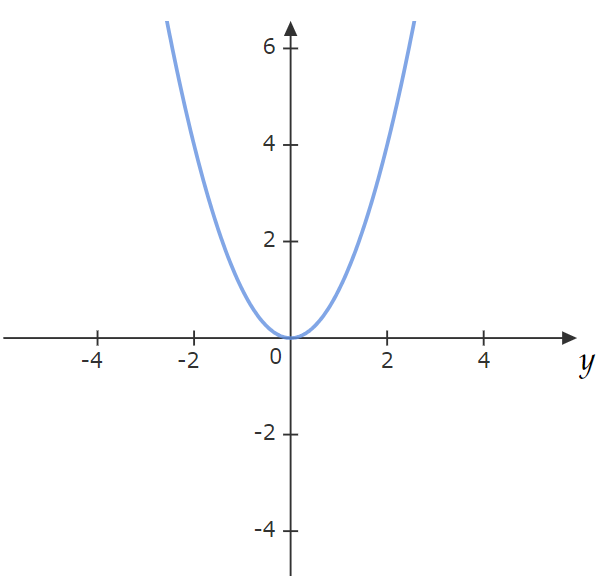}
        \caption{Graph of $g(1,y)$ with one non-degenerate stationary point}
        \label{fig:y^4-2y^2}
    \end{minipage}
\end{figure}

\begin{example}\label{example:almosteverymorse}
    Suppose the smooth function $g(x,y):\mathbb{R}\times\mathbb{R}\to\mathbb{R}$ satisfies $g(0,y)=y^4-2y^2$, and $g(1,y)=y^2$. As illustrated in Figures~\ref{fig:y^2} and~\ref{fig:y^4-2y^2}, $g(0,y)$ has three non-degenerate stationary points, and $g(1,y)$ has one non-degenerate stationary point. If $g$ is slightly perturbed to a new function $\widetilde{g}$, with the perturbation term having sufficiently small function value, for which the changes in the function value and in its first and second derivatives are all sufficiently small, then by the non-degeneracy of the stationary points of $g$ at $x=0$ and $x=1$, $\widetilde{g}$ will still have three non-degenerate stationary points near $x=0$ and one near $x=1$. We will show that no matter which sufficiently small perturbation we choose, there always exists at least one point $x\in (0,1)$ such that $\widetilde{g}(x,y)$ is not Morse in $y$.
    
    Assume, for the sake of contradiction, that $\widetilde{g}(x,y)$ has no degenerate stationary points for any $x\in[0,1]$. Note that $\widetilde{g}(x,y)$ is continuously differentiable. By the implicit function theorem, if $\nabla_y \widetilde{g}(\bar{x},\bar{y})=0$ and the Hessian $\nabla^2_{yy} \widetilde{g}(\bar{x},\bar{y})$ is non-singular, then there exists a neighborhood $I$ of $\bar{x}$ and a unique smooth function $y:I\to \mathbb{R}$ with $y(\bar{x})=\bar{y}$ such that $\nabla_y\widetilde{g}(x,y(x))=0$ for every $x\in I$. Since $[0,1]$ is compact, we can cover it with finitely many such intervals $I_1,...,I_k$ on each of which the implicit function theorem applies. Therefore, for any $\bar{x} \in [0,1]$ and stationary point $\bar{y}$ of $\widetilde{g}(\bar{x}, \cdot)$, we can construct a smooth curve $x \mapsto y(x)$ such that $y(\bar{x}) = \bar{y}$ and $\nabla_y \widetilde{g}(x, y(x)) = 0$ for every $x \in [0,1]$. Applying the above argument to the three non-degenerate stationary points of $\widetilde{g}(0,y)$, we obtain three smooth curves $y_1(x)$, $y_2(x)$, and $y_3(x)$ defined on $[0,1]$, such that for each $x \in [0,1]$, the point $y_i(x)$ is a stationary point of $\widetilde{g}(x, \cdot)$. By the local uniqueness guaranteed by the implicit function theorem, these curves cannot intersect, i.e., $y_i(x) \neq y_j(x)$ for $i \neq j$ and all $x \in [0,1]$. In particular, at $x = 1$, the function $\widetilde{g}(1,y)$ must have at least three distinct stationary points. However, $\widetilde{g}(1,y)$ has only one stationary point. This contradiction implies that there must exist some $x\in(0,1)$ at which $\widetilde{g}(x,y)$ admits a degenerate stationary point. 
\end{example}

Example~\ref{example:almosteverymorse} shows that the set of functions that are Morse in $y$ for \textbf{every} $x$ is not dense in the space of smooth functions.

One can also consider the assumption that there exists {\bf a global constant} $\mu>0$ such that $g(x,y)$ satisfies the local $\mu$-PL condition in $y$ for \textbf{every} $x$. This is a strong assumption, and the set of smooth functions satisfying it is not dense in the space of smooth functions. Consider again Example \ref{example:almosteverymorse}. At $x=0$, the function $g(0,y)$ has two non-degenerate local minimizers, whereas at $x=1$, $g(1,y)$ has one. This implies that as $x$ increases from $0$ to $1$, at least one of the local minimizers of $g(0,y)$ must disappear through a degeneracy. Let $\bar{x}\in(0,1)$ denote the smallest point at which degeneration occurs. Consider the smooth function $y : [0, \bar{x}) \to \mathbb{R}$ satisfying $\nabla_y g(x, y(x)) = 0$ for all $x \in [0, \bar{x})$, with initial value $y(0)$ equal to a non-degenerate local minimizer of $g(0,y)$. As $x$ approaches $\bar{x}$ from the left, the point $y(x)$ converges to a degenerate stationary point $y(\bar{x})$ of $g(\bar{x},y)$. For every $x\in[0,\bar{x})$, the point $y(x)$ is a locally strongly convex minimizer of $g(x,\cdot)$. As $x$ approaches $\bar{x}$ from the left, the local strong convexity constant at $y(x)$ tends to zero since $g(\bar{x},y)$ is degenerate at $y(\bar{x})$. Note that local strong convexity implies the local PL condition, and the PL constant $\mu(x)$ in this case coincides with the local strong convexity constant. It follows that $\mu(x)$ converges to $0$ as $x$ approaches $\bar{x}$ from the left. Therefore, no uniform constant $\mu>0$ can exist such that $g(x,y)$ satisfies the $\mu$-PL condition in $y$ for all $x\in[0,1]$.

\subsection{Beyond Pointwise Regularity: Prevalent Assumption and its Challenge}\label{sec:genericassumption}

We begin by introducing the notion of \emph{prevalence}, which provides a measure-theoretic way to describe properties that are common in infinite-dimensional spaces such as spaces of smooth functions.

\begin{definition}[Prevalence \citep{hunt1992prevalence}]
Let $\mathcal{V}$ be a topological vector space (e.g., the space of $C^r$ functions), and let $\mathcal{S} \subseteq \mathcal{V}$ be a Borel-measurable subset. We say that $\mathcal{S}$ is \emph{prevalent} if there exists a finite-dimensional subspace $P \subseteq \mathcal{V}$, called a \emph{probe set}, such that for every $v \in \mathcal{V}$, the set $\{p \in P : v + p \in \mathcal{S}\}$ has full Lebesgue measure in $P$.
\end{definition}

We say that a property is prevalent in $\mathcal{V}$ if the set $\mathcal{S}\subset\mathcal{V}$ of elements having this property is prevalent. Intuitively, this means there exists a way $P$ to perturb any element $v \in \mathcal{V}$ such that, after applying a random perturbation, the perturbed element $v+p$ satisfies the desired property with probability one.

To broaden the scope of the problem under consideration, we move beyond the relatively strong pointwise regularity assumptions such as requiring $g(x,y)$ to be Morse or local $\mu$-PL in $y$ for \textbf{every} $x$, where Morse function is formally defined as follow:
\begin{definition}[Morse function]
    $g(x,y)$ is said Morse in $y$ if every stationary point with respect to $y$ is non-degenerate, that is, whenever $\nabla_y g(x,y)=0$, it holds that $\det(\nabla^2_{yy} g(x,y))\ne 0$.
\end{definition}
We then adopt a weaker and prevalent assumption: 
\begin{assumption}\label{assumption:nowhere_dense}
    $g(x,y)$ is Morse in $y$ for \textbf{almost every} $x$.
\end{assumption}
The prevalence of this assumption can be established by the following Theorem~\ref{theorem:nowhere_dense}. This assumption holds with probability one for any smooth function under a random linear perturbation, thereby capturing a much broader class of problems.
\begin{theorem}\label{theorem:nowhere_dense}
    For any smooth function $g(x,y)$, we uniformly choose $a$ from $[-\nu,\nu]^m$, where $\nu>0$ can be any constant. We perturb $g(x,y)$ as follows
    $$\widetilde{g}_a(x,y):=g(x,y)+a^\top y.$$
    Then, with probability one (i.e., almost surely with respect to the random choice of $a$), $\widetilde{g}_a(x,y)$ is Morse in $y$ for \textbf{almost every} $x$.
\end{theorem}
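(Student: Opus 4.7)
The approach is a clean combination of Sard's theorem with Fubini, built around a well-chosen auxiliary self-map of $\mathbb{R}^{n+m}$. Since $\widetilde{g}_a$ differs from $g$ only by a linear term in $y$, one has $\nabla_y \widetilde{g}_a(x,y) = \nabla_y g(x,y) + a$ and $\nabla^2_{yy}\widetilde{g}_a(x,y) = \nabla^2_{yy} g(x,y)$, and therefore
$$\widetilde{\mathcal{X}}_a \;=\; \bigl\{x \in \mathbb{R}^n \,:\, \exists\, y \in \mathbb{R}^m \text{ with } \nabla_y g(x,y) = -a \text{ and } \det\nabla^2_{yy} g(x,y) = 0\bigr\}.$$
The goal thus reduces to: for Lebesgue-almost every $a \in [-\nu,\nu]^m$, the set $\widetilde{\mathcal{X}}_a$ is a Lebesgue null subset of $\mathbb{R}^n$.

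Introduce the smooth self-map
$$F : \mathbb{R}^{n+m} \to \mathbb{R}^{n+m}, \qquad F(x,y) \;=\; \bigl(-\nabla_y g(x,y),\, x\bigr).$$
Its Jacobian has the block form
$$DF(x,y) \;=\; \begin{pmatrix} -\nabla^2_{yx} g(x,y) & -\nabla^2_{yy} g(x,y) \\ I_n & 0 \end{pmatrix},$$
and expansion along the identity block immediately gives $|\det DF(x,y)| = |\det\nabla^2_{yy} g(x,y)|$. Hence the critical set of $F$ is exactly $\mathcal{C} := \{(x,y) : \det\nabla^2_{yy} g(x,y) = 0\}$, and the corresponding image of critical values satisfies
$$F(\mathcal{C}) \;=\; \bigl\{(a,x) \in \mathbb{R}^m \times \mathbb{R}^n : x \in \widetilde{\mathcal{X}}_a\bigr\}.$$

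By Sard's theorem applied to $F$, the set $F(\mathcal{C})$ has Lebesgue measure zero in $\mathbb{R}^{n+m}$. Intersecting with $[-\nu,\nu]^m \times \mathbb{R}^n$ preserves this, and Fubini's theorem on the $\sigma$-finite product space then yields that for Lebesgue-almost every $a \in [-\nu,\nu]^m$ the $a$-slice $\widetilde{\mathcal{X}}_a$ is a Lebesgue null subset of $\mathbb{R}^n$, which is precisely the desired statement. I do not foresee a substantial obstacle; the only technical point is measurability so that Fubini is legitimate, but this is immediate since $\mathcal{C}$ is closed as the zero set of a continuous function, $F(\mathcal{C})$ is the continuous image of a closed set and hence an analytic (in particular universally measurable) subset of $\mathbb{R}^{n+m}$, and each slice inherits measurability.
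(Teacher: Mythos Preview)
Your proof is correct. Both your argument and the paper's rest on the same two pillars—Sard's theorem and Fubini—but you package them differently and, arguably, more cleanly.

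The paper applies Sard \emph{fiberwise}: for each fixed $x$ it considers the map $y \mapsto \nabla_y g(x,y)$ and concludes that its set of critical values $\Lambda_x \subset \mathbb{R}^m$ is null, so for every $x$ one has $\int_{[-\nu,\nu]^m} I(\widetilde g_a,x)\,da = 0$; then it integrates over $x$ and swaps via Fubini. This requires a separate argument that the indicator $I(\widetilde g_a,x)$ is jointly measurable in $(a,x)$, which the paper handles by writing the non-Morse locus as a coordinate projection of a closed set and invoking analytic-set machinery. Your construction instead applies Sard \emph{once} to the full map $F(x,y) = (-\nabla_y g(x,y),\,x)$ on $\mathbb{R}^{n+m}$, whose critical-value set is already the entire $(a,x)$-locus $\{(a,x): x \in \widetilde{\mathcal X}_a\}$; this set is $\sigma$-compact (continuous image of the closed set $\mathcal C$), so measurability is immediate and Fubini slices it directly. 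The trade-off: your route is shorter and avoids the indicator-measurability detour, while the paper's fiberwise approach makes the per-$x$ Morse genericity explicit as an intermediate statement.
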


\begin{proof}
    See Appendix~\ref{proof:theorem:nowhere_dense}.
\end{proof}

To understand why Theorem \ref{theorem:nowhere_dense} holds, we draw insight from differential topology. In particular, Sard's theorem tells us that for a smooth function, the set of critical values, meaning the values at which the Jacobian matrix fails to be of full rank, has measure zero. If we apply Sard's theorem to the gradient mapping of a function, we obtain a powerful conclusion: for almost all small perturbations, the resulting function becomes a Morse function. In particular, by applying Sard's theorem to the gradient map and combining it with Fubini's theorem, we show that the set of $x$ for which the perturbed function $\widetilde{g}_a(x,y)$ fails to be Morse has measure zero for almost every perturbation $a$.

It is worth noting that the notion of “prevalent” here does not refer to perturbing a single function $g(x,\cdot)$ to satisfy a property, but rather to perturbing the entire parametric family $\{g(x,\cdot)\}_{x\in\mathcal{X}}$ so that it satisfies the property. The specific distinction is presented in the following remark.

\begin{remark}[Two viewpoints on “prevalent”]\label{rmk:generic-vs-ae}
We introduce two ways to call the Morse-in-$y$ property “prevalent”.

\smallskip
\noindent\textbf{(i) Single-function viewpoint.}
Fix $x$ and regard $g(x,\cdot)$ as a function of $y$ only.
For any smooth $g$ one can add an arbitrarily small linear perturbation
$g(x,y)+a^\top y$.  
By Sard's theorem, this perturbed function is Morse with probability~1 (see proof of Theorem~\ref{theorem:nowhere_dense}).
Hence, “being Morse” is prevalent in the space of smooth functions of $y$.

\smallskip
\noindent\textbf{(ii) Parametric-family viewpoint.}
Now regard $g$ as a family $\{g(x,\cdot)\}_{x\in\mathcal X}$ of smooth functions,
parameterized by $x$.
A perturbation 
cannot simultaneously regularize every member of this family (see Example~\ref{example:almosteverymorse}).
Theorem~\ref{theorem:nowhere_dense} states that, after an arbitrarily small linear perturbation
$g(x,y)+a^\top y$, which can be viewed as a uniform perturbation applied to the entire function family $\{g(x,\cdot)\}_{x\in\mathcal{X}}$, the probability (over $a$) that $g(x,\cdot)$ is Morse for almost every $x$ is 1.
Thus, the condition
\[
g(x,y)\text{ is Morse in }y\text{ for almost every }x
\]
is a prevalent property of the function family $\{g(x,\cdot)\}_{x \in \mathcal{X}}$.
\end{remark}

Although the difference between assuming that $g(x, y)$ is Morse in $y$ for \textbf{every} $x$ and Assumption~\ref{assumption:nowhere_dense} lies only in a measure-zero subset, the impact of this relaxation is substantial. The set of parameter values where $g(x, \cdot)$ fails to be Morse, referred to as the bifurcation point set and formally defined below, may introduce significant theoretical and practical challenges. 
\begin{definition}[Bifurcation Point Set]\label{def:bifurcationpointset}
    Let $g:\mathcal{X}\times\R^m\to\R$ be a twice continuously differentiable function. We define the bifurcation point set as
    \begin{align*}
    \widetilde{X}:=\{x\in\mathcal{X}:g(x,\cdot)\text{ is not a Morse function}\}.
\end{align*}
\end{definition}
\begin{remark}\label{remark:close}
    Note that Morse is an open property for $x$, i.e., if $g(\bar{x},y)$ is Morse in $y$, then there exists an open neighborhood $U$ of $\bar{x}$ such that $g(x,y)$ is Morse in $y$ for any $x\in U$. This directly implies that $\widetilde{\mathcal{X}}$ is closed.
\end{remark}

The bifurcation point set gives rise to the following two challenges, from both theoretical and practical perspectives:\\

\noindent\textbf{Challenge from theoretical perspective:} The presence of the bifurcation point set makes the structure of the lower-level solutions highly difficult to analyze. Because the stationary points of $g(x,\cdot)$ may disappear or emerge as $x$ varies (see Example~\ref{example:almosteverymorse}), the conclusion in \citet[Section 2.2]{bolte2025bilevel}, which states that there exist finitely many disjoint stationary curves $y^{(1)}(x),\dots,y^{(M)}(x)$ (each being a stationary point of $g(x,\cdot)$ and varying smoothly with $x$), no longer holds. Such stationary curves may intersect at some $x$ or vanish.
New stationary curves may also emerge at some $x$. Analyzing the evolution of solution curves by studying the structure of the bifurcation point set is also extremely challenging. As we present in Section~\ref{sec:measureofbifurcationpoints}, the bifurcation point set often admits only a stratified manifold structure (see Figure~\ref{fig:bifurcation points} for a concrete example). Only under stronger assumptions on the bifurcation point set, such as the fold bifurcation concept introduced in Section~\ref{sec:foldbifurcation}, does it enjoy better geometric properties, for example, a manifold structure.\\

\noindent\textbf{Challenge from practical perspective:} Even if we relax the lower-level solution requirement from the global minimizer to the commonly used local minimizer~\citep{liu2024moreau,bolte2025bilevel,lou2025decomposition}, evaluating the hyperfunction value near the bifurcation point set is practically intractable. Specifically, from a double loop algorithm perspective, any practical algorithm can only run the lower-level solver for a finite number of iterations, producing an approximate solution $\hat{y}$. As a result, we do not have access to the true hyperfunction value \( \phi(x) \), but only to an approximate evaluation
\[
\hat{\phi}(x) := f(x, \hat{y}).
\]
To control the error
$$|\hat{\phi}(x)-\phi(x)|=|f(x,\hat{y})-f(x,y^\ast(x))|,$$
one must first bound $\|\hat{y}-y^\ast(x)\|$. For a fixed $x$ that does not lie on the bifurcation point set, by the Morse property, the lower-level problem is locally strongly convex in $y$ at each local minimizer. In this case, a suitable algorithm, like gradient descent, can make $\|\hat{y} - y^\ast(x)\|$ arbitrarily small within a finite number of iterations. In particular, in a neighborhood where $g(x,\cdot)$ has a strong convexity constant $m(x) > 0$, standard gradient descent error bounds give
$$\|\hat{y}-y^\ast(x)\|\leq C(1-\alpha \cdot m(x))^K,$$
for some constants $C,\alpha$ and iteration number $K$. However, when $x$ is close to the bifurcation point set and $y^\ast(x)$ is about to degenerate, the local strong convexity modulus $m(x)$ of the minimizer becomes small. This slows down the convergence rate of gradient-type methods and greatly increases the number of iterations needed to reach a target accuracy. In practice, since one does not know in advance how close a given $x$ is to the bifurcation point set, it is impossible to determine $m(x)$, and hence to estimate the number of iterations required to achieve sufficient accuracy. Consequently, the hyperfunction value cannot be practically computed in any neighborhood of the bifurcation point set.

\section{Correspondence-Driven Hyperfunction and Its Smoothing}\label{sec:Correspondence_Driven_Reformulation_and_Its_Smoothing}

We introduce in Section~\ref{section:CorrespondenceDrivenHyperfunction} a new definition of the hyperfunction called \emph{correspondence-driven hyperfunction} based on algorithmic trajectories, which replaces the classical \emph{rational agent} assumption of the lower-level with a \emph{algorithm-based bounded rational agent}. Unlike the classical \emph{rational agent}, who is assumed to always attain a global minimizer even for nonconvex problems, an \emph{algorithm-based bounded rational agent} generates its decision directly through the execution of a prescribed algorithm.

This \emph{correspondence-driven hyperfunction} is then smoothed via Gaussian convolution in Section~\ref{section:smoothing}, yielding a well-behaved approximation that serves as the objective in our subsequent algorithm design. Section~\ref{section:smoothingproperty} establishes several basic properties of this smoothed function.

\subsection{Definition of the Correspondence-Driven Hyperfunction}\label{section:CorrespondenceDrivenHyperfunction}

To address the unsettledness (see Section~\ref{sec:illnessofhyperfunction}) of the hyperfunction definition associated with the original problem \eqref{eq:original_problem}, we introduce a novel \emph{correspondence-driven hyperfunction} in this section.

Instead of modeling the follower as a \emph{rational agent} who always attains a global minimizer, we consider the follower to be an \emph{algorithm-based bounded rational agent}. Specifically, the follower’s decision is generated by executing a prescribed optimization method $\mathcal{M}$ with step size schedule $\boldsymbol{\eta}$ from a fixed initialization $y_0$. For any given upper-level variable $x$, this procedure yields a sequence, and $\hat{S}(x, y_0, \boldsymbol{\eta}, \mathcal{M})$ denotes its accumulation points, from which the follower selects its decision. Regarding the algorithm $\mathcal{M}$ and step size schedule $\boldsymbol{\eta}$, we impose the following mild assumption, which can be satisfied by a wide range of standard optimization methods, such as the gradient descent method.

\begin{assumption}\label{assumption:method}
    The accumulation points of the sequence generated using method $\mathcal{M}$ with step size schedule $\boldsymbol{\eta}$ for initial point $y_0$ and any upper-level decision variable $x$ are stationary points with respect to $y$. That is, the set of all limit points of the sequence are a subset of the lower-level stationary point set.
\end{assumption}

This \emph{algorithm-based bounded rational} perspective captures realistic decision-making behavior. Under limited computational resources, the follower uses a prescribed optimization method $\mathcal{M}$ with step size schedule $\boldsymbol{\eta}$ from $y_0$ to explore the solution space and obtains the set of practically attainable solutions $\hat{S}(x, y_0, \boldsymbol{\eta}, \mathcal{M})$. By following this optimization method $\mathcal{M}$, the agent makes the best decision it can achieve, without assuming access to the full set of global minimizers as in the fully rational case.

We define the \emph{correspondence-driven hyperfunction} as follows:

\begin{definition}[Correspondence-driven Hyperfunction]
Given an upper-level decision variable $x \in \mathcal{X}$, a fixed follower initialization $y_0 \in \mathcal{Y}$, a step size schedule $\boldsymbol{\eta} = \{\eta_t\}$, and an optimization method $\mathcal{M}$ satisfying Assumption~\ref{assumption:method}, let $y^{\mathrm{cd}}(x, y_0, \boldsymbol{\eta}, \mathcal{M})$ be defined as follows:
\begin{align}\label{eq:ycd}
y^{\mathrm{cd}}(x, y_0, \boldsymbol{\eta}, \mathcal{M})
    := \arg\min_{y \in \hat{S}(x, y_0, \boldsymbol{\eta}, \mathcal{M})} f(x,y).
\end{align}
The \emph{correspondence-driven hyperfunction} is defined as
\begin{align}\label{eq:generalreformulation_single}
    \phi^{\mathrm{cd}}(x; y_0, \boldsymbol{\eta}, \mathcal{M}) := f\big(x, y^{\mathrm{cd}}(x, y_0, \boldsymbol{\eta}, \mathcal{M})\big).
\end{align}
\end{definition}

For notational simplicity, we denote $\phi^{\text{cd}}(x; y_0, \boldsymbol{\eta}, \mathcal{M})$, $y^{\text{cd}}(x, y_0, \boldsymbol{\eta}, \mathcal{M})$ and $\hat{S}(x, y_0, \boldsymbol{\eta}, \mathcal{M})$ simply as $\phi^{\text{cd}}(x)$, $y^{\text{cd}}(x)$ and $\hat{S}(x)$ in the remainder of this work.

This \emph{correspondence-driven hyperfunction} resolves the essential challenges raised in Section~\ref{sec:illnessofhyperfunction} from both the computational and modeling perspectives.  
From the computational perspective, replacing the global minimizer set $S(x)$ with the algorithmically attainable set $\hat{S}(x)$ makes the hyperfunction tractable, since $\hat{S}(x)$ can be approximated in principle by running method $\mathcal{M}$ with step size schedule $\boldsymbol{\eta}$ from initialization $y_0$.  
From the modeling perspective, the leader no longer assumes that the follower is a \emph{rational agent} who always achieves a global optimum, but instead treats the follower as an \emph{algorithm-based bounded rational agent} with limited computational capacity, leading to a more realistic modeling of the bilevel interaction.

\begin{remark}
More generally, one may model the follower’s response by averaging over a uniform distribution of initial points. Given a compact set $\mathcal{Y}_0 \subseteq \R^m$, one can define
\begin{align}\label{eq:generalreformulation}
    \phi^{\text{cd}}(x; \mathcal{Y}_0, \boldsymbol{\eta}, \mathcal{M}) := 
\frac{1}{\operatorname{vol}(\mathcal{Y}_0)} 
\int_{y_0 \in \mathcal{Y}_0} f(x, y^{\text{cd}}(x, y_0, \boldsymbol{\eta}, \mathcal{M})) \, dy_0.
\end{align}
This extension captures the average behavior of the follower under randomized initializations and reduces to \eqref{eq:generalreformulation_single} when $\mathcal{Y}_0 = \{y_0\}$. Our proposed approach \ouralgo{} (Algorithm~\ref{alg:main}) can be extended to this setting as well; however, for simplicity of analysis, we focus on the single-initialization case in this paper.
\end{remark}

\subsection{Smoothing via Mollification}\label{section:smoothing}

The hyperfunction $\phi^{\text{cd}}(x)$ defined in \eqref{eq:generalreformulation_single} remains challenging to optimize, as it can be discontinuous in $x$: small perturbations in $x$ may cause the algorithm to converge to different solutions, resulting in abrupt changes in the value of $\phi^{\text{cd}}$. We illustrate this behavior with the following example.

\begin{example}\label{example:discontiunous}
    Consider the lower-level problem 
    $$g(x,y)=(y-x)^4-2(y-x)^2,$$
    whose graph shifts left or right as $x$ varies. Figure \ref{fig:discontinuity} illustrates the function for $x=0$. We fix the initialization $y_0=0$ and apply gradient descent with step size smaller than $1/L$, where $L$ is the Lipschitz constant of $\nabla_y g(x,y)$ in the compact sublevel set $\{y:g(x,y)\leq g(x,0)\}$. In this setting, the algorithm converges to different local minimizers depending on the value of $x$: for $x<0$, the trajectory converges to the right minimizer; for $x>0$, it converges to the left minimizer; and for $x=0$, the algorithm stagnates at the maximizer $y^{\text{cd}}(0)=0$. As a result, $y^{\text{cd}}(x)$ is discontinuous at $x=0$, and consequently, $\phi^{\text{cd}}(x)=f(x,y^{\text{cd}}(x))$ is also discontinuous at that point unless the upper-level objective $f$ is carefully designed to cancel the jump.
\end{example}

\begin{figure}
    \centering
    \includegraphics[width=0.4\linewidth]{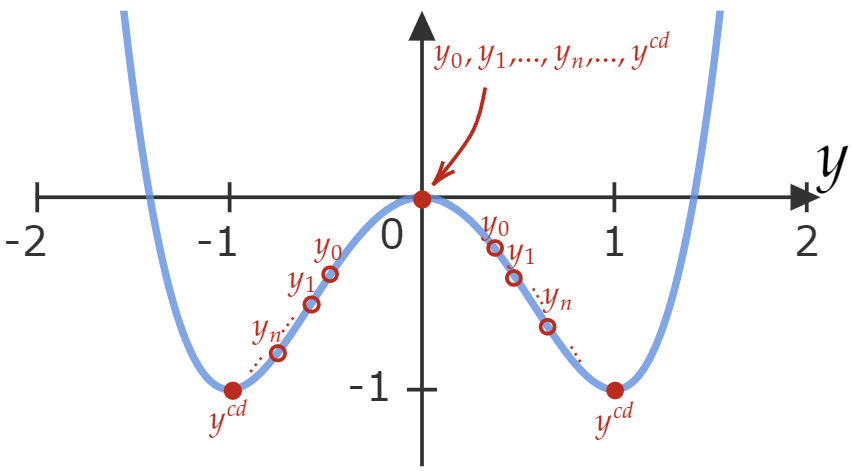}
    \caption{Illustration of the lower-level objective $g(x,y)=(y-x)^4-2(y-x)^2$ when $x=0$. The function has two symmetric minimizers and a saddle point at $y=0$. With fixed initialization $y_0=0$ and gradient descent, small changes in $x$ lead to different accumulation points, resulting in discontinuity in $y^{\text{cd}}(x)$ and hence in $\phi^{\text{cd}}(x)$.}
    \label{fig:discontinuity}
\end{figure}

The discontinuity of $\phi^{\text{cd}}(x)$ poses significant challenges for designing algorithms to optimize it directly. To address this, we consider a smoothed approximation of the hyperfunction, denoted by $\phi^{\text{cd}}_\xi(x)$, obtained by convolving $\phi^{\text{cd}}(x)$ with a smooth mollifier. Specifically, we use the Gaussian kernel
\begin{align}\label{eq:kernal}
    h_\xi(z)=\frac{1}{(2\pi\xi^2)^{n/2}}\exp(-\frac{\|z\|^2}{2\xi^2}),
\end{align}
and define the {\bf smoothed version of correspondence-driven hyperfunction}
\begin{align}\label{eq:smooth:hyper}
    \phi^{\text{cd}}_\xi(x):=(\phi^{\text{cd}}* h_\xi)(x)=\int_{\mathbb{R}^n}h_\xi(x-z)\phi^{\text{cd}}(z)dz.
\end{align}
Its gradient can be computed as
\begin{align}\label{eq:computeofgradieeent}
    \nabla_x\phi^{\text{cd}}_\xi(x)=\int_{\mathbb{R}^n}\nabla_xh_\xi(x-z)\phi^{\text{cd}}(z)dz.
\end{align}

This smoothing procedure produces a smooth approximation $\phi^{\text{cd}}_\xi(x)$, which approximates $\phi^{\text{cd}}(x)$ well in regions where $\phi^{\text{cd}}(x)$ is continuous. As illustrated in Figure \ref{fig:smoothing}, the smoothed function aligns well with the original function except near the discontinuity. Moreover, the minimizer of the smoothed function converges to $0$ from the left as the smoothing parameter $\xi$ tends to $0$.

\begin{figure}
    \centering
    \includegraphics[width=0.5\linewidth]{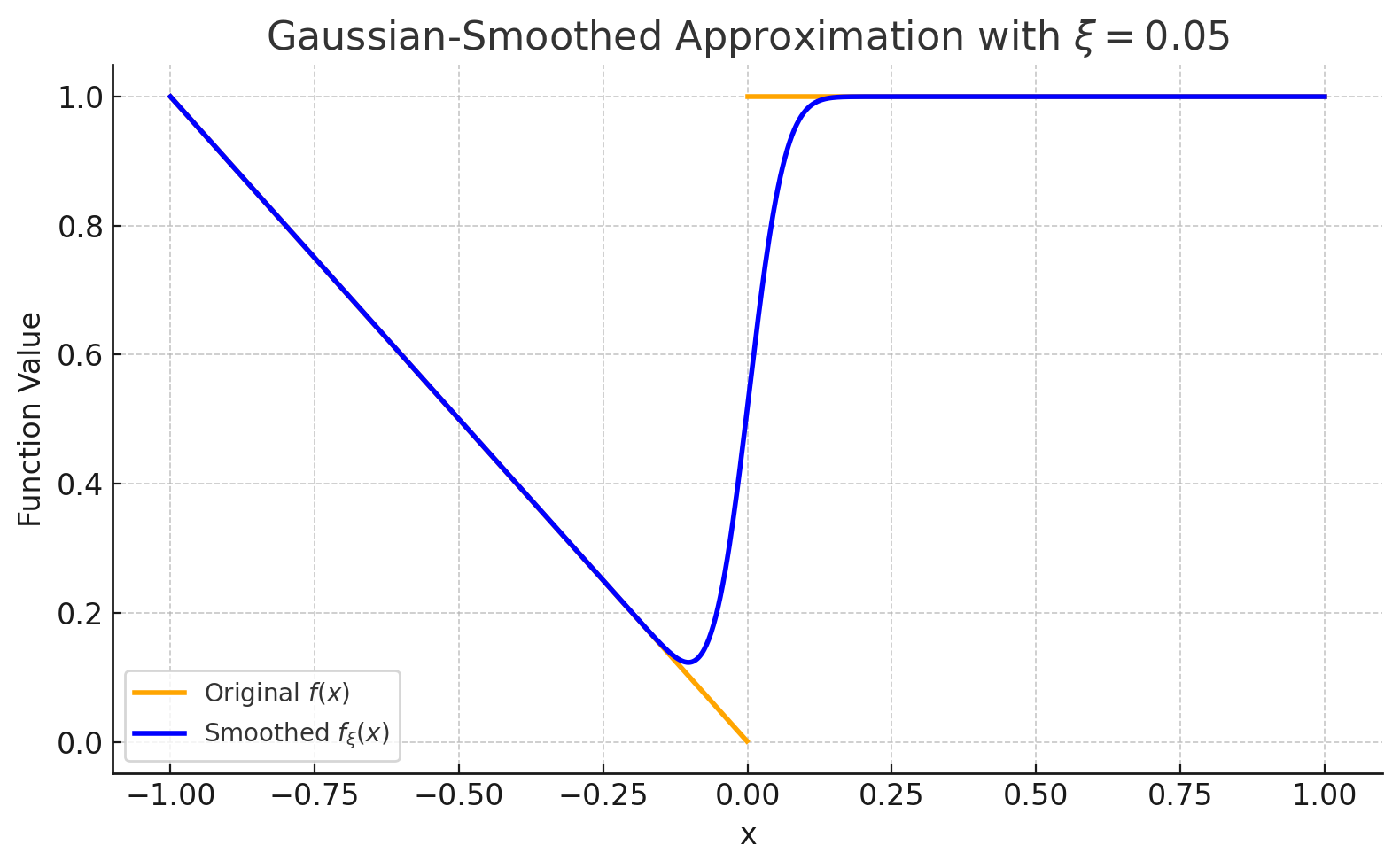}
    \caption{Gaussian smoothing of the piecewise-defined function $f(x)=-x$ for $x\leq 0$, and $f(x)=1$ for $x>0$, using a Gaussian kernel with $\xi=0.05$. The original function is discontinuous at $x=0$, while the smoothed approximation $f_\xi(x)$ is smooth and closely follows $f(x)$ away from the discontinuity. The minimizer of the smoothed function approximates the left-sided minimum of the original function.}
    \label{fig:smoothing}
\end{figure}

Before analyzing $\phi^{\text{cd}}_\xi(x)$ and designing the algorithm, we clarify two issues related to the well-definedness of the convolution. 

\begin{itemize}
    \item \textbf{The first issue} concerns the domain of definition: since the upper-level objective $f(x,y)$ is only defined for $x\in\mathcal{X}$, and hence the hyperfunction $\phi^{\text{cd}}(x)$ is only defined on $\mathcal{X}$. However, the convolution that defines $\phi^{\text{cd}}_\xi(x)$ formally integrates over the entire space $\mathbb{R}^n$, which requires a careful interpretation outside the domain $\mathcal{X}$;
    \item \textbf{The second issue} concerns the integrability of $\phi^{\text{cd}}(x)$: since this function is generally discontinuous, it is not immediately clear whether the convolution integral and its gradient are well-defined. In particular, the integrability of discontinuous functions requires careful analysis of the set of discontinuity points.
\end{itemize} 

To ensure that the convolution $\phi^{\text{cd}}_\xi(x)$ is well-defined and to provide a foundation for the theoretical developments that follow, we now introduce a set of assumptions.

\begin{assumption}\label{assumption:general1}
    The following hold:
    \begin{enumerate}
        \item $f(x,y)$ is continuous and $g(x,y)$ is twice continuously differentiable for any $x\in\mathcal{X}$ and $y\in\mathbb{R}^m$;\label{assumption:general1(1)}
        \item $\mathcal{X}$ is convex and compact;\label{assumption:general1(2)}
        \item $g(x,y)$ is Lipschitz smooth with constant $\overline{L}_g$ on $\mathcal{X}\times\R^m$;\label{assumption:general1(4)}
        \item $\nabla^2_{yy} g(x,y)$ is Lipschitz continuous with constant $\overline{\overline{L}}_g$ on $\mathcal{X}\times\R^m$;\label{assumption:general1(11)}
        \item $g(x,y)$ is lower bounded by $\underline{g}$ for any $x\in\mathcal{X}$ and $y\in\mathbb{R}^m$;\label{assumption:general1(5)}
        \item $\{y:g(x,y)\leq g(x,y_0)\}$ is compact and contained in a compact set $\mathcal{Y}\subset\R^m$ for any $x\in\mathcal{X}$;\label{assumption:general1(7)}
        \item $f(x,y)$ is Lipschitz continuous with constant $L_f$ on $\mathcal{X}\times\{y:g(x,y)\leq g(x,y_0)\}$.\label{assumption:general1(10)}
    \end{enumerate}
\end{assumption}
Assumptions \ref{assumption:general1}(\ref{assumption:general1(1)})-(\ref{assumption:general1(5)}) and (\ref{assumption:general1(10)}) are standard assumptions in the BLO literature. Assumption \ref{assumption:general1}(\ref{assumption:general1(7)}) is generally assumed in much of single-level optimization literature. Then we can directly obtain the following bounds by Assumptions~\ref{assumption:general1}(\ref{assumption:general1(2)})(\ref{assumption:general1(7)}), which state the compactness of $\mathcal{X}$ and the uniform compactness of the level sets $\{y : g(x, y) \leq g(x,y_0)\}$, respectively.

\begin{proposition}\label{prop:general} Suppose Assumption~\ref{assumption:general1} holds. There exist constants $\overline{g}_0$, $L_f$, and $\overline{f}$ such that the following hold
    \begin{enumerate} 
        \item $g(x,y_0)$ is upper bounded by $\overline{g}_0$ for any $x\in\mathcal{X}$, where $y_0$ is the initialization of the lower-level problem;\label{assumption:general1(6)}
        \item $|f(x,y)|$ is upper bounded by $\overline{f}$ for any $x\in \mathcal{X}$ and $y\in\{y:g(x,y)\leq g(x,y_0)\}$\label{assumption:general1(8)}.
    \end{enumerate}
\end{proposition}

About the lower-level method $\mathcal{M}$, step size schedule $\boldsymbol{\eta}$, and the initialization $y_0 \in \mathcal{Y}$, we need the following assumption:

\begin{assumption}\label{assumption:descent}
The lower-level optimization method $\mathcal{M}$, step size schedule $\boldsymbol{\eta}$, and initialization $y_0 \in \mathcal{Y}$ satisfy the following property: there exists an integer $K_0$ such that for any $x \in \mathcal{X}$, the sequence $\{y_k\}$ generated by $\mathcal{M}$ satisfies
$$g(x,y_k)\leq g(x,y_0)$$
for any $k\geq K_0$. Therefore, the following also holds
\begin{align}\label{eq:levelset}
    g(x, y^{\text{cd}}(x)) \leq g(x, y_0),
\end{align}
where $y^{\text{cd}}(x)$ is defined in \eqref{eq:ycd}. In the following, we always choose the lower-level iteration number $K \geq K_0$.
\end{assumption}

Assumption~\ref{assumption:descent} is mild and holds for a wide range of commonly used optimization methods (e.g., gradient descent, accelerated methods, or second-order methods) with properly selected step sizes. The relation \eqref{eq:levelset} implies that the output $y^{\text{cd}}(x)$ lies in the sublevel set
$\{y:g(x,y)\leq g(x,y_0))\}$. Combining this with Proposition \ref{prop:general}(\ref{assumption:general1(8)}), which ensures that $f(x,y)\leq \overline{f}$ for all $y$ in this sublevel set, we obtain the uniform bound
\begin{align}\label{eq:boundoff}
    \phi^{\text{cd}}(x)=f(x,y^{\text{cd}}(x))\leq \overline{f},\text{ for all }x\in\mathcal{X}.
\end{align}
This allows us to resolve \textbf{the first issue}, namely the mismatch between the domain of $\phi^{\text{cd}}(x)$, which is defined only on $\mathcal{X}$, and the domain required by the convolution, which integrates over the entire space $\mathbb{R}^n$. Specifically, we extend $\phi^{\text{cd}}(x)$ outside $\mathcal{X}$ by assigning it $\overline{f}$ (or equivalently modify the value of $f$ outside $\mathcal{X}$ to be $\overline{f}$), i.e., define the following extension:
\begin{align}\label{eq:extension}
    \widetilde{\phi}^{\text{cd}}(x)=\begin{cases}
        \phi^{\text{cd}}(x),\quad&\text{if}\ x\in\mathcal{X}\\
        \overline{f},\quad&\text{if}\ x\notin\mathcal{X}.
    \end{cases}
\end{align}
Under this extension, solving $\phi^{\text{cd}}(x)$ over $\mathcal{X}$ becomes equivalent to solving extended $\widetilde{\phi}^{\text{cd}}(x)$ over the whole space $\mathbb{R}^n$, since the minimizer must lie within $\mathcal{X}$. For notational simplicity, we continue to denote the extended function $\widetilde{\phi}^{\text{cd}}(x)$ by $\phi^{\text{cd}}(x)$.

To address \textbf{the second issue}, namely the integrability of $\phi^{\mathrm{cd}}(x)$, we partition $\mathcal{X}$ into the set of bifurcation points $\widetilde{\mathcal{X}}$ defined in \eqref{def:bifurcationpointset} and its complement $\mathcal{X}\setminus\widetilde{\mathcal{X}}$, and impose mild assumptions on each set separately. Leveraging the result that the discontinuity points of $y^{\mathrm{cd}}(x)$ within both sets have measure zero, we show that the integral is well-defined. This is motivated by a classical result in real analysis: the integral of a bounded function is well-defined as a Riemann integral if and only if its set of discontinuities has Lebesgue measure zero.

For the set $\widetilde{\mathcal{X}}$, we use Assumption~\ref{assumption:nowhere_dense}, which states that the bifurcation point set $\widetilde{\mathcal{X}}$ defined in \eqref{def:bifurcationpointset} has measure zero. For the remaining set $\mathcal{X} \setminus \widetilde{\mathcal{X}}$, we make the following assumption:

\begin{assumption}\label{assumption:discontinuous_point}
    On $\mathcal{X}\setminus\widetilde{\mathcal{X}}$, the set of points where $y^{\text{cd}}(x)$ is discontinuous has measure zero.
\end{assumption}

Assumption~\ref{assumption:discontinuous_point} is very mild. We show in Section~\ref{sec:MeasureZeroDiscontinuities} that this property holds in a representative setting where the solution mapping $y^{\text{cd}}(x)$ is generated by gradient descent with a suitable step size. From a practical perspective, small changes in $x$ can be seen as perturbations to the problem parameters. It is rarely observed that such perturbations cause abrupt changes in the behavior of solution trajectories produced by many algorithms (such as gradient methods and second-order methods). Therefore, it is reasonable to assume that the discontinuity points of $y^{\text{cd}}(x)$ are of measure zero.

Under Assumption \ref{assumption:nowhere_dense} and \ref{assumption:discontinuous_point}, $y^{\text{cd}}(x)$ is discontinuous only on a set of measure zero in $\mathcal{X}$. Since $\phi^{\text{cd}}(x)=f(x,y^{\text{cd}}(x))$ and the function $f$ is continuous, it follows that $\phi^{\text{cd}}(x)$ is also discontinuous only on a measure-zero subset of $\mathcal{X}$. Note that $\mathcal{X}$ is a closed convex set, and thus its boundary $\partial\mathcal{X}$ has Lebesgue measure zero~\cite[Theorem 1]{lang1986note}. The extension of $\phi^{\text{cd}}(x)$, i.e., setting $\phi^{\text{cd}}(x) = \overline{f}$ for $x \notin \mathcal{X}$, introduces new discontinuities only on a subset of $\partial\mathcal{X}$, since the function takes different values inside and outside $\mathcal{X}$. Therefore, the set of discontinuities of the extended $\phi^{\text{cd}}(x)$ on $\mathbb{R}^n$ remains a measure-zero set. By the classical criterion for Riemann integrability, it follows that $\phi^{\text{cd}}(x)$ is Riemann integrable on $\mathcal{X}$. Therefore, the second technical issue is resolved. In conclusion, the convolution $\phi^{\text{cd}}_\xi(x)$ is well-defined.

\begin{table}[h]
\centering
\renewcommand{\arraystretch}{1.4}
\begin{tabular}{|c|c|c|c|}
\hline
 & $\smash{\widetilde{\mathcal{X}}}$ & $\mathcal{X} \setminus \widetilde{\mathcal{X}}$ & $\mathbb{R}^n \setminus \mathcal{X}$ \\
\hline
Measure & zero & nonzero & nonzero \\
\hline
Continuity of $\phi^{\text{cd}}(x)$ & unknown & almost everywhere continuous & everywhere continuous \\
\hline
\end{tabular}
\caption{Measure and continuity properties of $\phi^{\text{cd}}(x)$ across different regions.}
\label{tab:cdcontinuity}
\end{table}

Having established that the smoothed hyperfunction $\pcd_\xi(x)$ is well-defined, we next discuss its properties in the following section.

\subsection{Properties of the Smoothed Hyperfunction \texorpdfstring{\eqref{eq:smooth:hyper}}{(Equation~\ref{eq:smooth:hyper})}}\label{section:smoothingproperty}

We now present several standard properties of the smoothed hyperfunction $\pcd_\xi(x)$, defined as the convolution of the function $\pcd(x)$ with the Gaussian kernel $h_\xi$.

\begin{proposition}\label{proposition:limit}
    Suppose Assumption~\ref{assumption:general1} holds. Then $\phi^{\text{cd}}_\xi(x)$ is smooth for any $\xi$. Furthermore, if $\phi^{\text{cd}}(x)$ is continuous at point $\bar{x}$, then we have the convergence of the function value $\lim_{\xi\to 0} \phi^{\text{cd}}_\xi(\bar{x})=\phi^{\text{cd}}(\bar{x})$. If $\pcd(x)$ is differentiable at point $\bar{x}$, then we have the convergence of gradient $\lim_{\xi\to 0} \nabla_x\phi^{\text{cd}}_\xi(\bar{x})=\nabla_x\phi^{\text{cd}}(\bar{x})$, and thus $\lim_{\xi\to 0}P_{\mathcal{X}}(x,\nabla_x\phi^{\text{cd}}_\xi,\beta)=P_{\mathcal{X}}(x,\nabla_x\phi^{\text{cd}},\beta)$ which are defined as follows:
    $$P_{\mathcal{X}}(x,\nabla_x\phi^{\text{cd}}_\xi,\beta):=\frac{x-\proj_\mathcal{X}(x-\beta\nabla_x\phi^{\text{cd}}_\xi)}{\beta},$$
    $$P_{\mathcal{X}}(x,\nabla_x\phi^{\text{cd}},\beta):=\frac{x-\proj_\mathcal{X}(x-\beta\nabla_x\phi^{\text{cd}})}{\beta}.$$
\end{proposition}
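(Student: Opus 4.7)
The plan is to establish the four claims in order, leveraging the boundedness of $\pcd$ (guaranteed by Proposition~\ref{prop:general}(\ref{assumption:general1(8)}) and the extension \eqref{eq:extension}, so that $|\pcd(x)|\le\overline{f}$ on all of $\mathbb{R}^n$) together with the fact that $h_\xi$ is a Schwartz function whose derivatives decay exponentially.

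First, for smoothness of $\pcd_\xi$, I would write the convolution in the form $\pcd_\xi(x)=\int h_\xi(x-z)\pcd(z)\,dz$ and argue that every partial derivative of any order of the integrand with respect to $x$ is dominated (locally uniformly in $x$) by an integrable function of $z$. Since $\pcd$ is bounded and $|D^\alpha h_\xi(x-z)|$ equals $h_\xi(x-z)$ times a polynomial in $(x-z)/\xi$ (which is Gaussian-integrable), dominated convergence justifies differentiating under the integral sign arbitrarily many times, so $\pcd_\xi\in C^\infty(\mathbb{R}^n)$.

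For the pointwise convergence at a continuity point $\bar x$, I would use $\int h_\xi(w)\,dw=1$ to write
\[
\pcd_\xi(\bar x)-\pcd(\bar x)=\int_{\mathbb{R}^n} h_\xi(w)\bigl[\pcd(\bar x-w)-\pcd(\bar x)\bigr]dw,
\]
then split the integral into $\|w\|<\delta$ and $\|w\|\ge\delta$. On the inner region, continuity at $\bar x$ makes the integrand less than $\varepsilon$ in absolute value, contributing at most $\varepsilon$. On the outer region, $|\pcd(\bar x-w)-\pcd(\bar x)|\le 2\overline f$, and the Gaussian tail $\int_{\|w\|\ge\delta}h_\xi(w)\,dw$ tends to $0$ as $\xi\to0$ for fixed $\delta$. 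Sending $\xi\to 0$ then $\varepsilon\to0$ gives the claim.

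The gradient convergence is the main technical step. Using $\nabla h_\xi(w)=-\frac{w}{\xi^2}h_\xi(w)$ and the moment identities $\int w\,h_\xi(w)\,dw=0$ and $\int ww^\top h_\xi(w)\,dw=\xi^2 I$, I would substitute $w=\bar x-z$ in \eqref{eq:computeofgradieeent} and rewrite
\[
\nabla_x\pcd_\xi(\bar x)-\nabla_x\pcd(\bar x)=-\int_{\mathbb{R}^n}\frac{w}{\xi^2}\,h_\xi(w)\,R(w)\,dw,\qquad R(w):=\pcd(\bar x-w)-\pcd(\bar x)+\nabla_x\pcd(\bar x)^\top w.
\]
Differentiability of $\pcd$ at $\bar x$ gives, for any $\varepsilon>0$, a $\delta>0$ with $|R(w)|\le\varepsilon\|w\|$ when $\|w\|<\delta$; globally, $|R(w)|\le 2\overline f+\|\nabla_x\pcd(\bar x)\|\,\|w\|$. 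Splitting into $\|w\|<\delta$ and $\|w\|\ge\delta$, the inner piece is bounded by $\varepsilon\int\|w\|^2\xi^{-2}h_\xi(w)\,dw=n\varepsilon$; the outer piece is controlled by $\xi^{-2}\int_{\|w\|\ge\delta}\|w\|(2\overline f+\|\nabla_x\pcd(\bar x)\|\,\|w\|)h_\xi(w)\,dw$, which tends to $0$ as $\xi\to0$ because of the exponential tail decay of $h_\xi$ (the $\xi^{-2}$ prefactor is dominated by $e^{-\delta^2/(2\xi^2)}$). The hard part is checking that this tail estimate actually beats the $1/\xi^2$ blow-up, which reduces to the standard Gaussian tail bound after rescaling $w=\xi u$.

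Finally, for the projected-gradient mapping, the projection $\proj_\mathcal{X}$ onto the closed convex set $\mathcal{X}$ is $1$-Lipschitz, so for fixed $x$ and $\beta>0$ the map $g\mapsto P_{\mathcal{X}}(x,g,\beta)=(x-\proj_\mathcal{X}(x-\beta g))/\beta$ is Lipschitz continuous in $g$. Combining this with $\nabla_x\pcd_\xi(\bar x)\to\nabla_x\pcd(\bar x)$ from the previous step yields $P_{\mathcal{X}}(\bar x,\nabla_x\pcd_\xi,\beta)\to P_{\mathcal{X}}(\bar x,\nabla_x\pcd,\beta)$, completing the proof.
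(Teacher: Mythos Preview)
Your proposal is correct and follows essentially the same approach as the paper: dominated convergence for smoothness, a near--far split exploiting boundedness $|\pcd|\le\overline f$ and Gaussian tail decay for pointwise convergence, and the Gaussian moment identities $\int w\,h_\xi=0$, $\int ww^\top h_\xi=\xi^2 I$ combined with another near--far split for the gradient. The paper omits an explicit argument for the projected-gradient limit (treating it as immediate), so your remark that $\proj_{\mathcal X}$ is $1$-Lipschitz and hence $g\mapsto P_{\mathcal X}(x,g,\beta)$ is continuous actually fills in that small gap.
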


\begin{proof}
    See Appendix~\ref{proof:proposition:limit}.
\end{proof}

Proposition~\ref{proposition:limit} shows that Gaussian smoothing gives $\pcd_\xi(x)$ good regularity: it is smooth for all $\xi>0$, and it recovers both the function value and (proximal) gradient of $\pcd$ in the limit as $\xi$ approaches $0$, at any point where $\pcd$ is continuous or differentiable. This result ensures that the projected gradient-based methods applied to the smoothed problem can approximate the behavior of the original hyperfunction. The proof follows the classical theory of mollification in PDE~\citep{evans2022partial}, where a discontinuous or nonsmooth function is approximated by the convolution with a smooth kernel. In classical mollifier theory, a compactly supported smooth bump function (e.g., supported on the unit ball) is often used. Pointwise and gradient convergence are established using local continuity or differentiability inside the support, while the contribution from outside the support is exactly zero. In our case, the smoothing kernel is the Gaussian $h_\xi$, which is not compactly supported. Nevertheless, the key idea remains the same: most of the kernel mass is concentrated around the origin, and the contribution from the tail decays exponentially fast. This exponential decay plays the same role as compact support in standard mollifier arguments. Thus, the structure of the proof is essentially identical to the standard bump function case.

We next establish that $\pcd_\xi(x)$ has bounded gradient and is Lipschitz smooth. These properties are crucial for the convergence analysis in Theorem~\ref{thm:biased_sgd_conv}.

\begin{proposition}\label{proposition:boundedgradient}
    Suppose Assumption~\ref{assumption:general1} holds. We have the following bound on the gradient of $\phi^{\text{cd}}_\xi(x)$:
    $$\|\nabla_x\phi^{\text{cd}}_\xi(x)\|\leq\sqrt{\frac{2}{\pi}}\times \frac{\overline{f}}{\xi}.$$
\end{proposition}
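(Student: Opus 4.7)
The plan is to rewrite the gradient $\nabla_x \phi^{\text{cd}}_\xi(x)$ as a Gaussian expectation via integration by parts, and then to extract a dimension-free bound by taking a supremum over unit directions rather than controlling each coordinate separately.

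First, using the explicit form of the Gaussian kernel, I would observe that $h_\xi$ is the density of $N(0,\xi^2 I_n)$ and apply the change of variables $u = z-x$ in \eqref{eq:computeofgradieeent}. Combined with the identity $\nabla h_\xi(u) = -\frac{u}{\xi^2} h_\xi(u)$, this yields the Stein-type representation
\[
\nabla_x \phi^{\text{cd}}_\xi(x) \;=\; \frac{1}{\xi}\,\mathbb{E}_{U\sim \mathcal{N}(0,I_n)}\!\left[U\,\phi^{\text{cd}}(x+\xi U)\right].
\]
The differentiation under the integral is justified by dominated convergence: the extension \eqref{eq:extension} together with $|\phi^{\text{cd}}(x)|\le \overline{f}$ from \eqref{eq:boundoff} gives a uniform envelope, and $\|u\|h_\xi(u)$ is integrable.

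Next, I would use the variational characterization $\|w\| = \max_{\|v\|=1} \langle v, w\rangle$ to pull the norm inside the expectation:
\[
\|\nabla_x \phi^{\text{cd}}_\xi(x)\| \;=\; \max_{\|v\|=1}\;\frac{1}{\xi}\,\mathbb{E}\!\left[\langle v, U\rangle\,\phi^{\text{cd}}(x+\xi U)\right]
\;\le\; \max_{\|v\|=1}\;\frac{\overline{f}}{\xi}\,\mathbb{E}\!\left|\langle v, U\rangle\right|.
\]
For any unit vector $v$ and $U\sim\mathcal{N}(0,I_n)$, the marginal $\langle v,U\rangle$ is a standard Gaussian, hence $\mathbb{E}|\langle v,U\rangle| = \sqrt{2/\pi}$ independently of the dimension $n$. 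Substituting gives exactly the claimed bound $\|\nabla_x\phi^{\text{cd}}_\xi(x)\|\le \sqrt{2/\pi}\cdot \overline{f}/\xi$.

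The only subtle step is the dimension-free step. A naive bound of the form $|\partial_i\phi^{\text{cd}}_\xi(x)|\le \sqrt{2/\pi}\,\overline{f}/\xi$ summed over coordinates would produce an unwanted $\sqrt{n}$ factor; the key is to use the duality $\|w\|=\max_{\|v\|=1}\langle v,w\rangle$ so that only the standard one-dimensional absolute moment of a Gaussian marginal enters. Once this dual formulation is set up, the remainder is just the boundedness of $\phi^{\text{cd}}$ guaranteed by Proposition~\ref{prop:general}(\ref{assumption:general1(8)}) and the extension \eqref{eq:extension}.
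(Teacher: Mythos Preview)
Your proof is correct and follows essentially the same approach as the paper: both express the gradient via a Gaussian expectation, take a supremum over unit directions to avoid a $\sqrt{n}$ factor, bound $|\phi^{\text{cd}}|$ by $\overline{f}$, and use that $\langle v,U\rangle$ is a one-dimensional standard Gaussian with absolute first moment $\sqrt{2/\pi}$. The only cosmetic differences are the choice of substitution ($u=z-x$ versus $u=x-z$) and whether the Gaussian is normalized to $\mathcal{N}(0,I_n)$ or kept as $\mathcal{N}(0,\xi^2 I_n)$.
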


\begin{proof}
    See Appendix~\ref{proof:proposition:boundedgradient}.
\end{proof}

\begin{proposition}\label{proposition:lipschitz}
    Suppose Assumption~\ref{assumption:general1} holds. $\phi^{\text{cd}}_\xi(x)$ is Lipschitz smooth with constant $\overline{L}_{\phi^{\text{cd}}_{\xi}}=\overline{f}/\xi^2$.
\end{proposition}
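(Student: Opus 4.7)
The plan is to prove Proposition~\ref{proposition:lipschitz} by showing that the operator norm of the Hessian $\nabla^2_x\phi^{\text{cd}}_\xi(x)$ is bounded above by $\overline{f}/\xi^2$ uniformly in $x$, from which Lipschitz smoothness of $\nabla_x\phi^{\text{cd}}_\xi$ with this constant follows by the mean-value inequality $\|\nabla_x\phi^{\text{cd}}_\xi(x_1)-\nabla_x\phi^{\text{cd}}_\xi(x_2)\|\leq \sup_{x_t}\|\nabla^2_x\phi^{\text{cd}}_\xi(x_t)\|_{\mathrm{op}}\,\|x_1-x_2\|$ along the segment between $x_1$ and $x_2$.

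First, I would justify differentiating twice under the integral sign in \eqref{eq:smooth:hyper}. Since $|\phi^{\text{cd}}|\le \overline{f}$ by \eqref{eq:boundoff}, while the Gaussian kernel and its derivatives of all orders decay rapidly in $\|x-z\|$, dominated convergence lets me bring both spatial derivatives inside the integral, giving
\[
\nabla^2_x\phi^{\text{cd}}_\xi(x)=\int_{\R^n}\nabla^2_x h_\xi(x-z)\,\phi^{\text{cd}}(z)\,dz.
\]
Direct computation from $\nabla_x h_\xi(u)=-\xi^{-2}u\,h_\xi(u)$ yields the explicit formula $\nabla^2_x h_\xi(u)=\xi^{-4}h_\xi(u)\bigl(uu^\top-\xi^{2}I\bigr)$. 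The main quantitative step is then to bound the operator norm of the resulting expression using $|\phi^{\text{cd}}|\le \overline{f}$. I would exploit the zero-moment identity $\int_{\R^n}(uu^\top-\xi^2 I)h_\xi(u)\,du=0$ by recentering $\phi^{\text{cd}}(z)$ around a reference constant (e.g.\ $\phi^{\text{cd}}_\xi(x)$); this eliminates the part of $\phi^{\text{cd}}$ that contributes nothing to the Hessian and would otherwise inflate the bound. For any unit vector $v$, the recentered quadratic form reduces the problem to the one-dimensional Gaussian integral $\int |(v^\top u)^2-\xi^2|\,h_\xi(u)\,du$, using that $v^\top u\sim\mathcal{N}(0,\xi^2)$ when $u\sim\mathcal{N}(0,\xi^2 I)$.

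The hard part will be matching the exact constant $\overline{f}/\xi^{2}$ in the statement. A naive triangle-inequality bound $\|uu^\top-\xi^2 I\|_{\mathrm{op}}\le \|u\|^2+\xi^2$ followed by $\mathbb{E}\|u\|^2=n\xi^2$ produces a bound of order $(n{+}1)\overline{f}/\xi^2$, which is too loose; tight control requires both the recentering trick and the observation that the relevant spectral direction of $uu^\top-\xi^2 I$ collapses the integrand to a one-dimensional Gaussian moment that is independent of $n$. I would expect the final computation to mirror the reduction used in Proposition~\ref{proposition:boundedgradient} (where the analogous 1D moment $\mathbb{E}[|W|]=\xi\sqrt{2/\pi}$ appears), replaced here by an explicit evaluation of $\mathbb{E}[|W^2-\xi^2|]$ with $W\sim\mathcal{N}(0,\xi^2)$, which is precisely the quantity controlling the Hessian and yields the stated factor $\xi^{-2}$ after cancellation.
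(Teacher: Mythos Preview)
Your approach is essentially the paper's: bound the operator norm of the Hessian by differentiating under the integral, compute the quadratic form $\eta^\top\nabla^2_x h_\xi(u)\,\eta = \xi^{-4}h_\xi(u)\bigl((\eta^\top u)^2-\xi^2\bigr)$ for a unit vector $\eta$, and reduce the resulting integral to the one-dimensional Gaussian moment $\xi^{-2}\,\E_{V\sim\mathcal N(0,1)}[\,|V^2-1|\,]\approx 0.968/\xi^2<1/\xi^2$. The paper carries this out by rotating $\eta$ to $e_1$, substituting $w=z/\xi$, and citing the numerical value $\approx 0.968$.

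The one difference is that the paper does \emph{not} recenter $\phi^{\text{cd}}$; it simply bounds $|\phi^{\text{cd}}(z)|\le\overline{f}$ and pulls this factor out before integrating $|\eta^\top\nabla^2 h_\xi\,\eta|$. Your proposed recentering around $c=\phi^{\text{cd}}_\xi(x)$ is unnecessary and does not help: after recentering you still need a uniform bound on $|\phi^{\text{cd}}(z)-c|$, and since the only available information is $|\phi^{\text{cd}}|\le\overline f$, the choice $c=0$ is optimal (any other $c$ gives at best $\overline f+|c|$, worsening the constant). The $n$-independence you correctly identify comes entirely from evaluating the scalar $\eta^\top(uu^\top-\xi^2 I)\eta=(\eta^\top u)^2-\xi^2$ \emph{before} integrating, not from recentering. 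Drop the recentering step and your argument coincides with the paper's.
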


\begin{proof}
    See Appendix~\ref{proof:proposition:lipschitz}.
\end{proof}

Given the Lipschitz smoothness of $\phi^{\text{cd}}_\xi(x)$ and the explicit gradient expression in \eqref{eq:computeofgradieeent}, we can now employ gradient-based methods to optimize it.

\section{Algorithm and Convergence Results}\label{section:Algorithm}

We now turn to the algorithmic component of this work. As emphasized in Section~\ref{sec:Correspondence_Driven_Reformulation_and_Its_Smoothing}, our goal is to minimize the smoothed hyperfunction $\pcd_\xi(x)$ for a fixed smoothing parameter $\xi>0$. We consider the case that the follower, to obtain higher-quality solutions, adopts the cubic-regularized Newton method~\citep{nesterov2006cubic} as the lower-level method $\mathcal{M}$. Other second-order approaches, such as trust-region methods~\citep{conn2000trust,jiang2023beyond} or the homogeneous second-order methods~\citep{he2025homogeneous,zhang2025homogeneous}, can be analyzed in a similar manner.

Although, by the extension in \eqref{eq:extension}, the minimum of $\pcd$ is attained within $\mathcal{X}$, and Proposition~\ref{proposition:limit} ensures that, for small $\xi$, the minimum of $\pcd_\xi$ is also attained in $\mathcal{X}$, applying an unconstrained first-order method directly to $\pcd_\xi$ may still be problematic. Indeed, the extension renders $\pcd$ constant outside $\mathcal{X}$, making its gradient zero there, and Proposition~\ref{proposition:limit} further implies that, when $\xi$ is small, the gradient of $\pcd_\xi$ is also very small outside $\mathcal{X}$. As a result, an unconstrained method initialized outside $\mathcal{X}$ may terminate prematurely. For this reason, we consider the following constrained problem:
\begin{align}
    \min_{x\in\mathcal{X}}\ \pcd_\xi(x):=\int_{\mathbb{R}^n}h_\xi(x-z)\phi^{\text{cd}}(z)dz
\end{align}
where $\phi^{\text{cd}}$ is from \eqref{eq:generalreformulation_single} and $h_\xi$ is the parametrized Gauss kernel defined in \eqref{eq:kernal}.
\subsection{Sketch of the Proposed Algorithm}
Since $\pcd_\xi(x)$ is smooth (see Proposition~\ref{proposition:lipschitz}), we adopt a standard projected gradient-based method (see Algorithm \ref{alg:main}) to solve the problem. In principle, the exact gradient of $\pcd_\xi(x)$ takes the form
\begin{align}\label{eq:exactgradient}       
    \nabla_x\phi^{\text{cd}}_\xi(x)&=\int_{\mathbb{R}^n}\nabla_xh_\xi(x-z)\phi^{\text{cd}}(z)dz\\
    &=\int_{\mathbb{R}^n}\nabla_xh_\xi(x-z)f(z,y^{\text{cd}}(z))dz\nonumber\\
    &=\int_{\mathbb{R}^n}-\frac{x-z}{\xi^2}h_\xi(x-z)f(z,y^{\text{cd}}(z))dz\nonumber\\
    &=\frac{1}{\xi}\E_{u\sim\mathcal{N}(0,I_n)}[uf(x+\xi u,y^{\text{cd}}(x+\xi u))].\nonumber
\end{align}
Note that, as established in \eqref{eq:extension}, we have extended $\pcd$ outside $\mathcal{X}$, which is equivalently interpreted as redefining $f$ to take the constant value $\overline{f}$ outside $\mathcal{X}$, where $\overline{f}$ is from Proposition~\ref{prop:general}(\ref{assumption:general1(8)}). Thus, in the following, we work with $f$ under this redefinition.

In practice, we estimate this gradient via Monte Carlo sampling, which leads to a biased stochastic gradient descent (\texttt{SGD}) algorithm. Specifically, we draw i.i.d. sample points $u^{(1)},...,u^{(N)}\sim\mathcal{N}(0,I_n)$, and for each such sample $u^{(i)}$, if the perturbed point $x+\xi u^{(i)}\notin\mathcal{X}$, we directly assign the value $f(x+\xi u^{(i)},\hat{y}(x+\xi u^{(i)}))=\overline{f}$; otherwise, when $x+\xi u^{(i)}\in\mathcal{X}$, we run a fixed number $K$ of iterations of the lower-level algorithm $\mathcal{M}$ at $x+\xi u^{(i)}$ to obtain an approximate solution $\hat{y}(x+\xi u^{(i)})$, where $K$ is independent of $x$. We then form the following average
\begin{align}\label{eq:estimator}
    \hat{\nabla}^K_x\phi^{\text{cd}}_{\xi }(x):=\frac{1}{N\xi}\sum_{i=1}^Nu^{(i)}f(x+\xi u^{(i)},\hat{y}(x+\xi u^{(i)}))
\end{align}
as a Monte Carlo gradient estimator of the following inexact gradient
\begin{align}\label{eq:inexactgradient}
\nabla^K_x\phi^{\text{cd}}_\xi(x):=\int_{\mathbb{R}^n}\nabla_xh_\xi(x-z)f(z,\hat{y}(z))dz.
\end{align}
Since each $\hat{y}(x+\xi u^{(i)})$ only approximates the true correspondence‐driven point $y^{\text{cd}}(x+\xi u^{(i)})$, this estimator carries a bias. 

This smoothing-and-sampling technique is closely related to the zeroth-order methods studied in \cite{nesterov2017random}, which focus on optimizing continuous or smooth functions via randomized function evaluations. In contrast, our formulation applies this idea to a discontinuous function since $\pcd$ may be discontinuous (see Example~\ref{example:discontiunous}). 

\begin{algorithm*}[htbp]
\caption{\ouralgo{}\ (Smooth Correspondence-driven Nonconvex lower-level Bilevel Optimization)}
\label{alg:main}
\KwIn{initial point $\bar{x}$; total iterations $T$; step sizes $\{\beta_t\}$; smoothing parameter $\xi $; number of samples $\{N_t\}$; number of inner steps $\{K_t\}$;}
\For{$t \gets 0$ \KwTo $T-1$}{
    Sample $u^{(1)}, \dots, u^{(N_t)} \sim \mathcal{N}(0, I_n)$\;
    \For{$i \gets 1$ \KwTo $N_t$}{
    $\tilde{x}^{(i)} \gets x_t + \xi \, u^{(i)}$\;
    \uIf{$\tilde{x}^{(i)} \in \mathcal{X}$}{
            Run inner algorithm $\mathcal{M}$ on $g(\tilde{x}^{(i)},\cdot)$ for $K_t$ steps, initialized at $y_0$, with step size $\boldsymbol{\eta}$ to obtain $\hat{y}^{(i)}$\;
            $f^{(i)} \gets f(\tilde{x}^{(i)},\, \hat{y}^{(i)})$\;
        }
        \Else{
            $f^{(i)} \gets \overline{f}$\;
        }
}
    Compute the gradient estimator $\hat{\nabla}_x^{K_t} \pcd_{\xi }(x_t)$ given by \eqref{eq:estimator} and update $x_{t+1}$:\ 
    $\displaystyle \hat{\nabla}_x^{K_t} \pcd_{\xi }(x_t)
    \gets \frac{1}{N_t\,\xi } \sum_{i=1}^{N_t} u^{(i)} f^{(i)}$\;
    $x_{t+1} \gets \proj_{\mathcal{X}}(x_t - \beta_t\, \hat{\nabla}_x^{K_t} \pcd_{\xi }(x_t))$\ 
}
\end{algorithm*}

While the proposed algorithm provides a practical way to estimate the gradient of the smoothed objective $\phi^{\text{cd}}_\xi(x)$ via Monte Carlo sampling, the resulting estimator \eqref{eq:estimator} approximates an inexact gradient \eqref{eq:inexactgradient} that is itself biased relative to the true gradient. This inexactness arises from using approximate lower-level solutions $\hat{y}(x)$ instead of $y^{\text{cd}}(x)$. The resulting discrepancy between the exact and inexact gradients can be quantified by the following expression:

\begin{align}\label{eq:splitofintegral}
\nabla_x\phi^{\text{cd}}_\xi(x)-\nabla^K_x\phi^{\text{cd}}_\xi(x)=\int_{\R^n}\nabla_x h_\xi(x-z)[f(z,y^{\text{cd}}(z))-f(z,\hat{y}(z))]dz.
\end{align}

As stated in Section~\ref{sec:genericassumption}, a central difficulty arises when $x$ lies near the bifurcation point set $\widetilde{\mathcal{X}}$ defined in \eqref{def:bifurcationpointset}: with a fixed number of lower-level iterations $K$, it is generally impossible to ensure that the approximate solution $\hat{y}(x)$ is close to the correspondence-driven solution $y^{\text{cd}}(x)$. Consequently, for any fixed $K$, the integrand in equation~\eqref{eq:splitofintegral} cannot be uniformly small in absolute value across all $z$. To proceed further, we define the $\delta$-neighborhood of the bifurcation point set as
\begin{align}\label{eq:nbhdofbifurcationset}
    \widetilde{\mathcal{X}}_{\delta} := \{x \in \mathcal{X} : \mathrm{dist}(x, \widetilde{\mathcal{X}}) \leq \delta\}.
\end{align}
We then split the integral in \eqref{eq:splitofintegral} as follows:
\begin{align}\label{eq:splitofintegral2}
    \int_{\widetilde{\mathcal{X}}_\delta} \nabla_x h_\xi(x-z)[f(z, y^{\text{cd}}(z)) - f(z, \hat{y}(z))]\,dz 
    + \int_{\mathbb{R}^n \setminus \widetilde{\mathcal{X}}_\delta} \nabla_x h_\xi(x-z)[f(z, y^{\text{cd}}(z)) - f(z, \hat{y}(z))]\,dz.
\end{align}

The first term in \eqref{eq:splitofintegral2} is particularly difficult to estimate, since within $\widetilde{\mathcal{X}}_\delta$ we cannot reliably control the gap $\|\hat{y}(z) - y^{\text{cd}}(z)\|$. This is because the Hessian of the lower-level objective at $y^{\text{cd}}(z)$ may be degenerate, so even though the algorithm returns a point $\hat{y}(z)$ with a small gradient norm, it may still be far from the true solution. Our approach to addressing this challenge relies on the absolute continuity of the integral: as long as the measure of $\widetilde{\mathcal{X}}_\delta$ is sufficiently small and the integrand is bounded, the effect of the first term in \eqref{eq:splitofintegral2} can be made arbitrarily small.
Hence, it is not necessary to compute highly accurate solutions on $\widetilde{\mathcal{X}}_\delta$. By Proposition~\ref{assumption:general1}(\ref{assumption:general1(8)}), together with Assumption~\ref{assumption:descent}, it follows immediately that both $|f(z, y^{\text{cd}}(z))|$ and $|f(z, \hat{y}(z))|$ are bounded. Moreover, by the design of $h_\xi$, the integrand in the first term of \eqref{eq:splitofintegral2} is itself bounded. Therefore, the core remaining challenge is to estimate the measure of $\widetilde{\mathcal{X}}_\delta$. In Section~\ref{sec:measureofbifurcationpoints}, we will provide an upper bound on the measure of $\widetilde{\mathcal{X}}_\delta$.

To estimate the second term in \eqref{eq:splitofintegral2}, we will analyze how many lower-level iterations $K$ are required to ensure that $\|y^{\text{cd}}(x) - \hat{y}(x)\| \leq \rho$ for all $x \in \mathbb{R}^n \setminus \widetilde{\mathcal{X}}_\delta$ in Section~\ref{sec:Outer-loop Iteration Complexity Analysis}. 

With these two results, we can then view Algorithm~\ref{alg:main} as a biased projected \texttt{SGD} method, and will subsequently establish an upper-level problem oracle complexity bound for \ouralgo{}.

Finally, to obtain the lower-level problem oracle complexity of \ouralgo{}, we introduce the notion of fold bifurcation Section~\ref{sec:foldbifurcation} to endow the bifurcation points with additional structure. This also leads to a clearer geometric picture of the bifurcation set.

\subsection{Geometric Analysis near Bifurcation Points}\label{sec:measureofbifurcationpoints}

We now analyze the measure of $\widetilde{\mathcal{X}}_\delta$.  Note that under Assumption~\ref{assumption:nowhere_dense}, we have already assumed that the bifurcation point set $\widetilde{\mathcal{X}}$ defined in \eqref{def:bifurcationpointset} has Lebesgue measure zero. Moreover, since $\widetilde{\mathcal{X}}$ is closed by Remark~\ref{remark:close} and bounded as a subset of $\mathcal{X}$ by 
Assumption~\ref{assumption:general1}(\ref{assumption:general1(2)}), we know that $\widetilde{\mathcal{X}}$ is also compact. It is provable that the $\delta$-neighborhood of a compact, measure-zero set has vanishing measure as $\delta$ tends to $0$. To obtain more precise estimates on the rate at which this measure vanishes, we introduce the following definitions and assumption from geometric measure theory.

\begin{definition}[Covering number]
    Suppose $\mathcal{X}$ is a compact set of $\mathbb{R}^n$, the covering number $N(\mathcal{X},r)$ is the number of balls of radius $r$ required to cover $\mathcal{X}$.
\end{definition}

\begin{definition}[Minkowski dimension]
    Suppose $\mathcal{X}$ is a compact set of $\mathbb{R}^n$ with covering number $N(\mathcal{X},r)$. Then the upper and lower Minkowski dimension are defined as follows respectively:
    $$\overline{\mathrm{dim}}_{\mathrm{box}}(\mathcal{X}):=\limsup_{r\to 0}\frac{\log N(\mathcal{X},r)}{-\log(r)},$$
    $$\underline{\mathrm{dim}}_{\mathrm{box}}(\mathcal{X}):=\liminf_{r\to 0}\frac{\log N(\mathcal{X},r)}{-\log(r)}.$$
    If the limit exists (i.e., the limsup equals the liminf), we call it the Minkowski dimension of $\mathcal{X}$ and write
    $$\mathrm{dim}_{\mathrm{box}}(\mathcal{X}):=\lim_{r\to 0}\frac{\log N(\mathcal{X},r)}{-\log(r)}.$$
\end{definition}

Intuitively, the Minkowski dimension reflects how the number of small balls needed to cover the set grows as the ball radius shrinks.

\begin{assumption}\label{assumption:Minkowski_dimension}
    The upper Minkowski dimension of the bifurcation point set $\widetilde{\mathcal{X}}$ defined in \eqref{def:bifurcationpointset} is $d:=\overline{\mathrm{dim}}_{\mathrm{box}}(\widetilde{\mathcal{X}})<n$.
\end{assumption}

This assumption is mild: it is satisfied by all semi-algebraic functions $g(x,y)$ under Assumption~\ref{assumption:nowhere_dense}, as guaranteed by the following theorem .

\begin{theorem}\label{theorem:poly_Minkowski}
    Suppose $g(x,y)$ is a semi-algebraic function for $(x,y)$, and Assumption~\ref{assumption:nowhere_dense} holds, then the upper Minkowski dimension $d$ of $\widetilde{\mathcal{X}}$ is less than or equal to $n-1$.
\end{theorem}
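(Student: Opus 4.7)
The plan is to exploit the semi-algebraic structure of the data and translate the measure-zero conclusion of Assumption~\ref{assumption:nowhere_dense} into a sharp dimension bound, by appealing to the cell decomposition theorem for semi-algebraic sets.

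First, I would verify that the bifurcation set $\widetilde{\mathcal{X}}$ is itself semi-algebraic. By its definition,
\[
\widetilde{\mathcal{X}} = \pi_x\bigl(\{(x,y)\in\mathcal{X}\times\mathbb{R}^m : \nabla_y g(x,y)=0,\ \det(\nabla^2_{yy} g(x,y))=0\}\bigr),
\]
where $\pi_x$ denotes projection onto the $x$-coordinates. Since $g$ is both $C^2$ (by Assumption~\ref{assumption:general1}(\ref{assumption:general1(1)})) and semi-algebraic, its partial derivatives $\partial_{y_i} g$ and $\partial^2_{y_iy_j} g$ are semi-algebraic functions, and $\det(\nabla^2_{yy} g)$ is a polynomial in these entries. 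Hence the set inside the projection is semi-algebraic, and by the Tarski-Seidenberg theorem, $\widetilde{\mathcal{X}}$ is semi-algebraic. Note that $\widetilde{\mathcal{X}}$ is bounded, since it is contained in the compact set $\mathcal{X}$ by Assumption~\ref{assumption:general1}(\ref{assumption:general1(2)}).

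Next, I would invoke the cell decomposition theorem: any bounded semi-algebraic subset of $\mathbb{R}^n$ admits a finite decomposition into disjoint semi-algebraic $C^1$ cells, each diffeomorphic to an open cube of some dimension. The algebraic dimension of the set is the maximum such cube dimension. A classical consequence (see, e.g., Bochnak--Coste--Roy) is that, for a bounded semi-algebraic set, the upper Minkowski dimension equals the algebraic dimension, since each cell of dimension $k$ has Minkowski dimension $k$ and the Minkowski dimension of a finite union is the maximum of the Minkowski dimensions of its members.

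Finally, I would conclude using Assumption~\ref{assumption:nowhere_dense}, which together with Theorem~\ref{theorem:nowhere_dense} ensures that $\widetilde{\mathcal{X}}$ has Lebesgue measure zero in $\mathbb{R}^n$. If the decomposition contained a cell of full dimension $n$, that cell would be an open subset of $\mathbb{R}^n$ and hence would have positive Lebesgue measure, a contradiction. Therefore the maximum cell dimension is at most $n-1$, yielding $\overline{\mathrm{dim}}_{\mathrm{box}}(\widetilde{\mathcal{X}}) = \dim(\widetilde{\mathcal{X}}) \leq n-1$. The main obstacle will be correctly invoking the equivalence between the upper Minkowski dimension and the algebraic dimension for bounded semi-algebraic sets, and unambiguously establishing the semi-algebraicity of the partial derivatives of $g$; once these foundational points are settled, the remainder reduces to a routine application of Tarski-Seidenberg and cell decomposition.
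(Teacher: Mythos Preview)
Your proposal is correct and follows essentially the same route as the paper: show $\widetilde{\mathcal{X}}$ is semi-algebraic via Tarski--Seidenberg, decompose it into finitely many smooth cells (the paper cites Coste's stratification result, you cite cell decomposition --- these are the same device here), use the measure-zero hypothesis to rule out $n$-dimensional cells, and conclude by finite stability of the Minkowski dimension. One small remark: you do not need to invoke Theorem~\ref{theorem:nowhere_dense} at the end, since Assumption~\ref{assumption:nowhere_dense} by itself is exactly the statement that $\widetilde{\mathcal{X}}$ has Lebesgue measure zero.
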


\begin{proof}
    See Appendix~\ref{proof:theorem:poly_Minkowski}.
\end{proof}

The core of the proof of Theorem~\ref{theorem:poly_Minkowski} is that the bifurcation set $\widetilde{\mathcal{X}}$ corresponding to a semi-algebraic function $g(x,y)$ is itself a semi-algebraic set, and thus admits a stratified manifold structure. Since $\widetilde{\mathcal{X}}$ has Lebesgue measure zero, all manifolds in the stratification must have dimension at most $n-1$, which implies that the Minkowski dimension of $\widetilde{\mathcal{X}}$ is at most $n-1$. A concrete example of such a stratified manifold structure for $\widetilde{\mathcal{X}}$ is illustrated in Figure~\ref{fig:bifurcation points}.

\begin{figure}
        \centering
        \includegraphics[width=0.4\linewidth]{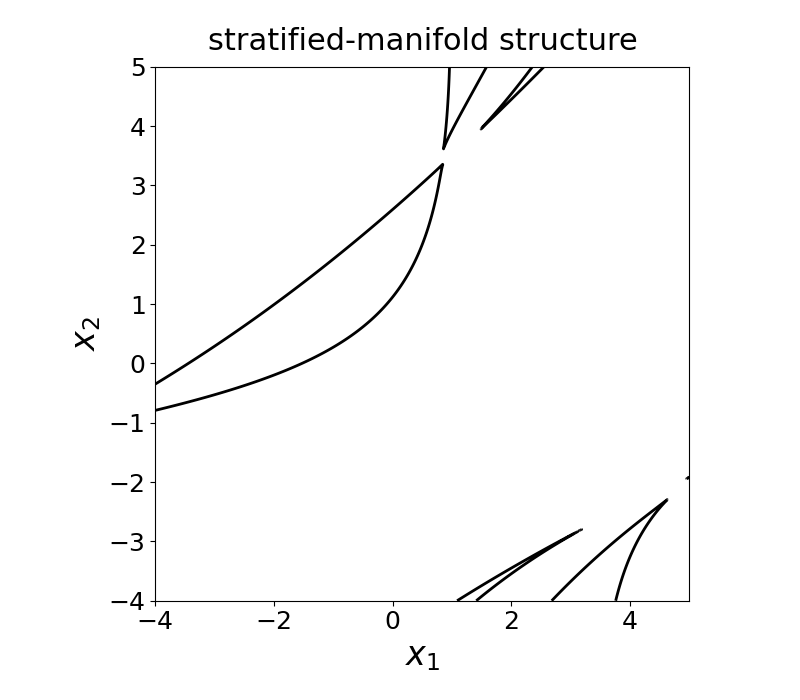}
        \caption{Visualization of the bifurcation point set $\widetilde{\mathcal{X}}$ of the function $g(x,y)=y^4+(x_1^2 - 5x_1x_2 + 2x_2^2 - 7x_1 + 8x_2 - 30)y^3
        +(x_1^2 - 3x_1x_2 + 4x_2^2 - 5x_1 + 2x_2 - 40)y^2
        +(x_1^2 - 5x_1x_2 + 2x_2^2 - 7x_1 + 8x_2 - 30)y.$ over the domain $[-4,5]^2$. The set exhibits a finite stratified manifold structure.}
        \label{fig:bifurcation points}
    \end{figure}

With the Minkowski dimension established, we can now estimate the measure of the $\delta$-neighborhood of the bifurcation set $\widetilde{\mathcal{X}}_\delta$ defined in \eqref{eq:nbhdofbifurcationset}. Let $\lambda(\delta)$ denote the Lebesgue measure of $\widetilde{\mathcal{X}}_\delta$. The goal is to bound $\lambda(\delta)$ as a function of $\delta$, using the Minkowski dimension provided by Assumption~\ref{assumption:Minkowski_dimension}.

\begin{lemma}\label{lem:measureofneighborhood}
    Suppose Assumption~\ref{assumption:Minkowski_dimension} holds. Then there exists a constant $C$ such that the measure of $\widetilde{\mathcal{X}}_\delta$ satisfies
    \begin{align}\label{equation:volume_of_balls}
        \lambda(\delta)\leq C\delta^{(n-d)/2}.
    \end{align}
\end{lemma}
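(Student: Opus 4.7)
The plan is to convert the covering–number bound implied by the upper Minkowski dimension into a volume bound on the $\delta$-tube, via the standard inclusion of $\widetilde{\mathcal{X}}_\delta$ in a union of enlarged balls of the covering.

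First, I would unpack Assumption~\ref{assumption:Minkowski_dimension}. Since $\overline{\mathrm{dim}}_{\mathrm{box}}(\widetilde{\mathcal{X}}) = d$ means $\limsup_{r \to 0}\log N(\widetilde{\mathcal{X}},r)/(-\log r) = d$, for every $\epsilon > 0$ there is a threshold $r_0(\epsilon) > 0$ such that
$$N(\widetilde{\mathcal{X}}, r) \;\leq\; r^{-(d+\epsilon)} \quad \text{for all } r \in (0, r_0(\epsilon)).$$
I will take $\epsilon := (n-d)/2 > 0$, which is positive because $d < n$, and keep this choice fixed. Let $r_0 = r_0(\epsilon)$ denote the corresponding threshold.

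Next I would perform the geometric covering step. Fix $\delta \in (0, r_0)$ and pick centers $z_1, \dots, z_{N(\widetilde{\mathcal{X}},\delta)} \in \mathbb{R}^n$ so that the balls $B(z_i, \delta)$ cover $\widetilde{\mathcal{X}}$. Any point $x$ with $\dist(x, \widetilde{\mathcal{X}}) \leq \delta$ satisfies $\|x - z_i\| \leq 2\delta$ for some $i$, by the triangle inequality. Hence
$$\widetilde{\mathcal{X}}_\delta \;\subseteq\; \bigcup_{i} B(z_i, 2\delta).$$
Letting $\omega_n$ denote the Lebesgue volume of the Euclidean unit ball in $\mathbb{R}^n$, I would estimate
$$\lambda(\delta) \;\leq\; N(\widetilde{\mathcal{X}},\delta)\cdot \omega_n (2\delta)^n \;\leq\; \omega_n\, 2^n\, \delta^{\,n-d-\epsilon} \;=\; \omega_n\, 2^n\, \delta^{(n-d)/2},$$
using the covering bound from the previous paragraph with the specific $\epsilon$.

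Finally I would handle the range $\delta \geq r_0$ separately. Since $\widetilde{\mathcal{X}}_\delta \subseteq \mathcal{X}$ and $\mathcal{X}$ is compact by Assumption~\ref{assumption:general1}(\ref{assumption:general1(2)}), we have $\lambda(\delta) \leq \lambda(\mathcal{X}) < \infty$ trivially, and this constant can be absorbed into $C$ by enlarging if necessary (since $\delta^{(n-d)/2}$ is bounded below on $[r_0, \mathrm{diam}(\mathcal{X})]$). Combining the small- and large-$\delta$ cases yields a single constant $C$ making \eqref{equation:volume_of_balls} hold uniformly in $\delta > 0$. I do not expect a genuine obstacle: the proof is a textbook geometric-measure-theory argument, and the only subtlety is matching the stated exponent $(n-d)/2$, which is achieved precisely by the particular choice $\epsilon = (n-d)/2$ in the covering bound, rather than the sharper but $\epsilon$-dependent rate $\delta^{n-d-\epsilon}$.
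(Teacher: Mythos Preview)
Your proposal is correct and follows essentially the same approach as the paper: both choose the slack $\epsilon=(n-d)/2$ in the Minkowski-dimension covering bound, cover $\widetilde{\mathcal{X}}_\delta$ by the doubled balls of a $\delta$-cover via the triangle inequality, and estimate the resulting volume. The only cosmetic difference is in handling large $\delta$: the paper extends the covering-number inequality $N(\widetilde{\mathcal{X}},r)\le M r^{-(d+n)/2}$ to all $r>0$ by monotonicity of $N$, whereas you bound $\lambda(\delta)$ directly by $\lambda(\mathcal{X})$ using compactness of $\mathcal{X}$; both routes are fine.
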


\begin{proof}
    See Appendix~\ref{proof:lem:measureofneighborhood}.
\end{proof}

The core idea of the proof of Lemma~\ref{lem:measureofneighborhood} is illustrated in Figure \ref{figure:covering}. The black curve represents the bifurcation set $\widetilde{\mathcal{X}}$, while the blue curve marks the boundary of its $\delta$-neighborhood $\widetilde{\mathcal{X}}_\delta$. We begin by covering $\widetilde{\mathcal{X}}$ with $N(\widetilde{\mathcal{X}},\delta)$ balls of radius $\delta$, whose boundaries are shown as dashed circles. Then, around the same centers, we construct larger balls of radius $2\delta$, shown in red. It can be shown that these enlarged balls collectively cover the entire neighborhood $\widetilde{\mathcal{X}}_\delta$. Finally, applying the definition of the upper Minkowski dimension yields an upper bound on the covering number $N(\widetilde{\mathcal{X}}, \delta)$, from which we obtain the estimate for $\lambda(\delta)$, that is, the area of the blue tubular region. 

\begin{remark}
    In Lemma~\ref{lem:measureofneighborhood}, letting $\delta\to 0$, we obtain that the measure of $\widetilde{\mathcal{X}}$ is zero, which is precisely Assumption~\ref{assumption:nowhere_dense}. Hence Assumption~\ref{assumption:Minkowski_dimension} actually implies Assumption~\ref{assumption:nowhere_dense}. Moreover, Theorem~\ref{theorem:poly_Minkowski} shows that, when $g$ is semi-algebraic in $(x,y)$, Assumption~\ref{assumption:Minkowski_dimension} and Assumption~\ref{assumption:nowhere_dense} are equivalent.
\end{remark}

The estimate on the measure of $\widetilde{\mathcal{X}}_\delta$ allows us to control the first term in \eqref{eq:splitofintegral2}, which is essential for establishing Proposition~\ref{proposition:gradient_error}. As a key intermediate result, this proposition is indispensable for the proof of Theorem~\ref{thm:biased_sgd_conv}, which establishes the complexity of Algorithm~\eqref{alg:main} for the upper-level problem.

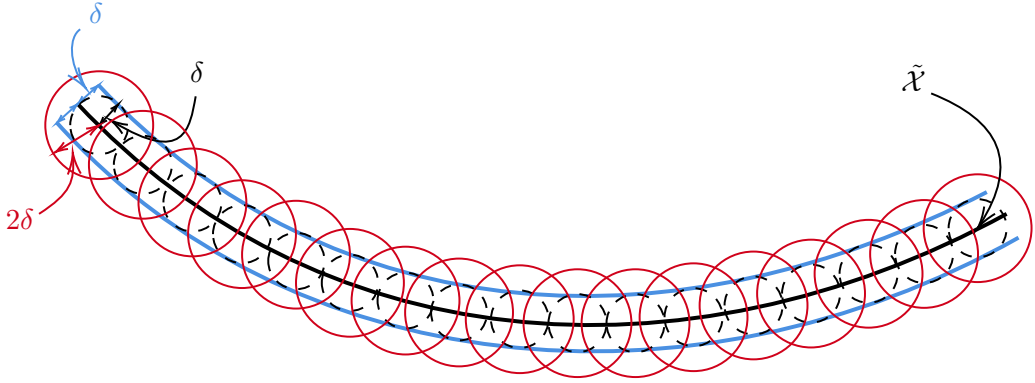
\begin{figure}
    \centering
    \tikzset{every picture/.style={line width=0.75pt}} 

\begin{tikzpicture}[x=0.75pt,y=0.75pt,yscale=-1,xscale=1]

\draw [line width=1.5]    (61.77,89.9) .. controls (181.03,217.3) and (365.27,235.4) .. (530.27,145.4) ;
\draw [color={rgb, 255:red, 74; green, 144; blue, 226 }  ,draw opacity=1 ][line width=1.5]    (72.27,80.4) .. controls (191.53,207.8) and (365.27,214.4) .. (520.27,134.4) ;
\draw [color={rgb, 255:red, 74; green, 144; blue, 226 }  ,draw opacity=1 ][line width=1.5]    (51.27,99.4) .. controls (167.27,228.4) and (369.27,251.4) .. (536.27,157.4) ;
\draw [color={rgb, 255:red, 74; green, 144; blue, 226 }  ,draw opacity=1 ]   (70.78,81.74) -- (63.25,88.56) ;
\draw [shift={(61.77,89.9)}, rotate = 317.86] [color={rgb, 255:red, 74; green, 144; blue, 226 }  ,draw opacity=1 ][line width=0.75]    (4.37,-1.32) .. controls (2.78,-0.56) and (1.32,-0.12) .. (0,0) .. controls (1.32,0.12) and (2.78,0.56) .. (4.37,1.32)   ;
\draw [shift={(72.27,80.4)}, rotate = 137.86] [color={rgb, 255:red, 74; green, 144; blue, 226 }  ,draw opacity=1 ][line width=0.75]    (4.37,-1.32) .. controls (2.78,-0.56) and (1.32,-0.12) .. (0,0) .. controls (1.32,0.12) and (2.78,0.56) .. (4.37,1.32)   ;
\draw  [dash pattern={on 4.5pt off 4.5pt}] (80,120.63) .. controls (80,112.55) and (86.55,106) .. (94.63,106) .. controls (102.72,106) and (109.27,112.55) .. (109.27,120.63) .. controls (109.27,128.72) and (102.72,135.27) .. (94.63,135.27) .. controls (86.55,135.27) and (80,128.72) .. (80,120.63) -- cycle ;
\draw  [color={rgb, 255:red, 0; green, 0; blue, 0 }  ,draw opacity=1 ][dash pattern={on 4.5pt off 4.5pt}] (58,100.63) .. controls (58,92.55) and (64.55,86) .. (72.63,86) .. controls (80.72,86) and (87.27,92.55) .. (87.27,100.63) .. controls (87.27,108.72) and (80.72,115.27) .. (72.63,115.27) .. controls (64.55,115.27) and (58,108.72) .. (58,100.63) -- cycle ;
\draw  [dash pattern={on 4.5pt off 4.5pt}] (388,191.63) .. controls (388,183.55) and (394.55,177) .. (402.63,177) .. controls (410.72,177) and (417.27,183.55) .. (417.27,191.63) .. controls (417.27,199.72) and (410.72,206.27) .. (402.63,206.27) .. controls (394.55,206.27) and (388,199.72) .. (388,191.63) -- cycle ;
\draw  [dash pattern={on 4.5pt off 4.5pt}] (130,155.63) .. controls (130,147.55) and (136.55,141) .. (144.63,141) .. controls (152.72,141) and (159.27,147.55) .. (159.27,155.63) .. controls (159.27,163.72) and (152.72,170.27) .. (144.63,170.27) .. controls (136.55,170.27) and (130,163.72) .. (130,155.63) -- cycle ;
\draw  [dash pattern={on 4.5pt off 4.5pt}] (157.87,169.2) .. controls (157.87,161.36) and (164.22,155) .. (172.07,155) .. controls (179.91,155) and (186.27,161.36) .. (186.27,169.2) .. controls (186.27,177.04) and (179.91,183.4) .. (172.07,183.4) .. controls (164.22,183.4) and (157.87,177.04) .. (157.87,169.2) -- cycle ;
\draw  [dash pattern={on 4.5pt off 4.5pt}] (184.87,180.2) .. controls (184.87,172.36) and (191.22,166) .. (199.07,166) .. controls (206.91,166) and (213.27,172.36) .. (213.27,180.2) .. controls (213.27,188.04) and (206.91,194.4) .. (199.07,194.4) .. controls (191.22,194.4) and (184.87,188.04) .. (184.87,180.2) -- cycle ;
\draw  [dash pattern={on 4.5pt off 4.5pt}] (213,189.17) .. controls (213,181.38) and (219.31,175.07) .. (227.1,175.07) .. controls (234.89,175.07) and (241.2,181.38) .. (241.2,189.17) .. controls (241.2,196.95) and (234.89,203.27) .. (227.1,203.27) .. controls (219.31,203.27) and (213,196.95) .. (213,189.17) -- cycle ;
\draw  [dash pattern={on 4.5pt off 4.5pt}] (240,195.17) .. controls (240,187.38) and (246.31,181.07) .. (254.1,181.07) .. controls (261.89,181.07) and (268.2,187.38) .. (268.2,195.17) .. controls (268.2,202.95) and (261.89,209.27) .. (254.1,209.27) .. controls (246.31,209.27) and (240,202.95) .. (240,195.17) -- cycle ;
\draw  [dash pattern={on 4.5pt off 4.5pt}] (270.2,198.57) .. controls (270.2,190.56) and (276.69,184.07) .. (284.7,184.07) .. controls (292.71,184.07) and (299.2,190.56) .. (299.2,198.57) .. controls (299.2,206.57) and (292.71,213.07) .. (284.7,213.07) .. controls (276.69,213.07) and (270.2,206.57) .. (270.2,198.57) -- cycle ;
\draw  [dash pattern={on 4.5pt off 4.5pt}] (299.2,200.57) .. controls (299.2,192.48) and (305.75,185.93) .. (313.83,185.93) .. controls (321.92,185.93) and (328.47,192.48) .. (328.47,200.57) .. controls (328.47,208.65) and (321.92,215.2) .. (313.83,215.2) .. controls (305.75,215.2) and (299.2,208.65) .. (299.2,200.57) -- cycle ;
\draw  [dash pattern={on 4.5pt off 4.5pt}] (328.47,200.57) .. controls (328.47,192.48) and (335.02,185.93) .. (343.1,185.93) .. controls (351.18,185.93) and (357.73,192.48) .. (357.73,200.57) .. controls (357.73,208.65) and (351.18,215.2) .. (343.1,215.2) .. controls (335.02,215.2) and (328.47,208.65) .. (328.47,200.57) -- cycle ;
\draw  [dash pattern={on 4.5pt off 4.5pt}] (357.73,197.57) .. controls (357.73,189.48) and (364.28,182.93) .. (372.37,182.93) .. controls (380.45,182.93) and (387,189.48) .. (387,197.57) .. controls (387,205.65) and (380.45,212.2) .. (372.37,212.2) .. controls (364.28,212.2) and (357.73,205.65) .. (357.73,197.57) -- cycle ;
\draw  [dash pattern={on 4.5pt off 4.5pt}] (105,139.63) .. controls (105,131.55) and (111.55,125) .. (119.63,125) .. controls (127.72,125) and (134.27,131.55) .. (134.27,139.63) .. controls (134.27,147.72) and (127.72,154.27) .. (119.63,154.27) .. controls (111.55,154.27) and (105,147.72) .. (105,139.63) -- cycle ;
\draw  [dash pattern={on 4.5pt off 4.5pt}] (417,185.63) .. controls (417,177.55) and (423.55,171) .. (431.63,171) .. controls (439.72,171) and (446.27,177.55) .. (446.27,185.63) .. controls (446.27,193.72) and (439.72,200.27) .. (431.63,200.27) .. controls (423.55,200.27) and (417,193.72) .. (417,185.63) -- cycle ;
\draw  [dash pattern={on 4.5pt off 4.5pt}] (446,175.63) .. controls (446,167.55) and (452.55,161) .. (460.63,161) .. controls (468.72,161) and (475.27,167.55) .. (475.27,175.63) .. controls (475.27,183.72) and (468.72,190.27) .. (460.63,190.27) .. controls (452.55,190.27) and (446,183.72) .. (446,175.63) -- cycle ;
\draw  [dash pattern={on 4.5pt off 4.5pt}] (474,165.63) .. controls (474,157.55) and (480.55,151) .. (488.63,151) .. controls (496.72,151) and (503.27,157.55) .. (503.27,165.63) .. controls (503.27,173.72) and (496.72,180.27) .. (488.63,180.27) .. controls (480.55,180.27) and (474,173.72) .. (474,165.63) -- cycle ;
\draw  [dash pattern={on 4.5pt off 4.5pt}] (501,152.63) .. controls (501,144.55) and (507.55,138) .. (515.63,138) .. controls (523.72,138) and (530.27,144.55) .. (530.27,152.63) .. controls (530.27,160.72) and (523.72,167.27) .. (515.63,167.27) .. controls (507.55,167.27) and (501,160.72) .. (501,152.63) -- cycle ;
\draw    (73.98,99.16) -- (80.92,91.54) ;
\draw [shift={(82.27,90.07)}, rotate = 132.35] [color={rgb, 255:red, 0; green, 0; blue, 0 }  ][line width=0.75]    (4.37,-1.32) .. controls (2.78,-0.56) and (1.32,-0.12) .. (0,0) .. controls (1.32,0.12) and (2.78,0.56) .. (4.37,1.32)   ;
\draw [shift={(72.63,100.63)}, rotate = 312.35] [color={rgb, 255:red, 0; green, 0; blue, 0 }  ][line width=0.75]    (4.37,-1.32) .. controls (2.78,-0.56) and (1.32,-0.12) .. (0,0) .. controls (1.32,0.12) and (2.78,0.56) .. (4.37,1.32)   ;
\draw    (121.27,86.07) .. controls (118.31,102.81) and (115.36,120.53) .. (79.91,99.07) ;
\draw [shift={(78.27,98.07)}, rotate = 31.87] [color={rgb, 255:red, 0; green, 0; blue, 0 }  ][line width=0.75]    (8.74,-2.63) .. controls (5.56,-1.12) and (2.65,-0.24) .. (0,0) .. controls (2.65,0.24) and (5.56,1.12) .. (8.74,2.63)   ;
\draw  [color={rgb, 255:red, 208; green, 2; blue, 27 }  ,draw opacity=1 ] (45.43,100.63) .. controls (45.43,85.61) and (57.61,73.43) .. (72.63,73.43) .. controls (87.66,73.43) and (99.83,85.61) .. (99.83,100.63) .. controls (99.83,115.66) and (87.66,127.83) .. (72.63,127.83) .. controls (57.61,127.83) and (45.43,115.66) .. (45.43,100.63) -- cycle ;
\draw  [color={rgb, 255:red, 208; green, 2; blue, 27 }  ,draw opacity=1 ] (67.43,120.63) .. controls (67.43,105.61) and (79.61,93.43) .. (94.63,93.43) .. controls (109.66,93.43) and (121.83,105.61) .. (121.83,120.63) .. controls (121.83,135.66) and (109.66,147.83) .. (94.63,147.83) .. controls (79.61,147.83) and (67.43,135.66) .. (67.43,120.63) -- cycle ;
\draw  [color={rgb, 255:red, 208; green, 2; blue, 27 }  ,draw opacity=1 ] (92.43,139.63) .. controls (92.43,124.61) and (104.61,112.43) .. (119.63,112.43) .. controls (134.66,112.43) and (146.83,124.61) .. (146.83,139.63) .. controls (146.83,154.66) and (134.66,166.83) .. (119.63,166.83) .. controls (104.61,166.83) and (92.43,154.66) .. (92.43,139.63) -- cycle ;
\draw  [color={rgb, 255:red, 208; green, 2; blue, 27 }  ,draw opacity=1 ] (117.43,155.63) .. controls (117.43,140.61) and (129.61,128.43) .. (144.63,128.43) .. controls (159.66,128.43) and (171.83,140.61) .. (171.83,155.63) .. controls (171.83,170.66) and (159.66,182.83) .. (144.63,182.83) .. controls (129.61,182.83) and (117.43,170.66) .. (117.43,155.63) -- cycle ;
\draw  [color={rgb, 255:red, 208; green, 2; blue, 27 }  ,draw opacity=1 ] (144.67,166) .. controls (144.67,150.98) and (156.84,138.8) .. (171.87,138.8) .. controls (186.89,138.8) and (199.07,150.98) .. (199.07,166) .. controls (199.07,181.02) and (186.89,193.2) .. (171.87,193.2) .. controls (156.84,193.2) and (144.67,181.02) .. (144.67,166) -- cycle ;
\draw  [color={rgb, 255:red, 208; green, 2; blue, 27 }  ,draw opacity=1 ] (171.87,180.2) .. controls (171.87,165.18) and (184.04,153) .. (199.07,153) .. controls (214.09,153) and (226.27,165.18) .. (226.27,180.2) .. controls (226.27,195.22) and (214.09,207.4) .. (199.07,207.4) .. controls (184.04,207.4) and (171.87,195.22) .. (171.87,180.2) -- cycle ;
\draw  [color={rgb, 255:red, 208; green, 2; blue, 27 }  ,draw opacity=1 ] (199.9,189.17) .. controls (199.9,174.14) and (212.08,161.97) .. (227.1,161.97) .. controls (242.12,161.97) and (254.3,174.14) .. (254.3,189.17) .. controls (254.3,204.19) and (242.12,216.37) .. (227.1,216.37) .. controls (212.08,216.37) and (199.9,204.19) .. (199.9,189.17) -- cycle ;
\draw  [color={rgb, 255:red, 208; green, 2; blue, 27 }  ,draw opacity=1 ] (226.9,195.17) .. controls (226.9,180.14) and (239.08,167.97) .. (254.1,167.97) .. controls (269.12,167.97) and (281.3,180.14) .. (281.3,195.17) .. controls (281.3,210.19) and (269.12,222.37) .. (254.1,222.37) .. controls (239.08,222.37) and (226.9,210.19) .. (226.9,195.17) -- cycle ;
\draw  [color={rgb, 255:red, 208; green, 2; blue, 27 }  ,draw opacity=1 ] (257.5,198.57) .. controls (257.5,183.54) and (269.68,171.37) .. (284.7,171.37) .. controls (299.72,171.37) and (311.9,183.54) .. (311.9,198.57) .. controls (311.9,213.59) and (299.72,225.77) .. (284.7,225.77) .. controls (269.68,225.77) and (257.5,213.59) .. (257.5,198.57) -- cycle ;
\draw  [color={rgb, 255:red, 208; green, 2; blue, 27 }  ,draw opacity=1 ] (286.63,200.57) .. controls (286.63,185.54) and (298.81,173.37) .. (313.83,173.37) .. controls (328.86,173.37) and (341.03,185.54) .. (341.03,200.57) .. controls (341.03,215.59) and (328.86,227.77) .. (313.83,227.77) .. controls (298.81,227.77) and (286.63,215.59) .. (286.63,200.57) -- cycle ;
\draw  [color={rgb, 255:red, 208; green, 2; blue, 27 }  ,draw opacity=1 ] (315.9,200.57) .. controls (315.9,185.54) and (328.08,173.37) .. (343.1,173.37) .. controls (358.12,173.37) and (370.3,185.54) .. (370.3,200.57) .. controls (370.3,215.59) and (358.12,227.77) .. (343.1,227.77) .. controls (328.08,227.77) and (315.9,215.59) .. (315.9,200.57) -- cycle ;
\draw  [color={rgb, 255:red, 208; green, 2; blue, 27 }  ,draw opacity=1 ] (345.17,197.57) .. controls (345.17,182.54) and (357.34,170.37) .. (372.37,170.37) .. controls (387.39,170.37) and (399.57,182.54) .. (399.57,197.57) .. controls (399.57,212.59) and (387.39,224.77) .. (372.37,224.77) .. controls (357.34,224.77) and (345.17,212.59) .. (345.17,197.57) -- cycle ;
\draw  [color={rgb, 255:red, 208; green, 2; blue, 27 }  ,draw opacity=1 ] (375.43,191.63) .. controls (375.43,176.61) and (387.61,164.43) .. (402.63,164.43) .. controls (417.66,164.43) and (429.83,176.61) .. (429.83,191.63) .. controls (429.83,206.66) and (417.66,218.83) .. (402.63,218.83) .. controls (387.61,218.83) and (375.43,206.66) .. (375.43,191.63) -- cycle ;
\draw  [color={rgb, 255:red, 208; green, 2; blue, 27 }  ,draw opacity=1 ] (404.43,185.63) .. controls (404.43,170.61) and (416.61,158.43) .. (431.63,158.43) .. controls (446.66,158.43) and (458.83,170.61) .. (458.83,185.63) .. controls (458.83,200.66) and (446.66,212.83) .. (431.63,212.83) .. controls (416.61,212.83) and (404.43,200.66) .. (404.43,185.63) -- cycle ;
\draw  [color={rgb, 255:red, 208; green, 2; blue, 27 }  ,draw opacity=1 ] (433.43,175.63) .. controls (433.43,160.61) and (445.61,148.43) .. (460.63,148.43) .. controls (475.66,148.43) and (487.83,160.61) .. (487.83,175.63) .. controls (487.83,190.66) and (475.66,202.83) .. (460.63,202.83) .. controls (445.61,202.83) and (433.43,190.66) .. (433.43,175.63) -- cycle ;
\draw  [color={rgb, 255:red, 208; green, 2; blue, 27 }  ,draw opacity=1 ] (461.43,165.63) .. controls (461.43,150.61) and (473.61,138.43) .. (488.63,138.43) .. controls (503.66,138.43) and (515.83,150.61) .. (515.83,165.63) .. controls (515.83,180.66) and (503.66,192.83) .. (488.63,192.83) .. controls (473.61,192.83) and (461.43,180.66) .. (461.43,165.63) -- cycle ;
\draw  [color={rgb, 255:red, 208; green, 2; blue, 27 }  ,draw opacity=1 ] (488.43,152.63) .. controls (488.43,137.61) and (500.61,125.43) .. (515.63,125.43) .. controls (530.66,125.43) and (542.83,137.61) .. (542.83,152.63) .. controls (542.83,167.66) and (530.66,179.83) .. (515.63,179.83) .. controls (500.61,179.83) and (488.43,167.66) .. (488.43,152.63) -- cycle ;
\draw [color={rgb, 255:red, 208; green, 2; blue, 27 }  ,draw opacity=1 ]   (70.93,101.68) -- (51.97,113.35) ;
\draw [shift={(50.27,114.4)}, rotate = 328.39] [color={rgb, 255:red, 208; green, 2; blue, 27 }  ,draw opacity=1 ][line width=0.75]    (6.56,-1.97) .. controls (4.17,-0.84) and (1.99,-0.18) .. (0,0) .. controls (1.99,0.18) and (4.17,0.84) .. (6.56,1.97)   ;
\draw [shift={(72.63,100.63)}, rotate = 148.39] [color={rgb, 255:red, 208; green, 2; blue, 27 }  ,draw opacity=1 ][line width=0.75]    (6.56,-1.97) .. controls (4.17,-0.84) and (1.99,-0.18) .. (0,0) .. controls (1.99,0.18) and (4.17,0.84) .. (6.56,1.97)   ;
\draw [color={rgb, 255:red, 208; green, 2; blue, 27 }  ,draw opacity=1 ]   (42.27,145.4) .. controls (58.5,134.89) and (60.14,128.95) .. (59.39,116.24) ;
\draw [shift={(59.27,114.4)}, rotate = 85.91] [color={rgb, 255:red, 208; green, 2; blue, 27 }  ,draw opacity=1 ][line width=0.75]    (10.93,-3.29) .. controls (6.95,-1.4) and (3.31,-0.3) .. (0,0) .. controls (3.31,0.3) and (6.95,1.4) .. (10.93,3.29)   ;
\draw [color={rgb, 255:red, 74; green, 144; blue, 226 }  ,draw opacity=1 ]   (60.28,91.24) -- (52.75,98.06) ;
\draw [shift={(51.27,99.4)}, rotate = 317.86] [color={rgb, 255:red, 74; green, 144; blue, 226 }  ,draw opacity=1 ][line width=0.75]    (4.37,-1.32) .. controls (2.78,-0.56) and (1.32,-0.12) .. (0,0) .. controls (1.32,0.12) and (2.78,0.56) .. (4.37,1.32)   ;
\draw [shift={(61.77,89.9)}, rotate = 137.86] [color={rgb, 255:red, 74; green, 144; blue, 226 }  ,draw opacity=1 ][line width=0.75]    (4.37,-1.32) .. controls (2.78,-0.56) and (1.32,-0.12) .. (0,0) .. controls (1.32,0.12) and (2.78,0.56) .. (4.37,1.32)   ;
\draw [color={rgb, 255:red, 74; green, 144; blue, 226 }  ,draw opacity=1 ]   (63.04,52.24) .. controls (57.28,56.08) and (57.05,69.14) .. (65,80.79) ;
\draw [shift={(66.04,82.24)}, rotate = 233.13] [color={rgb, 255:red, 74; green, 144; blue, 226 }  ,draw opacity=1 ][line width=0.75]    (10.93,-3.29) .. controls (6.95,-1.4) and (3.31,-0.3) .. (0,0) .. controls (3.31,0.3) and (6.95,1.4) .. (10.93,3.29)   ;
\draw    (492.2,84.6) .. controls (526.67,98.39) and (537.31,119) .. (516.6,151.16) ;
\draw [shift={(515.63,152.63)}, rotate = 303.69] [color={rgb, 255:red, 0; green, 0; blue, 0 }  ][line width=0.75]    (10.93,-3.29) .. controls (6.95,-1.4) and (3.31,-0.3) .. (0,0) .. controls (3.31,0.3) and (6.95,1.4) .. (10.93,3.29)   ;

\draw (66,37.4) node [anchor=north west][inner sep=0.75pt]  [color={rgb, 255:red, 74; green, 144; blue, 226 }  ,opacity=1 ]  {$\delta $};
\draw (117,67.4) node [anchor=north west][inner sep=0.75pt]    {$\delta $};
\draw (24,141.4) node [anchor=north west][inner sep=0.75pt]  [color={rgb, 255:red, 208; green, 2; blue, 27 }  ,opacity=1 ]  {$2\delta $};
\draw (475.63,68.4) node [anchor=north west][inner sep=0.75pt]    {$\widetilde{\mathcal{X}}$};

\end{tikzpicture}
    \caption{The black curve represents the bifurcation set $\widetilde{\mathcal{X}}$, and the blue curve outlines the boundary of its $\delta$-neighborhood $\widetilde{\mathcal{X}}_\delta$. Dashed black circles of radius $\delta$ are used to cover $\widetilde{\mathcal{X}}$, and red circles of radius $2\delta$ are centered at the same locations. These enlarged red circles together cover the entire neighborhood $\widetilde{\mathcal{X}}_\delta$.
}

    \label{figure:covering}
\end{figure}

\subsection{Upper-level Oracle Complexity Analysis}\label{sec:Outer-loop Iteration Complexity Analysis}

We now turn to the second term in the integral decomposition \eqref{eq:splitofintegral2}, which corresponds to the region $\R^n\setminus\widetilde{\mathcal{X}}_\delta.$ In this region, we aim to control the approximation error $\|y^{\text{cd}}(x)-\hat{y}(x)\|,$ where $y^{\text{cd}}(x)$ is the exact solution defined in \eqref{eq:ycd}, and $\hat{y}(x)$ is the output of the cubic-regularized Newton method (see Algorithm~\ref{alg:cubic_newton}) after finite  $K$ steps which are independent of $x$.

\begin{algorithm}[htbp]
\caption{Cubic-Regularized Newton Method for Solving Lower-level Problem}
\label{alg:cubic_newton}
\KwIn{fixed $x \in \mathcal{X}$; initial point $y_0$; number of steps $K$; regularization parameter $M=\overline{\overline{L}}_g$}
\KwOut{approximate solution $\hat{y}$}
\For{$k \gets 0$ \KwTo $K-1$}{
    Compute gradient $g_k \gets \nabla_y g(x, y_k)$\;
    Compute Hessian $H_k \gets \nabla^2_{yy} g(x, y_k)$\;
    Solve the subproblem:
    \[
    s_k \gets \arg\min_s \left\{ g_k^\top s + \frac{1}{2} s^\top H_k s + \frac{M}{6} \|s\|^3 \right\}
    \]
    Update $y_{k+1} \gets y_k + s_k$\;
}
Select the best iterate by the stationarity measure:
\[
k^\ast \gets \arg\min_{0 \le k \le K}\;
\max\!\left\{
\sqrt{\tfrac{1}{M}\,\big\|\nabla_y g(x,y_k)\big\|},
-\tfrac{2}{3M}\,\lambda_{\min}\!\big(\nabla^2_{yy} g(x,y_k)\big)
\right\}.
\]
\Return $\hat{y}=y_{k^\ast}$
\end{algorithm}

By Assumption~\ref{assumption:general1}(\ref{assumption:general1(7)}), the level set $\mathcal{Y}$ is compact, and by Assumption~\ref{assumption:descent} all iterates of the cubic-regularized Newton method remain in $\mathcal{Y}$; hence we only need to consider stationary points of $g(x,\cdot)$ within $\mathcal{Y}$. 

At a high level, our approach to analyzing the algorithm convergence is as follows: since we focus on points $x$ outside the bifurcation set $\widetilde{\mathcal X}$, $g(x,y)$ is Morse in $y$, and all of its stationary points are isolated and non-degenerate. At each such point, the norm of every Hessian eigenvalue is bounded below by $\mu>0$ (to be shown in Lemma \ref{lemma:eigenvalue}). Thus, by the Lipschitz continuity of Hessian from Assumption~\ref{assumption:general1}(\ref{assumption:general1(11)}), within an open ball of radius $0<r\leq\mu/(2\overline{\overline{L}}_g)$ centered at any stationary point in $\mathcal Y$, all eigenvalues of $\nabla^2_{yy} g(x,y)$ have absolute values lower bounded by $\mu/2$. As illustrated in Figure~\ref{fig:cubicnewton}, we denote these open neighborhoods by blue-shaded disks centered at each stationary point. Outside the union of all radius-$r$ open balls centered at stationary points, the remaining points in $\mathcal Y$ form a compact set that contains no stationary point. By continuity of $\nabla_y g(x,y)$, it follows that $\|\nabla_y g(x,y)\|$ is bounded below by a positive constant on this compact set. These structural conditions allow us to characterize the iterates produced by the cubic-regularized Newton method and to derive non-asymptotic bounds on the approximation error $\|\hat y(x)-y^{\mathrm{cd}}(x)\|$. Specifically, known results for cubic Newton~\citep{nesterov2006cubic} guarantee that after $K$ steps the algorithm produces a point $\hat y$ such that
$$
\|\nabla_y g(x,\hat y)\|=\mathcal O\!\left(K^{-2/3}\right),
$$
and the minimal eigenvalue of the Hessian is bounded from below,
$$
\lambda_{\min}\!\big(\nabla^2_{yy} g(x,\hat y)\big)=-\Omega\!\left(K^{-1/3}\right).
$$
These two estimates lead to a clear two-phase convergence behavior (to be shown in Lemma \ref{lemma:approximationerror}). First, by the gradient bound, once $K$ is large enough so that $\|\nabla_y g(x,\hat y)\|$ falls below the positive lower bound that holds outside these radius-$r$ open balls (the existence of such a positive lower bound will be shown in Lemma \ref{lemma:gradient_lower_bound}). Then the iterate $\hat y$ must lie inside the union of these open balls in $\mathcal Y$. Next, by the curvature condition together with the termination criterion for cubic Newton, which requires $\lambda_{\min}\!\big(\nabla^2_{yy} g(x,\hat y)\big) > -C/K^{1/3}$ for some constant $C$, the radius-$r$ open balls around saddle points and local maximizers are ruled out as possible locations of $\hat y$ whenever $K>(2C/\mu)^3$. Therefore, for sufficiently large $K$, $\hat y$ lies in an $r$-neighborhood of a local minimizer, where $g(x,y)$ is strongly convex in $y$. Within this neighborhood, where $g(x,\cdot)$ is strongly convex and also Lipschitz smooth in $y$ (as assumed in Assumption~\ref{assumption:general1}(\ref{assumption:general1(4)})), we can establish an upper bound on the distance between the approximate solution $\hat y$ and the exact solution $y^{\mathrm{cd}}$.

\begin{figure}
    \centering
    \includegraphics[width=0.5\linewidth]{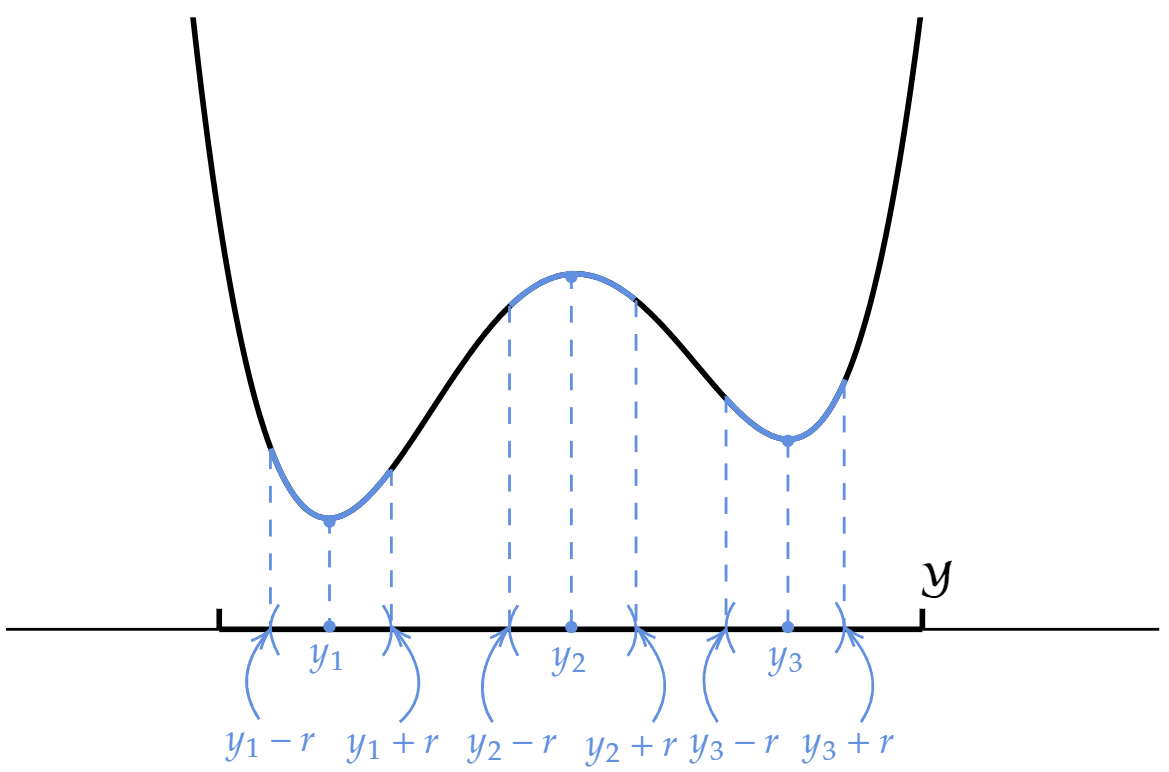}
    \caption{We construct open neighborhoods with radius $r$ around each of the three stationary points. Outside these neighborhoods, the gradient norm $\|\nabla_y g(x,y)\|$ is bounded below by a positive constant. Within each neighborhood, the eigenvalues of the Hessian $\nabla^2_{yy} g(x,y)$ have a positive lower bound.}
    \label{fig:cubicnewton}
\end{figure}

To formalize this high level idea, we now state two technical lemmas. Lemma \ref{lemma:eigenvalue} guarantees that outside $\widetilde{\mathcal{X}}_\delta$, which is $\delta$-neighborhood of the bifurcation set $\widetilde{\mathcal{X}}$, every stationary point $y \in \mathcal{Y}'$ of $g(x,\cdot)$ has a Hessian $\nabla^2_y g(x,y)$ whose eigenvalues are uniformly bounded away from zero. Here $\mathcal{Y}'$ is a slightly enlarged compact superset of $\mathcal{Y}$, introduced for technical convenience in the proof. Lemma~\ref{lemma:gradient_lower_bound} guarantees that for any $x \in \mathcal{X} \setminus \widetilde{\mathcal{X}}_\delta$, the gradient norm $\|\nabla_y g(x,y)\|$ is uniformly lower bounded by a positive constant $\alpha(\delta,r)$ for all $y$ away from the $r$-neighborhood of the stationary set. The constant $\alpha(\delta,r)$ depends only on $r$ and $\delta$, not on $x$. These two results play a key role in characterizing the behavior of the cubic-regularized Newton method.

\begin{lemma}\label{lemma:eigenvalue}
    Suppose Assumption~\ref{assumption:general1} holds. Let $\mathcal{Y}'\supset\mathcal{Y}$ be a compact set with $\mathcal{Y}\subset\mathrm{int}(\mathcal{Y}')$, where $\mathcal{Y}$ is defined in Assumption~\ref{assumption:general1}(\ref{assumption:general1(7)}). Then there exists a positive function $\mu(\delta)$ depending only on $\delta$ such that for any $x \in \mathcal{X} \setminus \widetilde{\mathcal{X}}_\delta$ and any stationary points $y\in\mathcal{Y}'$ of $g(x, \cdot)$, all eigenvalues $\lambda$ of $\nabla^2_{yy}g(x, y)$ satisfy $|\lambda| \geq \mu(\delta)$.
\end{lemma}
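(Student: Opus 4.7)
The plan is to realize $\mu(\delta)$ as the minimum of a continuous positive function on a compact set. For each $\delta>0$, introduce
\[
\bar{B}_\delta := \{x \in \mathcal{X} : \mathrm{dist}(x, \widetilde{\mathcal{X}}) \geq \delta\}, \qquad A_\delta := \{(x,y) \in \bar{B}_\delta \times \mathcal{Y}' : \nabla_y g(x,y) = 0\}.
\]
Any $x \in \mathcal{X} \setminus \widetilde{\mathcal{X}}_\delta$ satisfies $\mathrm{dist}(x, \widetilde{\mathcal{X}}) > \delta$, hence lies in $\bar{B}_\delta$, so every pair $(x,y)$ to be bounded in the lemma lies in $A_\delta$. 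It therefore suffices to bound $\min_i |\lambda_i(\nabla^2_{yy} g(x,y))|$ uniformly from below as $(x,y)$ ranges over $A_\delta$.

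First I would verify compactness of $A_\delta$. By Assumption~\ref{assumption:general1}(\ref{assumption:general1(2)}) $\mathcal{X}$ is compact, and the map $x \mapsto \mathrm{dist}(x, \widetilde{\mathcal{X}})$ is continuous, so $\bar{B}_\delta$ is closed in $\mathcal{X}$ and hence compact; together with compact $\mathcal{Y}'$, the product $\bar{B}_\delta \times \mathcal{Y}'$ is compact. The constraint $\nabla_y g(x,y)=0$ is a closed condition since $\nabla_y g$ is continuous (Assumption~\ref{assumption:general1}(\ref{assumption:general1(1)})), so $A_\delta$ is a closed subset of a compact set, and therefore compact.

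Next I would establish pointwise positivity on $A_\delta$. Because $\widetilde{\mathcal{X}}$ is closed (Remark~\ref{remark:close}), $\mathrm{dist}(x, \widetilde{\mathcal{X}}) \geq \delta > 0$ forces $x \notin \widetilde{\mathcal{X}}$, so $g(x,\cdot)$ is Morse and every stationary point has nonsingular Hessian. Hence
\[
\sigma(x,y) := \min_{i} \big|\lambda_i(\nabla^2_{yy} g(x,y))\big|
\]
is strictly positive on $A_\delta$. Combining continuity of the sorted eigenvalues of a symmetric matrix with respect to its entries (a standard consequence of Weyl's inequality) with continuity of $\nabla^2_{yy} g$ on $\mathcal{X}\times \R^m$ (implied by Assumption~\ref{assumption:general1}(\ref{assumption:general1(11)})) shows that $\sigma$ is continuous on $\bar{B}_\delta \times \mathcal{Y}'$. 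Since a positive continuous function on a nonempty compact set attains a positive minimum, I set
\[
\mu(\delta) := \min_{(x,y) \in A_\delta} \sigma(x,y) > 0,
\]
with the convention $\mu(\delta)=1$ when $A_\delta$ is empty, in which case the conclusion of the lemma is vacuous. By construction $\mu$ depends only on $\delta$ (with $g$, $\mathcal{X}$, $\mathcal{Y}'$ fixed).

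The main technical point I anticipate is justifying continuity of $\sigma$, namely the continuity of the smallest absolute eigenvalue of a symmetric matrix; this should be invoked via standard matrix perturbation theory rather than left implicit. A secondary subtlety is that $\mathcal{X}\setminus \widetilde{\mathcal{X}}_\delta$ is only open relative to $\mathcal{X}$, which is precisely why I enlarge it to the closed set $\bar{B}_\delta$ defined by the weak inequality $\mathrm{dist}(x,\widetilde{\mathcal{X}})\geq \delta$: this enlargement makes the compactness argument go through while still covering every $x$ in the lemma's hypothesis.
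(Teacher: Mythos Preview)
Your proposal is correct and takes essentially the same approach as the paper: both are compactness arguments on the closed set of stationary pairs in $\overline{\mathcal{X}\setminus\widetilde{\mathcal{X}}_\delta}\times\mathcal{Y}'$, using continuity of the Hessian eigenvalues and the Morse property away from $\widetilde{\mathcal{X}}$. The paper phrases this as a contradiction via a minimizing sequence and passage to a limit, while you apply the extreme value theorem directly to $\sigma(x,y)=\min_i|\lambda_i(\nabla^2_{yy}g(x,y))|$; these are equivalent formulations of the same idea.
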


\begin{proof}
    See Appendix~\ref{proof:lemma:eigenvalue}.
\end{proof}

\begin{lemma}\label{lemma:gradient_lower_bound}
Suppose Assumption~\ref{assumption:general1} holds. Define $\mathrm{Crit}(x) := \{ y \in \mathcal{Y}' : \nabla_{y} g(x, y) = 0 \}$, where $\mathcal{Y}'$ is from Lemma~\ref{lemma:eigenvalue}. For any $r > 0$, there exists a positive constant $\alpha(\delta,r)$ only depends on $\delta$ and $r$ such that $\|\nabla_y g(x,y)\|\geq\alpha(\delta,r)$ for all $x \in \mathcal{X}\setminus\widetilde{\mathcal{X}}_\delta$ and all $y \in \mathcal{Y} \setminus \mathrm{Crit}_r(x)$, where $\mathrm{Crit}_r(x)$ denotes the $r$-neighborhood of $\mathrm{Crit}(x)$ in $\R^m$, i.e.,
$$\mathrm{Crit}_r(x):=\{y\in\R^m:\mathrm{dist}(y,\mathrm{Crit}(x))\leq r\}.$$
\end{lemma}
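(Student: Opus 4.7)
The plan is to argue by contradiction using the compactness of $\mathcal{X}\times\mathcal{Y}$ together with the implicit function theorem, in the spirit of the proof of Lemma~\ref{lemma:eigenvalue}. Suppose no such $\alpha(\delta,r)>0$ exists. Then there are sequences $\{x_k\}\subset \mathcal{X}\setminus\widetilde{\mathcal{X}}_\delta$ and $\{y_k\}\subset\mathcal{Y}\setminus\mathrm{Crit}_r(x_k)$ with $\|\nabla_y g(x_k,y_k)\|\to 0$. Since $\mathcal{X}$ and $\mathcal{Y}$ are compact by Assumption~\ref{assumption:general1}(\ref{assumption:general1(2)})(\ref{assumption:general1(7)}), after passing to a subsequence we may assume $x_k\to\bar{x}\in\mathcal{X}$ and $y_k\to\bar{y}\in\mathcal{Y}$. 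The distance function $\mathrm{dist}(\cdot,\widetilde{\mathcal{X}})$ is continuous, so $\mathrm{dist}(\bar{x},\widetilde{\mathcal{X}})\geq\delta>0$, which in particular gives $\bar{x}\notin\widetilde{\mathcal{X}}$. By continuity of $\nabla_y g$ (Assumption~\ref{assumption:general1}(\ref{assumption:general1(1)})), $\nabla_y g(\bar{x},\bar{y})=0$, so $\bar{y}$ is a stationary point of $g(\bar{x},\cdot)$, and since $\bar{x}\notin\widetilde{\mathcal{X}}$ it is non-degenerate: $\nabla^2_{yy}g(\bar{x},\bar{y})$ is nonsingular.

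Next I would invoke the implicit function theorem applied to $\nabla_y g(x,y)=0$ at $(\bar{x},\bar{y})$. Non-singularity of $\nabla^2_{yy} g(\bar{x},\bar{y})$ yields an open neighborhood $U$ of $\bar{x}$ and a $C^1$ map $y^\ast:U\to\mathbb{R}^m$ with $y^\ast(\bar{x})=\bar{y}$ and $\nabla_y g(x,y^\ast(x))=0$ for every $x\in U$. Since $\bar{y}\in\mathcal{Y}\subset\mathrm{int}(\mathcal{Y}')$ and $y^\ast$ is continuous at $\bar{x}$, for all sufficiently large $k$ we have $y^\ast(x_k)\in\mathcal{Y}'$, hence $y^\ast(x_k)\in\mathrm{Crit}(x_k)$.

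The last step is to derive the contradiction. By continuity of $y^\ast$ we have $y^\ast(x_k)\to\bar{y}$, and by construction $y_k\to\bar{y}$, so $\|y_k-y^\ast(x_k)\|\to 0$. For $k$ large enough this forces $\|y_k-y^\ast(x_k)\|<r$, i.e. $y_k\in\mathrm{Crit}_r(x_k)$, which contradicts the choice $y_k\in\mathcal{Y}\setminus\mathrm{Crit}_r(x_k)$. Hence the assumed sequence cannot exist, and the desired uniform lower bound $\alpha(\delta,r)>0$ is obtained. The bound depends only on $(\delta,r)$ because the whole argument is carried out on the fixed compact sets $\mathcal{X}\setminus\widetilde{\mathcal{X}}_\delta$ (a closed subset of $\mathcal{X}$ once we pass to the closure, which remains disjoint from $\widetilde{\mathcal{X}}$) and $\mathcal{Y}$, and nowhere depends on a particular $x$.

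The main obstacle I anticipate is the bookkeeping needed to ensure that the limit point $\bar{x}$ still lies in a region where the Morse condition applies, and that the implicit-function branch $y^\ast(x_k)$ actually falls inside $\mathcal{Y}'$ so that it qualifies as an element of the parametric critical set $\mathrm{Crit}(x_k)$; both are handled by using the open neighborhood $\mathcal{Y}\subset\mathrm{int}(\mathcal{Y}')$ baked into the setup of Lemma~\ref{lemma:eigenvalue} and by the fact that $\mathrm{dist}(\bar{x},\widetilde{\mathcal{X}})\geq\delta$ persists in the limit. A slightly subtler point is that $\mathcal{X}\setminus\widetilde{\mathcal{X}}_\delta$ is only relatively open in $\mathcal{X}$, so one must allow $\bar{x}$ to sit on the boundary $\mathrm{dist}(\bar{x},\widetilde{\mathcal{X}})=\delta$; this is harmless because only $\bar{x}\notin\widetilde{\mathcal{X}}$ is needed for the Morse/IFT argument.
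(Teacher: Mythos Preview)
Your proposal is correct and follows essentially the same approach as the paper: a compactness-plus-contradiction argument where a sequence with vanishing gradient is shown to converge to a non-degenerate stationary point $(\bar x,\bar y)$, and then the implicit function theorem is used to produce nearby critical points $y^\ast(x_k)\in\mathrm{Crit}(x_k)$ that force $y_k\in\mathrm{Crit}_r(x_k)$. The paper phrases the final contradiction slightly differently (it argues that a full product neighborhood $B_{\bar x}(\Delta_x)\times B_{\bar y}(r/2)$ lies outside the relevant set), but the logical content is identical, and you have correctly anticipated and handled the two delicate points about $\bar x$ possibly landing on the boundary $\mathrm{dist}(\bar x,\widetilde{\mathcal X})=\delta$ and about $y^\ast(x_k)$ remaining inside $\mathcal Y'$.
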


\begin{proof}
    See Appendix~\ref{proof:lemma:gradient_lower_bound}.
\end{proof}

We now formalize this two-phase convergence process in the following lemma, which provides an estimate on the gradient norm and the distance to the local minimizer after a finite number of iterations of the cubic-regularized Newton method, for any $x \in \mathcal{X} \setminus \widetilde{\mathcal{X}}_\delta$.

\begin{lemma}\label{lemma:approximationerror}
    Suppose Assumption~\ref{assumption:general1} holds and $x\in\mathcal{X}\setminus\widetilde{\mathcal{X}}_\delta$. We perform $K$ iterations of the cubic Newton method update for the lower-level problem at the point $x$ with $M=\overline{\overline{L}}_g$ from Assumption~\ref{assumption:general1}(\ref{assumption:general1(11)}). Then sequence $\{y_k\}_{k=1}^{K}$ generated by this method converges to a local minimizer $y^{\text{cd}}(x)$ when $K$ goes to infinity. If the iteration number $K$ satisfies
    \begin{align}\label{equation:requirement_for_K}
        K&>\max\left\{\frac{256(\overline{g}_0-\underline{g})\left(\overline{\overline{L}}_g\right)^{1/2}}{9\left(\min\left\{\alpha(\delta,\frac{\mu(\delta)}{2\overline{\overline{L}}_g}),\frac{(\mu(\delta))^2}{4\overline{\overline{L}}_g}\right\}\right)^{3/2}},\frac{768(\overline{g}_0-\underline{g})\overline{\overline{L}}^2_g}{\left(\mu(\delta)\right)^3}\right\}+\frac{256(\overline{g}_0-\underline{g})\left(\overline{\overline{L}}_g\right)^{1/2}}{9(\frac{\mu(\delta)\rho}{2})^{3/2}}\nonumber\\
        &=:K_1+K_2,
    \end{align} 
     where $\overline{g}_0$ and $\underline{g}$ are from Proposition~\ref{prop:general}(\ref{assumption:general1(6)}) and Assumption~\ref{assumption:general1}(\ref{assumption:general1(4)}), functions $\mu(\cdot)$ and $\alpha(\cdot,\cdot)$ are from Lemma~\ref{lemma:eigenvalue} and Lemma~\ref{lemma:gradient_lower_bound}, and $\rho>0$ is a constant which represents the approximation error in the lower-level solution, then we have
     $$\min_{k=K_1+1,...,K}\left\|\nabla_y g(x,y_k)\right\|\leq\mu(\delta)\rho.$$
     Define $\hat{y}(x):=y_{k_0}$, where $k_0=\arg\min_{k=K_1+1,...,K}\|\nabla_y g(x,y_k)\|$. We also have
    $$\|\hat{y}(x)-y^{\text{cd}}(x)\|\leq\rho.$$
\end{lemma}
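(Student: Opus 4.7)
The plan is to deploy the two-phase convergence idea sketched in Section~\ref{sec:Outer-loop Iteration Complexity Analysis}, combining (i) the global worst-case complexity bound of Nesterov--Polyak for cubic Newton, (ii) Lemma~\ref{lemma:eigenvalue} and Lemma~\ref{lemma:gradient_lower_bound} which characterize the geometry of $g(x,\cdot)$ whenever $x \in \mathcal{X}\setminus\widetilde{\mathcal{X}}_\delta$, and (iii) the local strong-convexity behavior of $g(x,\cdot)$ around each non-degenerate local minimizer. The two phases correspond exactly to the two summands $K_1$ and $K_2$ appearing in \eqref{equation:requirement_for_K}.

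For Phase~1, I would first invoke the Nesterov--Polyak complexity bound applied to Algorithm~\ref{alg:cubic_newton} with $M=\overline{\overline{L}}_g$. Using Assumption~\ref{assumption:descent} together with Proposition~\ref{prop:general}(\ref{assumption:general1(6)}) and Assumption~\ref{assumption:general1}(\ref{assumption:general1(5)}) to bound the initial suboptimality by $\overline{g}_0-\underline{g}$, this yields an index $k^\ast\le K_1$ at which the stationarity measure defined in Algorithm~\ref{alg:cubic_newton} is at most $\min\bigl\{\sqrt{\alpha(\delta,r)/\overline{\overline{L}}_g},\,\mu(\delta)/(3\overline{\overline{L}}_g)\bigr\}$, where $r=\mu(\delta)/(2\overline{\overline{L}}_g)$. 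The first component forces $\|\nabla_y g(x,y_{k^\ast})\|<\alpha(\delta,r)$, which by Lemma~\ref{lemma:gradient_lower_bound} places $y_{k^\ast}$ inside the $r$-neighborhood of some stationary point. Lemma~\ref{lemma:eigenvalue} guarantees that at every stationary point all Hessian eigenvalues have modulus at least $\mu(\delta)$; Lipschitz continuity of the Hessian (Assumption~\ref{assumption:general1}(\ref{assumption:general1(11)})) then extends this to an absolute-value lower bound of $\mu(\delta)/2$ throughout the chosen $r$-ball. The second component yields $\lambda_{\min}(\nabla^2_{yy}g(x,y_{k^\ast}))>-\mu(\delta)/2$, which rules out saddle- and local-maximum neighborhoods and forces $y_{k^\ast}$ into the $r$-ball of a local minimizer $y^\star$, where $g(x,\cdot)$ is $(\mu(\delta)/2)$-strongly convex.

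Phase~2 is then largely routine. By the monotone decrease property of the cubic regularization step, all iterates $y_k$ for $k>k^\ast$ stay in the sublevel set $\{g(x,\cdot)\le g(x,y_{k^\ast})\}$, and local strong convexity inside the $r$-ball traps them in the basin of $y^\star$. Re-invoking the Nesterov--Polyak bound on the second segment of length $K_2$ produces an iterate $y_{k_0}$, $k_0\in\{K_1+1,\dots,K\}$, whose stationarity measure is at most $\sqrt{\mu(\delta)\rho/\overline{\overline{L}}_g}$, so in particular $\|\nabla_y g(x,y_{k_0})\|\le\mu(\delta)\rho$. Finally, integrating the Hessian along the segment from $y_{k_0}$ to $y^\star=y^{\text{cd}}(x)$ and using the uniform eigenvalue lower bound $\mu(\delta)/2$ inside the $r$-ball gives the strong-convexity estimate $\|\nabla_y g(x,y_{k_0})\|\ge(\mu(\delta)/2)\,\|y_{k_0}-y^{\text{cd}}(x)\|$, which yields $\|\hat y(x)-y^{\text{cd}}(x)\|\le\rho$ after absorbing a harmless constant factor into the coefficients defining $K_2$ in \eqref{equation:requirement_for_K}. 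Convergence of the entire sequence $\{y_k\}$ to $y^{\text{cd}}(x)$ as $K\to\infty$ then follows from the standard second-order-stationarity property of cubic Newton accumulation points, combined with isolation of stationary points implied by the Morse property on $\mathcal{X}\setminus\widetilde{\mathcal{X}}_\delta$.

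The hard part will be the saddle-avoidance step inside Phase~1: certifying that $y_{k^\ast}$ lands in a \emph{minimizer} basin rather than around a saddle or a local maximum. This is precisely where the negative-curvature half of Algorithm~\ref{alg:cubic_newton}'s stationarity criterion is essential, and where the $x\notin\widetilde{\mathcal{X}}_\delta$ hypothesis cannot be dropped: without the uniform Hessian gap $\mu(\delta)$ delivered by Lemma~\ref{lemma:eigenvalue} and the gradient gap $\alpha(\delta,r)$ from Lemma~\ref{lemma:gradient_lower_bound}, the two thresholds would shrink to zero, the required iteration budget $K_1$ would blow up, and the basins of attraction around distinct stationary points would cease to be separated.
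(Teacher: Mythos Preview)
Your proposal is correct and follows essentially the same two-phase argument as the paper: Phase~1 uses the Nesterov--Polyak complexity bound together with Lemmas~\ref{lemma:eigenvalue} and~\ref{lemma:gradient_lower_bound} to land an iterate in the strongly-convex basin of a local minimizer (the paper also explicitly excludes balls around minimizers lying in $\mathcal{Y}'\setminus\mathcal{Y}$ and invokes \citet[Lemma~2]{nesterov2006cubic} for the sublevel-set trapping, but these are exactly the routine details you flagged), and Phase~2 re-applies the bound inside that basin. One minor correction: the $K_2$ in \eqref{equation:requirement_for_K} is calibrated so that the Phase~2 gradient bound comes out as $\mu(\delta)\rho/2$ rather than $\mu(\delta)\rho$, which combined with $(\mu(\delta)/2)$-strong convexity gives $\|\hat y(x)-y^{\text{cd}}(x)\|\le\rho$ on the nose---so no constant-absorption into $K_2$ is needed.
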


\begin{proof}
    See Appendix~\ref{proof:lemma:approximationerror}.
\end{proof}

Lemma \ref{lemma:approximationerror} indicates that, once we are outside the $\delta$-neighbourhood of the bifurcation set, $K$ steps of the cubic-regularised Newton method already deliver an $\rho$-accurate lower-level solution. Section \ref{sec:measureofbifurcationpoints} in turn shows that this $\delta$-neighborhood occupies only a vanishingly small portion of the domain. Taken together, these two facts imply that the bias in our hypergradient estimator is dominated by an $\rho$-term from the regular region $\mathcal{X}\setminus\widetilde{\mathcal{X}}_\delta$ and a $\delta$-term arising from the $\delta$-neighborhood $\widetilde{\mathcal{X}}_\delta$. The next proposition formalizes this conclusion by giving an explicit upper bound on the gap between the exact gradient \eqref{eq:exactgradient} and the inexact gradient \eqref{eq:inexactgradient}.

\begin{proposition}\label{proposition:gradient_error}
Suppose Assumptions~\ref{assumption:method}-\ref{assumption:discontinuous_point},~\ref{assumption:Minkowski_dimension} hold, and the iteration number of the inner loop $K$ satisfies \eqref{equation:requirement_for_K} depends on $\delta$. Define $\nabla^K_x\phi^{\text{cd}}_\xi(x)$ as follows:
\begin{align}
\nabla^K_x\phi^{\text{cd}}_\xi(x):=\int_{\mathbb{R}^n}\nabla_xh_\xi(x-z)f(z,\hat{y}(z))dz,
\end{align}
where $\hat{y}(z)$ denotes the approximate lower-level solution obtained by applying $K$ steps of the cubic-regularized Newton method (see Algorithm~\ref{alg:cubic_newton}) at the point $z$. We have the following approximation error between exact gradient \eqref{eq:exactgradient} and inexact gradient \eqref{eq:inexactgradient} 
    \begin{align*}
        \left\|\nabla_x\pcd_{\xi }(x)-\nabla^K_x\phi^{\text{cd}}_\xi(x)\right\|\leq C_1(n,\xi)\rho+C_2(n,\xi)\delta^{(n-d)/2},
    \end{align*}
for some constant $C_1(n,\xi)$, $C_2(n,\xi)$ which depend on $n$ and $\xi$, where $d$ is the Minkowski dimension of $\widetilde{\mathcal{X}}$ assumed in Assumption~\ref{assumption:Minkowski_dimension}.

\end{proposition}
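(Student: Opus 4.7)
The plan is to start from the integral representation of the gradient gap and then follow exactly the partition in \eqref{eq:splitofintegral2}, bounding each piece by a separate mechanism. Subtracting \eqref{eq:inexactgradient} from \eqref{eq:exactgradient} gives
\[
\nabla_x\pcd_\xi(x)-\nabla^K_x\pcd_\xi(x)
=\int_{\R^n}\nabla_x h_\xi(x-z)\bigl[f(z,y^{\mathrm{cd}}(z))-f(z,\hat y(z))\bigr]\,dz.
\]
Recall from the discussion around \eqref{eq:extension} that on $\R^n\setminus\mathcal X$ both $f(\cdot,y^{\mathrm{cd}}(\cdot))$ and $f(\cdot,\hat y(\cdot))$ are set to the constant $\overline f$, so the integrand vanishes there. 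The problem therefore reduces to bounding the integral over $\mathcal X$, which I would split as $\mathcal X=\widetilde{\mathcal X}_\delta\cup(\mathcal X\setminus\widetilde{\mathcal X}_\delta)$.

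For the piece over $\widetilde{\mathcal X}_\delta$, I would use the crude but dimension-free bound $|f(z,y^{\mathrm{cd}}(z))-f(z,\hat y(z))|\le 2\overline f$ from Proposition~\ref{prop:general}(\ref{assumption:general1(8)}) together with a pointwise bound on $\|\nabla_x h_\xi\|$. Explicitly, writing $\nabla_x h_\xi(x-z)=-\xi^{-2}(x-z)h_\xi(x-z)$, the function $u\mapsto\|u\|\,h_\xi(u)/\xi^2$ attains its maximum at $\|u\|=\xi$, giving a pointwise constant of order $\xi^{-(n+1)}$. Combining this with Lemma~\ref{lem:measureofneighborhood}, which controls the Lebesgue measure $\lambda(\delta)\le C\delta^{(n-d)/2}$, yields a bound of the form $C_2(n,\xi)\,\delta^{(n-d)/2}$ for this piece.

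For the piece over $\mathcal X\setminus\widetilde{\mathcal X}_\delta$, I would use the Lipschitz continuity of $f$ from Proposition~\ref{prop:general}(\ref{assumption:general1(10)}) together with Lemma~\ref{lemma:approximationerror}, which, under the choice of $K$ satisfying \eqref{equation:requirement_for_K}, guarantees $\|\hat y(z)-y^{\mathrm{cd}}(z)\|\le\rho$ pointwise for $z\in\mathcal X\setminus\widetilde{\mathcal X}_\delta$. Consequently $|f(z,y^{\mathrm{cd}}(z))-f(z,\hat y(z))|\le L_f\rho$, and pulling this uniform constant outside the integral leaves
\[
L_f\rho\int_{\R^n}\|\nabla_x h_\xi(x-z)\|\,dz
=L_f\rho\cdot\xi^{-1}\E_{u\sim\mathcal N(0,I_n)}\|u\|,
\]
which is of the form $C_1(n,\xi)\,\rho$. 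Adding the two bounds gives the claimed inequality.

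The main obstacle is bookkeeping around the dependence on $\delta$ through $K$: Lemma~\ref{lemma:approximationerror} ensures $\|\hat y-y^{\mathrm{cd}}\|\le\rho$ only when $K\ge K_1(\delta)+K_2(\delta,\rho)$, and the constants $\mu(\delta)$ and $\alpha(\delta,\cdot)$ implicit in \eqref{equation:requirement_for_K} may degenerate as $\delta\to 0$. This is precisely why the first-piece bound is tolerated to be measure-based rather than accuracy-based: no matter how large the lower-level solver error inside $\widetilde{\mathcal X}_\delta$ is, we only pay the measure of that region, which vanishes at rate $\delta^{(n-d)/2}$. Thus the proof amounts to combining the two mechanisms and extracting the constants $C_1(n,\xi)$ and $C_2(n,\xi)$ explicitly, while emphasizing that $K$ must be chosen as a function of the target $\delta$ and $\rho$.
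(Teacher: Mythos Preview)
Your proposal is correct and mirrors the paper's proof essentially step for step: the same reduction from $\R^n$ to $\mathcal X$ via the extension \eqref{eq:extension}, the same split $\mathcal X=\widetilde{\mathcal X}_\delta\cup(\mathcal X\setminus\widetilde{\mathcal X}_\delta)$, the Lipschitz bound $L_f\rho$ together with $\int_{\R^n}\|\nabla_x h_\xi\|=\xi^{-1}\E\|u\|$ on the complement, and the $2\overline f\cdot\sup\|\nabla_x h_\xi\|\cdot\lambda(\delta)$ bound combined with Lemma~\ref{lem:measureofneighborhood} on $\widetilde{\mathcal X}_\delta$. The paper additionally writes out the explicit constants $C_1(n,\xi)=\sqrt{2}L_f\Gamma((n+1)/2)/(\Gamma(n/2)\xi)$ and $C_2(n,\xi)=2\overline f C e^{-1/2}/((2\pi)^{n/2}\xi^{n+1})$, but your sketch already identifies where each factor comes from.
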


\begin{proof}
    See Appendix~\ref{proof:proposition:gradient_error}.
\end{proof}

Proposition~\ref{proposition:gradient_error} provides an explicit bound on the error incurred when using approximate lower-level solutions to evaluate the smoothed hyperfunction gradient. This bound consists of two parts: a term depending on the approximation accuracy $\rho$ of the cubic Newton method, and a term due to the $\delta$-neighborhood around the bifurcation set $\widetilde{\mathcal{X}}$.

In practice, we do not compute $\nabla^K_x\phi^{\text{cd}}_\xi(x)$ exactly, but estimate it using Monte Carlo sampling \eqref{eq:estimator}. This results in an additional variance term due to stochastic sampling. We now combine the bias bound from Proposition~\ref{proposition:gradient_error} with the variance bound of the Monte Carlo estimator to obtain a complete characterization of the stochastic gradient.  We consider the following biased stochastic gradient descent (\texttt{SGD}) update
\begin{align}\label{eq:update}
    x_{t+1}=\proj_{\mathcal{X}}(x_t-\beta_t\hat{\nabla}^{K_t}_x\phi^{\text{cd}}_{\xi }(x_t)),\quad\quad t=0,1,2,...,
\end{align}
where at every iteration, the search direction is computed using the following Monte-Carlo estimator: 
\begin{align}
    \hat{\nabla}^K_x\phi^{\text{cd}}_{\xi }(x)=\frac{1}{N\xi}\sum_{i=1}^Nu^{(i)}f(x+\xi u^{(i)},\hat{y}(x+\xi u^{(i)}))
\end{align}
Here $N$ denotes the number of Monte Carlo samples drawn independently from the standard normal distribution. Each $u^{(i)}\sim\mathcal{N}(0,I_n)$ is sampled independently from the standard multivariate Gaussian distribution in $\R^n$. This estimator is a biased estimator of the exact gradient $\nabla_x\pcd_{\xi }(x)$. The bias, defined as the difference between its expectation and the exact gradient
$$\left\|\E\left[\hat{\nabla}^K_x\phi^{\text{cd}}_{\xi }(x)\right]-\nabla_x\pcd_{\xi }(x)\right\|=\left\|\nabla^K_x\phi^{\text{cd}}_\xi(x)-\nabla_x\pcd_{\xi }(x)\right\|,$$
is bounded by Proposition~\ref{proposition:gradient_error}. The variance of the estimator is analyzed as follows: Let
\begin{align}
    \zeta:=\frac{1}{\xi }uf(x+\xi u,\hat{y}(x+\xi u)),\quad\quad\text{where }u\sim\mathcal{N}(0,I_n).
\end{align}
Then $\hat{\nabla}^K_x\phi^{\text{cd}}_{\xi }(x)$ is the empirical average of $N$ i.i.d. samples of $\zeta$. Note that we assume that the number of lower-level iterations satisfies $K\geq K_0$ in Assumption \ref{assumption:descent}, so $\hat{y}(x+\xi u))$ is always in the level set $\{y:g(x+\xi u,y)\leq g(x+\xi u,y_0)\}$. By Proposition~\ref{prop:general}(\ref{assumption:general1(8)}), we have
$$|f(x+\xi u,\hat{y}(x+\xi u))|\leq \overline{f}.$$
Thus, for each coordinate $j=1,..,n$, we obtain 
$$\E[\zeta^2_j]=\frac{1}{\xi^2}\E[u_j^2f(x+\xi u,\hat{y}(x+\xi u))^2]\leq\frac{1}{\xi^2}\E[u^2_j(\overline{f})^2]=\frac{(\overline{f})^2}{\xi ^2}.$$
This implies
$$\text{Var}(\zeta_j)\leq\frac{(\overline{f})^2}{\xi ^2},\quad\text{Var}\left(\left(\hat{\nabla}^K_x\phi^{\text{cd}}_{\xi }(x)\right)_j\right)\leq\frac{(\overline{f})^2}{N\xi ^2}.$$
Summing over all coordinates $j=1,\cdots, n$, the total variance satisfies
\begin{align}
    \E\left[\left\|\hat{\nabla}^K_x\phi^{\text{cd}}_{\xi }(x)-\nabla^K_x\phi^{\text{cd}}_\xi(x)\right\|^2\right]\leq\frac{n(\overline{f})^2}{N\xi^2}.
\end{align}

With the bias and variance bounds in place, we proceed to analyze the convergence of the proposed algorithm. The statement and proof of the following Theorem~\ref{thm:biased_sgd_conv} can be viewed as a variant of \citet[Theorem6.6]{lan2020first}: in \citet[Theorem6.6]{lan2020first}, the analysis is carried out for mirror descent methods, whereas here we specialize \citet[Equation~(6.2.1)]{lan2020first} to the case $h\equiv 0$ and take $V$ in \citet[Equation~(6.2.6)]{lan2020first} to be the squared Euclidean distance, which yields projected \texttt{SGD}. The key difference from \citet[Theorem~6.6]{lan2020first} is that our gradient estimator is biased, while theirs is unbiased.

\begin{theorem}\label{thm:biased_sgd_conv}
Suppose Assumptions~\ref{assumption:method}-\ref{assumption:discontinuous_point},~\ref{assumption:Minkowski_dimension} hold. Let $\{\rho_t\}$, $\{\delta_t\}$ and $\{N_t\}$ be sequences of positive numbers. At each iteration $t$, we solve the lower-level problem $g(x_t,y)$ using the cubic-regularized Newton method (Algorithm~\ref{alg:cubic_newton}) with $M=\overline{\overline{L}}_g$ from Assumption~\ref{assumption:general1}(\ref{assumption:general1(11)}) for $K_t$ steps, where $K_t$ satisfies 
\begin{align}\label{eq:requirementforkt}
        K_t>\max\left\{\frac{256(\overline{g}_0-\underline{g})\left(\overline{\overline{L}}_g\right)^{1/2}}{9\left(\min\left\{\alpha(\delta_t,\frac{\mu(\delta_t)}{2\overline{\overline{L}}_g}),\frac{(\mu(\delta_t))^2}{4\overline{\overline{L}}_g}\right\}\right)^{3/2}},\frac{768(\overline{g}_0-\underline{g})\overline{\overline{L}}^2_g}{\left(\mu(\delta_t)\right)^3}\right\}+\frac{256(\overline{g}_0-\underline{g})\left(\overline{\overline{L}}_g\right)^{1/2}}{9(\mu(\delta_t)\rho_t)^{3/2}}.
    \end{align} 
We bound the bias and variance as follows
\begin{align}\label{eq:bias}
\left\|\E\left[\hat{\nabla}^K_x\phi^{\text{cd}}_{\xi }(x)\right]-\nabla_x\pcd_{\xi }(x)\right\| & =\left\|\nabla^K_x\phi^{\text{cd}}_\xi(x)-\nabla_x\pcd_{\xi }(x)\right\|\nonumber\\
& \leq C_1(n,\xi)\cdot\rho +C_2(n,\xi)\cdot\delta^{(n-d)/2}=:\Delta(\rho,\delta)
\end{align}
\begin{align}\label{eq:variance}
\E\left[\left\|\hat{\nabla}^K_x\phi^{\text{cd}}_{\xi }(x)-\nabla^K_x\phi^{\text{cd}}_\xi(x)\right\|^2\right]\leq \frac{n(\overline{f})^2}{N\xi^2}=:\sigma^2(N).
\end{align}
Suppose the sequence $\{x_t\}$ is generated by Algorithm~\ref{alg:main} with step size $\beta_t<1/\overline{L}_{\phi^{\text{cd}}_{\xi}}$ for each iteration, where $\overline{L}_{\phi^{\text{cd}}_{\xi}}$ is from Proposition~\ref{proposition:lipschitz}, and the probability mass function $P_R(t)$ of the random index $R$ is chosen such that for any $t=1,...,T$
\begin{align*}
    P_R(t):=\mathrm{Prob}\{R=t\}=\frac{\beta_t-\overline{L}_{\phi^{\text{cd}}_{\xi}}\beta_t^2}{\sum_{t=1}^T(\beta_t-\overline{L}_{\phi^{\text{cd}}_{\xi}}\beta_t^2)}.
\end{align*}
Then for any $T \ge 1$, for the random index $R$ drawn according to the probability mass function $P_R(t)$ given above, we have
\begin{align*}
    \E[\|\widetilde{\Phi}_{\mathcal{X},R}\|^2]\leq\frac{2\overline{f}+\sum_{t=1}^T\sqrt{\frac{2}{\pi}}\frac{\overline{f}}{\xi}\Delta(\rho_t,\delta_t)\beta_t+\sum_{t=1}^T(\sigma^2(N_t)+(\Delta(\rho_t,\delta_t))^2)\beta_t}{\sum_{t=1}^T(\beta_t-\overline{L}_{\phi^{\text{cd}}_{\xi}}\beta_t^2)},
\end{align*}
where $\overline{f}$ is from Proposition~\ref{prop:general}(\ref{assumption:general1(8)}), and $\widetilde{\Phi}_{\mathcal{X},t}$ is defined as follow:
$$\widetilde{\Phi}_{\mathcal{X},t}:=\frac{x_t-\proj_{\mathcal{X}}(x_t-\beta_t \hat{\nabla}^{K_t}_x\phi^{\text{cd}}_{\xi }(x_t))}{\beta_t},$$
which coincides with the projected gradient mapping in the randomized setting considered in \cite[Theorem~6.6]{lan2020first}. In particular, choosing 
$$\rho_t=\frac{1}{t+1},\quad\delta_t=\frac{1}{(t+1)^{2/(n-d)}},\quad N_t=t+1$$
and constant step size $\beta_t=\beta<1/\overline{L}_{\phi^{\text{cd}}_{\xi}}$ yields the following convergence result
\begin{align}\label{eq:rate}
    \E[\|\widetilde{\Phi}_{\mathcal{X},R}\|^2]\leq\mathcal{O}\left(\frac{\log T}{T}\right).
\end{align}
\end{theorem}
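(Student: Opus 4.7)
The plan is to follow the standard descent-lemma analysis for nonconvex projected SGD with a biased stochastic oracle, adapting the argument of \cite[Theorem~6.6]{lan2020first} to our setting, and then to substitute the prescribed schedules to read off the rate. Set $\nabla_t:=\nabla_x\pcd_\xi(x_t)$, $g_t:=\hat{\nabla}^{K_t}_x\pcd_\xi(x_t)$, $b_t:=\E[g_t\mid x_t]-\nabla_t$, and $\epsilon_t:=g_t-\E[g_t\mid x_t]$; Proposition~\ref{proposition:gradient_error} gives $\|b_t\|\leq\Delta(\rho_t,\delta_t)$ whenever $K_t$ meets \eqref{eq:requirementforkt}, and \eqref{eq:variance} yields $\E[\|\epsilon_t\|^2\mid x_t]\leq\sigma^2(N_t)$. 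First I would invoke the Lipschitz-smoothness descent lemma for $\pcd_\xi$ (Proposition~\ref{proposition:lipschitz}) together with the projection optimality condition $\langle g_t,\widetilde{\Phi}_{\mathcal{X},t}\rangle\geq\|\widetilde{\Phi}_{\mathcal{X},t}\|^2$, which rewrites the inner product with the true gradient as $\|\widetilde{\Phi}_{\mathcal{X},t}\|^2$ plus an error term $\langle g_t-\nabla_t,\widetilde{\Phi}_{\mathcal{X},t}\rangle$. Young's inequality with parameter $\overline{L}_{\phi^{\text{cd}}_{\xi}}\beta_t$ absorbs this cross term into an additional $\tfrac{\overline{L}_{\phi^{\text{cd}}_{\xi}}\beta_t^2}{2}\|\widetilde{\Phi}_{\mathcal{X},t}\|^2$ piece, which, merged with the smoothness term, produces the $\overline{L}_{\phi^{\text{cd}}_{\xi}}\beta_t^2$ coefficient appearing in the denominator, plus a $\tfrac{1}{2\overline{L}_{\phi^{\text{cd}}_{\xi}}}\|g_t-\nabla_t\|^2$ residual, giving the one-step inequality
\begin{equation*}
(\beta_t-\overline{L}_{\phi^{\text{cd}}_{\xi}}\beta_t^2)\|\widetilde{\Phi}_{\mathcal{X},t}\|^2 \leq \pcd_\xi(x_t)-\pcd_\xi(x_{t+1}) + R_t,
\end{equation*}
where $R_t$ collects the residual contributions of the inexact gradient.

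Next I would split $R_t$ into bias and variance parts. Letting $\widetilde{\Phi}^\ast_{\mathcal{X},t}$ denote the projected-gradient mapping built from the true $\nabla_t$, the nonexpansiveness of $\proj_{\mathcal{X}}$ yields $\|\widetilde{\Phi}_{\mathcal{X},t}-\widetilde{\Phi}^\ast_{\mathcal{X},t}\|\leq\|g_t-\nabla_t\|$, which lets me replace $\widetilde{\Phi}_{\mathcal{X},t}$ by $\widetilde{\Phi}^\ast_{\mathcal{X},t}$ in cross terms at the cost of a quadratic residual. Taking conditional expectation then kills the noise contribution, since $\E[\epsilon_t\mid x_t]=0$ while $\widetilde{\Phi}^\ast_{\mathcal{X},t}$ is $x_t$-measurable, leaving only the variance piece $\E[\|\epsilon_t\|^2\mid x_t]\leq\sigma^2(N_t)$. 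For the bias, Cauchy-Schwarz together with the uniform gradient bound $\|\widetilde{\Phi}^\ast_{\mathcal{X},t}\|\leq\|\nabla_t\|\leq\sqrt{2/\pi}\,\overline{f}/\xi$ from Proposition~\ref{proposition:boundedgradient} produces precisely the $\sqrt{2/\pi}(\overline{f}/\xi)\Delta(\rho_t,\delta_t)\beta_t$ term in the numerator, while the $\Delta^2(\rho_t,\delta_t)\beta_t$ contribution arises from $\|b_t\|^2$ when Young's inequality is applied to the remaining bias cross pieces.

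Finally I would sum the resulting inequalities over $t=1,\dots,T$, telescope the left-hand side, and use $|\pcd_\xi|\leq\overline{f}$ (which follows from \eqref{eq:boundoff} together with the extension \eqref{eq:extension}) to bound $\pcd_\xi(x_1)-\E[\pcd_\xi(x_{T+1})]\leq 2\overline{f}$. Dividing by $\sum_t(\beta_t-\overline{L}_{\phi^{\text{cd}}_{\xi}}\beta_t^2)>0$ and recognizing the resulting weighted average as an expectation under $P_R$ yields the stated bound on $\E[\|\widetilde{\Phi}_{\mathcal{X},R}\|^2]$. Substituting $\rho_t=1/(t+1)$, $\delta_t=1/(t+1)^{2/(n-d)}$, and $N_t=t+1$ gives $\Delta(\rho_t,\delta_t)=O(1/t)$ and $\sigma^2(N_t)=O(1/t)$, so the numerator is $2\overline{f}+O(\sum_{t=1}^T 1/t)=O(\log T)$ while a constant $\beta<1/\overline{L}_{\phi^{\text{cd}}_{\xi}}$ makes the denominator $\Theta(T)$, establishing the $O(\log T/T)$ rate. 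The hardest step will be the bias handling: unlike standard unbiased SGD, where the zero-mean property of the noise eliminates the corresponding cross term outright, here I must leverage the uniform gradient bound of the smoothed hyperfunction to keep the bias cross term proportional to $\Delta(\rho_t,\delta_t)$ rather than $\Delta(\rho_t,\delta_t)\|\widetilde{\Phi}_{\mathcal{X},t}\|$, which is essential for preserving a useful $\beta_t-\overline{L}_{\phi^{\text{cd}}_{\xi}}\beta_t^2$ denominator and not polluting the leading-order coefficient.
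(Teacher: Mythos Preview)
Your plan matches the paper's argument in substance—descent lemma, the projection inequality $\langle g_t,\widetilde{\Phi}_{\mathcal{X},t}\rangle\ge\|\widetilde{\Phi}_{\mathcal{X},t}\|^2$, the split via the true projected-gradient map $\Phi_{\mathcal{X},t}$ (your $\widetilde{\Phi}^\ast_{\mathcal{X},t}$), nonexpansiveness, the gradient bound from Proposition~\ref{proposition:boundedgradient}, telescoping with $|\pcd_\xi|\le\bar f$, and the schedule substitution—but your first and second paragraphs describe two incompatible treatments of the cross term.

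If you really apply Young's inequality to the \emph{whole} of $\beta_t\langle g_t-\nabla_t,\widetilde{\Phi}_{\mathcal{X},t}\rangle$ as in your first paragraph, the only residual is $\tfrac{1}{2\overline{L}_{\phi^{\text{cd}}_\xi}}\|g_t-\nabla_t\|^2$; there are no ``cross terms'' left for paragraph two to process, and after expectation you get $\tfrac{1}{2\overline{L}_{\phi^{\text{cd}}_\xi}}(\sigma^2(N_t)+\Delta^2(\rho_t,\delta_t))$—a valid bound with the same $O(\log T/T)$ rate, but \emph{not} the displayed inequality (the linear-in-$\Delta$ term is absent and the quadratic residual carries $1/(2\overline{L}_{\phi^{\text{cd}}_\xi})$ rather than $\beta_t$). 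The paper instead skips Young entirely and does exactly what your second paragraph outlines: with $\gamma_t:=g_t-\nabla_t$, write
\[
\beta_t\langle\gamma_t,\widetilde{\Phi}_{\mathcal{X},t}\rangle=\beta_t\langle\gamma_t,\Phi_{\mathcal{X},t}\rangle+\beta_t\langle\gamma_t,\widetilde{\Phi}_{\mathcal{X},t}-\Phi_{\mathcal{X},t}\rangle,
\]
bound the second piece directly by $\beta_t\|\gamma_t\|^2$ via $\|\widetilde{\Phi}_{\mathcal{X},t}-\Phi_{\mathcal{X},t}\|\le\|\gamma_t\|$, and take conditional expectation of the first so only the bias survives, controlled by $\|\Phi_{\mathcal{X},t}\|\le\|\nabla_t\|\le\sqrt{2/\pi}\,\overline{f}/\xi$. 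This leaves the coefficient $(\beta_t-\tfrac{1}{2}\overline{L}_{\phi^{\text{cd}}_\xi}\beta_t^2)$ on $\|\widetilde{\Phi}_{\mathcal{X},t}\|^2$, which is then relaxed to $(\beta_t-\overline{L}_{\phi^{\text{cd}}_\xi}\beta_t^2)$ to match the $P_R$ weights. So drop the Young step from paragraph one and run paragraph two as the main argument; everything else in your plan is correct.
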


\begin{proof}
    See Appendix~\ref{proof:thm:biased_sgd_conv}.
\end{proof}

\begin{corollary}
Under the setting of Theorem~\ref{thm:biased_sgd_conv}, we let $\rho_t=1/(t+1)$, $\delta_t=(t+1)^{-2/(n-d)}$, $N_t=t+1$ and use a constant
step size $\beta_t=\beta<\overline{L}_{\phi^{\text{cd}}_{\xi}}/2$.
To achieve $\mathbb{E}\big[\|\widetilde{\Phi}_{\mathcal X,R}\|^{2}\big]\le \epsilon^{2}$, the total number of function-value oracle calls $Q_f(\epsilon)=\sum_{t=1}^{T}N_t$ satisfies $Q_f(\epsilon)=\widetilde{O}(\epsilon^{-4})$.
\end{corollary}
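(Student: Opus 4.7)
The plan is to directly combine the convergence rate~\eqref{eq:rate} established in Theorem~\ref{thm:biased_sgd_conv} with the specified sample schedule $N_t = t+1$, and then invert the rate to read off the required number of outer iterations $T$ in terms of the target accuracy $\epsilon$. First, I would verify that the chosen parameters $\rho_t=1/(t+1)$, $\delta_t=(t+1)^{-2/(n-d)}$, $N_t=t+1$ and constant step size $\beta_t=\beta$ satisfy the hypotheses of Theorem~\ref{thm:biased_sgd_conv}. Under these choices, the bias quantity in~\eqref{eq:bias} becomes $\Delta(\rho_t,\delta_t) = C_1(n,\xi)/(t+1) + C_2(n,\xi)/(t+1) = \mathcal{O}(1/(t+1))$, so $\sum_{t=1}^{T}\Delta(\rho_t,\delta_t)\beta_t = \mathcal{O}(\log T)$. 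Similarly, the variance quantity in~\eqref{eq:variance} satisfies $\sigma^2(N_t) = n\overline{f}^2/((t+1)\xi^2) = \mathcal{O}(1/(t+1))$, so $\sum_{t=1}^T (\sigma^2(N_t)+(\Delta(\rho_t,\delta_t))^2)\beta_t = \mathcal{O}(\log T)$. Dividing by the denominator $\sum_{t=1}^T (\beta_t - \overline{L}_{\phi^{\text{cd}}_{\xi}}\beta_t^2) = \Theta(T)$ reproduces the rate $\mathbb{E}[\|\widetilde{\Phi}_{\mathcal{X},R}\|^2] = \mathcal{O}(\log T/T)$ asserted in~\eqref{eq:rate}.

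Second, I would invert this rate: choosing $T = \Theta(\log(1/\epsilon)/\epsilon^2)$ makes $\log T/T = \mathcal{O}(\epsilon^2)$, so the condition $\mathbb{E}[\|\widetilde{\Phi}_{\mathcal{X},R}\|^2] \le \epsilon^2$ is satisfied for $T = \widetilde{O}(\epsilon^{-2})$.

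Finally, since $N_t = t+1$, the total number of upper-level function-value oracle calls satisfies
\[
Q_f(\epsilon) = \sum_{t=1}^T N_t = \sum_{t=1}^T (t+1) = \frac{T(T+3)}{2} = \mathcal{O}(T^2) = \widetilde{O}(\epsilon^{-4}),
\]
which is the claimed bound. There is no substantive obstacle here; the corollary is essentially an arithmetic consequence of the rate~\eqref{eq:rate} together with the explicit sample schedule $N_t = t+1$. The only care needed is to track the logarithmic factors hidden in the $\widetilde{O}$ notation when inverting the $\log T/T$ bound, together with a brief sanity check that the step size restriction in the corollary statement is compatible with $\beta_t - \overline{L}_{\phi^{\text{cd}}_{\xi}}\beta_t^2 = \Theta(\beta)$, so that the denominator in the bound of Theorem~\ref{thm:biased_sgd_conv} truly grows linearly in $T$.
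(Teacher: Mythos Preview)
Your proposal is correct and matches the paper's treatment: the paper states this corollary without a separate proof, treating it as an immediate arithmetic consequence of the rate~\eqref{eq:rate} in Theorem~\ref{thm:biased_sgd_conv} together with $N_t=t+1$, which is exactly the computation you outline. Your care about the step-size condition is well placed, since the stated bound $\beta<\overline{L}_{\phi^{\text{cd}}_{\xi}}/2$ appears to be a typo for $\beta<1/(2\overline{L}_{\phi^{\text{cd}}_{\xi}})$, which is what is needed to ensure $\beta-\overline{L}_{\phi^{\text{cd}}_{\xi}}\beta^2\ge\beta/2$ and hence a denominator of order $\Theta(T)$.
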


However, in order to satisfy the requirement \eqref{eq:requirementforkt}, the number of inner iterations $K_t$ needed at each step remains unclear under the current assumptions. Specifically, the decay rates of $\mu(\delta)$ and $\alpha(\delta,\mu(\delta)/(2\overline{\overline{L}}_g))$ with respect to $\delta$ are unknown. To better understand the decay rate, we further investigate the structure of the bifurcation points of $g$ in the next section.

\subsection{Lower-level Oracle Complexity under Fold Bifurcation Assumption}\label{sec:foldbifurcation}

The upper-level convergence result obtained in the previous section relies on the requirement \eqref{eq:requirementforkt}, yet the exact lower-level oracle complexity remains unclear because it depends on the decay rates of $\mu(\delta)$ and $\alpha(\delta,\mu(\delta)/(2\overline{\overline{L}}g))$ as $\delta \to 0$. These rates are determined by intrinsic properties of $g(x,y)$, in particular by the nature of degenerate stationary points where $\nabla^2_{yy}g(x,y)$ becomes singular. To better characterize the local structure of such degenerate points, we draw upon bifurcation theory, which provides a framework for classifying equilibrium degeneracies in dynamical systems. 

Consider, for example, a parameterized system
\begin{align}\label{eq:dynamicsystem}
\dot{y} = F(x, y),
\end{align}
where $x \in \mathbb{R}^n$ is a parameter and $y \in \mathbb{R}^m$ is the state variable.
A central question in bifurcation theory is how the qualitative behavior of solutions to this system changes as $x$ varies. In particular, bifurcation occurs when small perturbations in $x$ induce sudden changes in the number or stability of equilibria, i.e., solutions $y$ satisfying $F(x,y)=0$.
Importantly, bifurcation theory provides a framework for classifying equilibrium degeneracies and shows that different types, such as fold and cusp, exhibit fundamentally distinct local behaviors.
These suggest that, in principle, different classes of degenerate stationary points can be analyzed separately within bilevel optimization.

In the bilevel setting, we regard $\nabla_y g(x,y)$ as analogous to $F(x,y)$, which connects the analysis of degenerate stationary points to classical results from bifurcation theory.
Since a comprehensive characterization of bifurcation structures in high-dimensional parameter and state spaces remains elusive, we focus on a prototypical and well-understood case: the fold bifurcation, one of the simplest yet most fundamental bifurcation types.
We introduce the fold bifurcation of stationary points as follows, with its classical definition provided in~\cite{kuznetsov1998elements,guckenheimer2013nonlinear}.

\begin{definition}[Fold Bifurcation of Stationary Points]\label{def:fold}
    Suppose $g(x,y)$ is three times continuously differentiable and $(\bar{x},\bar{y})$ is a stationary point of $g(x,y)$ with respect to $y$. We say $\bar{x}$ is a fold bifurcation point associated with the stationary point $\bar{y}$ if the following holds:
    \begin{enumerate}
        \item $\nabla^2_{yy}g(\bar{x},\bar{y})$ has exactly one zero eigenvalue and all other eigenvalues are nonzero;
        \item The unit eigenvector $v$ corresponding to the zero eigenvalue of $\nabla^2_{yy}g(\bar{x},\bar{y})$ satisfies
        $$(\nabla_x (\nabla_y g(x,y)^\top v))\big|_{(\bar{x},\bar{y})}\ne 0;$$
        \item Let $\nabla^3_{yyy} g$ denote the third-order derivative tensor field of $g$ with respect to $y$, and $v$ denote the unit eigenvector corresponding to the zero eigenvalue, then the following holds
        $$\nabla^3_{yyy} g(\bar{x},\bar{y})[v,v,v]\ne 0.$$
    \end{enumerate}
\end{definition}

The first condition in Definition~\ref{def:fold} ensures that the characteristic polynomial of the Hessian has a simple zero root, while the third condition in Definition~\ref{def:fold} rules out third-order degeneracy along the corresponding eigenvector $v$. 

The second condition in Definition~\ref{def:fold} requires that a tiny change of the parameter in some direction instantly changes the stationarity condition along the degenerate direction $v$. It is a prevalent condition, with prevalence understood in the parametric-family sense discussed in Remark~\ref{rmk:generic-vs-ae}. Specifically, it is not hard to see that the second condition is equivalent to requiring that the matrix
\begin{align}\label{eq:full_row_rank}
\begin{bmatrix}
\nabla^2_{yx} g(x,y) & \nabla^2_{yy} g(x,y)
\end{bmatrix}
\end{align}
has full row rank at any stationary point $(x,y)$ of $g(x,y)$ with respect to $y$. To justify this equivalent condition is prevalent, we observe that $\nabla_y g(x,y)$ can be viewed as a vector-valued map from $\mathcal{X} \times \mathbb{R}^m$ to $\mathbb{R}^m$. By applying Sard's theorem, one can conclude that after an arbitrarily small linear perturbation of $g$ with respect to $y$, its Jacobian \eqref{eq:full_row_rank} is non-degenerate with probability one. 

To illustrate the notion of a fold bifurcation point from Definition~\ref{def:fold}, we present the following example.

\begin{example}\label{example:foldbifurcation}
    Let $x=(x_1,x_2)\in\R^2$ and $y\in\R$. Consider $g(x,y)$ that is globally defined, but whose local behavior near $y=0$ (for $x_1\in[0,1]$, with $x_2$ arbitrary) is given by the expression
    \begin{align*}
        g(x,y)=(1-2x_1)y+(3x_1-2x_1^2)y^3.
    \end{align*}
    It is easy to compute that the stationary points of $g(x,y)$ with respect to $y$ near $y=0$ are
    \begin{align*}
        y^\ast(x)=\begin{cases}
            \emptyset\quad&\text{when}\quad 0\leq x_1<1/2\\
            \{0\}\quad&\text{when}\quad x_1=1/2\\
            \{\pm\sqrt{\frac{2x_1-1}{9x_1-6x_1^2}}\}\quad&\text{when}\quad 1/2<x_1\leq 1
        \end{cases}
    \end{align*}
    We check the three conditions in Definition~\ref{def:fold} at the stationary point $((1/2,x_2),0)$. 
    First, since there is only one $y$-variable, the Hessian $\nabla^2_{yy} g$ reduces to a scalar. At $(x_1,y)=(1/2,0)$ we obtain $\nabla^2_{yy} g=0$, which means the Hessian has exactly one zero eigenvalue, satisfying condition (1) in Definition~\ref{def:fold}. 
    Second, letting $v=1$ be the eigenvector corresponding to this zero eigenvalue, we compute
    $$
    \nabla_y g(x,y)=(1-2x_1)+3(3x_1-2x_1^2)y^2,
    $$
    and
    $$
    \frac{\partial}{\partial x_1}\nabla_y g(x,y)\big|_{(1/2,0)}=-2\neq 0,
    $$
    while the derivative with respect to $x_2$ vanishes. This shows that a perturbation of $x_1$ changes the stationarity condition along $v$, hence condition (2) in Definition~\ref{def:fold} is satisfied. 
    Finally, the third-order derivative at $(1/2,0)$ is
    \[
    \nabla^3_{yyy} g(1/2,0)=6\neq 0,
    \]
    verifying condition (3) in Definition~\ref{def:fold}. 
    Therefore, all three conditions in Definition~\ref{def:fold} hold, and $(1/2,x_2)$ is indeed a fold bifurcation point associated with the stationary point $y=0$.
    
    To gain intuition about the fold bifurcation of stationary points, we begin by examining the deformation of the function $g(x,y)$ as the parameter $x_1$ varies. Figure~\ref{fig:deformation} illustrates how the graph of $g(x,y)$ with respect to $y$ changes for different values of $x_1$. To further visualize the structure of the stationary point set, Figure~\ref{fig:3dfold} presents a 3D plot of the correspondence $x\mapsto y^\ast(x)$, where each point on the surface represents a stationary point of $g(x,y)$ with respect to $y$ near $y=0$.
    
    \begin{figure}[H]
    \centering
    \includegraphics[width=1\linewidth]{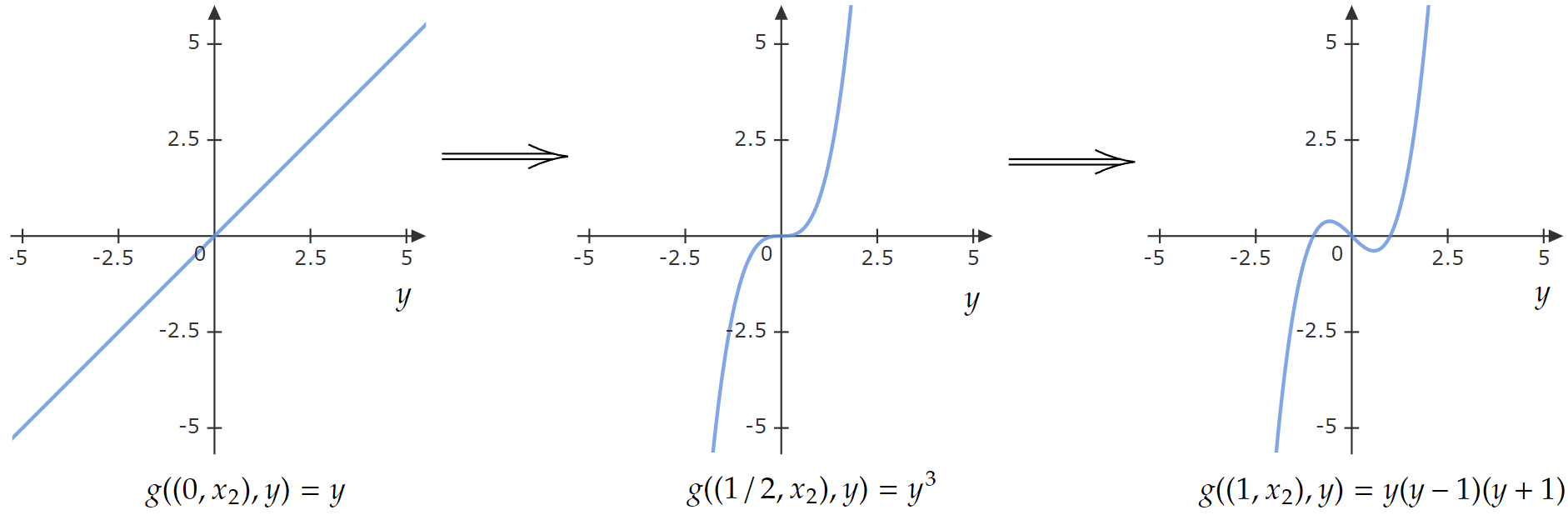}
    \caption{From left to right, the plots illustrate a typical fold bifurcation process in terms of the stationary point. When $x_1=0$, there is no stationary point near $y=0$. As the parameter increases, a degenerate stationary point emerges at $y=0$ when $x_1=1/2$. When $x_1=1$, this degenerate point has split into two non-degenerate stationary points near $y=0$, corresponding to one local minimum and one local maximum.}
    \label{fig:deformation}
\end{figure}

\begin{figure}[H]
    \centering
    \includegraphics[width=0.5\linewidth]{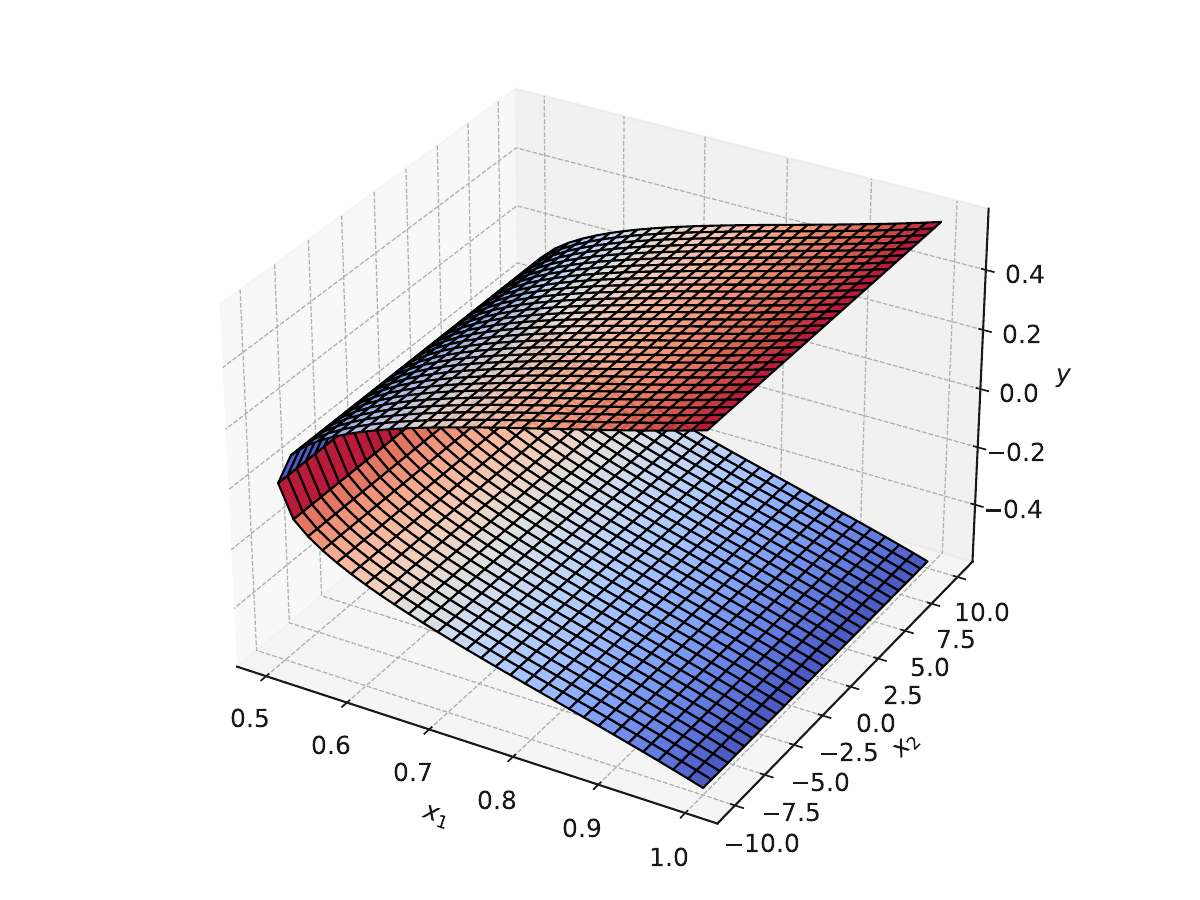}
    \caption{Visualization of the stationary point set $y^\ast(x)$ for $g(x,y)$.}
    \label{fig:3dfold}
\end{figure}

\end{example}

Under the assumption that all degenerate stationary points are fold bifurcation points, we obtain the following estimate for $\mu(\delta)$ and $\alpha(\delta,\mu(\delta)/(2\overline{\overline{L}}g))$:

\begin{theorem}\label{thm:globalversionmainthm}
    Suppose $g(x,y)$ is three times continuously differentiable, and all degenerate stationary points of $g(x,y)$ with respect to $y$ are fold bifurcation points defined in Definition~\ref{def:fold}, then there exist constants $D_1,D_2,D_3,D_4>0$ such that for any $\delta>0$
    \begin{align*}
    \mu(\delta)&\geq\min\{D_1\sqrt{\delta},D_2\}\\
        \alpha\left(\delta,\frac{\mu(\delta)}{2\overline{\overline{L}}_g}\right)&\geq\min\{D_3\delta,D_4\}
    \end{align*}
    where $\mu(\cdot)$ and $\mathcal{Y}'$ are from Lemma~\ref{lemma:eigenvalue} and $\alpha(\cdot,\cdot)$ is from Lemma~\ref{lemma:gradient_lower_bound}.
\end{theorem}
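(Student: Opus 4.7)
The plan is local-to-global: perform a Lyapunov--Schmidt reduction to the scalar fold normal form at every degenerate stationary point, then glue the resulting local estimates through a compactness argument. Introduce the degenerate stationary set
$$\mathcal{D}:=\{(\bar{x},\bar{y})\in(\widetilde{\mathcal{X}}\cap\mathcal{X})\times\mathcal{Y}':\nabla_y g(\bar{x},\bar{y})=0,\ \det\nabla^2_{yy}g(\bar{x},\bar{y})=0\}.$$
By Assumption~\ref{assumption:general1}(\ref{assumption:general1(2)}), Remark~\ref{remark:close}, and continuity of $\nabla_y g$ and $\det\nabla^2_{yy}g$, the set $\mathcal{D}$ is compact, and by hypothesis every point of $\mathcal{D}$ is a fold bifurcation point in the sense of Definition~\ref{def:fold}.

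First I would carry out the reduction at a fixed $(\bar{x},\bar{y})\in\mathcal{D}$. Let $v$ be the unit null eigenvector of $\nabla^2_{yy}g(\bar{x},\bar{y})$ and decompose $y=\bar{y}+\tau v+w$ with $w\perp v$. By Definition~\ref{def:fold}(1) the equation $P_{v^\perp}\nabla_y g(x,\bar{y}+\tau v+w)=0$ has invertible linearization in $w$ at the base point, so the implicit function theorem yields a smooth $w=w(x,\tau)$ on a neighborhood $U$ of $(\bar{x},0)$, and stationarity collapses to the scalar equation
$$h(x,\tau):=v^\top\nabla_y g\bigl(x,\bar{y}+\tau v+w(x,\tau)\bigr)=b^\top(x-\bar{x})+a\tau^2+R(x,\tau),$$
with $2a=\nabla^3_{yyy}g(\bar{x},\bar{y})[v,v,v]\neq 0$ and $b=\nabla_x(\nabla_y g^\top v)|_{(\bar{x},\bar{y})}\neq 0$ by Definition~\ref{def:fold}(2)--(3), and $R$ a higher-order remainder. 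Consequently the local branch of $\widetilde{\mathcal{X}}$ issuing from $\bar{x}$ is tangent to the hyperplane $\{b^\top(x-\bar{x})=0\}$ and satisfies $\mathrm{dist}(x,\widetilde{\mathcal{X}})\asymp|b^\top(x-\bar{x})|/\|b\|$ on $U$; at stationary points ($h=0$) this forces $|\tau|\asymp\sqrt{\mathrm{dist}(x,\widetilde{\mathcal{X}})}$, and since the smallest absolute eigenvalue of $\nabla^2_{yy}g$ equals $|v^\top\nabla^2_{yy}g\,v|+O(\|y-\bar{y}\|^2)=|2a\tau|+o(|\tau|)$, we obtain the local bound $\mu(\delta)|_U\geq c_{\bar{x},\bar{y}}\sqrt{\delta}$.

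Next I would cover $\mathcal{D}$ with finitely many such neighborhoods $U_1,\dots,U_L$ with constants $c_\ell$ and set $D_1:=\min_\ell c_\ell$. On the complementary compact part of the stationary set $\mathcal{S}:=\{(x,y)\in\mathcal{X}\times\mathcal{Y}':\nabla_y g(x,y)=0\}\setminus\bigcup_\ell U_\ell$, the Hessian $\nabla^2_{yy}g$ is uniformly non-degenerate with a positive lower bound $D_2$ by continuity and compactness, proving $\mu(\delta)\geq\min\{D_1\sqrt{\delta},D_2\}$. For the gradient bound, fix $x\in\mathcal{X}\setminus\widetilde{\mathcal{X}}_\delta$ and let $r:=\mu(\delta)/(2\overline{\overline{L}}_g)$. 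Taylor's theorem with the Hessian Lipschitz bound (Assumption~\ref{assumption:general1}(\ref{assumption:general1(11)})) yields, for $y^\ast\in\mathrm{Crit}(x)$ and $\|y-y^\ast\|\leq 2r$,
$$\|\nabla_y g(x,y)\|\geq\bigl(\mu(\delta)-\tfrac{\overline{\overline{L}}_g}{2}\|y-y^\ast\|\bigr)\|y-y^\ast\|\geq\tfrac{\mu(\delta)}{2}\|y-y^\ast\|,$$
and applied between two critical points forces pairwise separation $\geq 2r$, so every $y$ with $\mathrm{dist}(y,\mathrm{Crit}(x))\in[r,2r]$ lies in the Taylor regime of a unique $y^\ast$ and satisfies $\|\nabla_y g(x,y)\|\geq\mu(\delta)^2/(4\overline{\overline{L}}_g)\geq D_1^2\delta/(4\overline{\overline{L}}_g)$. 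For $y$ with $\mathrm{dist}(y,\mathrm{Crit}(x))>2r$ inside a fold neighborhood $U_\ell$, I would invoke the reduced equation: on the ``no-solution'' side of $\widetilde{\mathcal{X}}$, $|h|\geq|b^\top(x-\bar{x})|-O(\tau^2)\gtrsim\delta$; on the ``two-solution'' side, the product form $h\approx a(\tau-\tau_+)(\tau-\tau_-)$ with $|\tau_\pm|\asymp\sqrt{\delta}$ and $|\tau-\tau_\pm|\geq r$ again gives $\|\nabla_y g\|\gtrsim\delta$. Outside all fold neighborhoods, a compactness argument on the continuous positive function $\|\nabla_y g\|$ furnishes a $\delta$-independent constant $D_4$; taking the minimum over these regimes yields the second claim with $D_3:=D_1^2/(4\overline{\overline{L}}_g)$.

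The principal obstacle will be the ``far-field'' analysis on the side of $\widetilde{\mathcal{X}}$ where critical points have vanished (the ``no-solution'' side of a Whitney-type fold): no anchoring critical point is available, so one cannot use a Taylor expansion around $y^\ast$, and the $\delta$-rate must instead be extracted from the reduced scalar equation itself. Making this precise requires a quantitative implicit function theorem and uniform control of the remainder $R(x,\tau)$ across the finite cover of $\mathcal{D}$; the remaining pieces are routine Taylor and compactness arguments.
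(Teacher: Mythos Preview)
Your approach is essentially the paper's: local reduction to the scalar fold normal form at each degenerate stationary point, then a finite compactness cover of the degenerate set $\mathcal{D}$ to globalize both the eigenvalue and the gradient bounds, with the same case split (no-solution side versus two-solution side of the fold) for the far-field gradient estimate. The only cosmetic difference is that the paper carries out the reduction via the splitting lemma rather than Lyapunov--Schmidt, which has the minor advantage that the orthogonal directions become \emph{exactly} $\pm y_i^2$ in the new coordinates and so points off the reduced manifold are handled for free in the gradient bound; in your framework you should add one line noting that away from the graph of $w(x,\tau)$ the nondegenerate $P_{v^\perp}$-block of the Hessian contributes a gradient component of order $\|w-w(x,\tau)\|$, which closes the one case your sketch leaves implicit.
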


\begin{proof}
    See Appendix~\ref{proof:thm:globalversionmainthm}.
\end{proof}

\begin{remark}
    As shown in equation~\eqref{eq:jacobianofA} in the proof of Theorem~\ref{thm:globalversionmainthm}, under the assumption that all bifurcation points are fold bifurcation stationary points, the set of bifurcation points is in fact an $(n-1)$-dimensional manifold. Therefore, the Minkowski dimension in this case is $n-1$.
\end{remark}

With Theorem~\ref{thm:globalversionmainthm}, we can derive the following oracle complexity of Algorithm~\ref{alg:main}.

\begin{corollary}
Under the setting of Theorem~\ref{thm:biased_sgd_conv}, we assume that $g(x,y)$ is three times continuously differentiable, and all degenerate stationary points of $g(x,y)$ with respect to $y$ are fold bifurcation points (Definition~\ref{def:fold}). 
Let $\rho_t=1/(t+1)$, $\delta_t=(t+1)^{-2/(n-d)}$, $N_t=t+1$ and use a constant step size $\beta_t=\beta<\overline{L}_{\phi^{\text{cd}}_{\xi}}/2$. To achieve $\mathbb{E}\big[\|\widetilde{\Phi}_{\mathcal X,R}\|^{2}\big]\le \epsilon^{2}$, we have the following oracle complexity:
\begin{itemize}
    \item The total number of function-value oracle calls $Q_f(\epsilon)=\sum_{t=1}^{T}N_t$ satisfies $Q_f(\epsilon)=\widetilde{O}(\epsilon^{-4})$;
    \item The total number of gradient oracle calls $Q_{\nabla_y g}(\epsilon)=\sum_{t=1}^{T}K_tN_t$ satisfies $Q_{\nabla_y g}(\epsilon)=\widetilde{O}(\epsilon^{-10})$;
    \item The total number of Hessian oracle calls $Q_{\nabla^2_{yy} g}(\epsilon)=\sum_{t=1}^{T}K_tN_t$ satisfies $Q_{\nabla^2_{yy} g}(\epsilon)=\widetilde{O}(\epsilon^{-10})$.
\end{itemize}
Here, a \emph{function-value oracle call} refers to one evaluation of the upper-level objective $f(x,y)$, while \emph{gradient} and \emph{Hessian oracle calls} refer to evaluations of the lower-level derivatives $\nabla_y g(x,y)$ and $\nabla^2_{yy} g(x,y)$, respectively.
\end{corollary}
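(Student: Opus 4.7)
The plan is to combine three ingredients: (i) the projected-SGD rate from Theorem~\ref{thm:biased_sgd_conv}, (ii) the explicit choices $\rho_t=1/(t+1)$, $\delta_t=(t+1)^{-2/(n-d)}$, $N_t=t+1$, and (iii) the fold-bifurcation lower bounds on $\mu(\delta)$ and $\alpha(\delta,\mu(\delta)/(2\overline{\overline{L}}_g))$ from Theorem~\ref{thm:globalversionmainthm}, together with the remark that under the fold assumption the bifurcation set is an $(n-1)$-manifold so $d=n-1$. With $d=n-1$ we have $n-d=1$ and hence $\delta_t=(t+1)^{-2}$; this is the single observation that drives every subsequent rate.

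First I would invoke Theorem~\ref{thm:biased_sgd_conv} to get $\E[\|\widetilde\Phi_{\mathcal X,R}\|^2]=\O(\log T/T)$. Solving for the smallest $T$ with the right-hand side $\le\epsilon^2$ yields $T=\widetilde O(\epsilon^{-2})$. The function-value oracle count is then immediate:
\begin{equation*}
Q_f(\epsilon)=\sum_{t=1}^{T}N_t=\sum_{t=1}^{T}(t+1)=\O(T^{2})=\widetilde O(\epsilon^{-4}),
\end{equation*}
which gives the first bullet.

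For the derivative oracle counts, the key step is to estimate the per-iteration inner-loop budget $K_t$ mandated by \eqref{eq:requirementforkt}. Plugging Theorem~\ref{thm:globalversionmainthm} into that formula, for $t$ large enough that $\delta_t$ is small, the active branch of each $\min\{\cdot,\cdot\}$ is the $\delta$-dependent one, so $\mu(\delta_t)\gtrsim \sqrt{\delta_t}$ and $\alpha(\delta_t,\mu(\delta_t)/(2\overline{\overline{L}}_g))\gtrsim \delta_t$. Therefore
\begin{equation*}
\tfrac{1}{\bigl(\min\{\alpha(\delta_t,\mu(\delta_t)/(2\overline{\overline{L}}_g)),\,\mu(\delta_t)^2/(4\overline{\overline{L}}_g)\}\bigr)^{3/2}}=\O(\delta_t^{-3/2}),\qquad \tfrac{1}{\mu(\delta_t)^3}=\O(\delta_t^{-3/2}),
\end{equation*}
and the third, $\rho_t$-dependent summand in \eqref{eq:requirementforkt} behaves like $(\mu(\delta_t)\rho_t)^{-3/2}=\O(\delta_t^{-3/4}\rho_t^{-3/2})$. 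With $\delta_t=(t+1)^{-2}$ and $\rho_t=(t+1)^{-1}$, all three contributions collapse to the same polynomial order, $K_t=\widetilde O((t+1)^{3})$. Summing and using $T=\widetilde O(\epsilon^{-2})$,
\begin{equation*}
Q_{\nabla_y g}(\epsilon)=\sum_{t=1}^{T}K_tN_t=\sum_{t=1}^{T}\widetilde O\bigl((t+1)^{3}\cdot(t+1)\bigr)=\widetilde O(T^{5})=\widetilde O(\epsilon^{-10}),
\end{equation*}
and since each cubic-Newton step issues one Hessian call together with its gradient call, $Q_{\nabla^2_{yy}g}(\epsilon)=Q_{\nabla_y g}(\epsilon)=\widetilde O(\epsilon^{-10})$.

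The main obstacle is purely bookkeeping: verifying that the three summands in \eqref{eq:requirementforkt} really do balance at the same order $(t+1)^3$, which requires the identification $d=n-1$ (so that $\delta_t$ decays as $(t+1)^{-2}$, not more slowly) and the fact that the product $\sqrt{\delta_t}\rho_t$ is of order $(t+1)^{-2}$ so that $\delta_t^{-3/4}\rho_t^{-3/2}$ also lands at $(t+1)^{3}$. Once this balance is established, the three sums $\sum_t(t+1)^{3+1}$ reduce at once to $\widetilde O(T^5)$, and combining with $T=\widetilde O(\epsilon^{-2})$ yields every stated complexity.
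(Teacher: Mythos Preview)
Your proposal is correct and follows precisely the approach implicit in the paper: the corollary is stated there without proof, and the intended argument is exactly the one you outline—combine the $\O(\log T/T)$ rate from Theorem~\ref{thm:biased_sgd_conv}, the identification $d=n-1$ from the remark after Theorem~\ref{thm:globalversionmainthm} (so $\delta_t=(t+1)^{-2}$), and the bounds $\mu(\delta)\gtrsim\sqrt{\delta}$, $\alpha(\delta,\mu(\delta)/(2\overline{\overline L}_g))\gtrsim\delta$ to show that each summand in \eqref{eq:requirementforkt} is of order $(t+1)^3$, then sum. Your bookkeeping is accurate, including the observation that all three terms of the $K_t$ requirement balance at $(t+1)^3$.
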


\section{Experiments}
\subsection{Numerical Example}\label{sec:numericalexample}

We consider the minimax problem
$$\min_{x\in\R^n}\max_{y\in\R^m}f(x,y),$$
which has a bilevel structure as follows:
$$\min_{x\in\R^n} f(x,y^\ast(x))\quad \st\quad y^\ast(x)\in\argmin_{y\in\R^m}-f(x,y).$$ 
In particular, nonconvex-nonconcave minimax problems are challenging, since standard algorithms such as gradient descent-ascent (\texttt{GDA}) are known to fail to converge, often getting trapped in cyclic behaviors~\citep{jin2020local}. 
Since nonconvex-nonconcave minimax problems are a special case of bilevel optimization with a nonconvex lower-level problem, we apply our algorithm \ouralgo{} to this setting and compare its behavior against \texttt{GDA}, particularly in situations where \texttt{GDA} fails to converge.

We construct the following nonconvex-nonconcave minimax problem:
\begin{align}
    \min_{x\in\R^n}\max_{y\in\R^m} f(x,y)=(x^2-y^2)\sin(x+y)+xy\sin(x-y),
\end{align}
The function $f(x,y)$ is neither convex in $x$ nor concave in $y$. We simulate the \texttt{GDA} flow using 15 initializations with random seeds from 0 to 14, and observe that 4 trajectories (with seeds 5, 10, 12, and 13) enter closed loops, indicating non-convergence. We apply our algorithm \ouralgo{} to this nonconvex-nonconcave problem, and set the algorithmic parameters as follows:
\begin{itemize}
    \item \textbf{Lower-level solver:} gradient descent method;
    \item \textbf{Outer loop step size:} $\beta = 0.005$;
    \item \textbf{Inner loop step size:} $\eta = 0.01$;
    \item \textbf{Number of outer iterations:} $T=10000$;
    \item \textbf{Number of inner iterations:} $K=200$;
    \item \textbf{Number of samples used to approximate the hyperfunction:} $N=3$.
\end{itemize}

Unlike \texttt{GDA}, which often exhibits non-convergent cyclic behaviors, our algorithm converges from all 15 random initializations. 
In particular, our algorithm converges either to a local minimizer of a continuous region of $\phi^{\text{cd}}(x)$, or, when $\phi^{\text{cd}}(x)$ is discontinuous, to the side of the discontinuity with the smaller function value.

Figure~\ref{fig:gda_closed_loops} illustrates the comparison between the \texttt{GDA} flow and the \ouralgo{} trajectory for seeds 5, 10, 12, and 13, where \texttt{GDA} fails to converge while our method successfully converges. The \ouralgo{} trajectory shown corresponds to the sequence
\begin{align*}
    (x_0,y_0),\ (x_0,\hat{y}(x_0)),\ (x_1,\hat{y}(x_1)),\ (x_2,\hat{y}(x_2)),\ \cdots,\ (x_T,\hat{y}(x_T)),
\end{align*}
where $\hat{y}(x)$ denotes an inexact solution of the lower-level problem $g(x,\cdot)$ obtained by running $K=200$ steps of gradient descent with step size~$\eta=0.01$. 
We mark in Figure~\ref{fig:gda_closed_loops} the point among the last 100 iterations that achieves the smallest function value, referred to as the “best of last 100.” 
We also mark the stationary point, i.e., a point at which the gradients of $f$ with respect to both $x$ and $y$ vanish. 

In addition, Figure~\ref{fig:pcd} plots $\phi^{\text{cd}}(x)$ for seeds 5, 10, 12, and 13, where the initial point $y_0$ used in its definition is set to the $y$-component of the initialization for each seed. See Appendix~\ref{sec:exp} for the plots corresponding to the remaining seeds.

In summary, the experiments confirm that our algorithm \ouralgo{} consistently converges in nonconvex-nonconcave minimax settings, highlighting its robustness compared with \texttt{GDA}.

\begin{figure}[ht]
  \centering
  \begin{minipage}[b]{0.45\textwidth}
    \centering
    \includegraphics[width=\textwidth]{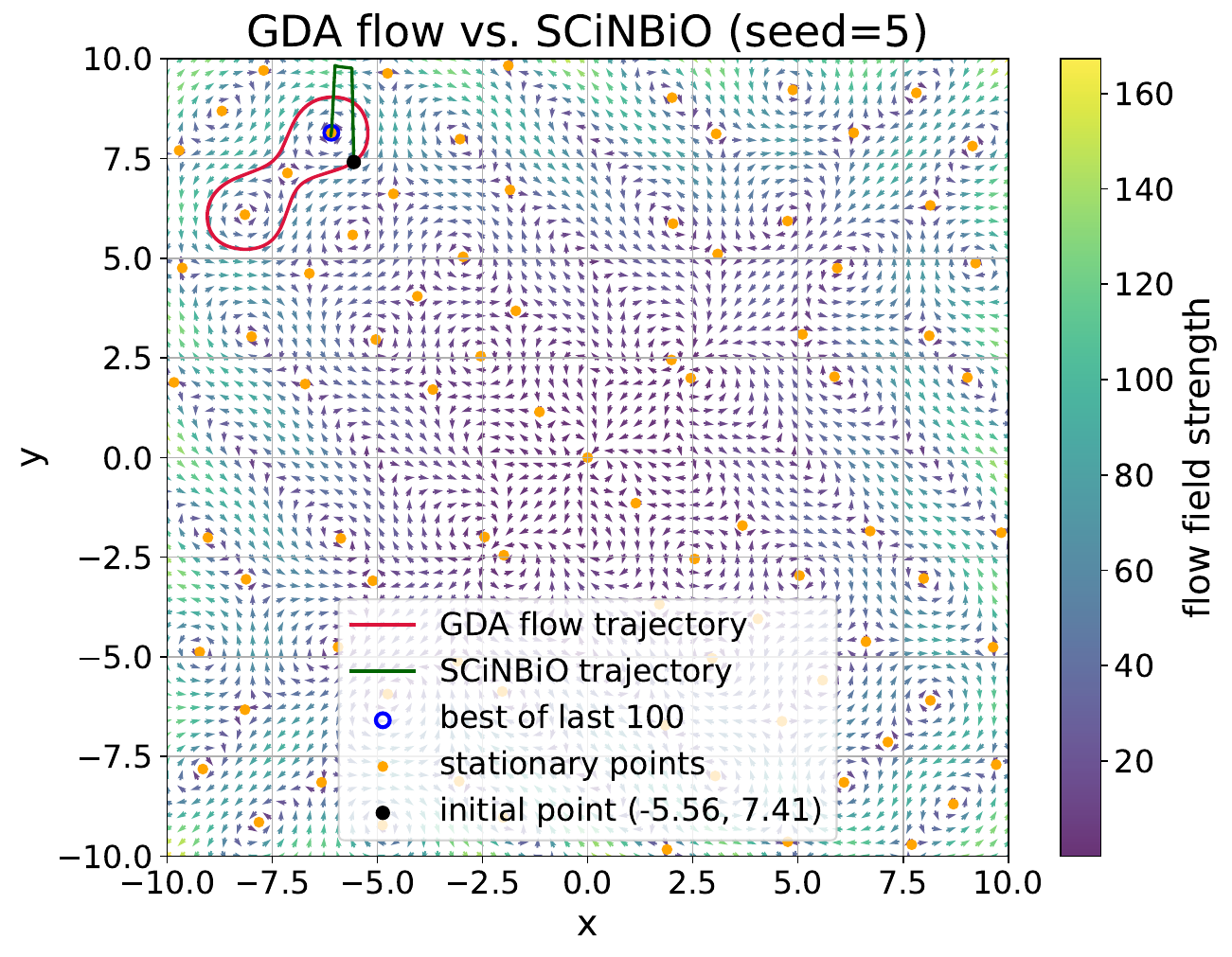}
    \par\small(a) Seed = 5
  \end{minipage}
  \hfill
  \begin{minipage}[b]{0.45\textwidth}
    \centering
    \includegraphics[width=\textwidth]{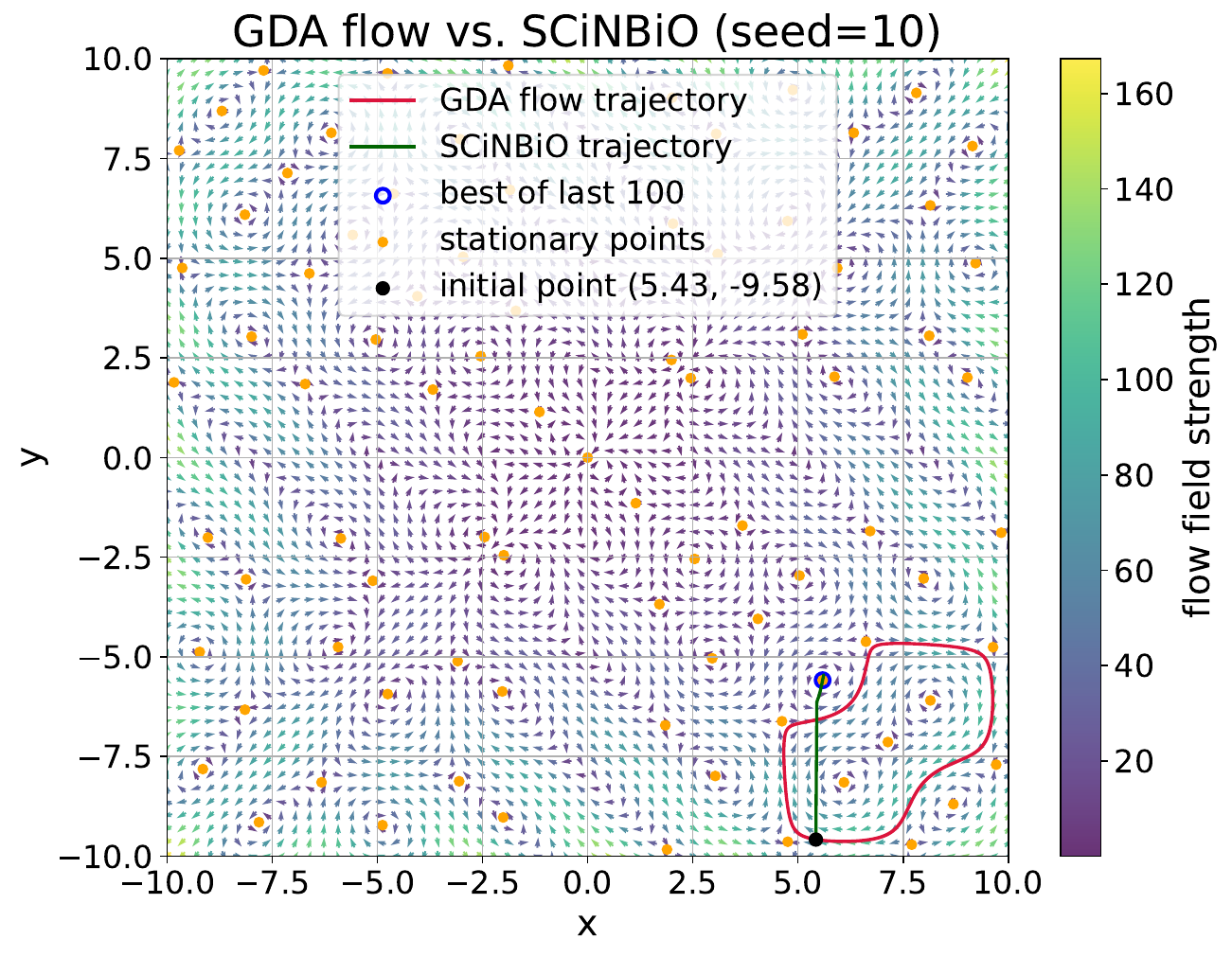}
    \par\small(b) Seed = 10
  \end{minipage}

  \vskip\baselineskip

  \begin{minipage}[b]{0.45\textwidth}
    \centering
    \includegraphics[width=\textwidth]{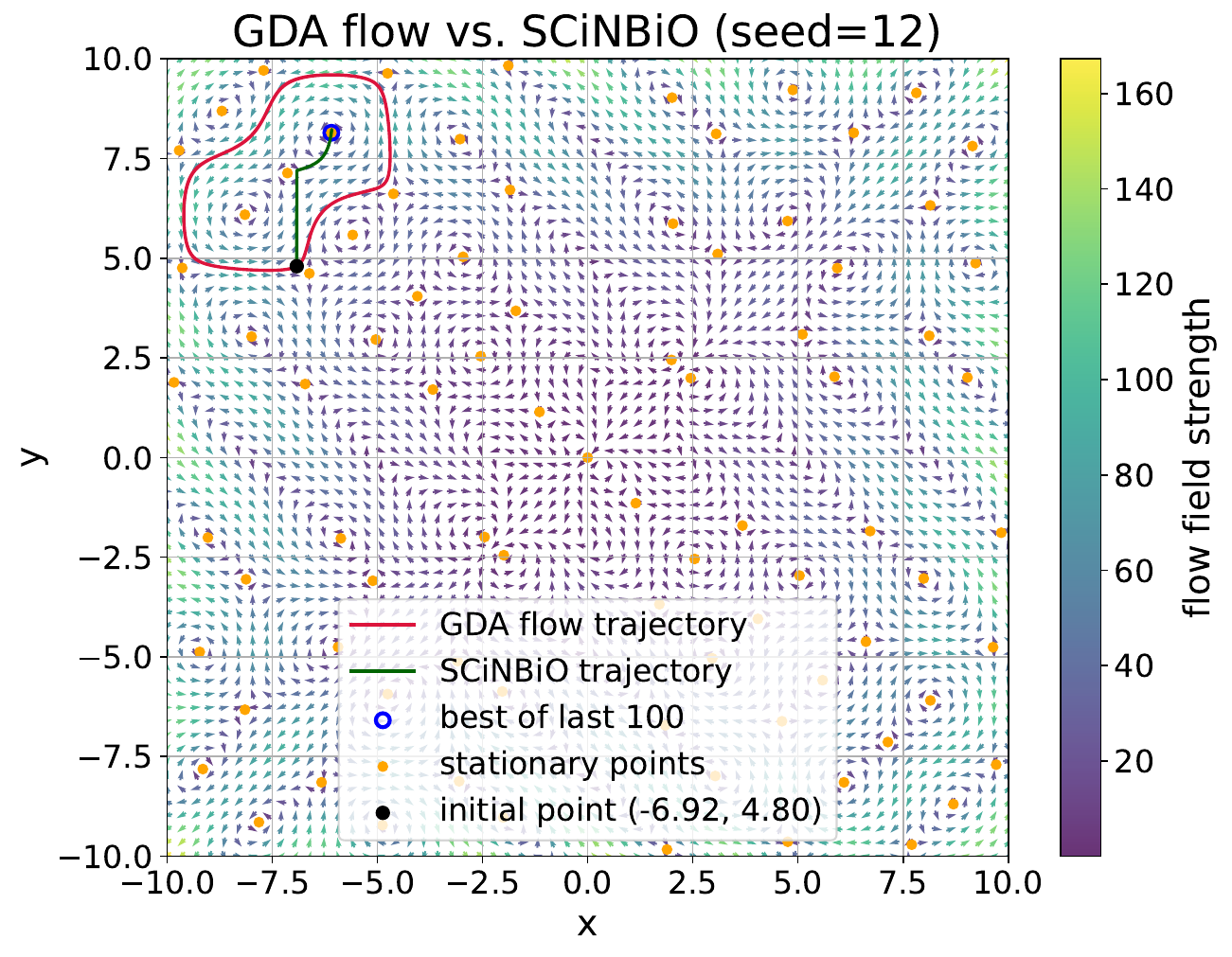}
    \par\small(c) Seed = 12
  \end{minipage}
  \hfill
  \begin{minipage}[b]{0.45\textwidth}
    \centering
    \includegraphics[width=\textwidth]{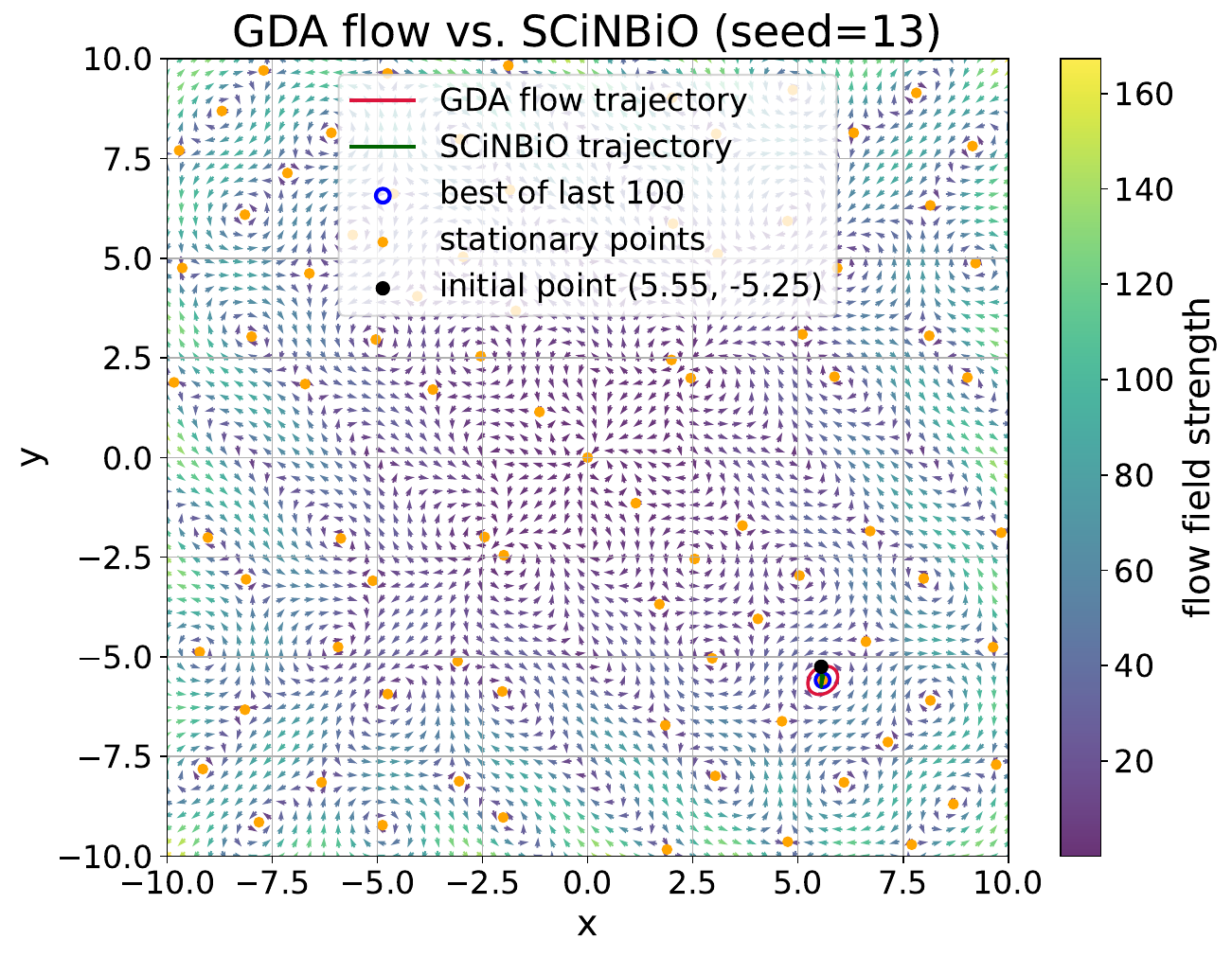}
    \par\small(d) Seed = 13
  \end{minipage}

  \caption{Closed-loop trajectories are observed under \texttt{GDA} for seeds 5, 10, 12, and 13, compared with the convergent behavior of \ouralgo{}. The arrows indicate the direction of the \texttt{GDA} vector field $(\nabla_x f(x,y), -\nabla_y f(x,y))$, and the color of each arrow represents its magnitude: lighter shades indicate larger vector norms, while darker shades indicate smaller ones.}
  \label{fig:gda_closed_loops}
\end{figure}

\begin{figure}[ht]
  \centering
  \begin{minipage}[b]{0.45\textwidth}
    \centering
    \includegraphics[width=\textwidth]{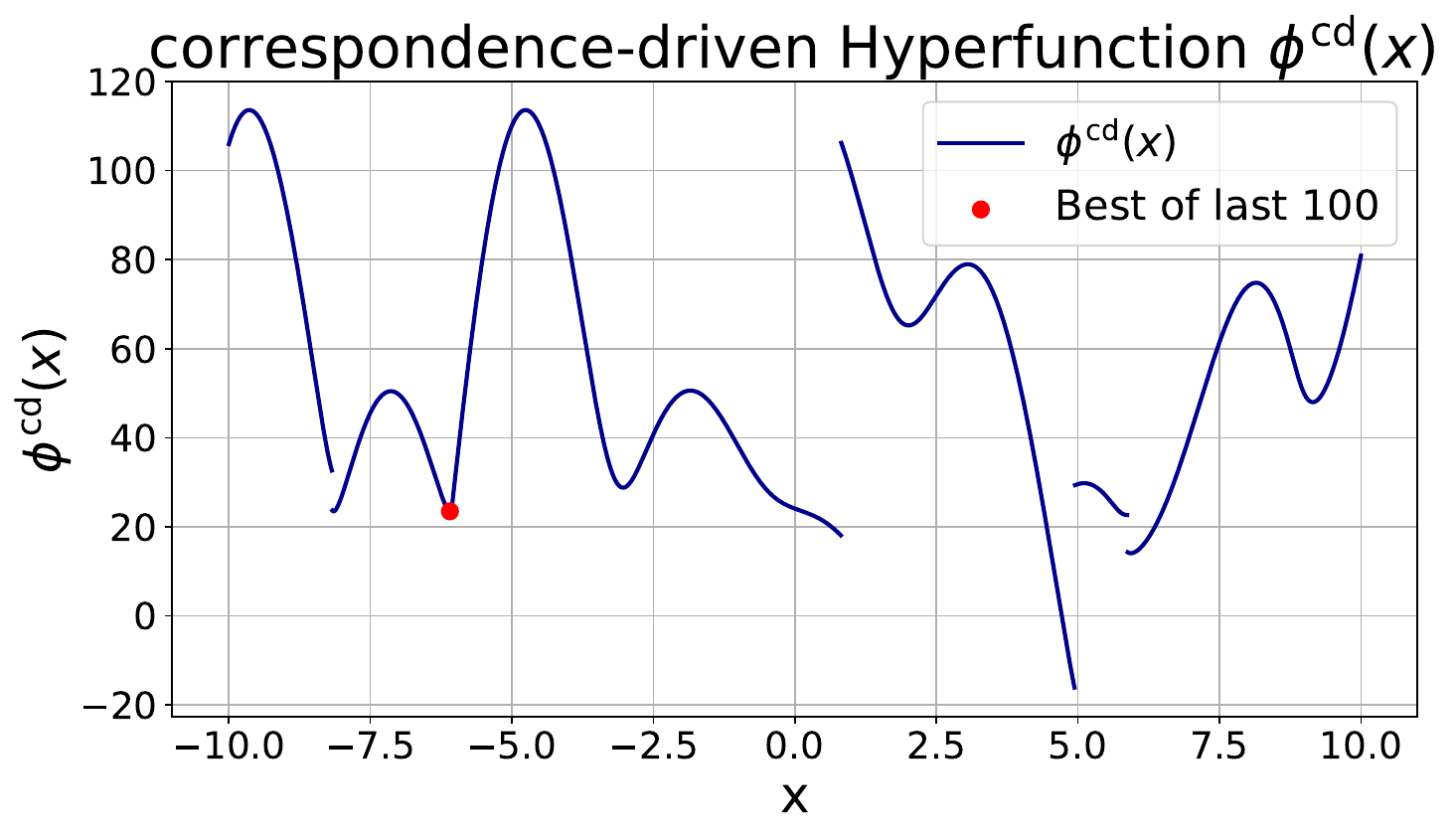}
    \par\small(a) Seed = 5
  \end{minipage}
  \hfill
  \begin{minipage}[b]{0.45\textwidth}
    \centering
    \includegraphics[width=\textwidth]{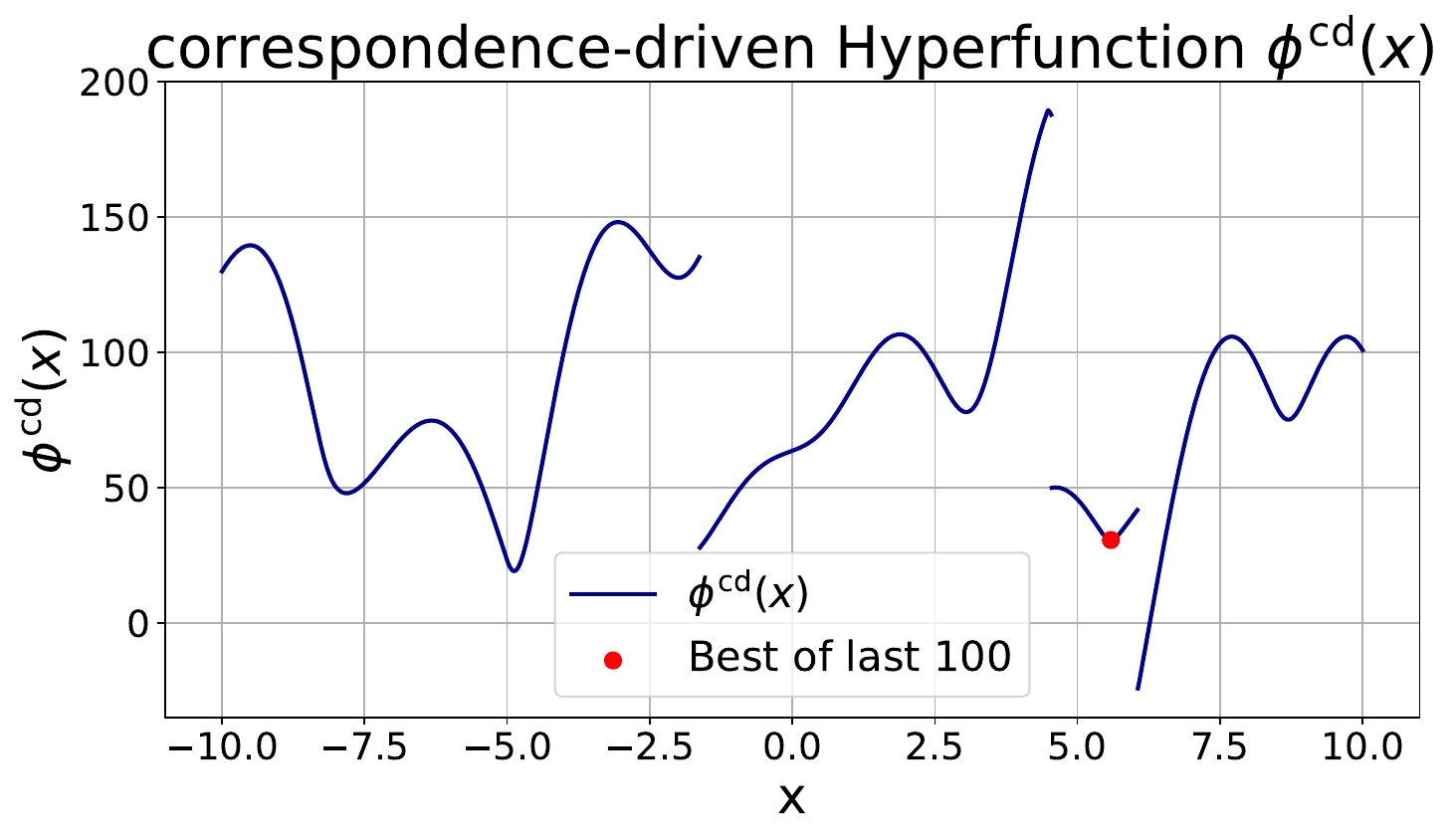}
    \par\small(b) Seed = 10
  \end{minipage}

  \vskip\baselineskip

  \begin{minipage}[b]{0.45\textwidth}
    \centering
    \includegraphics[width=\textwidth]{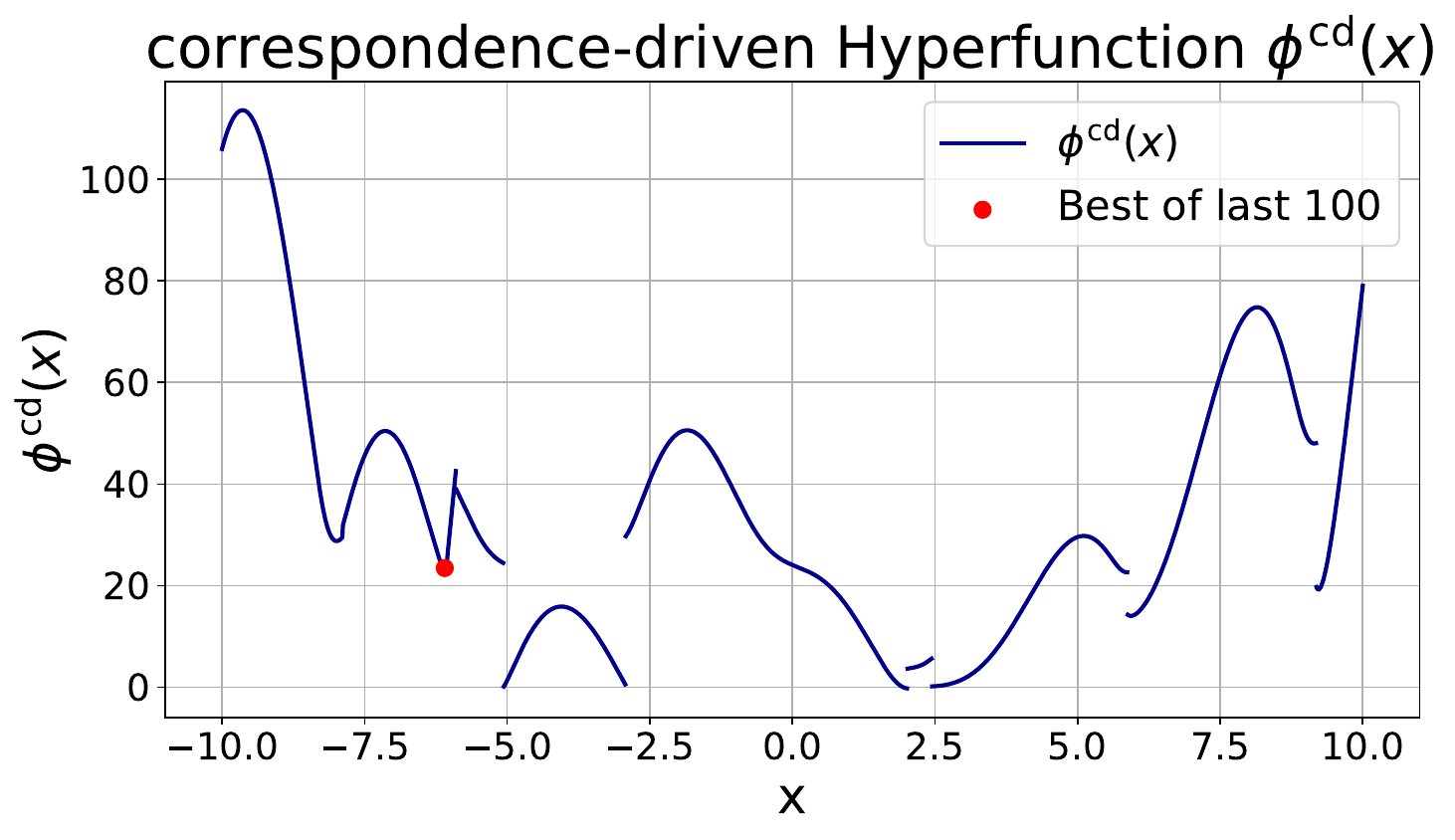}
    \par\small(c) Seed = 12
  \end{minipage}
  \hfill
  \begin{minipage}[b]{0.45\textwidth}
    \centering
    \includegraphics[width=\textwidth]{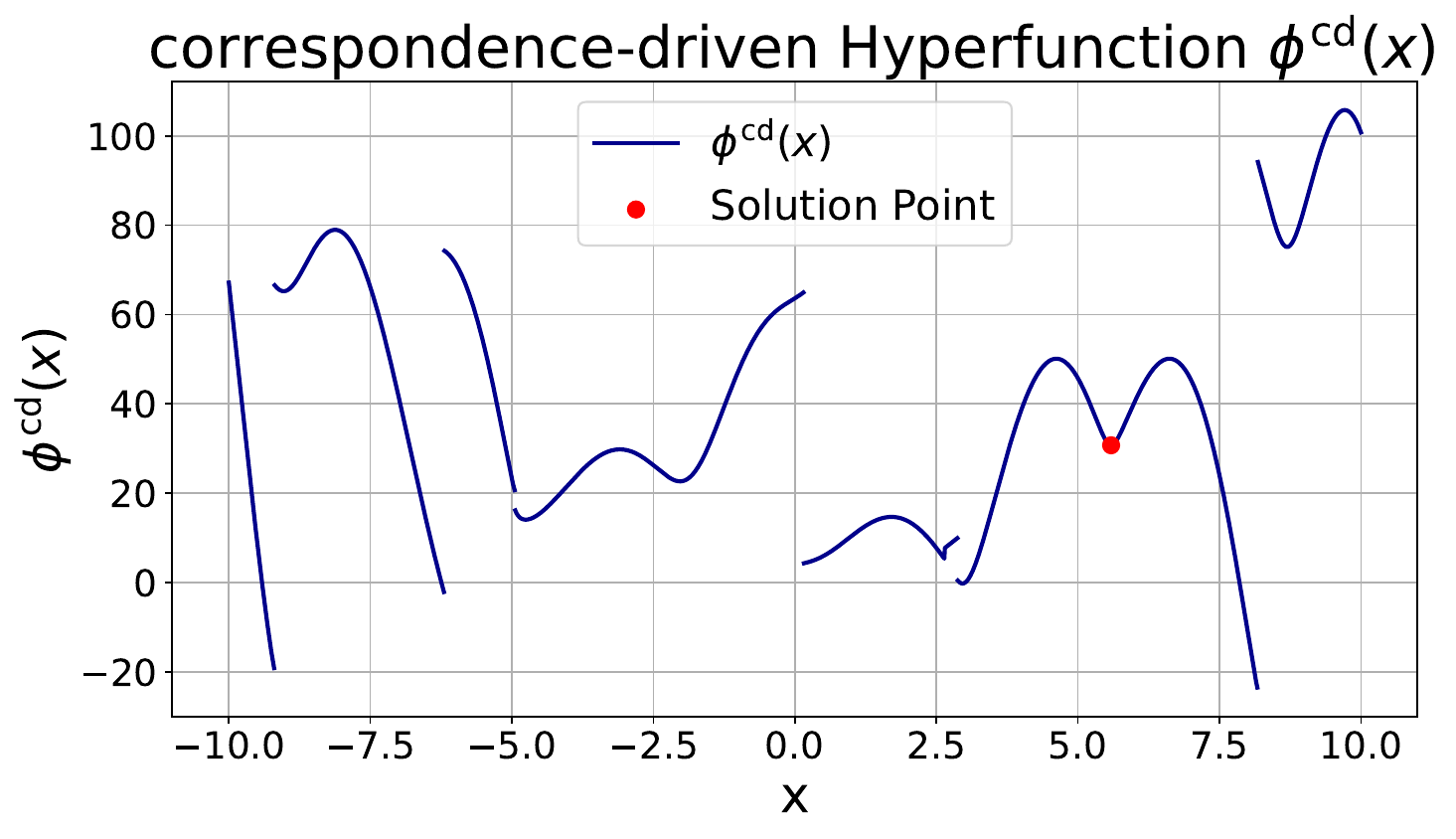}
    \par\small(d) Seed = 13
  \end{minipage}

  \caption{Plot of the \emph{correspondence-driven hyperfunction} $\phi^{\mathrm{cd}}(x)$ for seeds 5, 10, 12, and 13. In these four cases, \ouralgo{} successfully converges to a local minimum of $\phi^{\mathrm{cd}}(x)$.}
  \label{fig:pcd}
\end{figure}

\subsection{Hyperparameter Optimization}

In this section, we evaluate the performance of \ouralgo{} against \texttt{V-PBGD}~\citep{Onpenaltybasedbilevelgradientdescentmethod} and \texttt{BOME}~\citep{liu2022bome} on a bilevel hyperparameter optimization task. The goal is to tune a nonnegative $\ell_2$-regularization parameter $\lambda$ for a neural network.

We model the problem as choosing $\lambda$ to minimize the validation loss, subject to the network weights $\theta$ being optimized on the training set. To enforce $\lambda \ge 0$, we parameterize it as $\lambda=\mathrm{softplus}(x)=\log(1+e^x)$ with $x\in\mathbb{R}$. Formally, we solve:
\begin{subequations}\label{eq:mnist}
\begin{align}
\min_{\lambda\ge 0}\quad f(\theta^\ast(\lambda),\lambda)
&:=\mathrm{CE}_{\mathrm{val}}(\theta^\ast(\lambda))\label{eq:mnist_upper}\\
\text{s.t.}\quad 
\theta^\ast(\lambda)\in\argmin_{\theta}\quad g(\theta,\lambda)
&:=\mathrm{CE}_{\mathrm{train}}(\theta)+\tfrac{\lambda}{2B}\|\theta\|_2^2,\label{eq:mnist_lower}
\end{align}
\end{subequations}
where $\mathrm{CE}{\mathrm{train}}$ and $\mathrm{CE}{\mathrm{val}}$ denote the cross-entropy loss on training and validation sets, respectively, and $B$ is the training batch size. The cross-entropy loss for a dataset $\{(x_i,y_i)\}_{i=1}^n$ is defined as:
\begin{align}
\mathrm{CE}(\theta;{(x_i,y_i)}_{i=1}^n)
=-\frac{1}{n}\sum_{i=1}^n
\log\left(\frac{\exp(z_\theta(x_i){y_i})}{\sum_{j=1}^{10}\exp(z_\theta(x_i)j)}\right),
\end{align}
where $z_\theta(x_i)$ represents the classifier output.

We use the MNIST dataset with a split of $B=50000$ training examples for the lower-level problem~\eqref{eq:mnist_lower} and $10000$ validation examples for the upper-level objective~\eqref{eq:mnist_upper}. The classifier $z_\theta(\cdot)$ is a Convolutional Neural Network (CNN). To ensure the mapping $\theta\mapsto z_\theta(\cdot)$ remains differentiable, we employ SiLU activations and average pooling. The specific architecture is:
\begin{itemize}
\item Conv(1$\to$32,$3{\times}3$,pad=1)$\to$GroupNorm(8)$\to$SiLU$\to$AvgPool2d(2)
\item Conv(32$\to$64,$3{\times}3$,pad=1)$\to$GroupNorm(8)$\to$SiLU$\to$AvgPool2d(2)
\item Conv(64$\to$128,$3{\times}3$,pad=1)$\to$GroupNorm(8)$\to$SiLU$\to$AvgPool2d(2)
\item Flatten$\to$Linear(128$\cdot3\cdot3\to$256)$\to$GroupNorm(16)$\to$SiLU$\to$Linear(256$\to$10)
\end{itemize}
This deep architecture creates a highly nonconvex lower-level landscape.

We apply \ouralgo{} (our proposed Algorithm~\ref{alg:main}), \texttt{V-PBGD}~\citep{Onpenaltybasedbilevelgradientdescentmethod}, and \texttt{BOME}~\citep{liu2022bome} to the bilevel hyperparameter optimization~\eqref{eq:mnist_upper}. The specific experimental settings and algorithmic parameters are as follows: for all three algorithms, the lower-level optimization is performed using \texttt{SGD} with a step size $\eta=0.1$, a batch size of $B = 2048$, and $K=100$ iterations per inner solve. The additional parameters for each algorithm are specified below:

\begin{itemize}
\item \textbf{\ouralgo{}:} outer loop epoch number $T=100$, warm-started inner solves (i.e., initialize each inner-loop run at the final iterate from the previous inner-loop run, rather than restarting from the same fixed $y_0$ in Algorithm~\ref{alg:main}), outer loop step size $\beta_t = 0.001/\sqrt{t}$ using square-root decay, number of samples $N_{\mathrm{dirs}}=3$ for upper-level gradient estimator, smoothing radius $\xi=0.1$;
\item \texttt{V-PBGD:} outer loop epoch number $T = 300$, outer step size $\beta=0.01$, penalty parameter $\rho_t = \min\{1.05^{t-1},10\}$;
\item \texttt{BOME:} outer loop epoch number $T = 300$, outer loop step size $\eta = 0.1$.
\end{itemize}

To evaluate the performance, we measure both the upper-level validation loss and the lower-level objective, tracking the convergence over epochs. In Figure \ref{fig:hyper_exp}, we present the performance of \ouralgo{}, \texttt{V-PBGD}, and \texttt{BOME} on this hyperparameter optimization task, evaluated over 30 random seeds. The left plot and the right plot represent the upper-level validation loss and the lower-level objective, respectively, both plotted against the number of epochs. Since \ouralgo{} performs $N_{\mathrm{dirs}}=3$ inner-loop cycles for each outer-loop iteration, the plots are shown against the number of inner-loop epochs to provide a consistent comparison across the algorithms. The deep-colored curves represent the average performance across the 30 seeds for each algorithm, while the light-colored curves show the individual results from each of the 30 seeds. The results, shown in Figure \ref{fig:hyper_exp}, highlight the superior performance of \ouralgo{}. As seen in the figure, \ouralgo{} consistently achieves faster convergence and more stable optimization compared to \texttt{V-PBGD} and \texttt{BOME}, with strong performance in both the upper-level and lower-level tasks.

\begin{figure}
    \centering
    \includegraphics[width=0.9\linewidth]{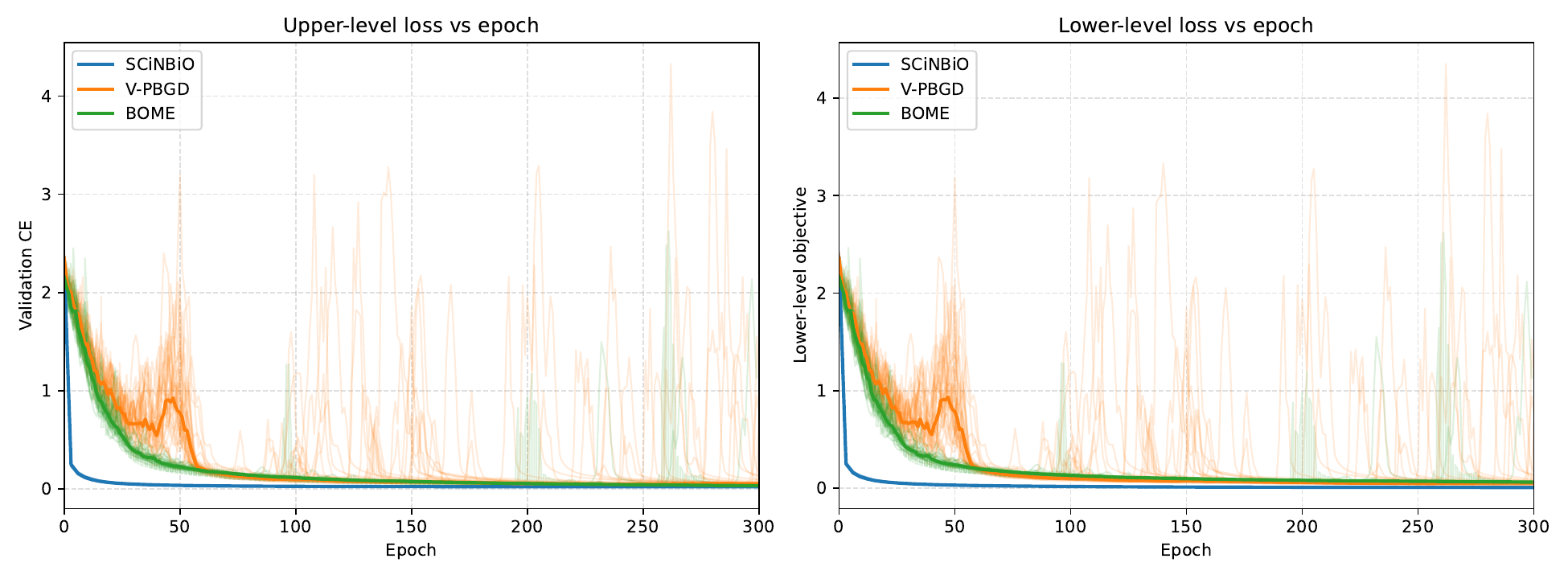}
    \caption{Plot of the upper-level \& lower-level function vs epoch for seeds 0 to 29}
  \label{fig:hyper_exp}
\end{figure}

\section{Conclusion}
In this paper, we proposed a \emph{correspondence-driven hyperfunction} formulation for bilevel optimization problems with nonconvex lower-level objectives. The proposed hyperfunction models the follower’s behavior by selecting among stationary points reachable by a fixed algorithm with a given initialization and step size, making it more meaningful and computationally tractable than the classical definition. To address its discontinuity, we applied Gaussian smoothing and established the convergence of the smoothed hyperfunction’s value, gradient, and proximal gradient to those of the original hyperfunction at appropriate points.  

We identified bifurcation phenomena as a key challenge for hyperfunction-based algorithms in the nonconvex setting, introduced the notion of prevalent assumptions, and proved that the property ``for almost every $x$, $g(x,\cdot)$ is Morse'' is prevalent under certain small perturbations. Under this assumption, we analyzed the geometric structure of the bifurcation set, and further connected bifurcation theory from dynamical systems to the bilevel setting by defining fold bifurcation points.  

Building on these results, we designed a biased projected \texttt{SGD}-based algorithm \ouralgo{} with a cubic-regularized Newton lower-level solver, and provided convergence guarantees together with oracle complexity bounds for the upper-level. Under the additional assumption that all degenerate stationary points are fold bifurcation points, we further obtained the lower-level oracle complexity of \ouralgo{}.  

Through numerical experiments, we demonstrated the effectiveness of \ouralgo{} in both nonconvex-nonconcave minimax problems and bilevel hyperparameter optimization tasks. In the nonconvex-nonconcave minimax setting, \ouralgo{} consistently converged from all random initializations, unlike \texttt{GDA}, which exhibited cyclic behaviors and failed to converge in several cases. For bilevel hyperparameter optimization, \ouralgo{} outperformed \texttt{V-PBGD} and \texttt{BOME} in terms of both convergence speed and stability, demonstrating its superior performance in solving bilevel optimization problems with nonconvex lower-level objectives.

Our work develops new modeling and algorithmic approaches for bilevel optimization with nonconvex lower-levels under minimal assumptions, offering practical solution methods together with theoretical insights and provable performance guarantees.
 Future work will aim to extend these results to settings with stochastic objectives or different lower-level solvers.

\clearpage
\bibliographystyle{abbrvnat} 
\bibliography{reference}

\clearpage
\appendix
\section*{Appendix}\

\section{Full Results of Numerical Example}\label{sec:exp}

In this section, we present the plots of the \emph{correspondence-driven hyperfunction} $\phi^{\mathrm{cd}}(x)$ for the remaining random seeds of the numerical example in Section~\ref{sec:numericalexample}, namely, seeds $0$--$14$ excluding $5$, $10$, $12$, and $13$.

\begin{figure*}[htbp]
  \centering

  \begin{minipage}[b]{0.3\textwidth}
    \centering
    \includegraphics[width=\textwidth]{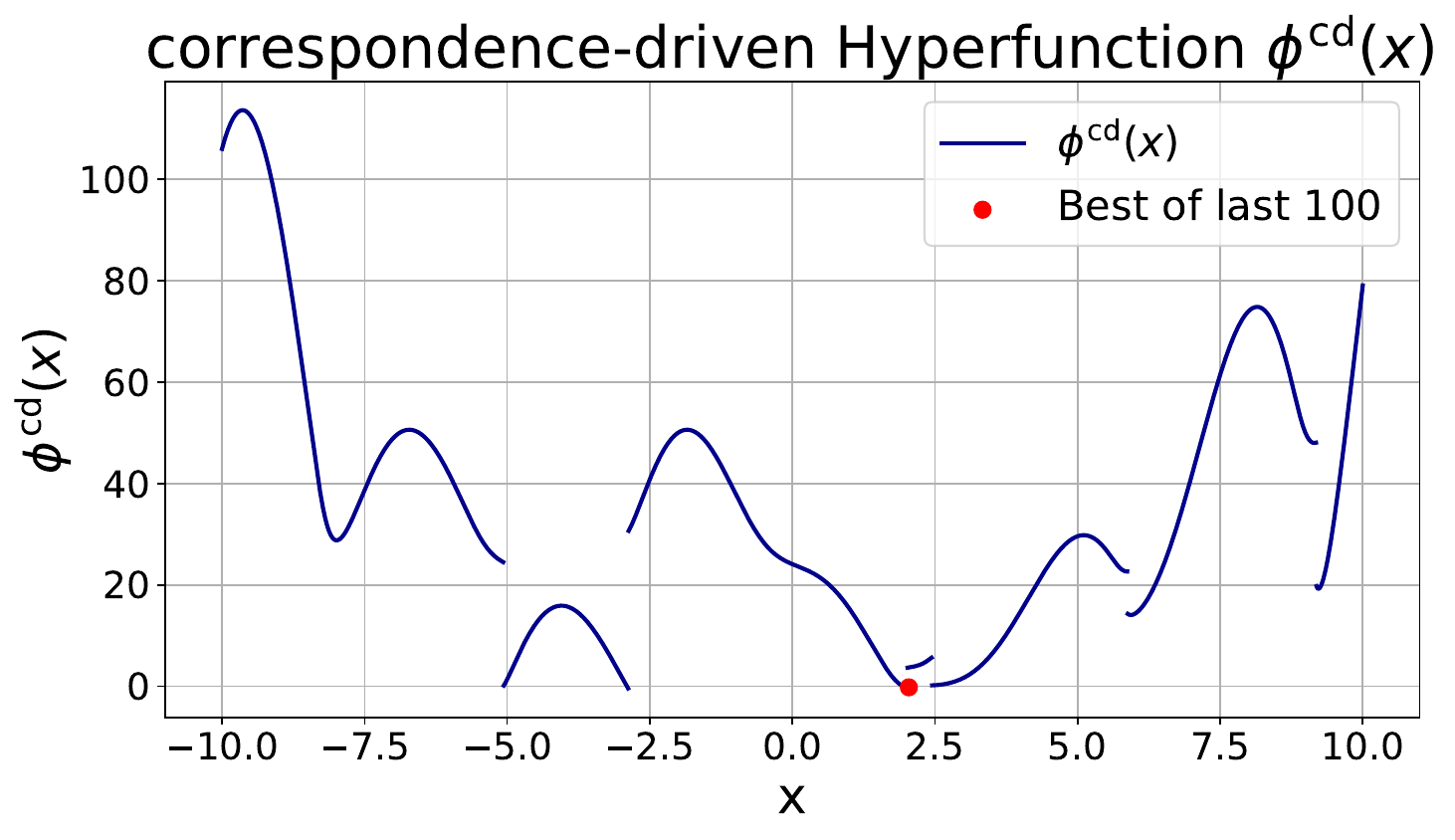}
    \par\small(a) Seed = 0
  \end{minipage}
  \hfill
  \begin{minipage}[b]{0.3\textwidth}
    \centering
    \includegraphics[width=\textwidth]{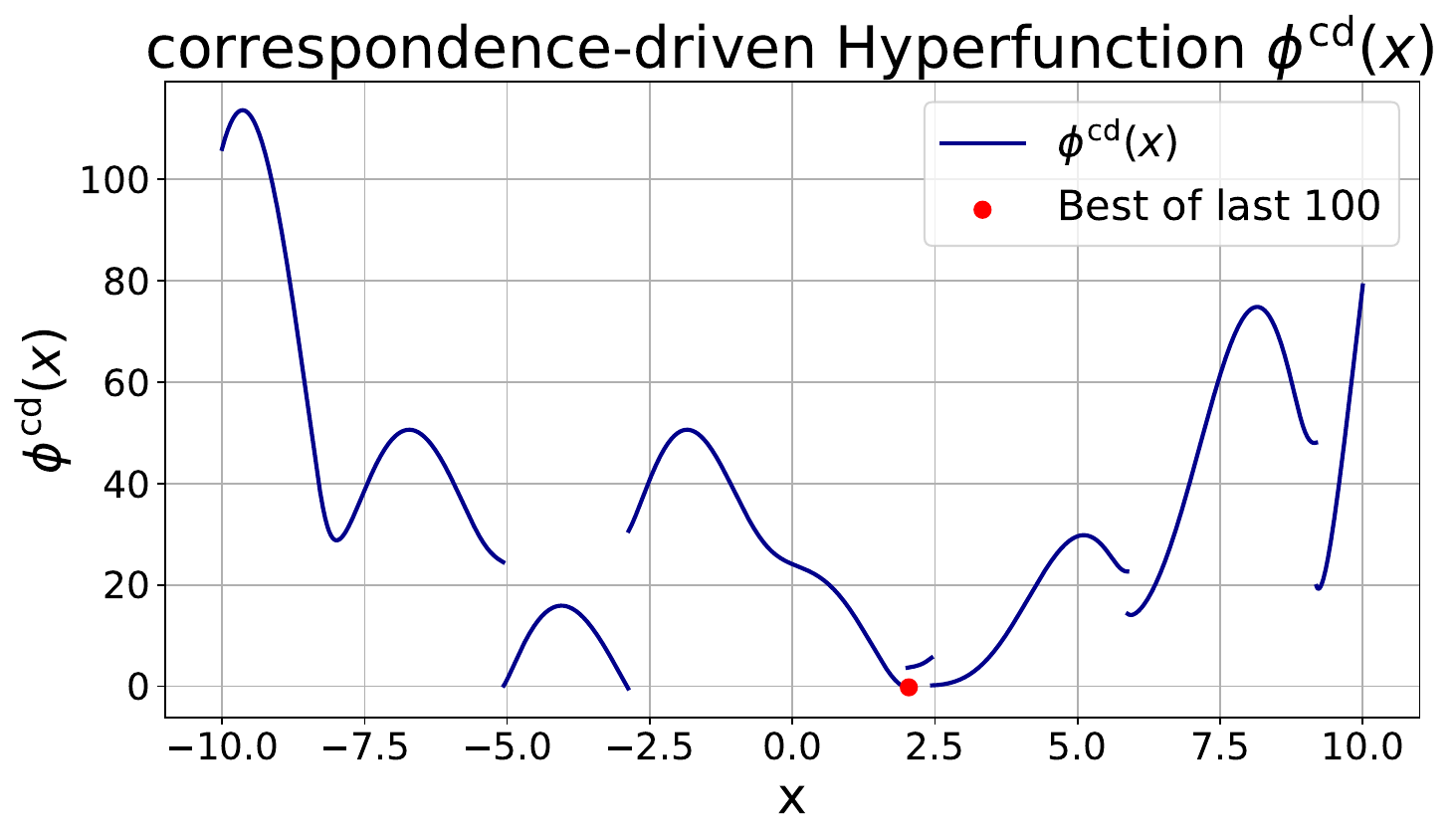}
    \par\small(b) Seed = 1
  \end{minipage}
  \hfill
  \begin{minipage}[b]{0.3\textwidth}
    \centering
    \includegraphics[width=\textwidth]{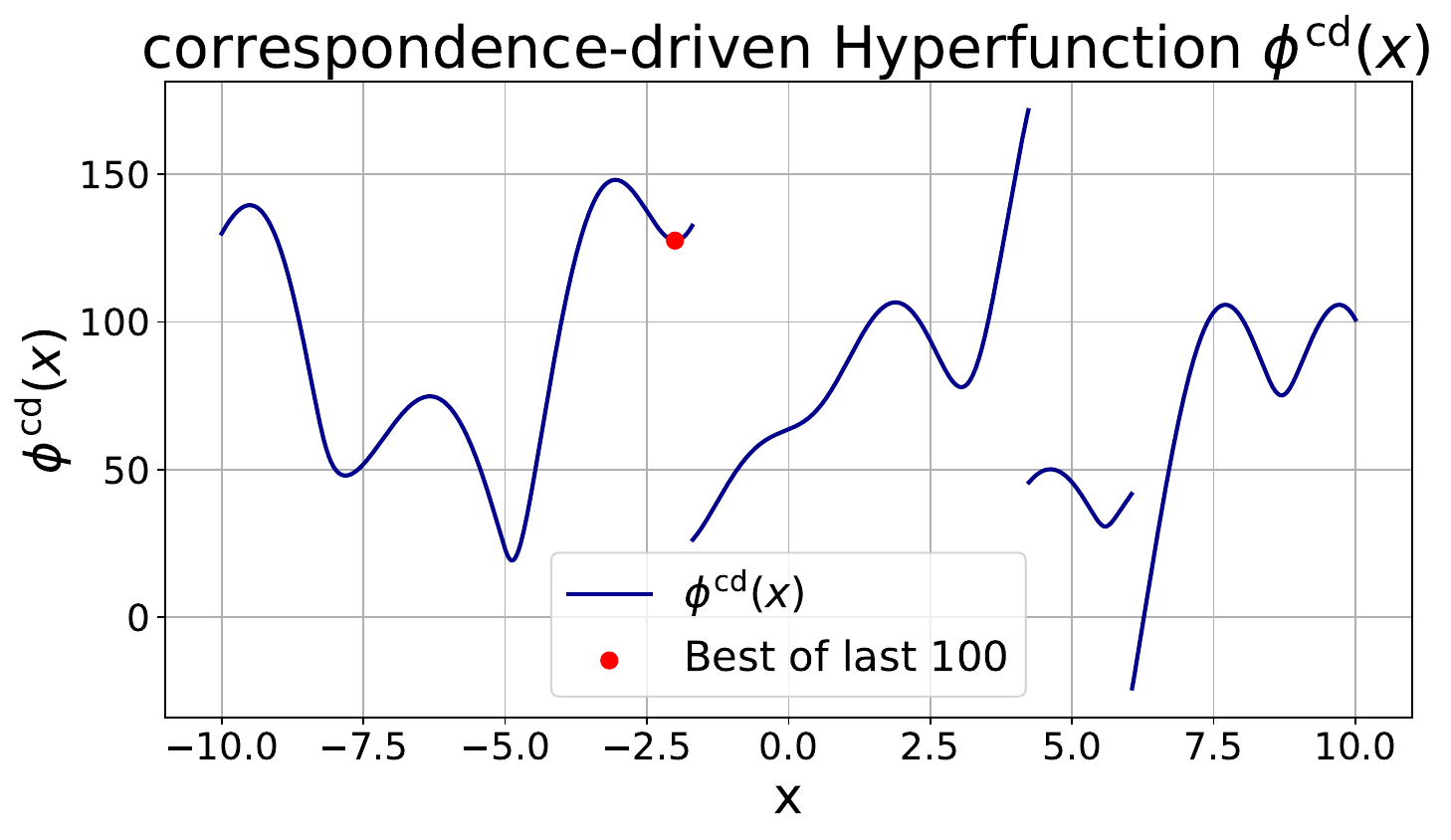}
    \par\small(c) Seed = 2
  \end{minipage}

  \vskip\baselineskip

  \begin{minipage}[b]{0.3\textwidth}
    \centering
    \includegraphics[width=\textwidth]{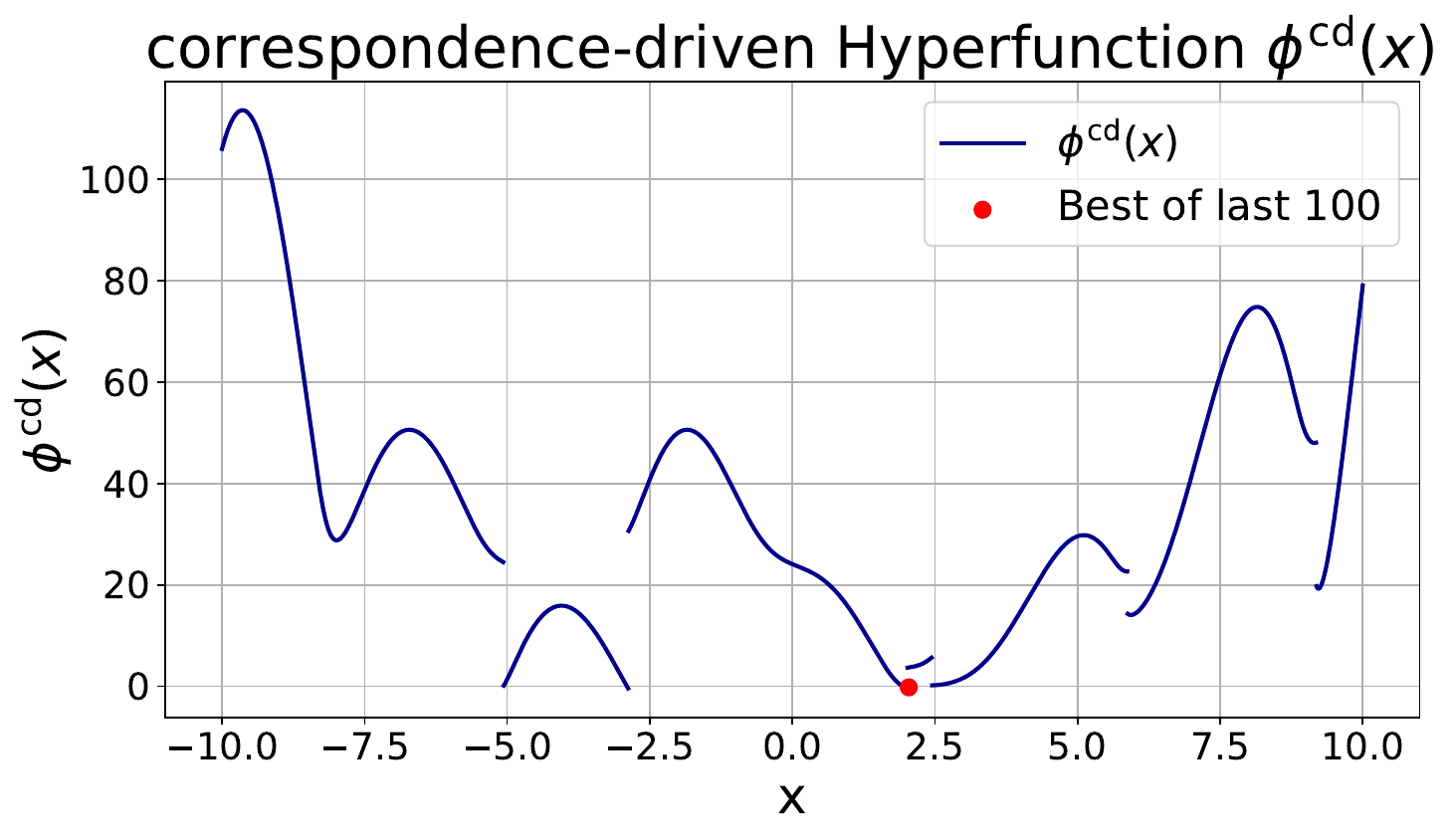}
    \par\small(d) Seed = 3
  \end{minipage}
  \hfill
  \begin{minipage}[b]{0.3\textwidth}
    \centering
    \includegraphics[width=\textwidth]{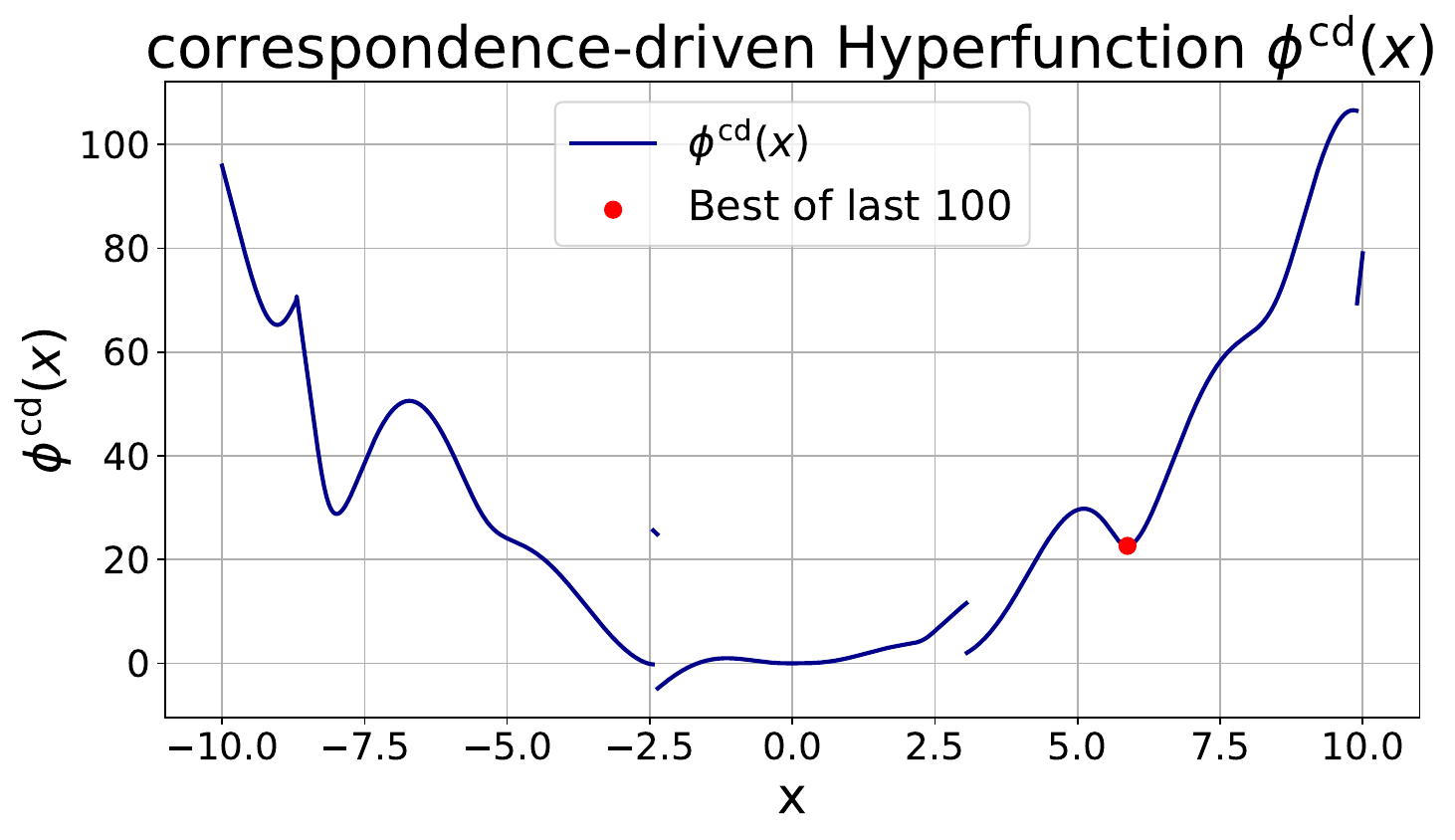}
    \par\small(e) Seed = 4
  \end{minipage}
  \hfill
  \begin{minipage}[b]{0.3\textwidth}
    \centering
    \includegraphics[width=\textwidth]{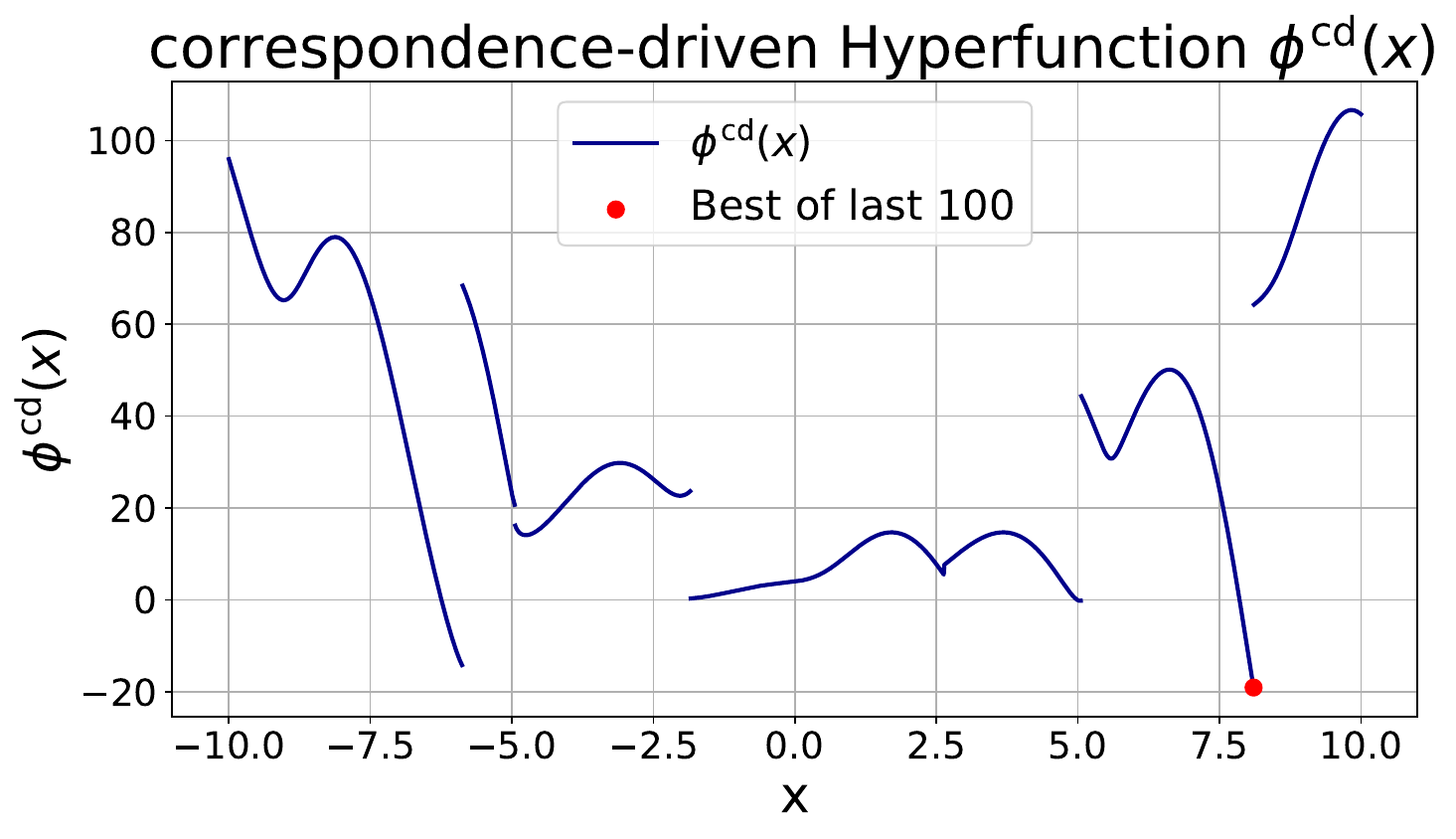}
    \par\small(f) Seed = 6
  \end{minipage}

  \vskip\baselineskip

  \begin{minipage}[b]{0.3\textwidth}
    \centering
    \includegraphics[width=\textwidth]{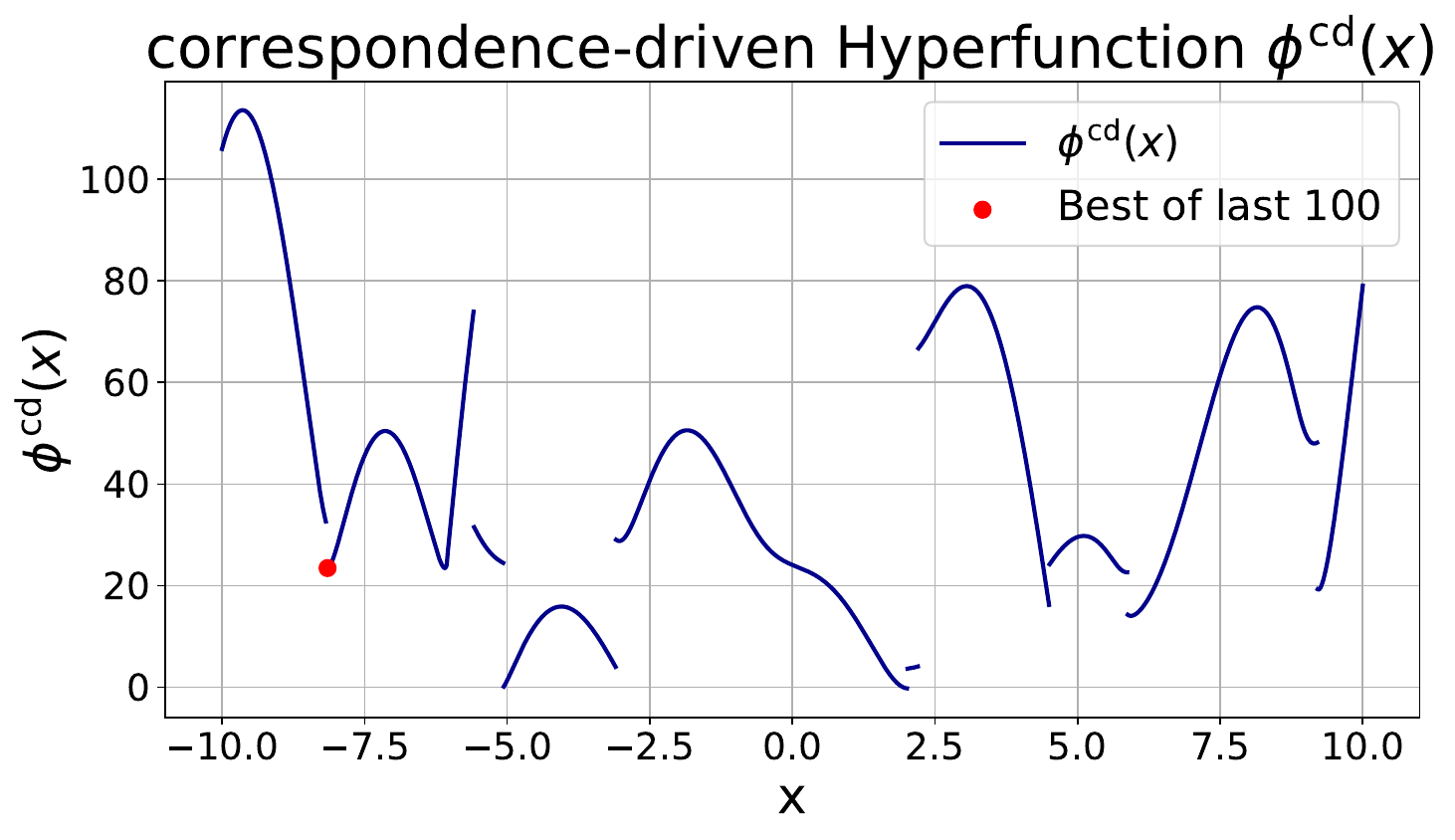}
    \par\small(g) Seed = 7
  \end{minipage}
  \hfill
  \begin{minipage}[b]{0.3\textwidth}
    \centering
    \includegraphics[width=\textwidth]{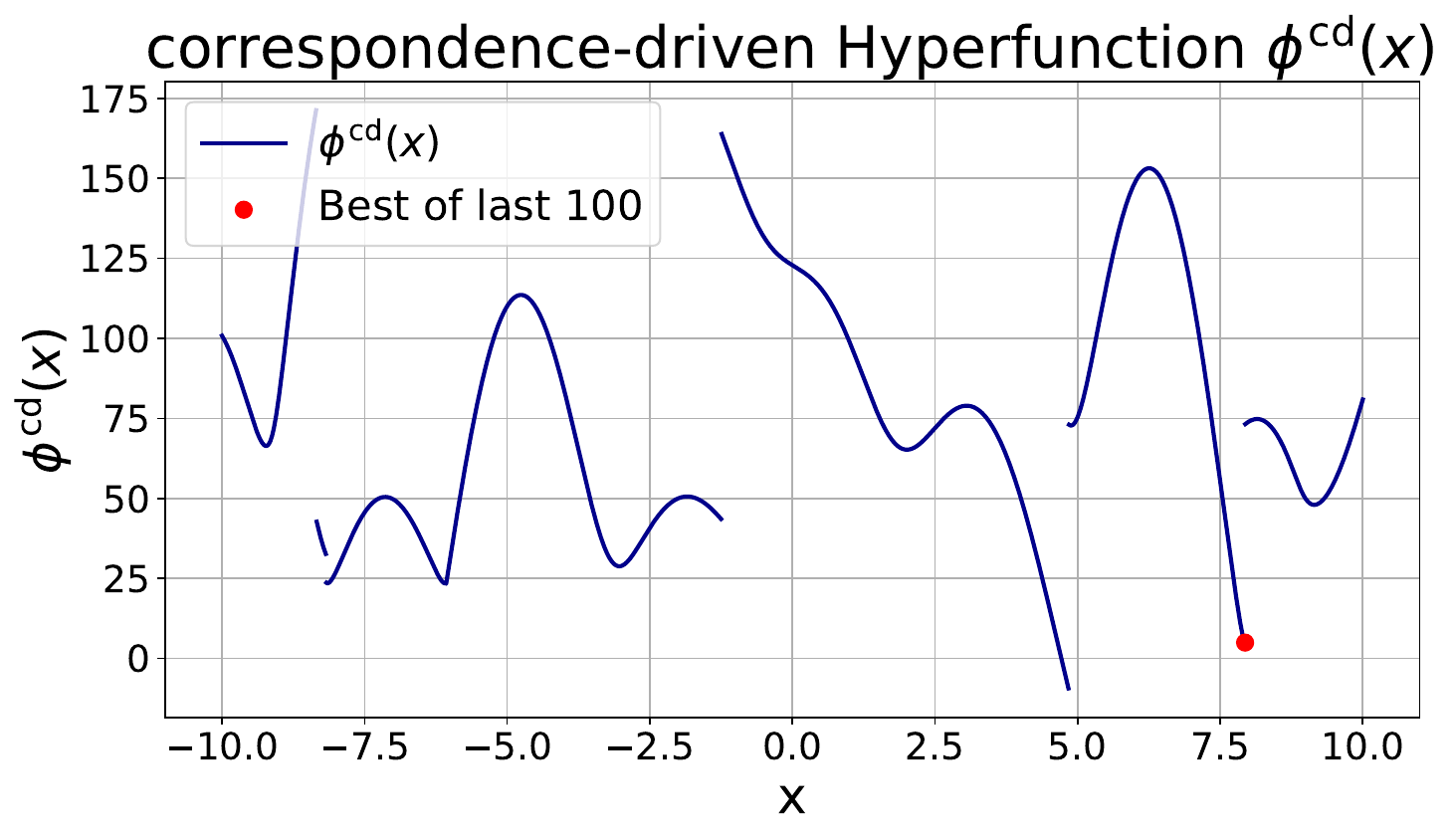}
    \par\small(h) Seed = 8
  \end{minipage}
  \hfill
  \begin{minipage}[b]{0.3\textwidth}
    \centering
    \includegraphics[width=\textwidth]{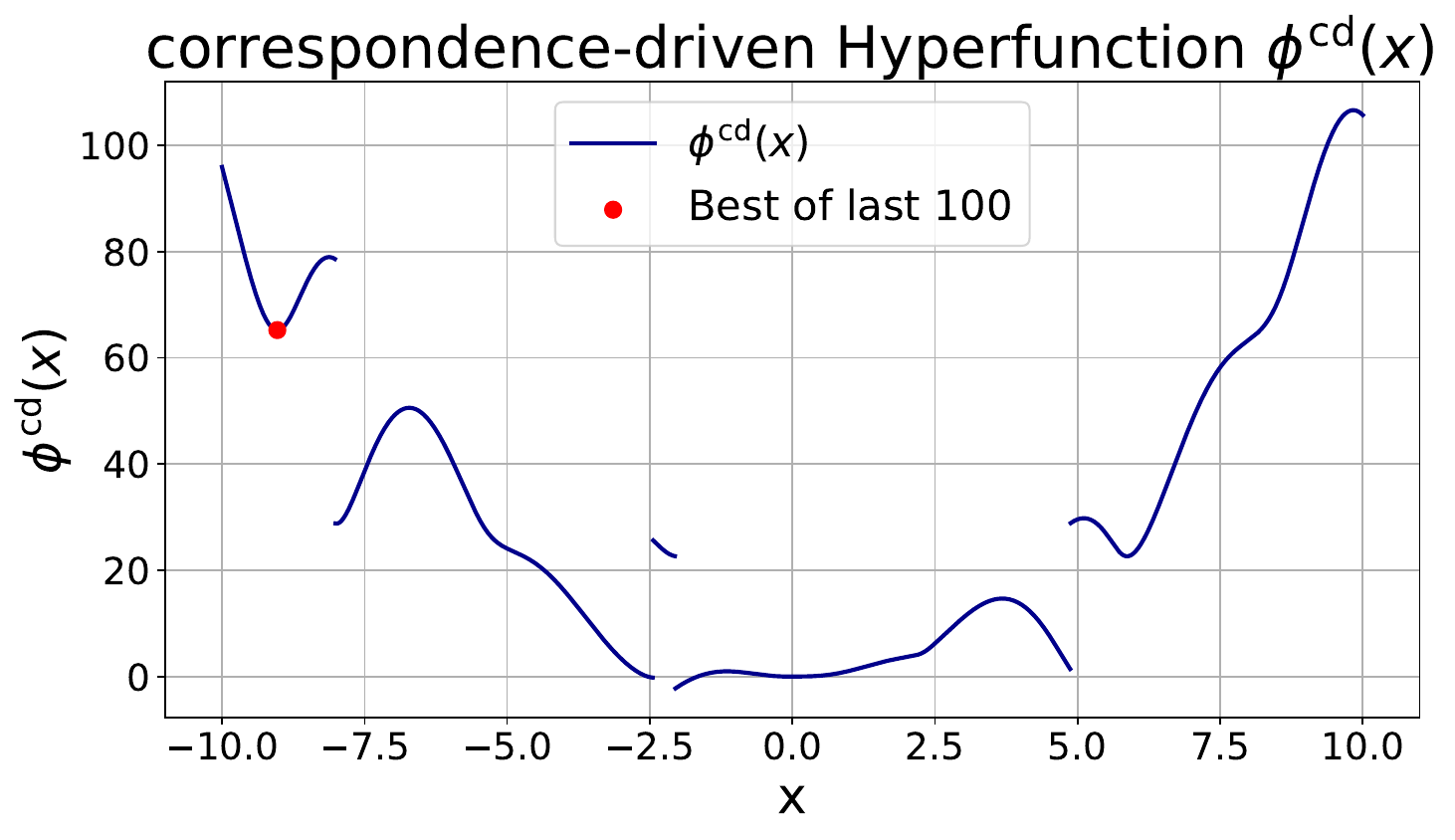}
    \par\small(i) Seed = 9
  \end{minipage}

  \vskip\baselineskip

  \begin{minipage}[b]{0.3\textwidth}
    \centering
    \includegraphics[width=\textwidth]{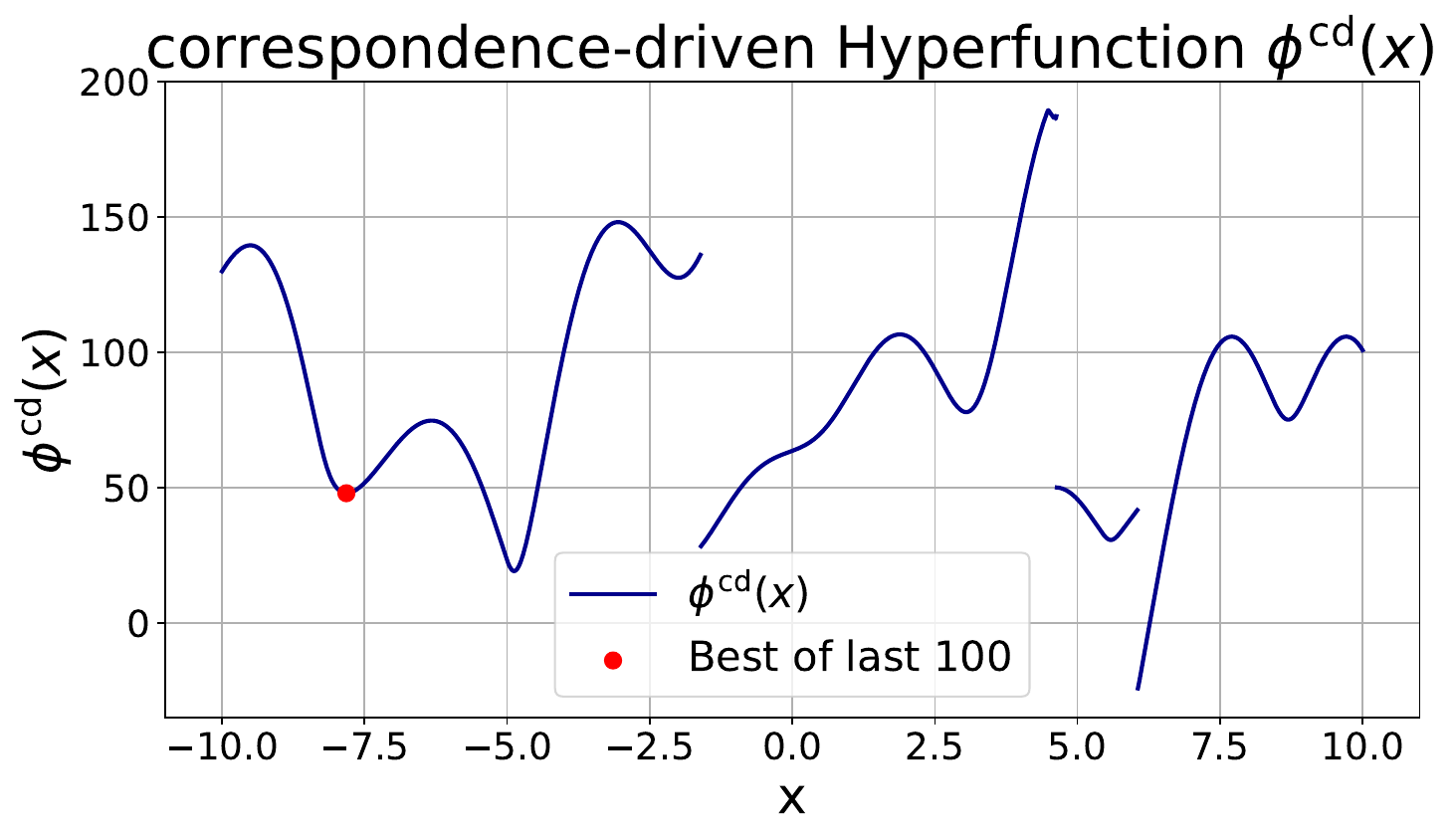}
    \par\small(j) Seed = 11
  \end{minipage}
  \hspace{0.1\textwidth}
  \begin{minipage}[b]{0.3\textwidth}
    \centering
    \includegraphics[width=\textwidth]{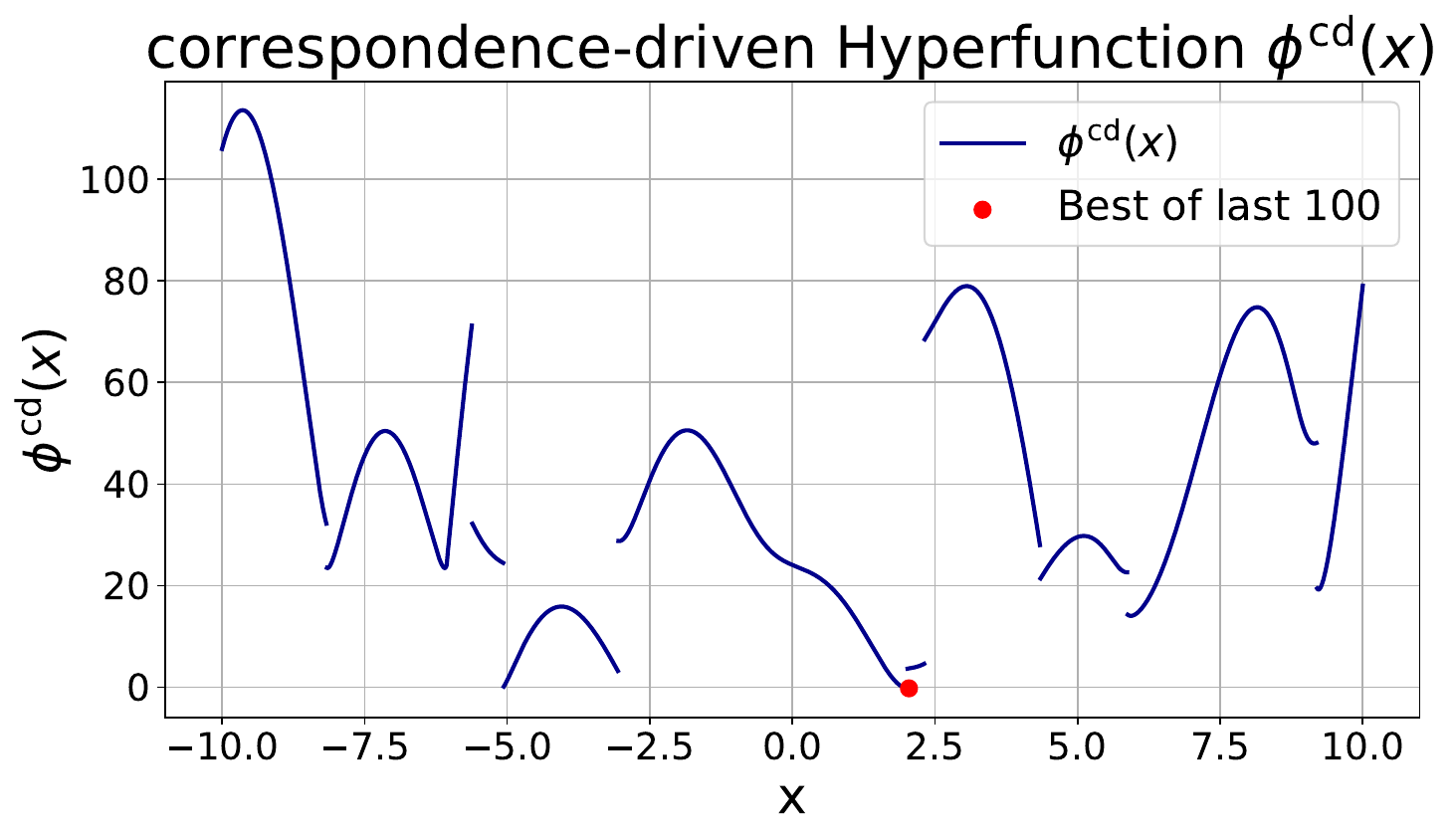}
    \par\small(k) Seed = 14
  \end{minipage}

  \caption{Plot of the \emph{correspondence-driven hyperfunction} $\phi^{\mathrm{cd}}(x)$ for different random seeds. In all cases, \ouralgo{} converges to either a local minimizer or the lower-value side of a discontinuity of $\phi^{\mathrm{cd}}(x)$.}
\end{figure*}

\section{Measure-Zero Discontinuities of Correspondence-Driven Solutions under Gradient Descent}\label{sec:MeasureZeroDiscontinuities}
\begin{theorem}\label{theorem:discontiunity_point}
    Suppose $g(x,y)$ is a $C^2$ function that is coercive in $y$ for any $x$. Let $\mathcal{M}$ be the gradient descent method with step size $0 < \eta < 1/\overline{L}_g$, where $\overline{L}_g$ is the Lipschitz smoothness coefficient of $g(x,\cdot)$. We uniformly choose $b$ from $[-\nu,\nu]^m$, where $\nu>0$ can be any constant, and perturb the initial point $y_0$ by setting it to $y_0+b$. Then, with probability one over the choice of $b$, the set of points in $\mathcal{X}\setminus\widetilde{\mathcal{X}}$ at which $y^{\text{cd}}(x)$ is discontinuous has Lebesgue measure zero, where $y^{\mathrm{cd}}(x)$ is the solution obtained by the algorithm $\mathcal{M}$ on the function $g(x,y)$ starting from the perturbed initialization $y_0+b$.
\end{theorem}

The core idea behind the proof of Theorem~\ref{theorem:discontiunity_point} is based on the stable manifold theorem~\citep[Page 122]{IntroductiontoDynamicalSystems} from discrete dynamical systems. Discontinuities in $y^{\text{cd}}(x)$ can only arise when the initial point $y_0$ lies on certain lower-dimensional stable manifolds associated with non-minimizing stationary points. However, such manifolds form a measure-zero subset of the domain. After perturbation, for almost every $x$, the initial point $y_0+b$ does not lie on any of them. As a result, the set of discontinuity points of $y^{\text{cd}}(x)$ has Lebesgue measure zero.

To proceed with the detailed proof, we introduce several relevant definitions and results. All concepts discussed here are within the framework of discrete dynamical systems.

\begin{definition}[Hyperbolic Set~(\cite{IntroductiontoDynamicalSystems}, Page 108)]
    \( M \) is a \( C^1 \) Riemannian manifold, \( U \subset M \) a non-empty open subset, and \( f : U \to f(U) \subset M \) a \( C^1 \) diffeomorphism. A compact, \( f \)-invariant subset \( \Lambda \subset U \) (i.e., $f(\Lambda)=\Lambda$) is called \emph{hyperbolic} if there exist \( \lambda \in (0,1) \), \( C > 0 \), and families of subspaces \( E^s(x) \subset T_x M \) and \( E^u(x) \subset T_x M \), \( x \in \Lambda \), such that for every \( x \in \Lambda \),
\begin{enumerate}
    \item \( T_x M = E^s(x) \oplus E^u(x) \),
    \item \( \| df_x^n v^s \| \leq C \lambda^n \| v^s \| \) for every \( v^s \in E^s(x) \) and \( n \geq 0 \),
    \item \( \| df_x^{-n} v^u \| \leq C \lambda^n \| v^u \| \) for every \( v^u \in E^u(x) \) and \( n \geq 0 \),
    \item \( df_x E^s(x) = E^s(f(x)) \) and \( df_x E^u(x) = E^u(f(x)) \).
\end{enumerate}
\end{definition}

Here \(E^s(x)\) and \(E^u(x)\) denote the stable and unstable subspaces at \(x\in\Lambda\). The space \(T_xM\) is the tangent space of the manifold \(M\) at the point \(x\), consisting of the velocities of smooth curves through \(x\). The map \(df_x: T_xM \to T_{f(x)}M\) is the derivative (differential, pushforward) of \(f\) at \(x\). For \(n\ge 1\), the notation
\[
df_x^{\,n} := d(f^{n})_x,
\qquad
df_x^{\,-n} := d(f^{-n})_x
\]
denotes the derivatives of the \(n\)-th forward iterates $f^n=f\circ f\circ\cdots f$ and backward iterates $f^{-n}=f^{-1}\circ f^{-1}\circ\cdots f^{-1}$. 

Informally, a hyperbolic set $\Lambda$ is one where, at every point, the tangent space splits into two complementary kinds of directions: along the “stable” directions $E^s$, repeated application of $f$ drags nearby points toward $\Lambda$; along the “unstable” directions $E^u$, the same iterations push points away. For instance, the map $f(x,y)=((1-2\alpha)x,(1+2\alpha)y))$ with a small constant $\alpha$ (which you can view as gradient step for the function $x^2-y^2$) maps original point to itself: points on the $x$-axis shrink under repeated application of the map and converge to $(0,0)$, whereas points on the $y$-axis expand and escape.

\begin{example}[Hyperbolic Fixed Point~(\cite{Hyperbolicity}, Page 11)]
    We say $x$ is a hyperbolic fixed point of the diffeomorphism $f:\R^n\to\R^n$ if $f(x)=x$ and all eigenvalues of $\nabla_{x} f(x)$ have norm different from one. A hyperbolic fixed point is a special case of a hyperbolic set.
\end{example}

\begin{definition}[Stable Manifold~(\cite{IntroductiontoDynamicalSystems}, Page 122)]
    \( M \) is a \( C^1 \) Riemannian manifold, \( U \subset M \) a non-empty open subset, and \( f : U \to f(U) \subset M \) a \( C^1 \) diffeomorphism. Let \( \Lambda \) be a hyperbolic set of \( f : U \to M \) and \( x \in \Lambda \). The (global) stable manifold of \( x \) is defined by
\[
W^s(x) = \{ y \in M : \mathrm{dist}(f^n(x), f^n(y)) \to 0 \text{ as } n \to \infty \}.
\]
\end{definition}

\begin{theorem}[Global Version of Stable Manifold Theorem~(\cite{IntroductiontoDynamicalSystems}, Corollary 5.6.6)]
    The global stable manifolds are embedded \(C^1\) submanifolds of \( M \); and it is homeomorphic to the open unit balls in corresponding dimensions.
\end{theorem}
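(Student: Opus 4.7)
The plan is to prove the theorem in two stages: first establish the local stable manifold theorem, then globalize via backward iteration. For the local version, I would show that for each $x\in\Lambda$ there is $\epsilon>0$ such that the local stable set
\[
W^s_\epsilon(x):=\{y\in M:\mathrm{dist}(f^n(x),f^n(y))\le\epsilon\ \text{for all}\ n\ge 0\}
\]
is a $C^1$ embedded disk tangent to $E^s(x)$ at $x$ and homeomorphic to an open ball in $E^s(x)$. I would pull back by the exponential map to work in a Banach space, expressing $f$ near $x$ as
\[
f(v^s,v^u)=(A^s v^s+\phi^s(v),\ A^u v^u+\phi^u(v)),
\]
where $A^s,A^u$ are the restrictions of $df_x$ to $E^s(x)$ and $E^u(x)$, and $\phi^s,\phi^u$ are higher-order perturbations with arbitrarily small Lipschitz constant on a small ball. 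I would then apply Hadamard's graph transform: search for $W^s_\epsilon(x)$ as the graph of a Lipschitz map $\sigma\colon B_\epsilon(0)\subset E^s(x)\to E^u(x)$ with $\sigma(0)=0$, and define a transform $\Gamma$ so that the graph of $\Gamma\sigma$ is mapped by $f$ into the graph of $\sigma$. The hyperbolicity constants make $\Gamma$ a contraction in the Lipschitz (or $C^0$) norm on a complete subspace, so the Banach fixed-point theorem yields a unique Lipschitz fixed point $\sigma_\ast$ whose graph is $W^s_\epsilon(x)$.

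The main obstacle is upgrading this Lipschitz regularity to $C^1$, since the Lipschitz norm is what makes $\Gamma$ a contraction but $C^1$ smoothness is not preserved under mere Lipschitz limits. I would address this with a secondary contraction argument on an auxiliary bundle: working over the already-obtained $\sigma_\ast$, set up the linearized graph transform acting on continuous candidate tangent-space distributions along the graph. Hyperbolicity again provides a contraction on this bundle, and its unique fixed point is identified with $d\sigma_\ast$; uniform convergence of the iterates then upgrades $\sigma_\ast$ to $C^1$, and the tangent space at the origin is $E^s(x)$. An alternative is to invoke the cone-invariance form of the Hadamard--Perron theorem, verifying $df^{-1}(\mathrm{cone}^s)\subset\mathrm{cone}^s$ in the stable direction.

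Once the local theorem is in hand, I would globalize through the identity
\[
W^s(x)=\bigcup_{n\ge 0}f^{-n}\bigl(W^s_\epsilon(f^n(x))\bigr).
\]
Because $\Lambda$ is $f$-invariant each $f^n(x)$ lies in $\Lambda$, so $W^s_\epsilon(f^n(x))$ is a $C^1$ embedded disk, and pulling back by the $C^1$ diffeomorphism $f^{-n}$ preserves this. These preimages form an ascending chain (if an orbit stays within $\epsilon$ of the reference orbit from time $n$ on, it does so from time $n+1$ on as well), and the $C^1$ structures agree on overlaps because consecutive pieces are obtained by applying further iterates of $f^{-1}$ to a common local piece. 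This gives an injectively immersed $C^1$ submanifold, and the uniform exponential contraction along stable directions rules out self-accumulation, promoting the immersion to an embedding. Finally, each piece is homeomorphic to an open disk of dimension $k:=\dim E^s(x)$ and the chain is monotone with diffeomorphic transitions, so the union is homeomorphic to $\mathbb{R}^k$ and hence to the open unit ball of dimension $k$, as claimed.
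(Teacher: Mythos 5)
This statement is imported verbatim from Brin and Stuck (Corollary 5.6.6); the paper gives no proof of it, so there is no in-paper argument to compare against. Your sketch is the standard Hadamard graph-transform route --- local stable disks as Lipschitz graphs obtained from a contraction, a secondary contraction on candidate tangent distributions to upgrade Lipschitz to $C^1$, and globalization via the increasing union $W^s(x)=\bigcup_{n\ge 0}f^{-n}\bigl(W^s_\epsilon(f^n(x))\bigr)$ --- which is essentially how the cited textbook proves it. The local part is right in outline, with the standard caveat that since $f^n(x)$ is not fixed you must run the graph transform along the entire orbit (the sequence version of Hadamard--Perron), not just linearize at the single point $x$; and the monotone-union argument correctly yields that $W^s(x)$ is an injectively immersed $C^1$ manifold homeomorphic to $\mathbb{R}^k$, hence to the open unit $k$-ball.

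The one step that does not hold up as written is the final claim that uniform exponential contraction ``rules out self-accumulation'' and promotes the injective immersion to an embedding. Contraction controls distances measured along the leaf, not how the leaf sits in $M$ transversally: the global stable manifold of a hyperbolic fixed point with a transverse homoclinic point, or of any point of an Anosov diffeomorphism, accumulates on itself and is therefore not an embedded submanifold in the subspace topology. The correct general conclusion is that $W^s(x)$ is an injectively immersed $C^1$ submanifold, homeomorphic to an open ball with respect to its intrinsic (direct-limit) topology --- which your union argument does establish --- and the word \emph{embedded} in the quoted statement has to be read in that weaker sense; note that the paper itself switches to ``injectively immersed'' when it actually uses the stable-manifold structure in its Appendix B argument. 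So either justify embeddedness under an additional hypothesis that excludes homoclinic recurrence, or weaken the conclusion of that step to injective immersion plus intrinsic ball topology.
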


If $\Lambda={x^*}$ is a hyperbolic fixed point of a diffeomorphism $f:\R^n\to\R^n$, then the dimension of the stable manifold at $x^*$ is equal to the number of eigenvalues of $\nabla_x f(x^*)$ with norm less than one.

\begin{theorem}[Fubini-Tonelli theorem]Let $ A $ and $ B $ be both $\sigma$-finite measure spaces, and $ A \times B $ be their product measurable space. If $ f: A \times B \to [0, \infty] $ is a non-negative measurable function, then$$\begin{aligned}
\int_A \left( \int_B f(x, y) \, dy \right) dx 
= \int_B \left( \int_A f(x, y) \, dx \right) dy 
= \int_{A \times B} f(x, y) \, d(x, y).
\end{aligned}$$\end{theorem}

\begin{lemma}
    Suppose $\mathcal{M}$ is gradient descent method, and $0<\eta<1/\overline{L}_g$. The following two hold:
    \begin{enumerate}
        \item Consider the map $F_x:y\mapsto y-\eta\nabla_y g(x,y)$, then $F_x(y)$ is a diffeomorphism on $\mathbb{R}^m$;
        \item If $g(x,y)$ is Morse in $y$ at $x$, and $\widetilde{y}$ is a stationary of $g(x,y)$, then $\widetilde{y}$ is a hyperbolic fixed point of $F$.
    \end{enumerate}
\end{lemma}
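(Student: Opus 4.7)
For part~(1), the plan is to first verify that $F_x$ is $C^1$ (indeed $C^1$ since $g$ is $C^2$ by Assumption~\ref{assumption:general1}(\ref{assumption:general1(1)})) and compute its Jacobian $DF_x(y) = I - \eta\,\nabla^2_{yy} g(x,y)$. The smoothness bound $\|\nabla^2_{yy}g(x,y)\|\le \overline{L}_g$ together with $0<\eta<1/\overline{L}_g$ guarantees that every eigenvalue of $DF_x(y)$ lies in the interval $(1-\eta\overline{L}_g,\,1+\eta\overline{L}_g)\subset(0,2)$, so $DF_x(y)$ is invertible at every $y$ with $\|DF_x(y)^{-1}\|\le (1-\eta\overline{L}_g)^{-1}$ uniformly in $y$. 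Local diffeomorphism then follows from the inverse function theorem.

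To upgrade local invertibility to global invertibility, the plan is to use the Hadamard global inverse theorem: a $C^1$ map $F:\mathbb{R}^m\to\mathbb{R}^m$ whose Jacobian is everywhere invertible with $\|DF(y)^{-1}\|$ uniformly bounded is a $C^1$ diffeomorphism of $\mathbb{R}^m$ onto $\mathbb{R}^m$. Equivalently, one may argue injectivity directly via the fundamental theorem of calculus: writing
\[
\langle F_x(y_1)-F_x(y_2),\,y_1-y_2\rangle=\int_0^1 \bigl\langle (I-\eta\nabla^2_{yy}g(x,y_2+t(y_1-y_2)))(y_1-y_2),\,y_1-y_2\bigr\rangle\,dt,
\]
the integrand is bounded below by $(1-\eta\overline{L}_g)\|y_1-y_2\|^2$, which gives both injectivity and a coercivity estimate $\|F_x(y_1)-F_x(y_2)\|\ge(1-\eta\overline{L}_g)\|y_1-y_2\|$. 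Coercivity implies $F_x$ is proper, and together with the open mapping property (from local invertibility) the image is open, closed and nonempty in $\mathbb{R}^m$, hence all of $\mathbb{R}^m$; this yields surjectivity. The inverse is then $C^1$ by the inverse function theorem.

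For part~(2), the plan is first to observe that $\nabla_y g(x,\widetilde y)=0$ immediately gives $F_x(\widetilde y)=\widetilde y$, so $\widetilde y$ is a fixed point. To verify hyperbolicity, the eigenvalues of $DF_x(\widetilde y)=I-\eta\nabla^2_{yy}g(x,\widetilde y)$ are exactly $\{1-\eta\lambda_i\}_{i=1}^m$, where $\{\lambda_i\}$ are eigenvalues of $\nabla^2_{yy} g(x,\widetilde y)$. I will check $|1-\eta\lambda_i|\ne 1$ case by case: for $\lambda_i>0$, the bound $\eta\lambda_i\in(0,1)$ gives $1-\eta\lambda_i\in(0,1)$ so $|1-\eta\lambda_i|<1$; for $\lambda_i<0$, we get $1-\eta\lambda_i\in(1,2)$ so $|1-\eta\lambda_i|>1$; and the case $\lambda_i=0$ is precisely ruled out by the Morse assumption. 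Hence no eigenvalue of $DF_x(\widetilde y)$ has modulus one, proving $\widetilde y$ is a hyperbolic fixed point.

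The main obstacle is the global invertibility argument in part~(1): local invertibility, injectivity via monotonicity, and the eigenvalue analysis in part~(2) are all straightforward, but concluding that $F_x$ is surjective onto all of $\mathbb{R}^m$ requires a nontrivial global argument (Hadamard's theorem, or the open/closed/proper-map combination above). Everything else in the statement follows from elementary linear algebra applied to the gradient-descent Jacobian, so the remainder of the proof should be short.
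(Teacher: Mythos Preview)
Your proposal is correct. Part~(2) is identical in spirit to the paper's argument, and for part~(1) your injectivity step via strong monotonicity is equivalent to the paper's direct contradiction using the Lipschitz bound on $\nabla_y g$. The one genuine difference is the surjectivity argument: the paper avoids Hadamard's theorem and the open/closed/proper-map machinery entirely by a short variational trick—given a target $y_1$, it observes that $\psi(y)=\tfrac12\|y-y_1\|^2-\eta g(x,y)$ is coercive (since $\eta\overline{L}_g<1$), hence admits a critical point $y$, and the first-order condition $y-y_1-\eta\nabla_y g(x,y)=0$ says exactly $F_x(y)=y_1$. Your route is more systematic and reusable (it works for any uniformly monotone $C^1$ map), while the paper's is shorter and self-contained, needing nothing beyond existence of minimizers for coercive functions.
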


\begin{proof}
    We first prove that $F_x$ is injective. If there exists $y_1\ne y_2$ such that $F_x(y_1)=F_x(y_2)$, i.e., $y_1-\eta\nabla_y g(x,y_1)=y_2-\eta\nabla_y g(x,y_2)$, then $y_1-y_2=\eta(\nabla_y g(x,y_1)-\nabla_y g(x,y_2))$. However, we also have
    $$\eta\|\nabla_y g(x,y_1)-\nabla_y g(x,y_2)\|\leq\eta\overline{L}_g\|y_1-y_2\|<\|y_1-y_2\|.$$
    This leads to a contradiction, so $F$ is injective.

    Next, we prove that $F_x$ is surjective. For any $y_1$, consider the following problem
    $$\psi(y)=\frac{1}{2}\|y-y_1\|^2-\eta g(x,y).$$
    It is easy to see that $\psi(y)$ is coercive, so it has a stationary point, i.e., there exists $y$ such that 
    $$y-y_1-\eta\nabla_y g(x,y)=0.$$
    This means $y_1=y-\eta\nabla_y g(x,y)$. Thus $F$ is surjective.

    Finally, we prove that $F_x$ and $F_x^{-1}$ is differentiable. The differentiability of $F_x$ is clear. To prove the differentiability of $F_x^{-1}$, note that 
    $$\nabla_y F_x(y)=I-\eta \nabla^2_{yy} g(x,y)$$
    is non-degenerate since $\eta<1/\overline{L}_g$. Therefore, by implicit function theorem, $F^{-1}$ is differentiable at $F_x(y)$. We have proved that $F_x$ is surjective, so $F^{-1}$ is differentiable at any point.

    Since $g(x,y)$ is Morse in $y$, all eigenvalues of $\nabla^2_{yy} g(x,\widetilde{y})$ are nonzero. Note that the eigenvalues of $\nabla_y F$ are $1-\eta\lambda_i$, where $\lambda_i$ are the eigenvalues of $\nabla^2_{yy} g(x,\widetilde{y})$ that are nonzero. This combines the fact that $\eta<1/\overline{L}_g\leq1/\max_i\{|\lambda_i|\}$ ensures that the norm of $1-\eta\lambda_i$ is different from $1$ for any $i$. Therefore, $\widetilde{y}$ is a hyperbolic fixed point.
\end{proof}

\noindent\textbf{Detailed proof of Theorem~\ref{theorem:discontiunity_point}:} Fix an arbitrary $x \in \mathcal{X}\setminus\widetilde{\mathcal{X}}$ and consider the gradient descent sequence $\{y_k\}$ initiated at $y_0$ with a step size $0 < \eta < 1/\overline{L}_g$. We first prove that the sequence generated by the gradient descent method will converge to a stationary point. Since the gradient $\nabla_y g(x, \cdot)$ is $\overline{L}_g$-Lipschitz continuous, standard optimization theory provides the following descent inequality for consecutive iterations:$$g(x, y_{k+1}) \leq g(x, y_k) - \left( \frac{1}{\eta} - \frac{\overline{L}_g}{2} \right) \|y_{k+1} - y_k\|^2.$$Because $\eta < 1/\overline{L}_g$, the coefficient $C = \left(\frac{1}{\eta} - \frac{\overline{L}_g}{2}\right)$ is strictly positive. This strict descent property ensures that the entire sequence $\{y_k\}$ is strictly confined within the initial sublevel set $\mathcal{L}(x, y_0) := \{y \in \mathbb{R}^m \mid g(x, y) \leq g(x, y_0)\}$. By the coercivity assumption on $g(x, \cdot)$, this sublevel set is bounded, and being closed, it is compact.  Summing the descent inequality from $k=0$ to infinity yields a telescoping sum:$$C \sum_{k=0}^{\infty} \|y_{k+1} - y_k\|^2 \leq g(x, y_0) - \inf_y g(x, y).$$Since $g(x,\cdot)$ is coercive, it is bounded from below, making the right side of the inequality finite. Thus, the general term of the series must approach zero, strictly proving that $\|y_{k+1} - y_k\| \to 0$, and consequently $\|\nabla_y g(x, y_k)\| \to 0$ as $k \to \infty$. By assumption, $g(x,\cdot)$ is a Morse function, meaning all its stationary points are strictly isolated. Because $\mathcal{L}(x, y_0)$ is a compact set, the intersection of these isolated stationary points with $\mathcal{L}(x, y_0)$ must form a finite discrete set, which we denote by $\mathcal{P}(x, y_0)$. Let $\Omega$ denote the set of all accumulation points of $\{y_k\}$. Since $\{y_k\}$ stays within the compact set $\mathcal{L}(x, y_0)$, the Bolzano-Weierstrass theorem ensures $\Omega$ is non-empty. Furthermore, since the gradient vanishes along the sequence, any accumulation point must be a stationary point, meaning $\Omega \subseteq \mathcal{P}(x, y_0)$. Crucially, a standard result in analysis states that if a bounded sequence satisfies $\|y_{k+1} - y_k\| \to 0$, its set of accumulation points $\Omega$ must be a connected set. Since $\Omega$ is a non-empty connected subset of the finite discrete set $\mathcal{P}(x, y_0)$, it must be a singleton. Therefore, the entire sequence converges strictly to a unique stationary point.

Next, we characterize the discontinuity of $y^{\text{cd}}(x)$. The basin of attraction for each stationary point corresponds to its stable manifold. According to the Global Stable Manifold Theorem, the stable manifold of a local minimizer is an open $m$-dimensional submanifold, whereas the stable manifolds of non-minimizing stationary points (saddle points and local maximizers) have dimension strictly less than $m$. If the initialization $y_0$ lies in the interior of the basin of attraction of a local minimizer for parameter $x$, it is easy to see that for sufficiently small perturbations of $x$, the gradient descent sequence will still converge to the continuous continuation of that local minimizer. Thus, discontinuities in $y^{\text{cd}}(x)$ arise only if the initialization $y_0$ lies exactly on the boundary separating different basins of attraction, which corresponds precisely to the stable manifolds of non-minimizing stationary points. Let $\Phi_k(x, y)$ denote the $k$-th iterate of gradient descent initiated at $y$ under the parameter $x$. As established, for any $(x, y) \in (\mathcal{X}\setminus\widetilde{\mathcal{X}}) \times \mathbb{R}^m$, the sequence converges strictly to a stationary point. Thus, the pointwise limit function $\Phi_\infty(x, y) := \lim_{k \to \infty} \Phi_k(x, y)$ is well-defined. Since $\Phi_k(x, y)$ is continuous jointly in $(x, y)$ for each finite $k$, its pointwise limit $\Phi_\infty(x, y)$ is a Borel measurable function. We define the critical set of initializations in the product space, $\mathcal{G}$, as the set of all pairs $(x, y)$ such that the gradient descent sequence converges to a non-minimizing stationary point. A stationary point is a non-minimizer if and only if the minimal eigenvalue of the Hessian matrix of $g(x, \cdot)$ at that point is strictly negative. Therefore, $\mathcal{G}$ can be rigorously defined as:$$\mathcal{G} = \left\{ (x, y) \in (\mathcal{X}\setminus\widetilde{\mathcal{X}}) \times \mathbb{R}^m : \lambda_{\min}\Big(\nabla^2_{yy} g\big(x, \Phi_\infty(x,y)\big)\Big) < 0 \right\}.$$Since $g$ is a $C^2$ function, the minimal eigenvalue function $\lambda_{\min}(\nabla^2_{yy} g(x, y))$ is continuous. The composition of this continuous function with the Borel measurable function $\Phi_\infty(x, y)$ ensures that the map $(x, y) \mapsto \lambda_{\min}\big(\nabla^2_{yy} g(x, \Phi_\infty(x,y))\big)$ is Borel measurable. Consequently, $\mathcal{G}$ is the preimage of the open interval $(-\infty, 0)$ under a measurable function, making $\mathcal{G}$ a Borel set. For a fixed $x \in \mathcal{X}\setminus\widetilde{\mathcal{X}}$, let $\mathcal{A}(x) = \{ y \in \mathbb{R}^m : (x, y) \in \mathcal{G} \}$ be the $x$-section of $\mathcal{G}$. Since each non-minimizing stationary point has a stable manifold of dimension strictly less than $m$, $\mathcal{A}(x)$ is an at most countable union of lower-dimensional submanifolds. Because the countable union of measure-zero sets still has measure zero, $\mathcal{A}(x)$ has Lebesgue measure zero in $\mathbb{R}^m$. We introduce the indicator function for the perturbed initialization $y_0 + b$ falling into this critical set:$$I_{\mathcal{A}}(x, b) = \mathbb{I}\{ y_0 + b \in \mathcal{A}(x) \} = \mathbb{I}\{ (x, y_0 + b) \in \mathcal{G} \}.$$Because $\mathcal{G}$ is a Borel set, the function $(x, b) \mapsto I_{\mathcal{A}}(x, b)$ is jointly measurable. Notice that the condition $y_0 + b \in \mathcal{A}(x)$ is strictly equivalent to $b \in \mathcal{A}(x) - y_0$. Since the Lebesgue measure is translation-invariant, the shifted set $\mathcal{A}(x) - y_0$ also has Lebesgue measure zero in $\mathbb{R}^m$. Therefore, integrating the indicator function with respect to the perturbation $b$ over any domain, including $[-\nu, \nu]^m$, identically yields zero:$$\int_{[-\nu,\nu]^m} I_{\mathcal{A}}(x, b) \, db = 0, \quad \forall x \in \mathcal{X}\setminus\widetilde{\mathcal{X}}.$$By Fubini's theorem, since the indicator function is non-negative and measurable, we can swap the order of integration:$$\int_{[-\nu,\nu]^m} \left( \int_{\mathcal{X}\setminus\widetilde{\mathcal{X}}} I_{\mathcal{A}}(x, b) \, dx \right) db = \int_{\mathcal{X}\setminus\widetilde{\mathcal{X}}} \left( \int_{[-\nu,\nu]^m} I_{\mathcal{A}}(x, b) \, db \right) dx = 0.$$This implies that for almost every perturbation $b \in [-\nu, \nu]^m$, the integral $\int_{\mathcal{X}\setminus\widetilde{\mathcal{X}}} I_{\mathcal{A}}(x, b) \, dx$ is exactly zero. Since the integrand is non-negative, this strictly means the set of $x$ where $y^{\text{cd}}(x)$ is discontinuous has Lebesgue measure zero.

\section{Detailed proof}

\subsection{Proof of Theorem~\ref{theorem:nowhere_dense}}\label{proof:theorem:nowhere_dense}

Before presenting the detailed proof, we provide some definitions and theorems that will be used.

\begin{definition}[Borel Set]
    A Borel set is any subset of a topological space that can be formed from its open sets (or, equivalently, from closed sets) through the operations of countable union, countable intersection, and relative complement.
\end{definition}

\begin{definition}[Analytic Set]
    The set $A$ is called an analytic set if it is the continuous image of a Borel set in a separable completely metrizable topological space.
\end{definition}

According to \citet[Theorem 29.7]{ClassicalDescriptiveSetTheory}, any analytic set in Euclidean space is measurable.

\begin{theorem}[Sard's Theorem]
    Let $f : \mathbb{R}^n \to \mathbb{R}^m$ be $C^k$, where $k \geq \max\{n - m + 1, 1\}$. Let $X \subset \mathbb{R}^n$ denote the critical set of $f$, which is the set of points $x \in \mathbb{R}^n$ at which the Jacobian matrix of $f$ has rank strictly less than $m$. Then the critical value set, that is, the image $f(X)$, has Lebesgue measure $0$ in $\mathbb{R}^m$.
\end{theorem}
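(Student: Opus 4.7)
The plan is to prove Sard's theorem by induction on the dimension $n$ of the domain, stratifying the critical set $X$ according to the order of vanishing of the derivatives of $f$. The base case $n=0$ is immediate, and the case $n < m$ is handled separately: then every point of $\R^n$ is critical, and a direct covering of $\R^n$ by cubes of side $\delta$, together with the local Lipschitz bound from $f \in C^1$, shows that $f(\R^n)$ has $m$-measure zero. For the inductive step, assume $n \geq m$ and define $X_0 := X$ and, for $i \geq 1$,
\[
X_i := \{ x \in \R^n : D^j f(x) = 0 \text{ for all } 1 \leq j \leq i \},
\]
yielding a filtration $X_0 \supset X_1 \supset \cdots \supset X_k$. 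It then suffices to show that the three types of pieces
\[
f(X_0 \setminus X_1), \qquad f(X_i \setminus X_{i+1}) \ (1 \leq i \leq k-1), \qquad f(X_k),
\]
each have $m$-dimensional Lebesgue measure zero.

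For $f(X_0 \setminus X_1)$, at any $x_0 \in X_0 \setminus X_1$ some first-order partial derivative of some component of $f$ is nonzero, so the implicit function theorem produces a local change of coordinates in which $f$ takes the form $(x_1, x') \mapsto (x_1, h(x_1, x'))$ with $h : \R \times \R^{n-1} \to \R^{m-1}$. A point $(x_1, x')$ is critical for $f$ iff $x'$ is critical for the slice $h(x_1, \cdot)$; by the inductive hypothesis applied in dimension $n-1$, the critical values of each slice form an $(m-1)$-null set, and Fubini's theorem upgrades this to $m$-null in the chart. Covering $X_0 \setminus X_1$ by countably many such charts finishes this piece. For the intermediate pieces $f(X_i \setminus X_{i+1})$, pick at $x_0 \in X_i \setminus X_{i+1}$ an $(i+1)$-order partial that does not vanish, and let $w(x)$ be the corresponding $i$-th partial derivative: then $w \equiv 0$ on $X_i$ near $x_0$ while $\nabla w(x_0) \neq 0$, so $X_i$ lies locally inside the smooth hypersurface $\{w = 0\}$. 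The restriction of $f$ to this $(n-1)$-dimensional $C^{k-i}$ submanifold is still regular enough (since $k - i \geq 1$ and $k - i \geq (n-1) - m + 1$) for the inductive hypothesis to apply, yielding $m$-measure zero image.

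The main obstacle will be the last piece $f(X_k)$, which is where the sharp regularity threshold $k \geq n - m + 1$ enters. By Taylor's theorem, for any $x_0 \in X_k$ within a bounded ball $B$ we have $|f(x_0 + h) - f(x_0)| \leq C |h|^{k+1}$ for $x_0 + h \in B$, since all derivatives of $f$ of order up to $k$ vanish at $x_0$. Partition $B$ into cubes of side $\delta$: at most $O(\delta^{-n})$ of them meet $X_k$, and the image of each is contained in an $m$-cube of side $O(\delta^{k+1})$, hence has $m$-measure $O(\delta^{m(k+1)})$. Summing gives a total image measure of $O(\delta^{\,m(k+1) - n})$, which tends to $0$ as $\delta \to 0$ precisely when $m(k+1) > n$. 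A direct check shows that the hypothesis $k \geq n - m + 1$ (combined with the already handled case $n < m$) guarantees this strict inequality, closing the induction. This final covering estimate is the delicate step, since it is where the regularity threshold is pinned down and where the dimensional interplay between $n$ and $m$ is essential; the inductive pieces, by contrast, reduce to standard applications of the implicit function theorem and Fubini.
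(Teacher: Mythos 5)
The paper does not prove this statement; Sard's theorem is quoted as a classical result (and is only ever applied to $C^\infty$ maps, namely gradient maps of smooth $g$), so your proposal has to be judged against the standard literature. Your architecture is the classical Milnor-style induction, and the case $n<m$, the first stratum $X_0\setminus X_1$ (normal form plus Fubini), and the overall decomposition are all sound. However, there is a genuine gap in the intermediate strata, and it is exactly the gap that separates the easy $C^\infty$ version from the sharp $C^{\max\{n-m+1,1\}}$ version you are claiming to prove. For $x_0\in X_i\setminus X_{i+1}$ your auxiliary function $w$ is an $i$-th order partial derivative of $f$, hence only $C^{k-i}$, so the hypersurface $\{w=0\}$ and the restricted map $f|_{\{w=0\}}$ are only $C^{k-i}$. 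Your parenthetical claim that $k-i\ge (n-1)-m+1$ is false in general: it requires $i\le k-(n-m)$, which at the sharp threshold $k=n-m+1$ allows only $i=1$. Concretely, for $n=5$, $m=1$, $k=5$, $i=2$ you have $k-i=3$ while the inductive hypothesis in dimension $n-1=4$ demands $C^4$. So for $n\ge m+2$ the induction does not close at the stated regularity; the argument as written proves Sard's theorem for $C^\infty$ maps (or for $k$ large enough to absorb the derivative loss), not with the sharp hypothesis in the statement. Repairing this requires a genuinely different treatment of the flat strata (as in Sard's original paper), not just bookkeeping.

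A secondary, fixable issue is in the last step: for an $f$ that is only $C^k$, Taylor's theorem with all derivatives through order $k$ vanishing at $x_0\in X_k$ gives $|f(x_0+h)-f(x_0)|=o(|h|^k)$ uniformly on compacta, not $O(|h|^{k+1})$ (the latter needs $C^{k+1}$ or a H\"older bound on $D^kf$). The covering argument still works with the weaker estimate: the total measure is $o(\delta^{mk-n})$, and $mk\ge n$ follows from $k\ge n-m+1$ together with $n\ge m$ since $(n-m)(m-1)\ge 0$; but your criterion $m(k+1)>n$ rests on an estimate you do not have. You should also record, for the Fubini step, that the critical set and each stratum are closed, so their images are $\sigma$-compact and hence measurable.
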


\noindent\textbf{Detailed proof of Theorem~\ref{theorem:nowhere_dense}:} Define the indicator function $I(a,x) = \mathbb{I}\{ \widetilde{g}_a(x,\cdot) \text{ is not Morse} \}$, where $\widetilde{g}_a(x,y)=g(x,y)+a^\top y$. First, we establish the measurability of $I(a,x)$. The set of points $(x,y,a)$ satisfying $\nabla_y \widetilde{g}_a = 0$ and $\det(\nabla_{yy}^2\widetilde{g}_a) = 0$ is the zero set of continuous functions, and thus a Borel set in $\mathbb{R}^{n+2m}$. Its projection onto the $(x,a)$-space is an analytic set, which is Lebesgue measurable. Consequently, $I(a,x)$ is a measurable function. Next, fix $x \in \mathcal{X}$. The function $\widetilde{g}_a(x,\cdot)$ fails to be Morse if and only if $-a$ is a critical value of the map $y \mapsto \nabla_y g(x,y)$. By Sard's theorem, the set of such critical values has Lebesgue measure zero. Therefore, for any $x$, we have$$\int_{[-\nu,\nu]^m} I(a,x) \, da = 0.$$ Integrating over $\mathcal{X}$, we obtain $\int_{\mathcal{X}} \int_{[-\nu,\nu]^m} I(a,x) \, da \, dx = 0$. By Fubini's theorem, we can exchange the order of integration:$$\int_{[-\nu,\nu]^m} \left( \int_{\mathcal{X}} I(a,x) \, dx \right) da = 0.$$ Since the inner integral $\int_{\mathcal{X}} I(a,x) \, dx$ is non-negative, it must be zero for almost every $a \in [-\nu,\nu]^m$. This proves that with probability one over the random draw of $a$, the set $\widetilde{\mathcal{X}}$ (where $\widetilde{g}_a$ is not Morse) has measure zero.

\subsection{Proof of Proposition~\ref{proposition:limit}}\label{proof:proposition:limit}
    \textbf{Smoothness for any $\xi>0$:} We first show that $\pcd_\xi\in C^\infty(\mathbb{R}^n)$. Note that we assume that the number of lower-level iterations satisfies $K\geq K_0$ in Assumption \ref{assumption:descent}. $y^{\text{cd}}(x)$ is always in the level set $\{y:g(x,y)\leq g(x,y_0)\}$. By Proposition~\ref{prop:general}(\ref{assumption:general1(8)}), we have
$$|\pcd_\xi(x)|\leq \overline{f}.$$
    Since the Gaussian kernel $h_\xi(x-z)$ is smooth in $x$, and
    $$|\partial^\alpha_x h_\xi(x-z)|\leq\frac{C_\alpha}{\xi^{|\alpha|}}h_{\xi/2}(x-z)$$
    for some constant $C_\alpha$, the product $\partial^\alpha_z h_\xi(z)\pcd(x-z)$ is dominated by an integrable function. Here \(\partial^\alpha_x\) denotes the partial derivative with respect to \(x\) of multi-index \(\alpha\)=$(\alpha_1,\cdots\alpha_n)$, i.e., \(\partial^\alpha_x = \partial^{\alpha_1+\cdots\alpha_n}/\partial x_1^{\alpha_1} \cdots \partial x_n^{\alpha_n}\). Thus by Leibniz rule for parameter integrals, we have
    $$\partial^\alpha_x\pcd_\xi(x)=\int_{\mathbb{R}^n}\partial^\alpha_x h_\xi(x-z)\pcd(z)dz$$
    is well-defined and smooth for any multi-index $\alpha$. Hence $\pcd_\xi\in C^\infty(\mathbb{R}^n)$.
    
    \noindent\textbf{Pointwise convergence:} Suppose $\pcd$ is continuous at $\bar{x}$. Then for any $\delta>0$, there exists $r>0$ such that $|\pcd(z)-\pcd(\bar{x})|<\delta$ whenever $\|z-\bar{x}\|<r$. Then
    $$\pcd_\xi(\bar{x})-\pcd(\bar{x})=\int_{\R^n} h_\xi(\bar{x}-z)[\pcd(z)-\pcd(\bar{x})]dz.$$
    Splitting the integral, we obtain
    \begin{align*}
        \pcd_\xi(\bar{x})-\pcd(\bar{x})&=\int_{\|z-\bar{x}\|<r} h_\xi(\bar{x}-z)[\pcd(z)-\pcd(\bar{x})]dz+\int_{\|z-\bar{x}\|\geq r} h_\xi(\bar{x}-z)[\pcd(z)-\pcd(\bar{x})]dz\\
        &=:I_1+I_2.
    \end{align*}
    For $\|z-\bar{x}\|<r$, we have $|\pcd(z)-\pcd(\bar{x})|<\delta$, so
    $$|I_1|\leq\delta\int_{\R^n}h_\xi(\bar{x}-z)dz=\delta.$$
    For $\|z-\bar{x}\|\geq r$, since $|\pcd(z)-\pcd(\bar{x})|\leq|\pcd(z)|+|\pcd(\bar{x})|\leq 2\overline{f}$ by \eqref{eq:boundoff}, we get
    $$|I_2|\leq 2\overline{f}\int_{\|z-\bar{x}\|\geq r}h_\xi(\bar{x}-z)dz=2\overline{f}\mathbb{P}_{Z\sim\mathcal{N}(0,I_n)}(\|Z\|\geq \frac{r}{\xi})\to 0\text{ as }\xi\to 0.$$
    Therefore, the following holds for any $\delta$
    $$\lim_{\xi\to 0}|\pcd_\xi(\bar{x})-\pcd(\bar{x})|\leq \lim_{\xi\to 0}|I_1|+|I_2|\leq\delta.$$
    Let $\delta$ approaches $0$, we conclude
    $$\lim_{\xi\to 0}\pcd_\xi(\bar{x})=\pcd(\bar{x}).$$

    \noindent\textbf{Gradient convergence:} Assume $\pcd$ is differentiable at $\bar{x}$. Then
    $$\nabla_x\pcd_\xi=\int_{\R^n}\pcd(z)\nabla_x h_\xi(\bar{x}-z)dz=-\int_{\R^n}\pcd(z)\frac{\bar{x}-z}{\xi^2}h_{\xi}(\bar{x}-z)dz.$$
    Note that
    $$\int_{\R^n}\pcd(\bar{x})\frac{\bar{x}-z}{\xi^2}h_{\xi}(\bar{x}-z)dz=\int_{\R^n}\pcd(\bar{x})\frac{z}{\xi^2}h_{\xi}(z)dz=0$$
    because $zh_{\xi}(z)$ is antisymmetric in $z$. Set $w=\bar{x}-z$, and $\pcd(z)-\pcd(\bar{x})=-\nabla_x\pcd(\bar{x})^\top w+r(z)$, where $r(z)=o(\|w\|)$, we have
    \begin{align*}
        \nabla_x\pcd_\xi(\bar{x})&=-\int_{\R^n}[\pcd(z)-\pcd(\bar{x})]\frac{\bar{x}-z}{\xi^2}h_\xi(\bar{x}-z)dz\\
        &=-\int_{\R^n}[-\nabla_x\pcd(\bar{x})^\top w]\frac{w}{\xi^2}h_\xi(w)dw-\int_{\R^n}r(z)\frac{w}{\xi^2}h_\xi(w)dw\\
        &=:A_\xi+B_\xi.
    \end{align*}
    By computing the component of $A_\xi$, we obtain
    \begin{align*}
        (A_\xi)_i=\sum_j\partial_{x_j}\pcd(\bar{x})\int_{\R^n}w_jw_i\frac{1}{\xi^2}h_\xi(w)dw=\sum_j\partial_{x_j}\pcd(\bar{x})\delta_{ij}=\partial_{x_i}\pcd(\bar{x}).
    \end{align*}
    Therefore, $A_\xi=\nabla_x\pcd(\bar{x})$. Here we used the result that $\int_{\R^n}w_jw_i\frac{1}{\xi^2}h_\xi(w)dw=\delta_{ij}$. This follows by defining the random vector $W\equiv(W_1,...,W_n)^\top\sim\mathcal{N}(0,\xi^2 I_n)$, whose density is precisely $h_\xi(w)$. Then
    $$\int_{\R^n}w_jw_i\frac{1}{\xi^2}h_\xi(w)dw=\frac{1}{\xi^2}\E[W_iW_j]=\frac{1}{\xi^2}(\E[W_iW_j]-\E[W_i]\E[W_j])=\frac{1}{\xi^2}\text{Cov}(W_i,W_j)=\delta_{ij}.$$
    
    The vanishing of the remainder term $B_\xi$ follows by exactly the same near-far splitting and Gaussian tail-bound argument used above in the pointwise convergence. We conclude
    $$\lim_{\xi\to 0}\nabla_x\pcd_\xi(\bar{x})=\nabla_x\pcd(\bar{x}).$$

\subsection{Proof of Proposition~\ref{proposition:boundedgradient}}\label{proof:proposition:boundedgradient}

For any vector $v$ with norm $1$, we have
\begin{align*}
    v^\top\nabla_x\pcd_\xi(x)&=v^\top\int_{\mathbb{R}^n}\nabla_xh_\xi(x-z)\phi^{\text{cd}}(z)dz\\
    &\overset{\text{(i)}}{=}v^\top\int_{\mathbb{R}^n}\nabla_u h_\xi(u)\phi^{\text{cd}}(x-u)du\\
    &=-\frac{1}{\xi^2}\int_{\R^n}v^\top u h_\xi(u)\pcd(x-u)du.
\end{align*}
In (i) we substitute $u=x-z$. Note that $|\pcd|\leq \overline{f}$, we have the bound
\begin{align*}
    |v^\top\nabla_x\pcd_\xi(x)|\leq\frac{\overline{f}}{\xi^2}\int_{\R^n}|v^\top u|h_\xi(u)du=\frac{\overline{f}}{\xi^2}\E[|v^\top Z|]=\frac{\overline{f}}{\xi^2}\xi\sqrt{\frac{2}{\pi}}=\sqrt{\frac{2}{\pi}}\frac{\overline{f}}{\xi}
\end{align*}
since for any unit vector $v$, the random variable $v^\top Z$ with $Z\sim\mathcal{N}(0,\xi^2I_n)$ is still a one-dimensional Gaussian, i.e., $v^\top Z\sim\mathcal{N}(0,\xi^2)$. 

Thus, we conclude
\begin{align*}
    \|\nabla_x\pcd_\xi(x)\|\leq\sqrt{\frac{2}{\pi}}\frac{\overline{f}}{\xi}.
\end{align*}
\subsection{Proof of Proposition~\ref{proposition:lipschitz}}\label{proof:proposition:lipschitz}
    Note that the Hessian of $\phi_\xi(x)$ can be computed as follows
    $$H(x)=\int_{\mathbb{R}^n}\nabla^2_{xx} h_\xi(x-z)\phi^{\text{cd}}(z)dz,$$
    we need to estimate the operator norm of $H(x)$. By definition
    $$\left\|H(x)\right\|_{\mathrm{op}}=\sup_{\|\eta\|=1}\left|\eta^\intercal H(x) \eta \right|.$$
    For any $\eta$, we have
    \begin{align*}
        \left|\eta^\intercal H(x)\eta\right|&=\left|\int_{\mathbb{R}^n}\eta^\intercal \nabla^2_{xx} h_\xi(x-z)\eta\phi^{\text{cd}}(z)dz\right|\overset{(i)}{\leq} \overline{f}\int_{\mathbb{R}^n}\left|\eta^\intercal \nabla^2_{xx} h_\xi(x-z)\eta\right| dz=\overline{f}\int_{\mathbb{R}^n}\left|\eta^\intercal \nabla^2_{zz} h_\xi(z)\eta\right|dz,
    \end{align*}
    where (i) is from \eqref{eq:boundoff}. By computation, we have
    \begin{align*}
        \nabla_{zz}^2 h_\xi(z)&=\nabla_z \left(-h_\xi(z)\frac{1}{\xi^2}h_\xi(z)z\right)=h_\xi(z)\left(\frac{zz^\top}{\xi^4}-\frac{I}{\xi^2}\right).
    \end{align*}
    Note that $h_\xi(z)$ depends only on $\|z\|$, it is invariant under every orthogonal transformation, i.e. $h_\xi(z)=h_\xi(Qz)$ for any orthogonal matrix $Q$. Suppose $Q$ satisfies $Qe_1=\eta$, where $e_1=(1,0,...,0)^\top$. We obtain that
    \begin{align*}
        \int_{\R^n}\left|\eta^\top\nabla^2_{zz}h_\xi(z)\eta \right|dz&=\int_{\R^n}\left|e_1^\top Q^\top\nabla_{zz}h_\xi(z)Q e_1 \right| dz\\
        &=\int_{\R^n}\left|e_1^\top Q^\top h_\xi(z)\left(\frac{zz^\top}{\xi^4}-\frac{I}{\xi^2}\right) Q e_1 \right|dz\\
        &=\int_{\R^n}\left|e_1^\top h_\xi(z)\left(\frac{(Q^\top z)(Q^\top z)^\top}{\xi^4}-\frac{I}{\xi^2}\right) e_1 \right|dz\\
        &=\int_{\R^n}\left|e_1^\top h_\xi(Qz)\left(\frac{zz^\top}{\xi^4}-\frac{I}{\xi^2}\right) e_1 \right|dz\\
        &=\int_{\R^n}\left|e_1^\top h_\xi(z)\left(\frac{zz^\top}{\xi^4}-\frac{I}{\xi^2}\right) e_1 \right|dz\\
        &=\int_{\R^n}\left|e_1^\top\nabla^2_{zz}h_\xi(z)e_1 \right|dz.
    \end{align*}

    Set $w=z/\xi$, then $z=\xi w$ and $dz=\xi^n dw$. Therefore
    \begin{align*}
        \int_{\mathbb{R}^n}\left|\eta^\intercal \nabla^2_{zz} h_\xi(z)\eta \right|dz&=\int_{\mathbb{R}^n}\left|e_1^\intercal \nabla^2_{zz} h_\xi(z)e_1 \right|dz&\\
        &=\frac{1}{\xi^2}\frac{1}{(2\pi\xi^2)^{n/2}}\int_{\mathbb{R}^n}\left|\frac{(\eta^\intercal(\xi w))^2}{\xi^2}-1\right|\exp\left(-\frac{\xi^2\|w\|^2}{2\xi^2}\right)\xi^n dw\\
        &=\frac{1}{\xi^2}\frac{1}{(2\pi\xi^2)^{n/2}}\int_{\mathbb{R}^n}\left|w_1^2-1\right|\exp\left(-\frac{\|w\|^2}{2}\right)\xi^n dw\\
        &=\frac{1}{\xi^2}\frac{1}{(2\pi)^{n/2}}\int_{\mathbb{R}}|w_1^2-1|\exp(-\frac{w_1^2}{2})dw_1\cdot\prod ^n_{j=2}\int_{\mathbb{R}}\exp(-\frac{w_j^2}{2})dw_j\\
        &\overset{\text{(i)}}{=}\frac{1}{\xi^2}\frac{1}{\sqrt{2\pi}}\int_{\mathbb{R}}|w_1^2-1|\exp(-\frac{w_1^2}{2})dw_1\\
        &\overset{\text{(ii)}}{\leq }\frac{1}{\xi^2}
    \end{align*}
where in $\text{(i)}$ we use 
$$\int_{\mathbb{R}}\exp(-\frac{w_j^2}{2})dw_j=\sqrt{2\pi},\quad\forall j$$
and in $\text{(ii)}$ we use the following numerical result
$$\frac{1}{\sqrt{2\pi}}\int_{\mathbb{R}}|w_1^2-1|\exp(-\frac{w_1^2}{2})dw_1\approx0.968<1.$$

\subsection{Proof of Theorem~\ref{theorem:poly_Minkowski}}\label{proof:theorem:poly_Minkowski}

We first present the definitions of semialgebraic sets and semialgebraic functions.

\begin{definition}[Semialgebraic Set]
    A subset $S$ of $\R^n$ is a semialgebraic set if it is a finite union of sets defined by polynomial equalities of the form $\{x\in\R^n:p(x)=0\}$ and of sets defined by polynomial inequalities of the form $\{x\in\R^n:p(x)<0\}$.
\end{definition}

\begin{definition}[Semialgebraic Function]
    A semialgebraic function is a function whose graph is a semialgebraic graph set.
\end{definition}

\begin{lemma}[Yomdin-Gromov Algebraic Lemma~\citep{yomdin1987volume,gromov1987entropy}]\label{lemma:algebraiclemma}
Let $X \subset [0,1]^n$ be a semialgebraic set of dimension $\mu$ defined by conditions
\[
p_j(x)=0 \quad \text{or} \quad p_j(x)<0,
\]
where $p_j$ are polynomials and $\sum \deg p_j=\beta$. Let $r\in\mathbb{N}$. 
There exists a constant $C=C(n,\mu,r,\beta)$ and semialgebraic functions
\[
\phi_1,\ldots,\phi_C : (0,1)^\mu \to X
\]
such that their images cover $X$ and $\|\phi_j\|_r \le 1$ for $j=1,\ldots,C$, where $\|\phi_j\|_r:=\max_{\alpha\leq r}\|D^\alpha\phi_j\|/\alpha!$.
\end{lemma}

\noindent\textbf{Detailed proof of Theorem~\ref{theorem:poly_Minkowski}:} Let $\mathcal{X}^*$ denote the set of $x \in \mathcal{D}$ such that the lower-level function $g(x,\cdot)$ is not Morse, where $\mathcal{D}$ is the projection of the natural domain of $g$ onto the $x$-coordinates. By Definition~\ref{def:bifurcationpointset}, $\widetilde{\mathcal{X}} = \mathcal{X}^* \cap \mathcal{X}$, where $\mathcal{X}$ is the compact constraint set of upper-level problem from Assumption~\ref{assumption:general1}(\ref{assumption:general1(2)}). It is not hard to see that $\mathcal{X}^*=\text{Proj}_{x}\{(x,y):\nabla_y g(x,y)=0,\ \det(\nabla^2_{yy} g(x,y))=0\}$, where $\text{Proj}_x:\mathbb{R}^{n+m}\to\mathbb{R}^n$ denotes the projection onto the $x$ coordinates. Note that the partial derivatives of a semi-algebraic function are also semi-algebraic functions~\citep[Exercise 2.10]{coste2000introduction}; the product of two semi-algebraic functions is again a semi-algebraic function; and the zero set of a semi-algebraic function is a semi-algebraic set. It follows that the set $\{(x,y):\nabla_y g(x,y)=0,\ \det(\nabla^2_{yy} g(x,y))=0\}$ is a semi-algebraic set in $\mathbb{R}^{n+m}$. By the Tarski-Seidenberg theorem~\citep[Theorem 2.2.1]{Bochnak1992RealAG}, the projection of a semi-algebraic set is semi-algebraic. Thus, the projection $X^*$ is a semi-algebraic set in $\mathbb{R}^n$.

To bound the Minkowski dimension of $\widetilde{\mathcal{X}}$, we construct a bounded semi-algebraic superset. Since the constraint set $\mathcal{X}$ is compact, there exists a sufficiently large radius $R > 0$ such that $\mathcal{X}$ is contained in the closed Euclidean ball $B_R = \{x \in \mathbb{R}^n : \|x\| \le R\}$. We define the $Z = \mathcal{X}^* \cap B_R$. Since $B_R$ is semi-algebraic and the intersection of semi-algebraic sets remains semi-algebraic, $Z$ is a bounded semi-algebraic set. Furthermore, by construction, we have the inclusion $\widetilde{\mathcal{X}} \subseteq Z$.

Let $\mu$ be the dimension of $Z$. By Assumption~\ref{assumption:nowhere_dense}, $\mathcal{X}^*$ has Lebesgue measure zero. Since $Z\subset \mathcal{X}^*$, it follows that the Lebesgue measure of $Z$ is also zero. For semi-algebraic sets, zero measure implies that the dimension is strictly less than the ambient dimension, i.e., $\mu \le n-1$.

We apply the Yomdin-Gromov Algebraic (Lemma~\ref{lemma:algebraiclemma}) to $Z$. This lemma guarantees the existence of finitely many semi-algebraic maps $\phi_j: [0,1]^\mu \to \mathbb{R}^n$ such that $Z \subseteq \bigcup_j \text{Im}(\phi_j)$ and the first-order derivatives of each $\phi_j$ are uniformly bounded. The boundedness of derivatives ensures that each $\phi_j$ is Lipschitz continuous. It is a standard property that Lipschitz mappings do not increase the upper Minkowski dimension. Since the upper Minkowski dimension of the unit hypercube $[0,1]^\mu$ is exactly $\mu$, we have $\overline{\dim}_{\mathrm{box}}(\text{Im}(\phi_j)) \le \overline{\dim}_{\mathrm{box}}([0,1]^\mu) = \mu$. 

By the finite stability of the upper Minkowski dimension (i.e., $\overline{\dim}_{\mathrm{box}}(\cup_j A_j) = \max_j \overline{\dim}_{\mathrm{box}}(A_j)$) and its monotonicity with respect to inclusion, we obtain:
$$
\overline{\dim}_{\mathrm{box}}(\widetilde{\mathcal{X}}) \le \overline{\dim}_{\mathrm{box}}(Z) \le \overline{\dim}_{\mathrm{box}}\left( \bigcup_{j} \text{Im}(\phi_j) \right) = \max_j \overline{\dim}_{\mathrm{box}}(\text{Im}(\phi_j)) \le \mu.
$$
Given that $\mu \le n-1$, we conclude that $\overline{\dim}_{\mathrm{box}}(\widetilde{\mathcal{X}}) \le n-1$.


    \subsection{Proof of Lemma~\ref{lem:measureofneighborhood}}\label{proof:lem:measureofneighborhood}
    By definition of upper Minkowski dimension, we have
    $$\limsup_{r\to 0}\frac{\log(N(\widetilde{\mathcal{X}},r))}{-\log r}=d.$$
    There exists $r_0$ such that
    $\frac{\log(N(\widetilde{\mathcal{X}},r))}{-\log r}\leq d+(n-d)/2=(d+n)/2$ holds for any $0<r<r_0$. This implies
    $N(\widetilde{\mathcal{X}},r)\leq M_1r^{-(d+n)/2}$ for some constant $M_1$, and any $0<r<r_0$. On the other hand, we have 
    $$N(\widetilde{\mathcal{X}},r)\leq N(\widetilde{\mathcal{X}},r_0)\quad\text{for all}\quad r\geq r_0.$$
    Combining these two, we obtain
    $$N(\widetilde{\mathcal{X}},r)\leq Mr^{-(d+n)/2},$$
    where $M=\max\{M_1,N(\widetilde{\mathcal{X}},r_0)r_0^{(d+n)/2}\}.$
    We select $N(\widetilde{\mathcal{X}},\delta)$ balls with radius $\delta$ to cover $\widetilde{\mathcal{X}}$, denoted as $\{B(x_i,\delta)\}_{i=1}^{N(\widetilde{\mathcal{X}},\delta)}$. Suppose $y\in\widetilde{\mathcal{X}}_{\delta}$, we can find $x\in\widetilde{\mathcal{X}}$ such that $d(x,y)\leq \delta$. Since $\{B(x_i,\delta)\}_{i=1}^{N(\widetilde{\mathcal{X}},\delta)}$ covers $\widetilde{\mathcal{X}}$, there exists $x_i$ such that $d(x,x_i)\leq \delta$. Thus we obtain that $d(y,x_i)\leq 2\delta$. This implies $\{B(x_i,2\delta)\}_{i=1}^{N(\widetilde{\mathcal{X}},\delta)}$ covers $\widetilde{\mathcal{X}}_{\delta}$. Note that the total volume of these $N(\widetilde{\mathcal{X}},\delta)$ balls with radius 
    $2\delta$ is $N(\widetilde{\mathcal{X}},\delta)(2\delta)^n\omega_n$, where $\omega_n=(\pi)^{n/2}/\Gamma(n/2+1)$, we obtain (\ref{equation:volume_of_balls}).

\subsection{Proof of Lemma~\ref{lemma:eigenvalue}}\label{proof:lemma:eigenvalue}
    We prove it by contradiction. Suppose there exists a sequence $\{(x_n,y_n)\}\subset\mathcal{X}\setminus(\widetilde{\mathcal{X}}_\delta)\times\mathcal{Y}'$ such that $\nabla_y g(x_n,y_n)=0$ for any $n$ and the smallest eigenvalue in norm of $\nabla^2_{yy}g(x_n,y_n)$ tends to zero as $n\to\infty$. By Assumption~\ref{assumption:general1}(\ref{assumption:general1(4)}), the norm of eigenvalues of $\nabla^2_{yy}g(x_n, y_n)$ is bounded above by $\overline{L}_g$. Since the smallest eigenvalue tends to zero, it follows that $\det(\nabla^2_{yy}g(x_n, y_n)) \to 0$. Note that $\mathcal{Y}'$ is compact, ${(x_n, y_n)}$ has a subsequence, which we denote again by ${(x_n, y_n)}$ for simplicity, that converges to a point $(\bar{x}, \bar{y})$ in $\overline{\mathcal{X} \setminus \widetilde{\mathcal{X}}_\delta} \times \mathcal{Y}'$, where the overline denotes the topological closure. By the continuity of the determinant function, we have $\nabla_y g(\bar{x},\bar{y})=0$ and $\det(\nabla^2_{yy} g(\bar{x},\bar{y}))=0$, which implies $\bar{x}\in\widetilde{\mathcal{X}}$. This contradicts that $\bar{x}\in\overline{\mathcal{X} \setminus \widetilde{\mathcal{X}}_\delta}$. Hence, we reach a contradiction, which completes the proof, i.e., over the set $\mathcal{X} \setminus \widetilde{\mathcal{X}}_\delta \times \mathcal{Y}'$, the norm of the smallest eigenvalue of $\nabla^2_{yy} g(x,y)$ at all points satisfying $\nabla_y g(x,y)=0$ admits a positive lower bound, which clearly depends only on $\delta$.

\subsection{Proof of Lemma~\ref{lemma:gradient_lower_bound}}\label{proof:lemma:gradient_lower_bound}
We first prove by contradiction that for any fixed $r$, the infimum of $\|\nabla_y g(x,y)\|$ over 
$$K:=\{(x,y)\in\overline{\mathcal{X}\setminus\widetilde{\mathcal{X}}_\delta}\times\mathcal{Y}:y\in\overline{\mathcal{Y}\setminus \mathrm{Crit}_r(x)}\}$$
is positive, where the overline denotes the topological closure. If this doesn't hold, there exists a sequence $\{(x_n,y_n)\}\subset K$ such that $\|\nabla_y g(x_n,y_n)\|$ goes to $0$ as $n$ tends to infinity. Note that $\overline{\mathcal{X}\setminus\widetilde{\mathcal{X}}_\delta}$ and $\mathcal{Y}$ are compact, we can suppose that $\{(x_n,y_n)\}$ converges to a point $(\bar{x},\bar{y})\in(\overline{\mathcal{X}\setminus\widetilde{\mathcal{X}}_\delta})\times \mathcal{Y}$, and $\nabla_y g(\bar{x},\bar{y})=0$ (see Figure~\ref{fig:lemma4.2p2}, $(\bar{x},\bar{y})$ is on the blue dashed curve inside the black rectangle). 
Note that the function $g(\bar{x},\cdot)$ is Morse. As $x$ varies near $\bar{x}$, each stationary point moves smoothly (by the implicit-function theorem). Therefore, when $x$ varies slightly around $\bar{x}$ such as $\|x-\bar{x}\|\leq\Delta_x$ with sufficient small constant $\Delta_x$, the Cartesian product $B_{\bar{x}}(\Delta_x)\times B_{\bar{y}}(r/2)$ is entirely outside $K$. That is, the point $(\bar{x},\bar{y})$ is contained in an open neighborhood that lies entirely outside $K$. This contradicts the fact that a sequence $\{(x_n,y_n)\}\subset K$ converging to $(\bar{x},\bar{y})$. As illustrated in Figure~\ref{fig:lemma4.2p2}, it is also intuitively clear that a sequence lying inside the black solid box but outside the blue tubular region cannot converge to a point on the blue dashed curve.

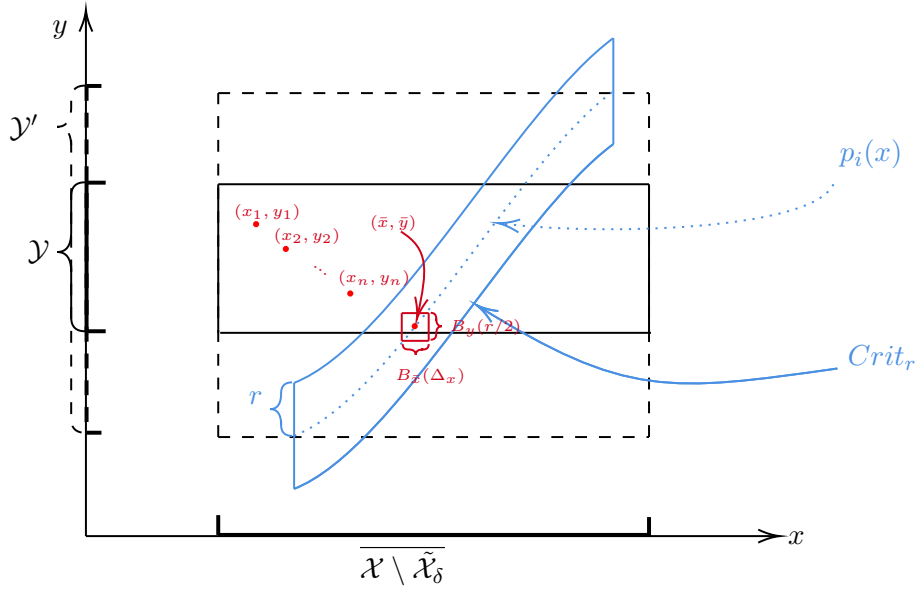
\begin{figure}
    \centering

\tikzset{every picture/.style={line width=0.75pt}} 

\begin{tikzpicture}[x=0.75pt,y=0.75pt,yscale=-1,xscale=1]

\draw    (144.27,274) -- (492.27,274) ;
\draw [shift={(494.27,274)}, rotate = 180] [color={rgb, 255:red, 0; green, 0; blue, 0 }  ][line width=0.75]    (10.93,-3.29) .. controls (6.95,-1.4) and (3.31,-0.3) .. (0,0) .. controls (3.31,0.3) and (6.95,1.4) .. (10.93,3.29)   ;
\draw    (144.27,274) -- (144.27,10.4) ;
\draw [shift={(144.27,8.4)}, rotate = 90] [color={rgb, 255:red, 0; green, 0; blue, 0 }  ][line width=0.75]    (10.93,-3.29) .. controls (6.95,-1.4) and (3.31,-0.3) .. (0,0) .. controls (3.31,0.3) and (6.95,1.4) .. (10.93,3.29)   ;
\draw  [dash pattern={on 4.5pt off 4.5pt}]  (211,224) -- (428.27,224) ;
\draw  [dash pattern={on 4.5pt off 4.5pt}]  (211,224) -- (211,50.4) ;
\draw  [dash pattern={on 4.5pt off 4.5pt}]  (428.27,224) -- (428.27,50.4) ;
\draw  [dash pattern={on 4.5pt off 4.5pt}]  (211,50.4) -- (428.27,50.4) ;
\draw    (211,171.4) -- (211,96.4) ;
\draw    (211,96.4) -- (428.27,96.4) ;
\draw    (212,171.4) -- (429.27,171.4) ;
\draw    (428.27,171.4) -- (428.27,96.4) ;
\draw [color={rgb, 255:red, 74; green, 144; blue, 226 }  ,draw opacity=1 ] [dash pattern={on 0.84pt off 2.51pt}]  (249.27,223.4) .. controls (300.27,203.4) and (361.27,85.4) .. (410.27,49.4) ;
\draw [color={rgb, 255:red, 74; green, 144; blue, 226 }  ,draw opacity=1 ]   (249.27,196.65) -- (249.27,250.15) ;
\draw [color={rgb, 255:red, 74; green, 144; blue, 226 }  ,draw opacity=1 ]   (410.27,22.65) -- (410.27,76.15) ;
\draw [color={rgb, 255:red, 74; green, 144; blue, 226 }  ,draw opacity=1 ]   (249.27,250.15) .. controls (300.27,230.15) and (361.27,112.15) .. (410.27,76.15) ;
\draw [color={rgb, 255:red, 74; green, 144; blue, 226 }  ,draw opacity=1 ] [dash pattern={on 0.84pt off 2.51pt}]  (521.27,96.4) .. controls (510.38,120.16) and (401.48,122.36) .. (353.44,116.19) ;
\draw [shift={(352,116)}, rotate = 7.71] [color={rgb, 255:red, 74; green, 144; blue, 226 }  ,draw opacity=1 ][line width=0.75]    (10.93,-3.29) .. controls (6.95,-1.4) and (3.31,-0.3) .. (0,0) .. controls (3.31,0.3) and (6.95,1.4) .. (10.93,3.29)   ;
\draw [line width=1.5]    (211,263.4) -- (211,273.4) ;
\draw [line width=1.5]    (428.27,264.4) -- (428.27,274) ;
\draw [line width=1.5]    (211,273.4) -- (428.27,273.4) ;
\draw  [color={rgb, 255:red, 74; green, 144; blue, 226 }  ,draw opacity=1 ] (248.27,196.4) .. controls (244.56,196.4) and (242.71,198.25) .. (242.71,201.96) -- (242.71,201.96) .. controls (242.71,207.25) and (240.86,209.9) .. (237.15,209.9) .. controls (240.86,209.9) and (242.71,212.55) .. (242.71,217.84)(242.71,215.46) -- (242.71,217.84) .. controls (242.71,221.55) and (244.56,223.4) .. (248.27,223.4) ;
\draw [color={rgb, 255:red, 74; green, 144; blue, 226 }  ,draw opacity=1 ]   (523.27,189.4) .. controls (438.69,201.34) and (416.49,204.37) .. (342.39,158.1) ;
\draw [shift={(341.27,157.4)}, rotate = 32.07] [color={rgb, 255:red, 74; green, 144; blue, 226 }  ,draw opacity=1 ][line width=0.75]    (10.93,-3.29) .. controls (6.95,-1.4) and (3.31,-0.3) .. (0,0) .. controls (3.31,0.3) and (6.95,1.4) .. (10.93,3.29)   ;
\draw [color={rgb, 255:red, 74; green, 144; blue, 226 }  ,draw opacity=1 ]   (523.27,189.4) .. controls (438.69,201.34) and (416.49,204.37) .. (342.39,158.1) ;
\draw [shift={(341.27,157.4)}, rotate = 32.07] [color={rgb, 255:red, 74; green, 144; blue, 226 }  ,draw opacity=1 ][line width=0.75]    (10.93,-3.29) .. controls (6.95,-1.4) and (3.31,-0.3) .. (0,0) .. controls (3.31,0.3) and (6.95,1.4) .. (10.93,3.29)   ;
\draw [color={rgb, 255:red, 74; green, 144; blue, 226 }  ,draw opacity=1 ]   (249.27,250.15) .. controls (300.27,230.15) and (361.27,112.15) .. (410.27,76.15) ;
\draw [color={rgb, 255:red, 74; green, 144; blue, 226 }  ,draw opacity=1 ]   (249.27,196.65) .. controls (300.27,176.65) and (361.27,58.65) .. (410.27,22.65) ;
\draw [line width=1.5]    (144.6,95.6) -- (144.6,170.6) ;
\draw [line width=1.5]    (144.6,170.6) -- (154.24,170.6) ;
\draw [line width=1.5]    (144.6,95.6) -- (154.24,95.6) ;
\draw [line width=1.5]  [dash pattern={on 5.63pt off 4.5pt}]  (144.6,46.79) -- (144.6,129.01) -- (144.6,221.81) ;
\draw [line width=1.5]  [dash pattern={on 5.63pt off 4.5pt}]  (144.6,221.81) -- (154.24,221.81) ;
\draw [line width=1.5]  [dash pattern={on 5.63pt off 4.5pt}]  (144.6,46.79) -- (154.24,46.79) ;
\draw   (143.84,95.41) .. controls (139.17,95.41) and (136.84,97.74) .. (136.84,102.41) -- (136.84,120.61) .. controls (136.84,127.28) and (134.51,130.61) .. (129.84,130.61) .. controls (134.51,130.61) and (136.84,133.94) .. (136.84,140.61)(136.84,137.61) -- (136.84,163.61) .. controls (136.84,168.28) and (139.17,170.61) .. (143.84,170.61) ;
\draw  [dash pattern={on 4.5pt off 4.5pt}][line width=0.75]  (143.84,46.61) .. controls (139.17,46.61) and (136.84,48.94) .. (136.84,53.61) -- (136.84,57.41) .. controls (136.84,64.08) and (134.51,67.41) .. (129.84,67.41) .. controls (134.51,67.41) and (136.84,70.74) .. (136.84,77.41)(136.84,74.41) -- (136.84,214.81) .. controls (136.84,219.48) and (139.17,221.81) .. (143.84,221.81) ;
\draw  [color={rgb, 255:red, 255; green, 0; blue, 0 }  ,draw opacity=1 ][fill={rgb, 255:red, 255; green, 0; blue, 0 }  ,fill opacity=1 ] (309.06,168.04) .. controls (309.06,167.47) and (309.52,167) .. (310.1,167) .. controls (310.68,167) and (311.14,167.47) .. (311.14,168.04) .. controls (311.14,168.62) and (310.68,169.09) .. (310.1,169.09) .. controls (309.52,169.09) and (309.06,168.62) .. (309.06,168.04) -- cycle ;
\draw [color={rgb, 255:red, 208; green, 2; blue, 27 }  ,draw opacity=1 ]   (303.56,161.44) -- (303.56,175.44) ;
\draw [color={rgb, 255:red, 208; green, 2; blue, 27 }  ,draw opacity=1 ]   (303.56,161.44) -- (317.14,161.44) ;
\draw [color={rgb, 255:red, 208; green, 2; blue, 27 }  ,draw opacity=1 ]   (317.14,161.44) -- (317.14,175.44) ;
\draw [color={rgb, 255:red, 208; green, 2; blue, 27 }  ,draw opacity=1 ]   (303.56,175.44) -- (317.14,175.44) ;
\draw  [color={rgb, 255:red, 208; green, 2; blue, 27 }  ,draw opacity=1 ] (304.06,177.09) .. controls (304.06,178.88) and (304.96,179.78) .. (306.75,179.78) -- (306.75,179.78) .. controls (309.32,179.78) and (310.6,180.68) .. (310.6,182.47) .. controls (310.6,180.68) and (311.88,179.78) .. (314.45,179.78)(313.29,179.78) -- (314.45,179.78) .. controls (316.24,179.78) and (317.14,178.88) .. (317.14,177.09) ;
\draw  [color={rgb, 255:red, 208; green, 2; blue, 27 }  ,draw opacity=1 ] (318.14,174.86) .. controls (320,174.86) and (320.93,173.93) .. (320.93,172.07) -- (320.93,172.07) .. controls (320.93,169.41) and (321.86,168.08) .. (323.73,168.08) .. controls (321.86,168.08) and (320.93,166.75) .. (320.93,164.09)(320.93,165.29) -- (320.93,164.09) .. controls (320.93,162.23) and (320,161.3) .. (318.14,161.3) ;
\draw [color={rgb, 255:red, 208; green, 2; blue, 27 }  ,draw opacity=1 ]   (303.14,120.3) .. controls (317.28,129.07) and (316.68,139.28) .. (311.55,162.71) ;
\draw [shift={(311.14,164.54)}, rotate = 282.53] [color={rgb, 255:red, 208; green, 2; blue, 27 }  ,draw opacity=1 ][line width=0.75]    (10.93,-3.29) .. controls (6.95,-1.4) and (3.31,-0.3) .. (0,0) .. controls (3.31,0.3) and (6.95,1.4) .. (10.93,3.29)   ;
\draw  [color={rgb, 255:red, 255; green, 0; blue, 0 }  ,draw opacity=1 ][fill={rgb, 255:red, 255; green, 0; blue, 0 }  ,fill opacity=1 ] (229.06,116.54) .. controls (229.06,115.97) and (229.52,115.5) .. (230.1,115.5) .. controls (230.68,115.5) and (231.14,115.97) .. (231.14,116.54) .. controls (231.14,117.12) and (230.68,117.59) .. (230.1,117.59) .. controls (229.52,117.59) and (229.06,117.12) .. (229.06,116.54) -- cycle ;
\draw  [color={rgb, 255:red, 255; green, 0; blue, 0 }  ,draw opacity=1 ][fill={rgb, 255:red, 255; green, 0; blue, 0 }  ,fill opacity=1 ] (244.06,129.04) .. controls (244.06,128.47) and (244.52,128) .. (245.1,128) .. controls (245.68,128) and (246.14,128.47) .. (246.14,129.04) .. controls (246.14,129.62) and (245.68,130.09) .. (245.1,130.09) .. controls (244.52,130.09) and (244.06,129.62) .. (244.06,129.04) -- cycle ;
\draw  [color={rgb, 255:red, 255; green, 0; blue, 0 }  ,draw opacity=1 ][fill={rgb, 255:red, 255; green, 0; blue, 0 }  ,fill opacity=1 ] (276.56,151.54) .. controls (276.56,150.97) and (277.02,150.5) .. (277.6,150.5) .. controls (278.18,150.5) and (278.64,150.97) .. (278.64,151.54) .. controls (278.64,152.12) and (278.18,152.59) .. (277.6,152.59) .. controls (277.02,152.59) and (276.56,152.12) .. (276.56,151.54) -- cycle ;

\draw (497,270.4) node [anchor=north west][inner sep=0.75pt]    {$x$};
\draw (126,12.4) node [anchor=north west][inner sep=0.75pt]    {$y$};
\draw (523,72.4) node [anchor=north west][inner sep=0.75pt]  [color={rgb, 255:red, 74; green, 144; blue, 226 }  ,opacity=1 ]  {$p_{i}( x)$};
\draw (113.4,123.4) node [anchor=north west][inner sep=0.75pt]    {$\mathcal{Y}$};
\draw (104.2,59.8) node [anchor=north west][inner sep=0.75pt]    {$\mathcal{Y} '$};
\draw (281,277.4) node [anchor=north west][inner sep=0.75pt]    {$\overline{\mathcal{X} \setminus \widetilde{\mathcal{X}}_{\delta }}$};
\draw (225,199.4) node [anchor=north west][inner sep=0.75pt]  [color={rgb, 255:red, 74; green, 144; blue, 226 }  ,opacity=1 ]  {$r$};
\draw (527,176.4) node [anchor=north west][inner sep=0.75pt]  [color={rgb, 255:red, 74; green, 144; blue, 226 }  ,opacity=1 ]  {$Crit_{r}$};
\draw (299.06,187.67) node [anchor=north west][inner sep=0.75pt]  [font=\tiny,color={rgb, 255:red, 208; green, 2; blue, 27 }  ,opacity=1 ]  {$B_{\bar{x}}( \Delta _{x})$};
\draw (326.56,163.17) node [anchor=north west][inner sep=0.75pt]  [font=\tiny,color={rgb, 255:red, 208; green, 2; blue, 27 }  ,opacity=1 ]  {$B_{y}( r/2)$};
\draw (286.06,110.4) node [anchor=north west][inner sep=0.75pt]  [font=\tiny,color={rgb, 255:red, 208; green, 2; blue, 27 }  ,opacity=1 ]  {$( \bar{x} ,\bar{y})$};
\draw (217.56,104.9) node [anchor=north west][inner sep=0.75pt]  [font=\tiny,color={rgb, 255:red, 208; green, 2; blue, 27 }  ,opacity=1 ]  {$( x_{1} ,y_{1})$};
\draw (237.56,116.9) node [anchor=north west][inner sep=0.75pt]  [font=\tiny,color={rgb, 255:red, 208; green, 2; blue, 27 }  ,opacity=1 ]  {$( x_{2} ,y_{2})$};
\draw (258.77,134.78) node [anchor=north west][inner sep=0.75pt]  [font=\tiny,color={rgb, 255:red, 208; green, 2; blue, 27 }  ,opacity=1 ,rotate=-35.56]  {$...$};
\draw (270.06,138.4) node [anchor=north west][inner sep=0.75pt]  [font=\tiny,color={rgb, 255:red, 208; green, 2; blue, 27 }  ,opacity=1 ]  {$( x_{n} ,y_{n})$};

\end{tikzpicture}
    \caption{The figure illustrates the case where the stationary point curve is a single curve $p_i(x)$. The solid-line rectangle represents the set $(\overline{\mathcal{X}\setminus\widetilde{\mathcal{X}}_\delta})\times \mathcal{Y}$, while the dashed-line rectangle represents $(\overline{\mathcal{X}\setminus\widetilde{\mathcal{X}}_\delta})\times \mathcal{Y}'$. The blue dashed curve denotes the restriction of the stationary point curve $p_i(x)$ to the domain $(\overline{\mathcal{X}\setminus\widetilde{\mathcal{X}}_\delta})\times \mathcal{Y}'$. The red tubular region consists of the slices $\mathrm{Crit}_r(x)$ for each $x$, while the blue tubular region consists of the slices $\mathrm{Crit}_{r_0}(x)$. Removing the blue tubular region from the solid rectangle yields $K$.}
    \label{fig:lemma4.2p2}
\end{figure}

\subsection{Proof of Lemma~\ref{lemma:approximationerror}}\label{proof:lemma:approximationerror}
    From \citet[Theorem 1]{nesterov2006cubic}, for the first $K_1$ iterations, the cubic-regularized Newton method guarantees:
    \begin{align}\label{eq:nesterov_bound}
    \min_{k=1,\dots,K_1} \nu_{\overline{\overline{L}}g}(y_k) \leq \frac{8}{3}\left(\frac{3(g(x,y_0)-\underline{g})}{2K_1\overline{\overline{L}}g}\right)^{1/3},
    \end{align}
    where the stationarity measure is defined as 
    $$\nu_{M}(y) := \max\left\{\sqrt{\frac{1}{M}|\nabla_y g(x,y)|}, -\frac{2}{3M}\lambda{\min}(\nabla^2_{yy} g(x,y))\right\}.$$
    Let $k^*$ be the index minimizing this measure within the first $K_1$ steps. By the specific choice of $K_1$ in \eqref{equation:requirement_for_K}, substituting $K_1$ into the RHS of \eqref{eq:nesterov_bound} ensures that $y_{k^*}$ satisfies two key conditions:\\
    
    \noindent\textbf{1. Gradient Condition:} The gradient norm satisfies:
    \begin{align}\label{eq:grad_condition}
    |\nabla_y g(x,y_{k^*})| < \min\left\{\alpha\left(\delta, \frac{\mu(\delta)}{2\overline{\overline{L}}_g}\right), \frac{(\mu(\delta))^2}{4\overline{\overline{L}}_g}\right\}.
    \end{align}
    
    \noindent\textbf{2. Curvature Condition:} The smallest eigenvalue of the Hessian satisfies:
    \begin{align}\label{eq:curv_condition}
    \lambda_{\min}(\nabla^2_{yy} g(x,y_{k^*})) > -\frac{\mu(\delta)}{2}.
    \end{align}
    We now analyze the geometric implications of these bounds.\\
    
    \noindent\textbf{Phase I: Localization to a Strongly Convex Neighborhood.} Since $x \in \mathcal{X} \setminus \widetilde{\mathcal{X}}_\delta$, $g(x, \cdot)$ is Morse. Let $\{p_j\}$ denote its discrete stationary points in $\mathcal{Y}'$. From Lemma \ref{lemma:gradient_lower_bound}, the first term in the minimum of \eqref{eq:grad_condition} implies that $y_{k^*}$ must lie within the $r$-neighborhood of some stationary point $p_j$, where $r := \mu(\delta)/(2\overline{\overline{L}}_g)$. Specifically, $y_{k^*} \in \bigcup_j B(p_j, r)$. Within any such ball $B(p_j, r)$, the Lipschitz continuity of the Hessian (Assumption~\ref{assumption:general1}(\ref{assumption:general1(11)})) and Lemma \ref{lemma:eigenvalue} imply that all eigenvalues of $\nabla^2_{yy} g(x,y)$ are lower bounded by $\mu(\delta) - \overline{\overline{L}}_g \cdot r = \mu(\delta)/2$ in absolute value. However, for saddle points or local maximizers, the smallest eigenvalue would be less then or equal to $-\mu(\delta)/2$. Condition \eqref{eq:curv_condition} ($\lambda_{\min} > -\mu(\delta)/2$) contradicts this, thereby excluding balls centered at non-minimizers. Furthermore, the monotonicity of the algorithm (Assumption \ref{assumption:descent}) excludes any local minimizers outside $\mathcal{Y}$ that have function values higher than $g(x,y_0)$. Thus, $y_{k^*}$ must reside in $B(p_{j^*}, r)$ for some local minimizer $p_{j^*} \in \mathcal{Y}$, denoted hereafter as $y^{\text{cd}}(x)$. In this neighborhood $B(y^{\text{cd}}(x), r)$, $g(x, \cdot)$ is strongly convex with modulus $\mu(\delta)/2$. The second term in \eqref{eq:grad_condition}, $\|\nabla_y g(x,y_{k^*})\| < (\mu(\delta))^2 / (4\overline{\overline{L}}_g)$, ensures that $y_{k^*}$ lies the ball $B(p_{j^*}, r)$. Specifically, it belongs to a local sublevel set $\mathcal{F}=\{y\in B(y^{\text{cd}}(x),r):g(x,y)-g(x,y^{\text{cd}}(x))\leq(\mu(\delta))^3/(16\overline{\overline{L}}_g^2)\}$. By \citet[Lemma 2]{nesterov2006cubic}, the algorithm will never leave this region $\mathcal{F}$ in subsequent iterations.\\
    
    \noindent\textbf{Phase II: Final Convergence.} The sequence now evolves inside a region where $g(x,\cdot)$ is strongly convex. Starting from $y_{k^*}$, we perform an additional $K_2$ iterations. Applying \citet[Theorem 1]{nesterov2006cubic} again for these $K_2$ steps, the choice of $K_2$ in \eqref{equation:requirement_for_K} guarantees finding a point $\hat{y}(x)$ such that:\begin{align*}|\nabla_y g(x,\hat{y}(x))| \leq \frac{\mu(\delta)\rho}{2}.\end{align*}Finally, using the $\frac{\mu(\delta)}{2}$-strong convexity of $g(x,\cdot)$ in the ball $B(y^{\text{cd}}(x),r)$, we have:\begin{align*}|\hat{y}(x) - y^{\text{cd}}(x)| \leq \frac{1}{\mu(\delta)/2} |\nabla_y g(x,\hat{y}(x))| \leq \frac{2}{\mu(\delta)} \cdot \frac{\mu(\delta)\rho}{2} = \rho.\end{align*}This completes the proof. 
    
\subsection{Proof of Proposition~\ref{proposition:gradient_error}}\label{proof:proposition:gradient_error}
Since we redefine $f(x,\cdot)$ to be constant $\overline{f}$ for $x \notin \mathcal{X}$ around \eqref{eq:extension}, it follows that
\begin{align*}
        & \left\|\nabla_x\pcd_{\xi }(x)-\nabla^K_x\phi^{\text{cd}}_\xi(x)\right\|\\
        =&\left\|\int_{\R^n}\nabla_xh_{\xi }(x-z)f(z,y^{\text{cd}}(z))dz-\int_{\R^n}\nabla_xh_{\xi }(x-z)f(z,\hat{y}(z))dz\right\|\\
        =&\left\|\int_{\mathcal{X}}\nabla_xh_{\xi }(x-z)f(z,y^{\text{cd}}(z))dz-\int_{\mathcal{X}}\nabla_xh_{\xi }(x-z)f(z,\hat{y}(z))dz\right\|\\
        \leq&\left\|\int_{\mathcal{X}\setminus\widetilde{\mathcal{X}}_{\delta}}\nabla_xh_{\xi }(x-z)\cdot\left|f(z,y^{\text{cd}}(z))-f(z,\hat{y}(z))\right|dz\right\|+\left\|\int_{\widetilde{\mathcal{X}}_{\delta}}\nabla_xh_{\xi }(x-z)\cdot\left|f(z,y^{\text{cd}}(z))-f(z,\hat{y}(z))\right|dz\right\|\\
\end{align*}
Note that $K$ satisfies \eqref{equation:requirement_for_K}, according to Lemma \ref{lemma:approximationerror}, we have $\|\hat{y}(z)-y^{\text{cd}}(z)\|\leq\rho$ for any $z\in\mathcal{X}\setminus\widetilde{\mathcal{X}}_{\delta}$. By Lipschitz continuity of $f$ (from Assumption~\ref{assumption:general1}(\ref{assumption:general1(10)})), it follows $|f(z,y^{\text{cd}}(z))-f(z,\hat{y}(z))|\leq L_f\rho$. From \eqref{eq:boundoff}, we know that $|f(z,y^{\text{cd}}(z))|\leq \overline{f}$ for any $z\in\mathcal{X}$. By an argument similar to that of \eqref{eq:boundoff}, we have $|f(z,\hat{y}(z))|\leq \overline{f}$ for any $z\in\mathcal{X}$ as well. Thus, the following holds:
\begin{align*}
        \left\|\nabla_x\pcd_{\xi }(x)-\nabla^K_x\phi^{\text{cd}}_\xi(x)\right\|\leq&L_f\rho \int_{\mathbb{R}^n}\left\|\nabla_xh_{\xi }(x-z)\right\|dz+2\overline{f}\int_{\widetilde{\mathcal{X}}_{\delta}}\left\|\nabla_xh_{\xi }(x-z)\right\|dz\\
        \overset{\text{(i)}}{\leq}&\sqrt{2}L_f\frac{\Gamma((n+1)/2)}{\Gamma(n/2)}\frac{\rho}{\xi }+2\overline{f}\lambda(\delta)\frac{e^{-1/2}}{(2\pi)^{n/2}{\xi }^{n+1}}\\
        \overset{\text{(ii)}}{\leq}&\sqrt{2}L_f\frac{\Gamma((n+1)/2)}{\Gamma(n/2)}\frac{\rho}{\xi }+\frac{2\overline{f} C e^{-1/2}}{(2\pi)^{n/2}\xi ^{n+1}}\delta^{(n-d)/2}\\
        =:&C_1(n,\xi)\rho+C_2(n,\xi)\delta^{(n-d)/2}.
\end{align*}
In (i) we use the following results:
\begin{align*}
    \int_{\mathbb{R}^n}\left\|\nabla_xh_{\xi }(x-z)\right\|dz=&\int_{\mathbb{R}^n}\frac{1}{\xi^2}\left\|u\right\|h_\xi(u)du=\frac{1}{\xi^2}\E_{W\sim\mathcal{N}(0,\xi^2I_n)}[\|W\|]=\frac{1}{\xi^2}\left(\sqrt{2}\xi\frac{\Gamma(\frac{n+1}{2})}{\Gamma(\frac{n}{2})}\right)=\frac{1}{\xi}\sqrt{2}\frac{\Gamma(\frac{n+1}{2})}{\Gamma(\frac{n}{2})},
\end{align*}
and
\begin{align*}
    \int_{\widetilde{\mathcal{X}}_{\delta}}\left\|\nabla_xh_{\xi }(x-z)\right\|dz\leq\lambda(\delta)\cdot\max_{z\in\R^n}\left\|\nabla_xh_{\xi }(x-z)\right\|=\lambda(\delta)\cdot\left\|\nabla_xh_{\xi }(z)\right\|\Big|_{\|z\|=\xi}=\lambda(\delta)\cdot\frac{e^{-1/2}}{(2\pi)^{n/2}{\xi }^{n+1}}.
\end{align*}
In (ii) we apply the bound $\lambda(\delta)\leq C\delta^{(n-d)/2}$ from Lemma~\ref{lem:measureofneighborhood}.
\subsection{Proof of Theorem~\ref{thm:biased_sgd_conv}}\label{proof:thm:biased_sgd_conv}

We define
$$\gamma_t:=\hat{\nabla}^{K_t}_x\phi^{\text{cd}}_{\xi }(x_t)-\nabla_x\pcd_\xi(x_t)$$
$$\Phi_{\mathcal{X},t}:=\frac{x_t-\proj_{\mathcal{X}}(x_t-\beta_t \nabla_x\pcd_\xi(x_t))}{\beta_t}$$
$$\widetilde{\Phi}_{\mathcal{X},t}:=\frac{x_t-\proj_{\mathcal{X}}(x_t-\beta_t \hat{\nabla}^{K_t}_x\phi^{\text{cd}}_{\xi }(x_t))}{\beta_t}.$$ 
Then the update \eqref{eq:update} can be written as $x_{t+1}=x_t-\beta_t\widetilde{\Phi}_{\mathcal{X},t}$. Since $\pcd_\xi$ is Lipschitz smooth by Proposition~\ref{proposition:lipschitz}, we have

\begin{align}\label{eq:descent1}
\pcd_\xi(x_{t+1})&\leq\pcd_\xi(x_t)+\langle\nabla_x\pcd_\xi(x_t),x_{t+1}-x_t\rangle+\frac{\overline{L}_{\phi^{\text{cd}}_{\xi}}}{2}\|x_{t+1}-x_t\|^2\nonumber\\
&=\pcd_\xi(x_t)
-\beta_t\langle\nabla_x \pcd_\xi(x_t),\widetilde{\Phi}_{\mathcal{X},t}\rangle
+\frac{\overline{L}_{\phi^{\text{cd}}_{\xi}}}{2}\beta_t^{2}\|\widetilde{\Phi}_{\mathcal{X},t}\|^{2}\nonumber\\
&=\pcd_\xi(x_t)-\beta_t\langle\hat{\nabla}^{K_t}_x\phi^{\text{cd}}_{\xi }(x_t),\widetilde{\Phi}_{\mathcal{X},t}\rangle
+\frac{\overline{L}_{\phi^{\text{cd}}_{\xi}}}{2}\beta_t^2\|\widetilde{\Phi}_{\mathcal{X},t}\|^2+\beta_t\langle\gamma_t,\widetilde{\Phi}_{\mathcal{X},t}\rangle.
\end{align}
From \citet[Lemma 6.4]{lan2020first} we have
$$\langle\hat{\nabla}^{K_t}_x\phi^{\text{cd}}_{\xi }(x_t),\widetilde{\Phi}_{\mathcal{X},t}\rangle\geq\|\widetilde{\Phi}_{\mathcal{X},t}\|^2.$$
Plug this into \eqref{eq:descent1}, we obtain
\begin{align*}
    \pcd_\xi(x_{t+1})&\leq \pcd_\xi(x_{t})-\beta_t\|\widetilde{\Phi}_{\mathcal{X},t}\|^2+\frac{\overline{L}_{\phi^{\text{cd}}_{\xi}}}{2}\beta_t^2\|\widetilde{\Phi}_{\mathcal{X},t}\|^2+\beta_t\langle\gamma_t,\Phi_{\mathcal{X},t}\rangle+\beta_t\langle\gamma_t,\widetilde{\Phi}_{\mathcal{X},t}-\Phi_{\mathcal{X},t}\rangle
\end{align*}
Then we have
\begin{align}
    \pcd_\xi(x_{t+1})&\leq\pcd_\xi(x_{t})-(\beta_t-\frac{\overline{L}_{\phi^{\text{cd}}_{\xi}}}{2}\beta_t^2)\|\widetilde{\Phi}_{\mathcal{X},t}\|^2+\beta_t\langle\gamma_t,\Phi_{\mathcal{X},t}\rangle+\beta_t\|\gamma_t\|\|\widetilde{\Phi}_{\mathcal{X},t}-\Phi_{\mathcal{X},t}\|\nonumber\\
    &\overset{\text{(i)}}{\leq}\pcd_\xi(x_{t})-(\beta_t-\frac{\overline{L}_{\phi^{\text{cd}}_{\xi}}}{2}\beta_t^2)\|\widetilde{\Phi}_{\mathcal{X},t}\|^2+\beta_t\langle\gamma_t,\Phi_{\mathcal{X},t}\rangle+\beta_t\|\gamma_t\|^2.\label{eq:descent2}
\end{align}
In (i) we use the result that
\begin{align*}
    \|\widetilde{\Phi}_{\mathcal{X},t}-\Phi_{\mathcal{X},t}\|&=\left\|\frac{x_t-\proj_{\mathcal{X}}(x_t-\beta_t \hat{\nabla}^{K_t}_x\phi^{\text{cd}}_{\xi }(x_t))}{\beta_t}-\frac{x_t-\proj_{\mathcal{X}}(x_t-\beta_t \nabla_x\pcd_\xi(x_t))}{\beta_t}\right\|\\
    &=\left\|\frac{\proj_{\mathcal{X}}(x_t-\beta_t \hat{\nabla}^{K_t}_x\phi^{\text{cd}}_{\xi }(x_t))-\proj_{\mathcal{X}}(x_t-\beta_t \nabla_x\pcd_\xi(x_t))}{\beta_t}\right\|\\
    &\leq\left\|\frac{(x_t-\beta_t \hat{\nabla}^{K_t}_x\phi^{\text{cd}}_{\xi }(x_t))-(x_t-\beta_t \nabla_x\pcd_\xi(x_t))}{\beta_t}\right\|\\
    &=\|\hat{\nabla}^{K_t}_x\phi^{\text{cd}}_{\xi }(x_t)-\nabla_x\pcd_\xi(x_t)\|\\
    &=\|\gamma_t\|.
\end{align*}
Summing up the \eqref{eq:descent2} for $t=1,...,T$ and noticing that $\beta_t\leq 1/\overline{L}_{\phi^{\text{cd}}_{\xi}}$, we obtain
\begin{align*}
    \sum_{t=1}^T(\beta_t-\overline{L}_{\phi^{\text{cd}}_{\xi}}\beta_t^2)\|\widetilde{\Phi}_{\mathcal{X},t}\|^2&\leq\sum_{t=1}^T(\beta_t-\frac{\overline{L}_{\phi^{\text{cd}}_{\xi}}}{2}\beta_t^2)\|\widetilde{\Phi}_{\mathcal{X},t}\|^2\\
    &\leq\pcd_\xi(x_{1})-\pcd_\xi(x_{T+1})+\sum_{t=1}^T\{\beta_t\langle\gamma_t,\Phi_{\mathcal{X},t}\rangle+\beta_t\|\gamma_t\|^2\}.
\end{align*}
Note that we assume that the number of lower-level iterations satisfies $K\geq K_0$ in Assumption \ref{assumption:descent}. $y^{\text{cd}}(x)$ is always in the level set $\{y:g(x,y)\leq g(x,y_0)\}$. By Proposition~\ref{prop:general}(\ref{assumption:general1(8)}), we also have $-\overline{f}\leq\pcd_\xi(x)\leq \overline{f}.$ It follows
\begin{align*}
    \sum_{t=1}^T(\beta_t-\overline{L}_{\phi^{\text{cd}}_{\xi}}\beta_t^2)\|\widetilde{\Phi}_{\mathcal{X},t}\|^2
    &\leq\pcd_\xi(x_{1})-\pcd_\xi(x_{T+1})+\sum_{t=1}^T\{\beta_t\langle\gamma_t,\Phi_{\mathcal{X},t}\rangle+\beta_t\|\gamma_t\|^2\}\\
    &\leq 2\overline{f}+\sum_{t=1}^T\{\beta_t\langle\gamma_t,\Phi_{\mathcal{X},t}\rangle+\beta_t\|\gamma_t\|^2\}.
\end{align*} 
Notice that the iterate $x_t$ is a function of the history, denoted by $\zeta_{[t-1]}$, of the generated random process and hence is random. Recall the definition of bias and variance in \eqref{eq:bias} and \eqref{eq:variance}, we have
\begin{align*}
    \Delta(\rho_t,\delta_t)&\geq \left\|\E\left[\hat{\nabla}^K_x\phi^{\text{cd}}_{\xi }(x_t)-\nabla_x\pcd_{\xi }(x_t)|\zeta_{[t-1]}\right]\right\|=\|\E[\gamma_t|\zeta_{[t-1]}]\|\\
    \sigma^2(N_t)&\geq\E\left[\left\|\hat{\nabla}^K_x\phi^{\text{cd}}_{\xi }(x_t)-\nabla^K_x\phi^{\text{cd}}_\xi(x_t)\right\|^2\Big|\zeta_{[t-1]}\right]=\E[\|\gamma_t\|^2|\zeta_{[t-1]}].
\end{align*}
According to Proposition~\ref{proposition:boundedgradient}, the following holds:
$$|\E[\langle\gamma_t,\Phi_{\mathcal{X},t}\rangle|\zeta_{[t-1]}]|=\|\langle\E[\gamma_t|\zeta_{[t-1]}],\Phi_{\mathcal{X},t}\rangle\|\leq\|\E[\gamma_t|\zeta_{[t-1]}]\|\|\Phi_{\mathcal{X},t}\|\leq\sqrt{\frac{2}{\pi}}\frac{\overline{f}}{\xi}\Delta(\rho_t,\delta_t)$$
We also have
$$\E[\|\gamma_t\|^2|\zeta_{[t-1]}]=\E[\|\hat{\nabla}^{K_t}_x\phi^{\text{cd}}_{\xi }(x_t)-\nabla_x\pcd_\xi(x_t)\|^2|\zeta_{[t-1]}]\leq\sigma^2(N_t)+(\Delta(\rho_t,\delta_t))^2.$$
Thus, we obtain
$$\sum_{t=1}^T(\beta_t-\overline{L}_{\phi^{\text{cd}}_{\xi}}\beta_t^2)\E[\|\widetilde{\Phi}_{\mathcal{X},t}\|^2]\leq 2\overline{f}+\sum_{t=1}^T\sqrt{\frac{2}{\pi}}\frac{\overline{f}}{\xi}\Delta(\rho_t,\delta_t)\beta_t+\sum_{t=1}^T(\sigma^2(N_t)+(\Delta(\rho_t,\delta_t))^2)\beta_t.$$
Then we conclude
$$\E[\|\widetilde{\Phi}_{\mathcal{X},R}\|^2]=\frac{\sum_{t=1}^T(\beta_t-\overline{L}_{\phi^{\text{cd}}_{\xi}}\beta_t^2)\E[\|\widetilde{\Phi}_{\mathcal{X},t}\|^2]}{\sum_{t=1}^T(\beta_t-\overline{L}_{\phi^{\text{cd}}_{\xi}}\beta_t^2)}\leq\frac{2\overline{f}+\sum_{t=1}^T\sqrt{\frac{2}{\pi}}\frac{\overline{f}}{\xi}\Delta(\rho_t,\delta_t)\beta_t+\sum_{t=1}^T(\sigma^2(N_t)+(\Delta(\rho_t,\delta_t))^2)\beta_t}{\sum_{t=1}^T(\beta_t-\overline{L}_{\phi^{\text{cd}}_{\xi}}\beta_t^2)}.$$

\subsection{Proof of Theorem~\ref{thm:globalversionmainthm}}\label{proof:thm:globalversionmainthm}

\begin{lemma}\label{lemma:localversionmainthm}
    Suppose $g(x,y)$ is three times continuously differentiable, and Assumptions~\ref{assumption:general1} holds. We denote by
    $$\Gamma:=\{(x,y)\in\mathcal{X}\times\R^m:\nabla_y g(x,y)=0,\ \det(\nabla^2_{yy}g(x,y))=0\}$$
    the set of degenerate stationary points. For any fold bifurcation stationary point $(\bar{x},\bar{y})$ of $g(x,y)$ respect to $y$, there exists a neighborhood $W$ of $(\bar{x},\bar{y})$ and two constants $C_1$ and $C_2$ such that
    $$|\lambda(x,y)|\geq \min\{C_1(\mathrm{dist}(\proj_x(\Gamma\cap W),x))^{1/2},C_2\}$$
    holds for any stationary point $(x,y)$ in $W$ with respect to $y$ for $g$, where $\proj_x:\R^{n+m}\to\R^n$ denotes the projection onto the $x$ coordinates, and $\lambda(x,y)$ is defined as follows
    \begin{align}\label{eq:smallesteignvalue}
        \lambda(x,y)=\lambda_{i^\ast}(\nabla^2_{yy}g(x,y)),\quad\text{where}\ i^\ast=\arg\min_{i}|\lambda_i(\nabla^2_{yy}g(x,y))|,
    \end{align}
    i.e., the eigenvalue of smallest absolute value of the Hessian with respect to $y$.
\end{lemma}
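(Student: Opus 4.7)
\medskip
\noindent\textbf{Proof plan for Lemma~\ref{lemma:localversionmainthm}.}
The plan is to perform a Lyapunov--Schmidt reduction near $(\bar x,\bar y)$ and then read off the fold normal form in one variable. Let $v$ be the unit eigenvector of $\nabla^2_{yy}g(\bar x,\bar y)$ associated with the zero eigenvalue (unique by condition~(1) in Definition~\ref{def:fold}), and let $E=v^{\perp}\subset\R^m$. Write $y=\bar y+sv+w$ with $s\in\R$ and $w\in E$. Because $\nabla^2_{yy}g(\bar x,\bar y)$ restricted to $E$ is invertible, the implicit function theorem produces a smooth map $w=w^{\ast}(x,s)$, defined on a neighborhood of $(\bar x,0)$, solving $P_E\nabla_y g(x,\bar y+sv+w)=0$; here $P_E$ is orthogonal projection onto $E$. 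Define the scalar reduced equation $h(x,s):=v^{\top}\nabla_y g(x,\bar y+sv+w^{\ast}(x,s))$. Then, inside a suitable neighborhood $W$ of $(\bar x,\bar y)$, stationary points of $g(x,\cdot)$ are exactly the points $(x,\bar y+sv+w^{\ast}(x,s))$ with $h(x,s)=0$.

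Next I would translate the three fold conditions into properties of $h$. Direct computation using $w^{\ast}(\bar x,0)=0$ and $\nabla_s w^{\ast}(\bar x,0)=0$ (the latter since $\nabla^2_{yy}g(\bar x,\bar y)v=0$) gives $h(\bar x,0)=0$, $\partial_s h(\bar x,0)=v^{\top}\nabla^2_{yy}g(\bar x,\bar y)v=0$, $\partial^2_s h(\bar x,0)=\nabla^3_{yyy}g(\bar x,\bar y)[v,v,v]=:a\ne 0$ by condition~(3), and $\nabla_x h(\bar x,0)=\nabla_x(v^{\top}\nabla_y g)(\bar x,\bar y)\ne 0$ by condition~(2). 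Applying the implicit function theorem to $\partial_s h$ with the nondegenerate second derivative $a$, I obtain a smooth $s_c(x)$ with $s_c(\bar x)=0$ and $\partial_s h(x,s_c(x))\equiv 0$. Setting $\phi(x):=h(x,s_c(x))$ one gets $\nabla\phi(\bar x)=\nabla_x h(\bar x,0)\ne 0$, so $\{\phi=0\}$ is, locally, a smooth embedded $(n-1)$-submanifold and the distance function obeys $c_0|\phi(x)|\le\dist(x,\{\phi=0\})\le c_1|\phi(x)|$ on a possibly smaller neighborhood. I would also verify that locally $\proj_x(\Gamma\cap W)=\{\phi=0\}$, using that $(x,y)\in\Gamma$ forces $h(x,s)=0$ and $\partial_s h(x,s)=0$ simultaneously.

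The core estimate then comes from a Taylor expansion of $h(x,\cdot)$ about $s_c(x)$. Since $\partial_s h(x,s_c(x))=0$ and $\partial^2_s h$ stays close to $a$ on $W$, for any stationary $(x,y)\in W$ with coordinate $s$ along $v$ I have
\begin{equation*}
0=h(x,s)=\phi(x)+\tfrac{1}{2}\partial^2_s h(x,s_c(x))\,(s-s_c(x))^2+O\!\bigl(|s-s_c(x)|^3\bigr),
\end{equation*}
which yields $(s-s_c(x))^2\ge \tfrac{2|\phi(x)|}{|a|}(1-o(1))$ after shrinking $W$. Expanding $\partial_s h$ around $s_c(x)$ gives $\partial_s h(x,s)=\partial^2_s h(x,s_c(x))(s-s_c(x))+O((s-s_c(x))^2)$, so $|\partial_s h(x,s)|\ge \tfrac{|a|}{2}|s-s_c(x)|\ge C'\sqrt{|\phi(x)|}\ge C_1\sqrt{\dist(x,\proj_x(\Gamma\cap W))}$ for some $C_1>0$.

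The last step is to transfer this scalar estimate to the eigenvalue $\lambda(x,y)$ of the full Hessian $\nabla^2_{yy}g(x,y)$. Because $\nabla^2_{yy}g(\bar x,\bar y)$ has a simple zero eigenvalue, standard analytic perturbation theory for simple eigenvalues of symmetric matrices supplies a smooth branch of the smallest-modulus eigenvalue on a neighborhood, with smoothly varying eigenvector close to $v$; along this branch $\lambda(x,y)$ agrees, up to a bounded smooth multiplicative factor, with $\partial_s h(x,s)$ at a stationary point $(x,y=\bar y+sv+w^{\ast}(x,s))$. The remaining $m-1$ eigenvalues stay bounded away from $0$ by a constant $C_2>0$ on $W$, so they never govern $\lambda(x,y)$ on $W$. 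Combining with the previous paragraph furnishes the advertised lower bound $|\lambda(x,y)|\ge\min\{C_1\sqrt{\dist(x,\proj_x(\Gamma\cap W))},C_2\}$, with $C_2$ absorbing both the bound on the non-critical eigenvalues and the saturation of the square-root estimate on the boundary of $W$.

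The main obstacle will be the passage in the last paragraph, namely justifying cleanly that the scalar quantity $\partial_s h(x,s)$ captures the smallest eigenvalue of the full Hessian at a stationary point and not merely at $(\bar x,\bar y)$. I would handle this by either (i) using the Kato analytic perturbation theory to produce a smooth eigenpair branch $(\lambda(x,y),v(x,y))$ with $v(\bar x,\bar y)=v$, and expanding $\lambda(x,y)=v(x,y)^{\top}\nabla^2_{yy}g(x,y)v(x,y)$ in the reduced coordinates, or (ii) directly showing that $\partial^2_s\tilde g(x,s)=\partial_s h(x,s)$ for the reduced function $\tilde g(x,s):=g(x,\bar y+sv+w^{\ast}(x,s))$ and relating $\partial^2_s\tilde g$ to $\lambda(x,y)$ via the block structure induced by $E\oplus\R v$. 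Either route produces the needed equivalence up to a multiplicative constant on $W$ and closes the proof.
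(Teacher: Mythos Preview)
Your proposal is correct and follows the same conceptual roadmap as the paper: reduce to a one-dimensional stationary equation along the degenerate direction, identify the local bifurcation set as the zero level of a scalar function with nonvanishing gradient (your $\phi$, the paper's $A$), obtain a $\sqrt{|\phi|}$ lower bound on the second derivative of the reduced function at stationary points, and then transfer this to the smallest-modulus Hessian eigenvalue.

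The technical packaging differs in two places worth noting. First, where you invoke Lyapunov--Schmidt reduction to produce $w^{\ast}(x,s)$ and the scalar $h(x,s)$, the paper instead applies the splitting lemma to obtain coordinates in which $g(x,y)=g_1(x,y_1)+\sum_{i\ge 2}\epsilon_i y_i^2$ exactly; this makes the Hessian block-diagonal in the new coordinates, so the ``smallest eigenvalue'' is literally $|\partial^2 g/\partial y_1^2|$ there. Second, for the transfer back to $\lambda(x,y)$ in the original coordinates, the paper uses the transformation law $\nabla^2_{y'y'}g'=J\,\nabla^2_{yy}g\,J^{\top}$ at stationary points together with a singular-value inequality, whereas you propose Kato-type perturbation of the simple eigenvalue or the identity $\nabla^2_{yy}g(x,y)\bigl(v+\partial_s w^{\ast}\bigr)=(\partial_s h)\,v$. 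Your route is arguably more intrinsic and avoids citing the splitting lemma, but the paper's coordinate-change argument makes the last step essentially a one-line matrix inequality; both close the proof with comparable effort.
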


\begin{proof}
    In what follows, all stationary points refer to those of $g$ with respect to the $y$ variables. Throughout, subscripts of $x$ and $y$ denote coordinate indices. The core idea of the proof is to perform some coordinate transformations in a neighborhood $W$ of $(\bar{x},\bar{y})$. After these transformations, we identify a scalar function $A(x)$ such that a stationary point is degenerate if and only if $A(x)=0$. We then establish a relationship between $A(x)$ and the distance from $x$ to the set $\Gamma\cap W$ at the stationary point $(x,y)$.
    
    Without loss of generality, we suppose $(\bar{x},\bar{y})=(0,0)$. We perform an orthogonal transformation of the $y$-coordinate such that $\p y_1$ is the only degenerate direction of $\nabla_{yy}g(0,0)$, and the restriction of $\nabla_{yy}g(0,0)$ at the subspace of the tangent space spanned by $\p y_2,...,\p y_m$ is non-degenerate and diagonal. By the splitting lemma \citep{poston2014catastrophe}, we can find a neighborhood $W$ and a coordinate map preserving $(x,y_1)$ such that under this new coordinate $(x,y)$, $g$ has the following expression in $W$:
    \begin{align}
        g(x,y)=g_1(x,y_1)+\sum_{i=2}^m\epsilon_i y_i^2,\quad\text{where}\ \epsilon_i=1\ \text{or}\ -1.
    \end{align}
    We expand $g_1(x,\cdot)$ in a Taylor series:
    \begin{align}
        g_1(x,y_1)=g_1(x,0)+\frac{\p g_1}{\p y_1}(x,0)y_1+\frac{1}{2}\frac{\p^2 g_1}{\p y_1^2}(x,0)y_1^2+\frac{1}{6}\frac{\p^3 g_1}{\p y_1^3}(x,0)y_1^3+r(x,y_1)
    \end{align}
    where $r(x,y_1)=\mathcal{O}(y_1^4)$ with respect to $y_1$. Note that $(0,0)$ is a fold bifurcation point. By denoting
    \begin{align}
        a(x):=\frac{\p g_1}{\p y_1}(x,0),\quad b(x):=\frac{\p^2 g_1}{\p y_1^2}(x,0),\quad c(x)=\frac{1}{2}\frac{\p^3 g_1}{\p y_1^3}(x,0),
    \end{align}
    we have $a(0)=b(0)=0$ and $c(0)\ne 0$. By the second property of Definition~\ref{def:fold}, we can perform an orthogonal transformation of the $x$-coordinate such that
    \begin{align*}
        \frac{\p a}{\p x_1}(0)\ne 0,\quad \frac{\p a}{\p x_i}(0)=0\quad\text{for any}\ i\ne 1.
    \end{align*}
    We investigate the stationary equations under this new coordinate: 
    \begin{align}\label{eq:stationaryequation}
        \begin{cases}
            \frac{\p g}{\p y_1}(x,y)&=a(x)+b(x)y_1+c(x)y_1^2+r'_{y_1}(x,y_1)=0\\
            \frac{\p g}{\p y_2}(x,y)&=2\epsilon_2 y_2=0\\
            &\vdots\\
            \frac{\p g}{\p y_m}(x,y)&=2\epsilon_m y_m=0
        \end{cases}
    \end{align}
    where $r'_{y_1}(x,y_1)=\mathcal{O}(y_1^3)$ with respect to $y_1$. It is clear that under this new coordinate, the stationary point must satisfy $y_2=\cdots=y_m=0$. We next eliminate the linear term in the first equation of \eqref{eq:stationaryequation} to analyze the solution structure of \eqref{eq:stationaryequation}. Consider the following map:
    \begin{align}
        F(x,y_1)=b(x)+2c(x)y_1+r''_{y_1y_1}(x,y_1).
    \end{align}
    Noting that
    \begin{align*}
        F(0,0)=b(0)=0,\quad \frac{\p F}{\p y_1}(0,0)=2c(0)\ne 0,
    \end{align*}
    by the implicit function theorem, we obtain that there exists a continuously differentiable function $Y_1(x)$ satisfies $Y_1(0)=0$ and $F(x,Y_1(x))=0$. Therefore, the first equation of \eqref{eq:stationaryequation} can be written as
    \begin{align}\label{eq:uniform}
        \frac{\p g}{\p y_1}(x,y)=A(x)+C(x)(y_1-Y_1(x))^2+R(x,y_1-Y_1(x))=0,
    \end{align}
    for some functions $A$, $C$, $R$, where $R(x,y_1-Y_1(x))=\mathcal{O}((y_1-Y_1(x))^3)$ with respect to $y_1-Y_1(x)$. Note that
    \begin{align*}
        A(x)&=\frac{\p g}{\p y_1}(x,Y_1(x))=a(x)+b(x)Y_1(x)+c(x)(Y_1(x))^2+r'_{y_1}(x,Y_1(x))\\
        C(x)&=\frac{1}{2}\frac{\p^3 g}{\p y_1^3}(x,Y_1(x))=c(x)+\frac{1}{2}r'''_{y_1y_1y_1}(x,Y_1(x)).
    \end{align*}
    It is easy to see that $A(x)$ and $C(x)$ satisfy $A(0)=a(0)=0$, $C(0)=c(0)\ne 0$, and 
    \begin{align}\label{eq:propertyforwidetildeA}
        \frac{\p A}{\p x_1}(0)=\frac{\p a}{\p x_1}(0)\ne 0,\quad\text{and}\quad\frac{\p A}{\p x_i}(0)=\frac{\p a}{\p x_i}(0)=0\quad\text{for all }i\ne 1.
    \end{align}
    We then perform a translation in $y_1$ by $y_1\mapsto y_1-Y_1(x)$. Then the system \eqref{eq:stationaryequation} can be simplified to
    \begin{align}\label{eq:stationaryequation2}
        \begin{cases}
            \frac{\p g}{\p y_1}(x,y)&=A(x)+C(x)y_1^2+R(x,y_1)=0\\
            \frac{\p g}{\p y_2}(x,y)&=2\epsilon_2 y_2=0\\
            &\vdots\\
            \frac{\p g}{\p y_m}(x,y)&=2\epsilon_m y_m=0
        \end{cases}
    \end{align}
    where $R(x,y_1)=\mathcal{O}(y_1^3)$ with respect to $y_1$. We rewrite the first equation of \eqref{eq:stationaryequation2} by
    \begin{align}\label{eq:solutionofG}
        G(x,y_1):=A(x)+y_1^2\left(C(x)+\frac{R(x,y_1)}{y_1^2}\right).
    \end{align}
    Since $C(0)\ne 0$, we assume that the neighborhood $W$ is chosen appropriately, so that within which we can find two positive constants $0<\alpha<\beta$ such that
    \begin{align}\label{2cases}
        \alpha<C(x)+\frac{R(x,y_1)}{y_1^2}<\beta\quad\text{or}\quad -\beta<C(x)+\frac{R(x,y_1)}{y_1^2}<-\alpha.
    \end{align}
    The specific case depends on the sign of $C(0)$. Without loss of generality, we assume that $C(0)>0$, i.e., we are in the first case of \eqref{2cases}. From \eqref{eq:solutionofG} we can see 
    \begin{itemize}
        \item When $A(x)>0$, $G(x,y_1)$ has no solution with respect to $y_1$;
        \item When $A(x)=0$, $G(x,y_1)$ has only one solution $y_1=0$ with respect to $y_1$. In the new coordinate system, this is a degenerate stationary point;
        \item When $A(x)<0$, $G(x,y_1)$ has two solutions with respect to $y_1$. In the new coordinate system, both of these solutions are non-degenerate stationary points.
    \end{itemize}
    Therefore, whether $A(x)=0$ serves as a criterion for determining whether $x$ belongs to $\proj_x(\Gamma\cap W)$, i.e., the projection (in the $x$-direction) of the set of degenerate stationary points near $(0,0)$. Note that
    \begin{align*}
        &G(x,\sqrt{\frac{-A(x)}{\alpha}})>A(x)-\frac{A(x)}{\alpha}\cdot\alpha=0\quad G(x,\sqrt{\frac{-A(x)}{\beta}})<A(x)-\frac{A(x)}{\beta}\cdot\beta=0\\
        &G(x,-\sqrt{\frac{-A(x)}{\alpha}})>A(x)-\frac{A(x)}{\alpha}\cdot\alpha=0\quad G(x,-\sqrt{\frac{-A(x)}{\beta}})<A(x)-\frac{A(x)}{\beta}\cdot\beta=0.
    \end{align*}
    By the intermediate value theorem, for any $x$, there exists one root $y_1$ of $G(x,y_1)=0$ in each of the intervals
    $$\left(-\sqrt{\frac{-A(x)}{\alpha}},-\sqrt{\frac{-A(x)}{\beta}}\right)\quad\text{and}\quad\left(\sqrt{\frac{-A(x)}{\beta}},\sqrt{\frac{-A(x)}{\alpha}}\right).$$
    At these roots, we have
    \begin{align}\label{eq:hessianbound2}
        \left|\frac{\p^2 g}{\p y_1^2}\right|=2C(x)\left|\sqrt{\frac{-A(x)}{\beta}}\right|-\left|R(x,\sqrt{\frac{-A(x)}{\beta}}))\right|\geq M\sqrt{-A(x)}.
    \end{align}
    for some constant $M>0$. According to the system \eqref{eq:stationaryequation2}, in this new coordinate, $|\lambda(x,y)|$ defined in \eqref{eq:smallesteignvalue} can be computed as follows:
    \begin{align}
        |\lambda(x,y)|=\min_{i}\left|\frac{\p^2 g}{\p y_i^2}(x,y)\right|\geq\min\{M\sqrt{-A(x)},2\}.
    \end{align}
    
    We proceed to prove that these coordinate transformations preserve the stationary points, and then investigate bounds on $|\lambda(x,y)|$ in the original coordinates. We denote the original coordinate system by $(x,y)$, the new coordinate system by $(x',y')$. Note that, for the $x$-component, we only applied a coordinate transformation independent of $y$. Therefore, the coordinate transformation can be written as follows:
    \begin{align}\label{eq:coordinatetransformation}
        \phi:(x,y)\mapsto(x',y')=(\phi_1(x),\phi_2(x,y)), 
    \end{align}
    $$\phi^{-1}:(x',y')\mapsto(x,y)=(\phi^{-1}_1(x'),\phi^{-1}_2(x',y')).$$ 
    The function $g$ under the new coordinate system can be written as
    $$g'(x',y'):=g(x,y),\quad\text{where}\ (x',y')=\phi(x,y).$$
    By computation, we have 
    $$\nabla_{y'} g'(x',y')=\nabla_x g(x,y)\nabla_{y'}\phi_1^{-1}(x')+\nabla_y g(x,y)\nabla_{y'}\phi_2^{-1}(x',y')=\nabla_y g(x,y)\nabla_{y'}\phi_2^{-1}(x',y').$$
    Note that the full Jacobian of the coordinate transformation $\phi^{-1}$ is invertible, i.e., the following matrix is invertible:
    \begin{align*}
        \nabla_{(x',y')}\phi^{-1}(x',y')=\begin{pmatrix}
            \nabla_{x'} \phi^{-1}_1(x') &  \nabla_{y'} \phi^{-1}_1(x')\\
            \nabla_{x'} \phi^{-1}_2(x',y') &  \nabla_{y'} \phi^{-1}_2(x',y')
        \end{pmatrix}=\begin{pmatrix}
            \nabla_{x'} \phi^{-1}_1(x') &  0\\
            \nabla_{x'} \phi^{-1}_2(x',y') &  \nabla_{y'} \phi^{-1}_2(x',y')
        \end{pmatrix}.
    \end{align*}
    Thus, $\nabla_{y'}\phi_2^{-1}(x',y')$ is invertible. This implies $\nabla_{y'} g'(x',y')=0$ is equivalent to $\nabla_y g(x,y)=0$. By the chain rule, when $(x,y)$ is a stationary point, or equivalently $(x',y')$ is a stationary point, we have
     \begin{align}
         \nabla^2_{y'y'}g'(x',y')=&\nabla_{y'} \phi^{-1}_2(x',y')\nabla^2_{yy}g(x,y)(\nabla_{y'} \phi^{-1}_2(x',y'))^\top+\left[\sum_{k=1}^m\frac{\p g'}{\p y_k'}(x',y')\frac{\p^2 (\phi_2^{-1})_k}{\p y'_i \p y'_j}(x',y')\right]_{i,j=1}^m\nonumber\\
        =&\nabla_{y'} \phi^{-1}_2(x',y')\nabla^2_{yy}g(x,y)(\nabla_{y'} \phi^{-1}_2(x',y'))^\top.
    \end{align}
    Since the full Jacobian $\nabla_{(x',y')}\phi^{-1}(x',y')$ is bounded, it is clear that $\nabla_{y'} \phi^{-1}_2(x',y')$ is also bounded, i.e., $\|\nabla_{y'} \phi^{-1}_2(x',y')\|\leq \overline{M}$ for some constant $\overline{M}$. Thus, we obtain the following singular value inequality:
    $$\sigma_{\text{min}}(\nabla^2_{y'y'}g'(x',y'))\leq \overline{M}^2\sigma_{\text{min}}(\nabla^2_{yy}g(x,y)).$$ 
    Note that for a symmetric matrix, the singular values are precisely the absolute values of its eigenvalues. Hence, in the original coordinate system we still have the bound
    \begin{align}\label{eq:hessianbound3}
        |\lambda(x,y)|\geq\frac{1}{\overline{M}^2}\min\{M\sqrt{-A(x)},2\}.
    \end{align}

    We next study the relationship between $A(x)$ and the distance from $x$ to $\proj_x(\Gamma\cap W)$. Consider the following mapping
    $$(x_1,...,x_n)\mapsto(x_1',x_2',...,x_n')=(A(x),x_2,...,x_n).$$
    The Jacobian of this mapping at $x=0$ is
    \begin{align}\label{eq:jacobianofA}
        \begin{pmatrix}
        \frac{\p A}{\p x_1}(0) & 0 & \cdots & 0\\
        0 & 1 & \cdots & 0\\
        \vdots & \vdots & \ddots & \vdots \\
        0 & 0 & \cdots & 1
    \end{pmatrix}
    \end{align}
    which is invertible. By the inverse function theorem, this mapping defines a local coordinate transformation, and is therefore bi-Lipschitz. By shrinking $W$ if necessary, we may assume that the mapping is bi-Lipschitz on the entire set $W$. In the new coordinate system, the set $\proj_x(\Gamma\cap W)$ corresponds exactly to the hyperplane $x_1'=0$. Consequently, the distance from a point $x'$ to this set is simply $|x_1'| = |A(x)|$. Invoking the bi-Lipschitz property, the distance in the original coordinates is bounded by $\mathrm{dist}(x, \text{Proj}_x(\Gamma\cap W)) \le L|A(x)|$ for some constant $L>0$. Combining with \eqref{eq:hessianbound3}, we obtain that there exists two constants $C_1$ and $C_2$ such that the following holds for any stationary point $(x,y)$ in $W$
    $$|\lambda(x,y)|\geq \min\{C_1(\mathrm{dist}(\proj_x(\Gamma\cap W),x))^{1/2},C_2\}.$$

\end{proof}

\noindent\textbf{Detailed proof of Theorem~\ref{thm:globalversionmainthm}:} In what follows, all gradient norms and stationary points refer to those of $g$ with respect to the $y$ variables. Throughout, subscripts of $x$ and $y$ denote coordinate indices.\\

\noindent\textbf{Proof of (i):} Since every degenerate stationary point is a fold bifurcation point, points in $\Gamma\cap\mathcal{Y}'$ are also fold bifurcation points as well. We can apply Lemma~\ref{lemma:localversionmainthm} to obtain a corresponding neighborhood $W$ around each of them. Note that $\Gamma\cap\mathcal{Y}'\subset\mathcal{X}\times\mathcal{Y}'$ is closed and bounded, and thus compact, we can extract a finite collection of neighborhoods $\{W_i\}_{i=1}^I$, each satisfying the conditions of Lemma~\ref{lemma:localversionmainthm}, that together cover $\Gamma\cap\mathcal{Y}'$. For any stationary point $(x,y)\in W_i$, we observe that
$$\mathrm{dist}(\proj_x(\Gamma\cap W_i),x)\geq\mathrm{dist}(\proj_x(\Gamma),x),$$
and hence Lemma~\ref{lemma:localversionmainthm} implies
$$|\lambda(x,y)|\geq \min\{C_1(\mathrm{dist}(\proj_x\Gamma,x))^{1/2},C_2\},$$
where $\lambda(x,y)$ is defined in \eqref{eq:smallesteignvalue}. For stationary points within $\mathcal{X}\times\mathcal{Y}'$ but outside $\{W_i\}_{i=1}^I$, $|\lambda(x,y)|$ is uniformly bounded below by a positive constant. Note that $x\notin\widetilde{\mathcal{X}}_\delta$ implies that $\mathrm{dist}(\proj_x(\Gamma),x)\geq\delta$. Recalling the definition of $\mu(\delta)$ in Lemma~\ref{lemma:eigenvalue} and combining the two cases, we conclude that there exists two constants $D_1$ and $D_2$ such that for any $\delta>0$ the following holds
    \begin{align}\label{eq:finalresult1}
        \mu(\delta)&\geq\min\{D_1\sqrt{\delta},D_2\}.
    \end{align}

\noindent\textbf{Proof of (ii):} We define
$$K(\delta,r):=\{(x,y)\in\overline{\mathcal{X}\setminus\widetilde{\mathcal{X}}_\delta}\times\mathcal{Y}:y\in\overline{\mathcal{Y}\setminus \mathrm{Crit}_r(x)}\}$$
Note that the rate of $\alpha(\delta,\mu(\delta)/(2\overline{\overline{L}}_g))$ with respect to $\delta$ is exactly the rate, in terms of $\delta$, of the minimal gradient norm of $g$ with respect to $y$ over the set $K(\delta,\mu(\delta)/(2\overline{\overline{L}}_g))$. Our approach is as follows: for each stationary point in $\mathcal{X}\times\mathcal{Y}$, we construct an open neighborhood $W$ that is independent of $\delta$. Since the set of stationary points in $\mathcal{X}\times\mathcal{Y}$ is closed, we can find finitely many such open neighborhoods $\{W_i\}_{i=1}^I$ that cover the entire set of stationary points. Then, on $(\mathcal{X}\times\mathcal{Y})\setminus(\cup_{i=1}^I W_i)$, the gradient norm of $g$ with respect to $y$ naturally admits a positive lower bound independent of $\delta$. Therefore, it suffices to analyze the lower bound of the gradient norm within each set $W_i \cap K(\delta, \mu(\delta)/(2\overline{\overline{L}}_g))$. We construct open neighborhoods separately according to the following cases:

\noindent\textbf{The first case:} The pair $(\bar{x},\bar{y})$ is a stationary point with $\bar{x} \notin \widetilde{\mathcal{X}}$. This means that $\nabla_{yy}^2 g(\bar{x},\bar{y})$ is non-degenerate. By the implicit function theorem, $\bar{y}$ can be locally extended to a continuous function $\bar{y}(x)$ for $x \in B_{\bar{x}}(\Delta_x)$, where $B_{\bar{x}}(\Delta_x)$ denotes the ball centered at $\bar{x}$ with radius $\Delta_x$. Here $\Delta_x$ is chosen sufficiently small such that $\nabla_{yy}^2 g(x,\bar{y}(x))$ remains non-degenerate for all $x \in B_{\bar{x}}(\Delta_x)$. Then the absolute values of all eigenvalues of $\nabla_{yy}^2 g(x,\bar{y}(x))$ admit a uniform positive lower bound, denoted by $\lambda$. We then define the open set corresponding to $(\bar{x},\bar{y})$ as
    $$
    W=\{(x,y): \; x \in B_{\bar{x}}(\Delta_x), \; y \in B_{\bar{y}(x)}(\lambda/(2\overline{\overline{L}}_g))\}.
    $$
    Next, we consider the intersection of $W$ and $K(\delta, \mu(\delta)/(2\overline{\overline{L}}_g))$, and estimate the lower bound of the gradient norm of $g$ with respect to $y$ on this intersection. To ensure that the intersection is non-empty, we may assume $\delta<\Delta_x$ and $\mu(\delta)/(2\overline{\overline{L}}_g))<\lambda/(2\overline{\overline{L}}_g))$. Fix $x \in B_{\bar{x}}(\Delta_x)$, and consider the slice $\{x\}\times B_{\bar{y}(x)}(\lambda/(2\overline{\overline{L}}_g))$. 
    By Lipschitz continuity of Hessian matrix (from Assumption~\ref{assumption:general1}(\ref{assumption:general1(11)})), we know that for $y \in B_{\bar{y}(x)}(\lambda/2\overline{\overline{L}}_g)$, it holds that
    $$
    \lambda(x,y) \geq \lambda(x,\bar{y}(x))-\overline{\overline{L}}_g\times\frac{\lambda}{2\overline{\overline{L}}_g}\geq\lambda-\frac{\lambda}{2}=\frac{\lambda}{2},
    $$
    where $\lambda(x,y)$ denotes the minimum absolute value of the eigenvalues of $\nabla_{yy}^2g(x,y)$. Consequently, on the slice $\{x\}\times B_{\bar{y}(x)}(\lambda/(2\overline{\overline{L}}_g))$, the gradient norm satisfies
    $$
    \|\nabla_y g(x,y)\| \geq \frac{\lambda}{2}\|y-\bar{y}(x)\|,
    $$
    and the minimal value of $\|\nabla_y g(x,y)\|$ at $W\cap K(\delta, \mu(\delta)/(2\overline{\overline{L}}_g))$ is attained at the boundary of $B_{\bar{y}(x)}(\mu(\delta)/(2\overline{\overline{L}}_g))$, yielding
    $$
    \|\nabla_y g(x,y)\| \geq \frac{\lambda\,\mu(\delta)}{4\overline{\overline{L}}_g}.
    $$
    Figure~\ref{fig:case1} provides an illustrative depiction.

    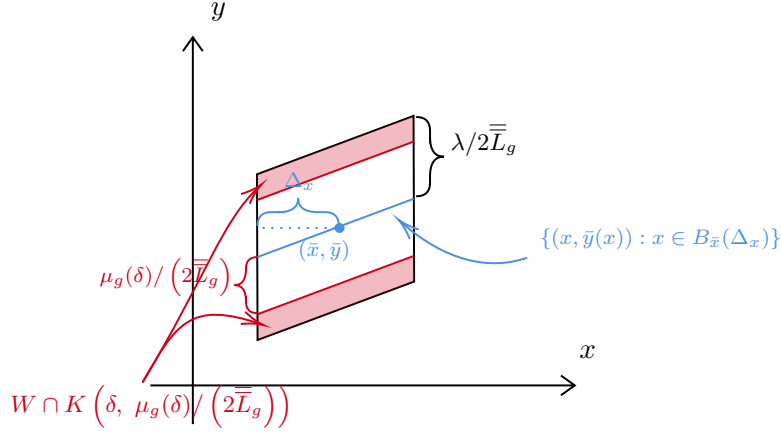
\begin{figure}[htbp]
    \centering
    \tikzset{every picture/.style={line width=0.75pt}} 

\begin{tikzpicture}[x=0.75pt,y=0.75pt,yscale=-1,xscale=1]

\draw   (157.05,201.8) -- (156.74,118.19) -- (235.71,88.69) -- (236.01,172.3) -- cycle ;
\draw [color={rgb, 255:red, 74; green, 144; blue, 226 }  ,draw opacity=1 ]   (156.89,159.99) -- (235.86,130.5) ;
\draw  (103,224.73) -- (317.03,224.73)(124.4,49) -- (124.4,244.26) (310.03,219.73) -- (317.03,224.73) -- (310.03,229.73) (119.4,56) -- (124.4,49) -- (129.4,56)  ;
\draw [color={rgb, 255:red, 74; green, 144; blue, 226 }  ,draw opacity=1 ]   (293.03,160.26) .. controls (273.43,167.12) and (250.95,161.49) .. (229.32,142.2) ;
\draw [shift={(228,141)}, rotate = 42.6] [color={rgb, 255:red, 74; green, 144; blue, 226 }  ,draw opacity=1 ][line width=0.75]    (10.93,-3.29) .. controls (6.95,-1.4) and (3.31,-0.3) .. (0,0) .. controls (3.31,0.3) and (6.95,1.4) .. (10.93,3.29)   ;
\draw   (237.03,129.26) .. controls (241.7,129.26) and (244.03,126.93) .. (244.03,122.26) -- (244.03,119.26) .. controls (244.03,112.59) and (246.36,109.26) .. (251.03,109.26) .. controls (246.36,109.26) and (244.03,105.93) .. (244.03,99.26)(244.03,102.26) -- (244.03,96.26) .. controls (244.03,91.59) and (241.7,89.26) .. (237.03,89.26) ;
\draw [color={rgb, 255:red, 74; green, 144; blue, 226 }  ,draw opacity=1 ] [dash pattern={on 0.84pt off 2.51pt}]  (196.38,145.23) -- (157.03,145.23) ;
\draw  [color={rgb, 255:red, 74; green, 144; blue, 226 }  ,draw opacity=1 ][fill={rgb, 255:red, 74; green, 144; blue, 226 }  ,fill opacity=1 ] (196.38,145.25) .. controls (196.38,144.13) and (197.28,143.23) .. (198.39,143.23) .. controls (199.5,143.23) and (200.41,144.13) .. (200.41,145.25) .. controls (200.41,146.36) and (199.5,147.26) .. (198.39,147.26) .. controls (197.28,147.26) and (196.38,146.36) .. (196.38,145.25) -- cycle ;
\draw  [color={rgb, 255:red, 74; green, 144; blue, 226 }  ,draw opacity=1 ][line width=0.75]  (198.03,144.26) .. controls (198.03,139.59) and (195.7,137.26) .. (191.03,137.26) -- (187.53,137.26) .. controls (180.86,137.26) and (177.53,134.93) .. (177.53,130.26) .. controls (177.53,134.93) and (174.2,137.26) .. (167.53,137.26)(170.53,137.26) -- (164.03,137.26) .. controls (159.36,137.26) and (157.03,139.59) .. (157.03,144.26) ;
\draw [color={rgb, 255:red, 208; green, 2; blue, 27 }  ,draw opacity=1 ]   (157.05,188.8) -- (236.01,159.3) ;
\draw [color={rgb, 255:red, 208; green, 2; blue, 27 }  ,draw opacity=1 ]   (156.74,131.19) -- (235.71,101.69) ;
\draw  [color={rgb, 255:red, 255; green, 255; blue, 255 }  ,draw opacity=0 ][fill={rgb, 255:red, 208; green, 2; blue, 27 }  ,fill opacity=0.27 ] (156.79,131.02) -- (156.74,118.18) -- (235.71,88.69) -- (235.75,101.52) -- cycle ;
\draw  [color={rgb, 255:red, 255; green, 255; blue, 255 }  ,draw opacity=0 ][fill={rgb, 255:red, 208; green, 2; blue, 27 }  ,fill opacity=0.27 ] (157.05,201.8) -- (157,188.97) -- (235.97,159.48) -- (236.01,172.31) -- cycle ;
\draw [color={rgb, 255:red, 208; green, 2; blue, 27 }  ,draw opacity=1 ]   (99.03,223.26) .. controls (131.37,169.36) and (138.74,142.35) .. (157.84,124.35) ;
\draw [shift={(159.03,123.26)}, rotate = 138.01] [color={rgb, 255:red, 208; green, 2; blue, 27 }  ,draw opacity=1 ][line width=0.75]    (10.93,-3.29) .. controls (6.95,-1.4) and (3.31,-0.3) .. (0,0) .. controls (3.31,0.3) and (6.95,1.4) .. (10.93,3.29)   ;
\draw [color={rgb, 255:red, 208; green, 2; blue, 27 }  ,draw opacity=1 ]   (99.03,223.26) .. controls (115.77,193.71) and (122.82,181.62) .. (158.38,193.69) ;
\draw [shift={(160.03,194.26)}, rotate = 199.36] [color={rgb, 255:red, 208; green, 2; blue, 27 }  ,draw opacity=1 ][line width=0.75]    (10.93,-3.29) .. controls (6.95,-1.4) and (3.31,-0.3) .. (0,0) .. controls (3.31,0.3) and (6.95,1.4) .. (10.93,3.29)   ;
\draw  [color={rgb, 255:red, 208; green, 2; blue, 27 }  ,draw opacity=1 ] (157.03,159.97) .. controls (153.15,159.97) and (151.21,161.91) .. (151.21,165.79) -- (151.21,165.79) .. controls (151.21,171.34) and (149.27,174.11) .. (145.38,174.11) .. controls (149.27,174.11) and (151.21,176.88) .. (151.21,182.43)(151.21,179.94) -- (151.21,182.43) .. controls (151.21,186.32) and (153.15,188.26) .. (157.03,188.26) ;

\draw (176,150) node [anchor=north west][inner sep=0.75pt]  [font=\scriptsize,color={rgb, 255:red, 74; green, 144; blue, 226 }  ,opacity=1 ]  {$( \bar{x} ,\bar{y})$};
\draw (318,203) node [anchor=north west][inner sep=0.75pt]    {$x$};
\draw (132,30) node [anchor=north west][inner sep=0.75pt]    {$y$};
\draw (298,144) node [anchor=north west][inner sep=0.75pt]  [font=\scriptsize,color={rgb, 255:red, 74; green, 144; blue, 226 }  ,opacity=1 ]  {$\{( x,\bar{y}( x)) :x\in B_{\bar{x}}( \Delta _{x})\}$};
\draw (253,91) node [anchor=north west][inner sep=0.75pt]  [font=\footnotesize]  {$\lambda /2\overline{\overline{L}}_{g}$};
\draw (169,116) node [anchor=north west][inner sep=0.75pt]  [font=\scriptsize,color={rgb, 255:red, 74; green, 144; blue, 226 }  ,opacity=1 ]  {$\Delta _{x}$};
\draw (31,223) node [anchor=north west][inner sep=0.75pt]  [font=\footnotesize,color={rgb, 255:red, 208; green, 2; blue, 27 }  ,opacity=1 ]  {$W\cap K\left( \delta ,\ \mu _{g}( \delta ) /\left( 2\overline{\overline{L}}_{g}\right)\right)$};
\draw (76,159) node [anchor=north west][inner sep=0.75pt]  [font=\scriptsize,color={rgb, 255:red, 208; green, 2; blue, 27 }  ,opacity=1 ]  {$\mu _{g}( \delta ) /\left( 2\overline{\overline{L}}_{g}\right)$};

\end{tikzpicture}

    \caption{Illustration of the construction of $W$ in the case where $(\bar{x},\bar{y})$ is a stationary point with $\bar{x} \notin \widetilde{\mathcal{X}}$; the red region indicates the intersection of $W$ and $K(\delta, \mu(\delta)/(2\overline{\overline{L}}_g))$.}
    \label{fig:case1}
\end{figure}
    
\noindent\textbf{The second case:}  The pair $(\bar{x},\bar{y})$ is a stationary point with $\bar{x} \in \widetilde{\mathcal{X}}$, but it is a non-degenerate stationary point, i.e., $\nabla_{yy} g(\bar{x},\bar{y})$ is non-degenerate. This case is almost the same as Case 1. Since $(\bar{x},\bar{y})$ is a non-degenerate stationary point, we can also construct a locally defined continuous curve of stationary points $\bar{y}(x)$. Along this curve, the absolute values of the eigenvalues of the Hessian admit a uniform positive lower bound $\lambda$, and thus we can construct $W$ in the same way as in Case 1. The only difference is that now the intersection of $W$ and $K(\delta, \mu(\delta)/(2\overline{\overline{L}}_g))$ is further intersected with $(\mathcal{X}\setminus\widetilde{\mathcal{X}}_\delta)\times \mathcal{Y}$ compared to Case 1. However, taking this additional intersection can only increase the minimal value of the gradient norm, and hence, we still obtain
    $$
    \|\nabla_y g(x,y)\| \geq \frac{\lambda\,\mu(\delta)}{4\overline{\overline{L}}_g}.
    $$

\noindent\textbf{The third case:}  The pair $(\bar{x},\bar{y})$ is a stationary point with $\bar{x} \in \widetilde{\mathcal{X}}$, and it is a degenerate stationary point, i.e., $\nabla_{yy} g(\bar{x},\bar{y})$ is degenerate. We first perform the coordinate transformation~\eqref{eq:coordinatetransformation} in the proof of Lemma~\ref{lemma:localversionmainthm}. In this coordinate, the equation system of stationary points has the form~\eqref{eq:stationaryequation2}. Recall the functions $A$, $C$, $R$ established in \eqref{eq:stationaryequation2}, we suppose that $C(0)>0$. We further perform the following two coordinate transformations
    \begin{align}
        \Phi_{1}:((x_1,...,x_n),y)\mapsto((A(x),x_2,...,x_n),y).
    \end{align}
    \begin{align}
        \Phi_{2}:(x,(y_1,...,y_m))\mapsto(x,(y_1\sqrt{C(x)+\frac{R(x,y_1)}{y_1^2}},y_2,...,y_m)).
    \end{align}
    We denote the original coordinate system by $(x,y)$, and the new system after performing the coordinate transformation~\eqref{eq:coordinatetransformation} and $\Phi_1$, $\Phi_2$ by $(x',y')$. The coordinate mapping is denoted by $\Psi:(x,y)\mapsto (x',y')$. Under this new coordinate, the equation system of stationary points has the form:
    \begin{align}\label{eq:stationarysystem3}
        \begin{cases}
            x_1'+y_1'^2=0\\
            2\epsilon_2 y'_2=0\\
            &\vdots\\
            2\epsilon_m y'_m=0
        \end{cases}
    \end{align}
    
    Without loss of generality, we suppose $(\bar{x},\bar{y})=(0,0)$. It is clear that $(\bar{x}',\bar{y}')=\Psi(x,y)=(0,0)$. We then choose the open neighborhood $W$ of $(\bar{x},\bar{y})=(0,0)$ such that $\Psi(W)$ is a box neighborhood of $(\bar{x}',\bar{y}')=(0,0)$ of the form $\{(x',y'):|x'_i|<r,\ |y'_j|<r\text{ for all } i,j\}$ for some constant $r>0$, i.e.,
    $$W=\Psi^{-1}(\{(x',y'):|x'_i|<R,\ |y'_j|<r\text{ for all } i,j\})$$
    After applying the coordinate transformations $\Phi_1$ and $\Phi_2$, the set $\Psi(\widetilde{X})$ is locally given by the $(n-1)$-dimensional manifold defined by $x'_1=0$.
    
    Next, we analyze the minimum of the gradient norm over the set $W \cap K(\delta, \mu(\delta)/(2\overline{\overline{L}}_g))$. Note that the set $W\cap K(\delta, \mu(\delta)/(2\overline{\overline{L}}_g))$ can essentially be viewed as obtained from $W$ by removing two subsets:
    \begin{itemize}
        \item \textbf{Set 1:} the points belonging to $\widetilde{\mathcal{X}}_\delta\times\mathcal{Y}$;
        \item \textbf{Set 2:} the $\mu(\delta)/(2\overline{\overline{L}}_g)$-neighborhoods of the set of stationary points in the fiber sense, i.e., for each stationary point $(x,y)$, we remove the set $\{x\}\times B_y(\mu(\delta)/(2\overline{\overline{L}}_g))$.
    \end{itemize}  
    We then turn to examining the properties of these two sets in the new coordinate system $(x',y')$.\\

    By the design of the coordinate transformation~\eqref{eq:coordinatetransformation} in the proof of Lemma~\ref{lemma:localversionmainthm}, and $\Phi_1$,$\Phi_2$, the overall coordinate transformation $\Psi$ takes the form
    $$\Psi:(x,y)\mapsto (x',y')=(\Psi_1(x),\Psi_2(x,y)),$$
    where both $\Psi$ and its $x$-component $\Psi_1(x)$ are bi-Lipschitz. \textbf{Set 1} consists of all points whose $x$-component lies within a distance at most $\delta$ from $\widetilde{\mathcal{X}}$. By the bi-Lipschitz property of $\Psi_1$, there exists a constant $D_1>0$ such that the image of this set under $\Psi$ contains all points whose $x'$-component lies within distance at most $D_1\delta$ from $\Psi_1(\widetilde{\mathcal{X}})$. Since $\Psi_1(\widetilde{\mathcal{X}})$ is precisely the $(n-1)$-dimensional manifold defined by $x_1'=0$, it follows that the image of the first set under $\Psi$ contains the set of points satisfying $|x_1'|\leq D_1\delta$. \textbf{Set 2} removes $\{x\}\times B_{y}(\mu(\delta)/(2\overline{\overline{L}}_g))$ for each stationary points $(x,y)$. Since $\Psi$ is fiber-preserving, whenever two points $(x,y)$ and $(x,\hat y)$ lie in the same fiber $\{x\}\times\mathcal{Y}$, their images $(x',y')$ and $(x',\hat y')$ also lie in the same fiber $\{x'\}\times\mathcal{Y}$. This property allows us to restrict the global bi-Lipschitz continuity of $\Psi$ to each fiber: there exists a uniform constant $D_2>0$, independent of $x$, such that for all $x$ and $y,\hat y$
    $$D_2\|y-\hat{y}\|\leq\|\Psi_2(x,y)-\Psi_2(x,\hat{y})\|\leq \frac{1}{D_2}\|y-\hat{y}\|.$$
    
    Thus, for every stationary point $(x',y')$, the image of the removed neighborhood under $\Psi$ contains the fiberwise ball $\{x'\}\times B_{y'}(D_2\mu(\delta)/(2\overline{\overline{L}}_g))$. Thus, in the new coordinates $(x',y')$, it suffices to consider the set $\Psi(W)$ after removing two regions: (i) the strip $\{|x_1'|\leq D_1\delta\}$, and (ii) the $D_2\mu(\delta)/(2\overline{\overline{L}}_g)$-neighborhoods of the stationary points in the fiber sense. Denote this remaining set by $Z$. By construction, we have  
    $$
    Z \supseteq \Psi\!\left(W \cap K\left(\delta,\tfrac{\mu(\delta)}{2\overline{\overline{L}}_g}\right)\right).
    $$  
    Therefore, obtaining a lower bound for the gradient norm over $Z$ immediately yields a lower bound for the gradient norm over $\Psi(W \cap K(\delta,\mu(\delta)/(2\overline{\overline{L}}_g)))$. Since the coordinate transformation $\Psi$ is bi-Lipschitz, the same bound converts into a gradient norm bound on the set $W \cap K(\delta,\mu(\delta)/(2\overline{\overline{L}}_g))$ in the original coordinates, up to a constant factor depending only on the bi-Lipschitz constant of $\Psi$.
    
    We identify the point in $Z$ where the gradient norm attains its minimum according to system~\eqref{eq:stationarysystem3}. For clarity, we consider the one-dimensional case for both $x'$ and $y'$. In fact, the higher-dimensional case is completely analogous, since only $x'_1$ and $y'_1$ are nontrivial in this system. This system of stationary points reduces to
    \begin{align}
        \frac{\partial g}{\partial y'}=x'+{y'}^2=0.
    \end{align}
    We distinguish two cases: $x'>\delta$ and $x'<-\delta$. For $x'>\delta$, the situation is relatively simple, since we directly obtain $|\partial g/\partial y'|\geq\delta.$ For $x'<-\delta$, we consider the intersection of the fiber $\{x'\}\times \mathcal{Y}$ with $Z$. This intersection is a union of intervals. Specifically, when  $\bar{r}:=D_2\mu(\delta)/2\overline{\overline{L}}_g\leq\sqrt{-x'}$, the set $\{x'\}\times \mathcal{Y}$ intersected with $Z$ is given by  
    $$
    \Psi(W)\cap\Bigl(\{x'\}\times\bigl((-\infty,\,-\sqrt{-x'}-\bar{r})\cup(-\sqrt{-x'}+\bar{r},\,\sqrt{-x'}-\bar{r})\cup(\sqrt{-x'}+\bar{r},+\infty)\bigr)\Bigr),
    $$
    and when $D_2\mu(\delta)/2\overline{\overline{L}}_g>\sqrt{-x'}$, the set $\{x'\}\times \mathcal{Y}$ intersected with $Z$ is given by  
    $$
    \Psi(W)\cap\Bigl(\{x'\}\times\bigl((-\infty,\,-\sqrt{-x'}-\bar{r})\cup(\sqrt{-x'}+\bar{r},+\infty)\bigr)\Bigr).
    $$
    By carrying out the computations on each of the above intervals and applying \eqref{eq:finalresult1}, it is easy to see that there exist constants $D_3,D_4>0$ such that, for every point $(x',y')$ in $Z$, we have
    $$
    \|\nabla_y g(x',y')\|\geq\min\{D_3\delta,D_4\}.
    $$
    Since the coordinate transformation $\Psi$ is bi-Lipschitz, the same conclusion holds (up to a constant factor) in the original coordinates. In particular, we obtain that for each point $(x,y)$ in $W \cap K(\delta,\mu(\delta)/(2\overline{\overline{L}}_g))$, it holds that  
    $$
    \|\nabla_y g(x,y)\|\geq D_2\min\{D_3\delta,D_4\}.
    $$

By combining the above three cases and applying \eqref{eq:finalresult1}, and noting that on $(\mathcal{X}\times\mathcal{Y})\setminus(\cup_{i=1}^I W_i)$ the gradient norm of $g$ with respect to $y$ naturally admits a positive lower bound independent of $\delta$, we conclude that there exist constants $C_3,C_4>0$ such that
    $$\alpha\left(\delta,\frac{\mu(\delta)}{2\overline{\overline{L}}_g}\right)\geq\min\{C_3\delta,C_4\}.$$

\end{document}